      \theoremstyle{plain}
     \newtheorem{thm}{Theorem}[section]
\newtheorem{lem}[thm]{Lemma}
\newtheorem{cor}[thm]{Corollary}
\newtheorem{pro}[thm]{Proposition}
\newtheorem{rem}[thm]{Remark}
\newtheorem{defi}[thm]{Definition}
\newcommand{\bess}{\begin{eqnarray*}}
\newcommand{\eess}{\end{eqnarray*}}
      \def\@setcopyright{}
      \def\serieslogo@{}
\begin{document}




\author{Weiyuan Qiu}
\address{School of Mathematical Sciences, Fudan University, Shanghai, 200433, P.R.China}
\email{wyqiu@fudan.edu.cn}

\author{Xiaoguang Wang}
\address{School of Mathematical Sciences, Fudan University, Shanghai, 200433, P.R.China}
\email{wxg688@163.com}

 \author{Yongcheng Yin}
 \address{Department of Mathematics, Zhejiang University, Hangzhou,310027, P.R.China}
 \email{yin@zju.edu.cn}






   \title[]{Dynamics of McMullen maps}


   \begin{abstract}
In this article, we develop the Yoccoz puzzle technique to study a
family of rational maps termed McMullen maps. We show that the
boundary of the immediate basin of infinity is always a Jordan curve
if it is connected. This gives a positive answer to a question of
Devaney. Higher regularity of this boundary is obtained in almost
all cases. We show that the boundary is a quasi-circle if it
contains neither a parabolic point nor a recurrent critical point.
For the whole Julia set, we show that the McMullen maps have locally
connected Julia sets except in some special cases.
 \end{abstract}

   \subjclass[2000]{Primary 37F45; Secondary 37F10, 37F50}

   \keywords{ McMullen map, Julia set, local connectivity, Jordan curve}



   \date{\today}


   \maketitle




\section{Introduction}

The local connectivity of Julia sets for rational maps is a central
problem in complex dynamical systems. It is well-studied for
classical types of rational maps, such as hyperbolic and
semi-hyperbolic maps and geometrically finite maps
\cite{CJY},\cite{M1},\cite{TY}. The polynomial case is also
well-known \cite{DH1},\cite{GS},\cite{K},\cite{L},\cite{M2}, \cite{P}. For
quadratic polynomials, Yoccoz proved that the Julia set is locally
connected provided all periodic points are repelling and the map is
not infinitely renormalizable \cite{H},\cite{M2}. Douady exhibited a
striking example of an infinitely renormalizable quadratic
polynomial with a non-locally connected Julia set \cite{M2}.
  For a general polynomial with connected Julia sets and without irrationally neutral cycles, Kiwi shows in \cite{K} that the local connectivity of the Julia set is equivalent to the non-existence of wandering continua.

   A powerful tool for studying the local connectivity of Julia
sets for polynomials is the so-called `Branner-Hubbard-Yoccoz
puzzle' technique introduced by Branner-Hubbard and Yoccoz
\cite{BH}. This technique uses a natural method of construction
involving finitely many  periodic external rays together with an
equipotential curve. However, for general rational maps, the
situation is different, and the construction of the Yoccoz puzzle
becomes quite involved, even impossible. Until now, the only known
rational maps that admit Yoccoz puzzle structures were cubic Newton
maps, whose Yoccoz puzzles were constructed by Roesch. In \cite{R},
Roesch applied Yoccoz puzzle techniques to show striking differences
between rational maps and polynomials. The method also leads to the
local connectivity of Julia sets except in some specific cases.

   In this article, we present the Yoccoz puzzle structure for another
family of rational maps known as McMullen maps. These maps are of
the form
$$f_\lambda:z\mapsto z^n+\lambda/z^n, \ \ \lambda\in \mathbb{C}^*=\mathbb{C}\setminus \{0\},\ n\geq3.$$ The dynamics of this family of maps have been studied by Devaney and his group \cite{D0},\cite{D1},\cite{DK},\cite{DLU}.

The Yoccoz puzzle differs for cubic Newton maps and McMullen maps in
the following way. For cubic Newton maps, the Yoccoz puzzle is
induced by a periodic Jordan curve that intersects the Julia set at
countably many points. However, for McMullen maps, the element used
to construct the Yoccoz puzzle is a periodic Jordan curve (this
curve will be called the `cut ray') that intersects the Julia set in
a Cantor set. This type of Jordan curve is induced by some
particular angle and can be viewed as an extension of the
corresponding external ray (see Section 3.2).

We denote by $B_\lambda$ the immediate basin of attraction of
$\infty$. The topology of $\partial B_\lambda$ is of special
interest. Based on Yoccoz puzzle techniques and on combinatorial and
topological analysis, we prove:

 \begin{thm}\label{11a}
 For any $n\geq3$ and any complex parameter $\lambda$, if the Julia set $J(f_\lambda)$ is not a Cantor set, then
 $\partial B_\lambda$ is a Jordan curve.
 \end{thm}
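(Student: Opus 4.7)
The plan is to show that the B\"ottcher coordinate at $\infty$ extends to a homeomorphism of closures, so that $\partial B_\lambda$ inherits the Jordan-curve structure of the unit circle.

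As a first step I would observe that under the hypothesis that $J(f_\lambda)$ is not a Cantor set, $B_\lambda$ contains no critical point of $f_\lambda$ other than $\infty$ itself. Indeed, forward invariance of $B_\lambda$ would push any critical point $c \in B_\lambda$ onto the critical value $\pm 2\sqrt{\lambda} \in B_\lambda$, and the escape trichotomy of Devaney--Look--Uminsky then implies $J(f_\lambda)$ is a Cantor set, a contradiction. Hence $f_\lambda : B_\lambda \to B_\lambda$ is a proper degree-$n$ map with $\infty$ the unique critical point; Riemann--Hurwitz yields $\chi(B_\lambda)=1$, so $B_\lambda$ is simply connected, and the B\"ottcher coordinate $\phi_\lambda$ extends to a conformal isomorphism from $B_\lambda$ onto the exterior of the closed unit disk. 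External rays $R_\lambda(\theta)$ are thus well defined for every angle $\theta$.

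The core of the proof then splits into two claims: (a) every ray $R_\lambda(\theta)$ lands at a single point of $\partial B_\lambda$, and (b) distinct angles land at distinct points. Claim (a) amounts to local connectivity of $\partial B_\lambda$, which I would establish by the Yoccoz puzzle technique developed in the paper: fixing a periodic cut ray from Section 3.2 and pulling it back repeatedly, one obtains nested puzzle pieces $P_0(z) \supset P_1(z) \supset \cdots$ at each $z \in \partial B_\lambda$, and a modulus estimate combined with a combinatorial analysis of the critical orbit shows these pieces shrink to $\{z\}$. Claim (b) requires ruling out identifications: two distinct rays sharing a landing point $z$ would, upon iterating $f_\lambda$, produce a periodic cycle of pinch points and hence an $n$-invariant lamination on the unit circle separating $B_\lambda$ from itself; combinatorial properties of the cut rays imply any such invariant gap would have to capture a critical value, forcing it into $B_\lambda$ and contradicting the preliminary observation.

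The main obstacle is the shrinking-of-puzzle-pieces argument in claim (a), which for McMullen maps is more delicate than in the polynomial setting: the critical set has $2n$ symmetric points folding onto just two critical values, and one must carefully leverage the cut-ray structure of Section 3.2 to perform the necessary modulus bookkeeping. Once this technical step is in hand, claim (b) and the theorem itself follow from the combinatorial framework established in Section 3.
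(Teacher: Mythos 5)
Your claim (a), that local connectivity of $\partial B_\lambda$ follows from the Yoccoz puzzle machinery and modulus estimates, is indeed what Sections 7.1--7.3 of the paper carry out, and your preliminary observation (that $B_\lambda$ contains no finite critical point, hence $\phi_\lambda$ extends conformally to all of $B_\lambda$ and every external ray is defined) is the correct starting point. The gap is in claim (b). You assert that a non-trivial pinching of $\partial B_\lambda$ would produce an invariant lamination whose gaps ``would have to capture a critical value, forcing it into $B_\lambda$,'' but you give no argument for either half of this. There is no general principle that a gap in the lamination of the boundary of an invariant basin must contain a critical value; and even if a gap region did contain one of $\pm 2\sqrt{\lambda}$, that value would lie in the pinched-off region, not in $B_\lambda$, so the contradiction you invoke with the preliminary observation does not materialize. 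The combinatorics of cut rays in Section 3 describe how the rays intersect the Julia set but do not, on their own, locate the critical values relative to a hypothetical gap.

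What the paper actually does at this step (Proposition \ref{7h}) is short and exploits a symmetry peculiar to McMullen maps, the inversion $H_\lambda(z) = \sqrt[n]{\lambda}/z$, which satisfies $f_\lambda\circ H_\lambda=f_\lambda$ and interchanges $B_\lambda$ with $T_\lambda$. Let $W_0$ be the component of $\overline{\mathbb{C}}\setminus\overline{B}_\lambda$ containing $0$. Local connectivity gives $H_\lambda(\overline{\mathbb{C}}\setminus\overline{W}_0)\subset W_0$, and this combined with the $2n$-fold rotational symmetry of $W_0$ forces $f_\lambda^{-1}(0)\subset W_0$, hence $f_\lambda^{-1}(W_0)\subset W_0$ and $f_\lambda^{-1}(\overline{W}_0)\subset\overline{W}_0$. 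Taking any $z\in\partial W_0$, density of the backward orbit of $z$ in $J(f_\lambda)$ yields $\partial B_\lambda\subset\overline{W}_0$, and since $\partial B_\lambda$ avoids the open set $W_0$ we get $\partial W_0=\partial B_\lambda$. If two external rays landed at the same point $p$, the Jordan curve $\overline{R_\lambda(t_1)\cup R_\lambda(t_2)}$ through $p$ and $\infty$ would split $\partial B_\lambda$ into two nontrivial pieces, making $\partial W_0$ a proper subset of $\partial B_\lambda$ --- a contradiction. You would need to replace your lamination sketch with an argument of this type to close claim (b).
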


This affirmatively answers a question posed by Devaney at the
Snowbird Conference on the 25th Birthday of the Mandelbrot set
\cite{DK}. For higher regularity of $\partial B_\lambda$, we show
that $\partial B_\lambda$ is a quasi-circle except in two special
cases.

 \begin{thm}\label{11b}
 Suppose that $J(f_\lambda)$ is not a Cantor set; then
 $\partial B_\lambda$ is a quasi-circle if it contains neither
 a parabolic point nor the recurrent critical set $C_\lambda:=\{c; c^{2n}=\lambda\}$.
 \end{thm}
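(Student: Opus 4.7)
The plan is to bootstrap from Theorem~\ref{11a}, which already yields that $\partial B_\lambda$ is a Jordan curve, to the stronger quasi-circle conclusion. Since $\infty$ is a super-attracting fixed point of local degree $n$, the B\"ottcher coordinate $\phi_\lambda$ conjugates $f_\lambda|_{B_\lambda}$ to $z\mapsto z^n$ on $\widehat{\mathbb{C}}\setminus\overline{\mathbb{D}}$, and by Theorem~\ref{11a} it extends to a homeomorphism of the closures. Thus $\partial B_\lambda$ is a quasi-circle iff the boundary extension $h:=\overline{\phi_\lambda}|_{\partial B_\lambda}\colon \partial B_\lambda\to S^1$ is quasi-symmetric (by Ahlfors--Beurling, any such $h$ upgrades to a global quasi-conformal extension of $\widehat{\mathbb{C}}$). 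My goal thus reduces to establishing quasi-symmetric control of $h$.

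The approach is dynamical: I aim to show that under the hypotheses $f_\lambda$ is uniformly expanding on $\partial B_\lambda$ in an appropriate conformal metric. Since $C_\lambda\cap\partial B_\lambda=\varnothing$, the map $f_\lambda\colon\partial B_\lambda\to\partial B_\lambda$ is a degree-$n$ covering with no critical point on the domain. Using the Yoccoz puzzle structure developed in the paper, I would attach to each $z\in\partial B_\lambda$ the nested sequence of puzzle pieces $P_k(z)\supset P_{k+1}(z)\supset\cdots$, which by Theorem~\ref{11a} satisfy $\bigcap_k\overline{P_k(z)}=\{z\}$. Because no critical point lies on $\partial B_\lambda$, only finitely many initial pullbacks along the forward orbit of $z$ can contain a critical value, so from some uniform depth on each $f_\lambda\colon P_{k+1}(z)\to P_k(f_\lambda z)$ is a conformal isomorphism.

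The crucial step is the modulus bound: the annuli $A_k(z):=P_k(z)\setminus\overline{P_{k+1}(z)}$ should have moduli uniformly bounded below, independent of $z\in\partial B_\lambda$ and $k$. A finite family of base-depth annuli has positive modulus by construction, and conformal invariance of modulus propagates these lower bounds along eventually univalent pullbacks, with compactness of $\partial B_\lambda$ supplying a global infimum. The parabolic-free hypothesis enters here precisely to prevent iterated pullbacks near a parabolic periodic point from producing moduli that degenerate to $0$. Once the modulus bound is secured, a standard Koebe / bounded-geometry argument turns the nested-disk control around each point into the three-point (quasi-symmetric) condition for $h$, whence $\partial B_\lambda$ is a quasi-circle.

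The main obstacle will be securing this uniform modulus bound in the presence of a recurrent critical set $C_\lambda$ that, although disjoint from $\partial B_\lambda$, still lies in the Julia set and may accumulate near $\partial B_\lambda$. One must rule out that critical puzzle pieces approach $\partial B_\lambda$ at definite combinatorial depth and shrink the moduli along the relevant pullbacks; controlling this via the combinatorics of cut rays and puzzle levels is the delicate point that the earlier-built machinery is designed to handle.
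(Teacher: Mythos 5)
Your overall strategy --- show that pullbacks of small boundary neighborhoods have uniformly bounded geometry, then convert that to the bounded-turning (Ahlfors) criterion --- is in the same spirit as the paper, which codifies the first part as a ``\textbf{BD} condition'' (Proposition~\ref{6a}) and then verifies it case by case. But your sketch has two genuine gaps.

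First, you misread the hypothesis. ``$\partial B_\lambda$ contains neither a parabolic point nor the recurrent critical set $C_\lambda$'' does \emph{not} mean $C_\lambda\cap\partial B_\lambda=\varnothing$. It allows $C_\lambda\subset\partial B_\lambda$ as long as $C_\lambda$ is not recurrent, and this case has to be treated separately; in the paper it is handled by observing that $C_\lambda\subset\partial B_\lambda$ non-recurrent forces every tableau $T(c)$ to be non-critical, which by Lemma~\ref{5a} makes $f_\lambda$ critically finite, reducing to a known case. Your argument, premised on $C_\lambda\cap\partial B_\lambda=\varnothing$, simply does not cover this.

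Second, and more seriously, the claim ``because no critical point lies on $\partial B_\lambda$, only finitely many initial pullbacks along the forward orbit of $z$ can contain a critical value, so from some uniform depth on the pullbacks are conformal'' is not justified and is in fact exactly the content one has to prove. Points of $C_\lambda$ lying off $\partial B_\lambda$ can still have critical puzzle pieces $P_d(c)$ that intersect $\partial B_\lambda$ for \emph{every} depth $d$: this is precisely what happens when some tableau $T(c)$ is periodic and the small filled Julia set $K_c$ of the resulting renormalization touches $\partial B_\lambda$. In that situation ${\rm End}(c)=K_c\neq\{c\}$ and the naive ``puzzle pieces shrink to $c$, hence eventually avoid the boundary'' argument breaks. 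The paper's Proposition~\ref{7c} is devoted to resolving exactly this: it constructs separating curves showing $K_c\cap\partial B_\lambda=\{\beta_c\}$, a single repelling point, so that $\#\bigl(P(f_\lambda)\cap\partial B_\lambda\bigr)<\infty$ and Proposition~\ref{6c} applies. And in the non-renormalizable case, the equality ${\rm End}(c)=\{c\}$ that you need is not free either; it rests on the Branner--Hubbard tableau machinery (Lemma~\ref{4c}, Proposition~\ref{7b}). Your plan names the obstacle but does not give the ideas that actually close it.
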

 
Here, the critical set $C_\lambda$ is called {\it recurrent}, if $C_\lambda\subset J(f_\lambda)$ and the set $\cup_{k\geq1}f^k_\lambda(C_\lambda)$ has an accumulation  point in $C_\lambda$. It
follows from Proposition \ref{7i} that if $\partial B_\lambda$
contains a parabolic point, then $\partial B_\lambda$  is not a
quasi-circle by the Leau-Fatou-Flower Theorem \cite{M2}. Whether
$\partial B_\lambda$ is a quasi-circle when $\partial B_\lambda$
contains the recurrent critical set $C_\lambda$ is still unknown.

For the topology of the Julia set, we show

 \begin{thm}\label{11c}
  Suppose $f_\lambda$ has no Siegel disk and the Julia set $J(f_\lambda)$ is connected, then $J(f_\lambda)$ is locally connected in the following cases:

1. The critical orbit does not accumulate on the boundary $\partial
B_\lambda$.

2. $f_\lambda$ is neither renormalizable nor $*-$renormalizable.

3. The parameter $\lambda$ is real and positive.

 \end{thm}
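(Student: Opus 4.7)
The plan is to prove local connectivity at each $z\in J(f_\lambda)$ using the cut-ray Yoccoz puzzle from Section 3 together with Theorem~\ref{11a}. Local connectivity at $z$ will come from one of two mechanisms: either $z$ lies on the boundary of a Fatou component whose boundary is a locally connected continuum, or the nested puzzle pieces $P_n(z)$ containing $z$ satisfy $\mathrm{diam}\, P_n(z)\to 0$. Under the hypotheses, Siegel disks are excluded, and since $f_\lambda$ carries no Herman rings, every bounded Fatou component is pre-periodic and its cycle is either attracting, parabolic, or $B_\lambda$ itself. For $B_\lambda$ the boundary is Jordan by Theorem~\ref{11a}; for the other two I would apply the classical Douady--Hubbard puzzle argument at the periodic Fatou boundary and then pull back to handle all preimages.

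For Case 1, because the orbits of $C_\lambda$ stay a positive distance from $\partial B_\lambda$, I would arrange the cut-ray graph so that every sufficiently deep puzzle piece avoids the forward critical set. Univalent pull-backs of puzzle pieces along orbits then produce annuli of definite moduli, and a Koebe-type contraction argument yields $\mathrm{diam}\, P_n(z)\to 0$ for every $z\in J(f_\lambda)$ outside the grand orbit of $C_\lambda$; points on Fatou boundaries and on preimages of the critical set are absorbed by the preceding Fatou analysis.

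For Case 2, I would carry out the Branner--Hubbard--Yoccoz tableau analysis within the cut-ray puzzle. The two non-shrinking scenarios for the critical tableau correspond respectively to a polynomial-like restriction of some $f_\lambda^k$ near a critical point (classical renormalization) and to a polynomial-like restriction on the quotient by the rotational symmetry $f_\lambda(\zeta z)=f_\lambda(z)$ with $\zeta^n=1$ ($*$-renormalization). Both are excluded by hypothesis, so the moduli of the nested tableau annuli diverge, the critical puzzle pieces shrink, and shrinking propagates to all of $J(f_\lambda)$ by pull-back. For Case 3, the real-axis symmetry for $\lambda>0$ forces the critical values $\pm 2\sqrt{\lambda}$ and all their iterates to remain real; a direct real-variable classification then identifies the possibilities for the real critical dynamics consistent with $J(f_\lambda)$ being connected, and each possibility can be routed into either Case 1 or Case 2.

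The hardest step will be Case 2, because the tableau argument must be adapted to a puzzle whose pieces intersect the Julia set in Cantor sets rather than finite point sets (unlike the polynomial or Newton-map settings) and the renormalization dichotomy must be split correctly between the classical and $*$-versions; in particular, one must verify that absence of both types of renormalization is genuinely equivalent to divergence of the moduli of the critical annuli. A secondary technical difficulty is propagating the local connectivity of $\partial B_\lambda$ from Theorem~\ref{11a} to all iterated preimages when the critical orbit accumulates there, which is the configuration that makes the Fatou-boundary step in Cases~1 and~3 non-trivial.
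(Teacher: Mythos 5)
Your proposal diverges substantially from the paper's proof, and there are gaps in Cases 1 and 3. The basic mechanism you propose --- showing $\mathrm{diam}\, P_n(z)\to 0$ for every $z\in J(f_\lambda)$ --- cannot work under the hypothesis of Case 1. That hypothesis (critical orbit avoids $\partial B_\lambda$) is fully compatible with $f_\lambda$ being renormalizable, in which case the critical puzzle pieces $P_d(c)$ all contain the small filled Julia set $K_c$, and for $z\in K_c$ the nested intersection $\bigcap_d\overline{P_d(z)}\supset K_c$ is not a point. You cannot ``arrange the cut-ray graph so that every sufficiently deep puzzle piece avoids the forward critical set'': the critical orbit always lives inside the puzzle, and its closure may be a genuine continuum. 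The paper's proof of Case 1 deliberately uses no puzzle at all: after disposing of the geometrically finite case by \cite{TY}, the Fatou set is exactly $\bigcup_k f_\lambda^{-k}(B_\lambda)$, all Fatou component boundaries are Jordan curves (Theorem \ref{11a} and Proposition \ref{7j}), and the Shrinking Lemma (Proposition \ref{8b}) supplies the diameter condition in Whyburn's criterion (Proposition \ref{8a}) because the hypothesis places the closure of $T_\lambda$ away from the postcritical set. This gives local connectivity of $J(f_\lambda)$ even though the nested puzzle intersections need not be singletons.

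Case 3 also cannot be ``routed into Case 1 or Case 2'' as you propose. Lemma \ref{7e} shows every $\lambda\in\mathbb{R}^+$ with bounded critical orbit is $1$-renormalizable at $c_0$, so Case 2 is never available; and since $K_{c_0}\cap\partial B_\lambda=\{\beta_{c_0}\}$ (Proposition \ref{7f}), the critical orbit can perfectly well accumulate on $\beta_{c_0}\in\partial B_\lambda$, so Case 1 is not always available either. The paper's proof of Case 3 is again Whyburn plus Shrinking Lemma, with the key real-variable input $f_\lambda(z)\geq 2\sqrt{\lambda}$ for $z>0$, which confines the postcritical set to $[v_\lambda^+,\beta_{c_0}]\cup[-\beta_{c_0},v_\lambda^-]\cup\{\infty\}$; when $v_\lambda^+>\beta'_{c_0}$ this set misses $\overline{T}_\lambda$ (using $\overline{T}_\lambda\cap\mathbb{R}=[-\beta'_{c_0},\beta'_{c_0}]$ from Remark \ref{7f1}), so the Shrinking Lemma applies with $T_\lambda$ in place of $B_\lambda$, while $v_\lambda^+=\beta'_{c_0}$ gives the Chebyshev-type straightening $z\mapsto z^2-2$ and hence a geometrically finite map. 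Your sketch for Case 2 is close in spirit to the paper's argument (Lemma \ref{5a} and Proposition \ref{7b} make precise the dichotomy between periodic tableau and shrinking ends), but your identification of $*$-renormalization with a rotational-quotient polynomial-like map is not the paper's definition: $*$-renormalization means $-f_\lambda^p$ is quadratic-like, exploiting the antipodal symmetry $f_\lambda(-z)=\pm f_\lambda(z)$, not the $\zeta^n=1$ rotational symmetry.
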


See Section 5 for the definitions of renormalization and
$*-$renormalization. Theorem \ref{11c} implies that the Julia set is
locally connected except in some special cases. In fact, the theorem
is stronger than the following statement:

\begin{thm}\label{11d}
Suppose $f_\lambda$ has no Siegel disk and the Julia set
$J(f_\lambda)$ is connected, then  $J(f_\lambda)$ is locally
connected if the critical orbit does not accumulate on the boundary
$\partial B_\lambda$.
\end{thm}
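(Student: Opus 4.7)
The plan is to show that every nest of Yoccoz puzzle pieces shrinks to a point, from which local connectivity of $J(f_\lambda)$ follows by a standard Whyburn--Carath\'eodory argument. I would use the puzzle built from the cut rays of Section~3 together with a suitable equipotential around $B_\lambda$: the depth-zero pieces tile a neighborhood of $J(f_\lambda)\setminus B_\lambda$, and the depth-$k$ pieces are the connected components of $f_\lambda^{-k}$ of the depth-zero pieces. For each $z\in J(f_\lambda)$ avoiding the grand orbit of the cut rays, write $P_k(z)$ for the depth-$k$ piece containing $z$; the goal is $\mathrm{diam}(P_k(z))\to 0$. Since Theorem~\ref{11a} already says $\partial B_\lambda$ is a Jordan curve, the real content is shrinking of pieces at points of $J(f_\lambda)$ interior to the puzzle.

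Because $\omega(C_\lambda)$ is compact and, by hypothesis, disjoint from $\partial B_\lambda$, I can arrange the depth-zero partition so that the pieces abutting $\partial B_\lambda$ (the ``border pieces'') are separated from a neighborhood containing all but finitely many points of the forward critical orbit. For $z$ in the grand orbit of $\partial B_\lambda$, every sufficiently deep iterate $f_\lambda^k(z)$ falls into such a border piece, so the pull-back $f_\lambda^k\colon P_k(z)\to P_0(f_\lambda^k(z))$ is eventually univalent. Koebe distortion applied to the successive annuli $A_k = P_{k-1}(z)\setminus\overline{P_k(z)}$, combined with the divergence of $\sum \mathrm{mod}(A_k)$, then forces $\mathrm{diam}(P_k(z))\to 0$. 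For $z$ outside the grand orbit of $\partial B_\lambda$ but whose orbit hits the finitely many ``critical'' puzzle pieces only boundedly often, the same Koebe argument applies with uniformly bounded degree, since only finitely many pull-back steps pass through a critical piece.

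The remaining, and most delicate, case is when the orbit of $z$ meets the critical puzzle pieces infinitely often. Here I would deploy a Branner--Hubbard--Yoccoz tableau argument tailored to the McMullen setting: one tracks the depth-sequence of visits of $\{f_\lambda^k(z)\}$ to the critical pieces, and, using the hypothesis on $\omega(C_\lambda)$ to keep pull-back degrees bounded, constructs a subsequence of annuli whose moduli sum to infinity. The main obstacle is exactly this persistently recurrent case: one must rule out a non-degenerate continuum in $\bigcap_k \overline{P_k(z)}$. In the polynomial case this is handled by the classical tableau rules, but the presence of possibly infinitely many Fatou components adjacent to the puzzle pieces (preimages of $B_\lambda$, and conceivably attracting or parabolic basins of a renormalization) requires verifying that the trapping annuli cannot collapse. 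The hypothesis $\omega(C_\lambda)\cap\partial B_\lambda=\emptyset$ enters critically here: it ensures that, near $\partial B_\lambda$ and along its grand orbit, every pull-back is univalent past a uniform depth, so the moduli of the BHY annuli stay definite and the nest does shrink.
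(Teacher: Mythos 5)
Your plan is to show that every Yoccoz puzzle nest shrinks to a point. This is strictly stronger than the statement and is in fact \emph{false} under the hypotheses of the theorem. The hypothesis ``$\omega(C_\lambda)\cap\partial B_\lambda=\emptyset$'' does not preclude renormalizability: if some tableau $T(c)$ is $k$-periodic and the small filled Julia set $K_c$ satisfies $K_c\cap\partial B_\lambda=\emptyset$, then the postcritical set is contained in $\bigcup_j f_\lambda^j(\pm K_c)\cup\{\infty\}$, which is disjoint from $\partial B_\lambda$, so the hypothesis holds — yet $\bigcap_d P_d(c)=K_c$ is a non-degenerate continuum, and the same happens along the whole grand orbit of $c$. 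So ``ruling out a non-degenerate continuum in $\bigcap_k\overline{P_k(z)}$'' is not something you can do; the continuum is genuinely there. Moreover, if the renormalization carries a Cremer point, $K_c$ itself is not locally connected, so any proof must explain why local connectivity of $J(f_\lambda)$ survives this — and a puzzle-shrinking argument simply cannot.

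The paper's actual argument sidesteps all of this. It does not re-attack the puzzle at the Julia-set level at all. Instead it combines two facts already secured in Section~7: (i) every Fatou component of $f_\lambda$ is a Jordan domain (Proposition~\ref{7j}, which rests on $\partial B_\lambda$ being a Jordan curve and the straightening theorem), and (ii) since the critical orbit stays off $\partial B_\lambda$, it also stays off $\partial T_\lambda$ (one more iterate sends $\partial T_\lambda$ to $\partial B_\lambda$), so $\overline{T}_\lambda\cap P(f_\lambda)=\emptyset$; the Shrinking Lemma (Proposition~\ref{8b}) then shows that for each $\epsilon>0$ only finitely many iterated preimages of $T_\lambda$ have diameter $>\epsilon$. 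Whyburn's criterion (Proposition~\ref{8a}) — local connectivity of a continuum from local connectivity of the complementary component boundaries plus diameter control on those components — then gives local connectivity of $J(f_\lambda)$ directly. This is a genuinely different mechanism: it controls the \emph{Fatou components}, not the puzzle nests, and it is precisely what lets the proof pass through the renormalizable (even possibly Cremer) case untouched.

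Two further concrete gaps in your draft: you invoke ``divergence of $\sum\mathrm{mod}(A_k)$'' together with eventual univalence of the pull-backs, but if the pull-backs are eventually univalent you do not need that divergence at all, and if they are not you have no source of non-degenerate annuli unless you first establish an admissible graph (the paper devotes Lemmas~\ref{4b}--\ref{4b2} to exactly this, and it is not automatic — some graphs are non-admissible). And your claim that ``every sufficiently deep iterate $f_\lambda^k(z)$ falls into such a border piece'' only holds for $z$ whose forward orbit eventually lands on $\partial B_\lambda$; it says nothing about a recurrent point $z$ deep inside the Julia set, which is where the hard work lies and where your proposal stalls.
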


Theorem \ref{11d} is an analogue of Roesch's Theorem \cite{R}:

\begin{thm}\label{11e}$\mathrm{(Roesch)}$
A genuine cubic Newton map without Siegel disks has a locally
connected Julia set provided the orbit of the non-fixed critical
point does not accumulate on the boundary of any invariant basin of
attraction.
 \end{thm}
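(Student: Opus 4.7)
The plan is to follow the Branner--Hubbard--Yoccoz puzzle strategy that Roesch introduced for cubic Newton maps. First, I would construct a Yoccoz puzzle for $N_P$. Each root $\alpha$ of $P$ is a super-attracting fixed point of $N_P$, and B\"ottcher coordinates at $\alpha$ give rise to internal rays inside each immediate basin $B_\alpha$. A preliminary step, itself non-trivial, is to exhibit periodic internal rays that land at repelling periodic points on $\partial B_\alpha$; combining finitely many such landing rays with equipotential arcs yields a graph $\Gamma_0$ whose complement consists of finitely many topological disks, the puzzle pieces of depth zero. The puzzle piece of depth $n$ containing a point $z$ is the connected component of $N_P^{-n}(\mathbb{C}\setminus \Gamma_0)$ that contains $z$.

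Second, I would reduce the local connectivity of $J(N_P)$ to the assertion $\bigcap_{n\geq 0} \overline{P_n(z)} = \{z\}$ for every $z\in J(N_P)$, together with a standard Douady--Hubbard-type argument at points that are eventually sent onto $\Gamma_0$. Points in the attracting basins pose no problem. For points on $\partial B_\alpha$, one needs as an intermediate result that $\partial B_\alpha$ is a locally connected continuum; this is proved by a cut-ray type construction using the puzzle combinatorics and the hypothesis on the free critical point. For points in $J(N_P)\setminus\bigcup_\alpha \overline{B_\alpha}$ the shrinking property is established by modulus estimates on the annular differences $P_n(z)\setminus \overline{P_{n+1}(z)}$.

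Third, the central technical step is the shrinking of puzzle pieces, and here the hypothesis that the orbit of the non-fixed critical point $c$ does not accumulate on any $\partial B_\alpha$ is used decisively. If $c$ visits a given critical puzzle piece only finitely often, then eventually pullbacks are univalent and the Gr\"otzsch inequality forces the moduli of the surrounding annuli to diverge, giving $\mathrm{diam}(P_n(z))\to 0$. If $c$ recurs, one shows that the combinatorial pattern is eventually periodic and extracts a polynomial-like renormalization; the quadratic Yoccoz theorem, together with straightening, then supplies the missing shrinking. The main obstacle, and the most delicate part of the argument, is controlling the geometry of puzzle pieces whose boundary meets $\partial B_\alpha$: unlike the polynomial setting, basin boundaries for rational maps are not a priori Jordan curves, so the local connectivity of $\partial B_\alpha$ must be proved in tandem with that of $J(N_P)$ rather than as an input. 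The exclusion of Siegel disks is what permits the modulus estimates to close up, since near a Siegel boundary the puzzle structure degenerates and no such estimate is available.
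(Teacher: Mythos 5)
The theorem you are addressing is stated in the paper as Theorem~\ref{11e} and is simply quoted there, with attribution to Roesch~\cite{R}; the present paper does not prove it, only its McMullen-map analogue Theorem~\ref{11d}. So there is no ``paper's own proof'' to compare against, but the mechanism of the analogue (and of Roesch's original argument) exposes two problems with your sketch.

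The gap is in your third step. You claim that when the free critical point is recurrent one ``extracts a polynomial-like renormalization'' and then ``the quadratic Yoccoz theorem, together with straightening, supplies the missing shrinking.'' This does not close the argument. The quadratic Yoccoz theorem requires the hybrid polynomial to be at most finitely renormalizable with all periodic orbits repelling; nothing in the hypotheses forbids an infinitely renormalizable renormalization, in which case the small Julia set may fail to be locally connected, and no amount of puzzle-piece shrinkage inside it can be forced. More fundamentally, you are trying to prove local connectivity of $J(N_P)$ by showing $\bigcap_n \overline{P_n(z)}=\{z\}$ for \emph{every} $z\in J$, and this is more than the theorem needs and more than the hypothesis delivers. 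The route taken both by Roesch and by the present paper for Theorem~\ref{11d} decouples the two issues: (i) one first shows each $\partial B_\alpha$ is locally connected (in fact a Jordan curve) via the puzzle, where renormalization is used only to produce a curve separating the small filled Julia set from $\partial B_\alpha$ except at a single $\beta$-fixed point, so that $P(N_P)\cap\partial B_\alpha$ is finite and the \textbf{BD} / semi-hyperbolicity criterion applies; (ii) the hypothesis on the free critical orbit then gives $\overline{B}_\alpha\cap P(N_P)=\emptyset$; (iii) local connectivity of $J(N_P)$ follows from Whyburn's characterization (Proposition~\ref{8a}) combined with the Shrinking Lemma (Proposition~\ref{8b}): every Fatou component is an iterated preimage of some $B_\alpha$, hence has locally connected boundary, and for each $\epsilon>0$ only finitely many have diameter $>\epsilon$. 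In particular one never needs the small Julia set of a renormalization to be locally connected, which is exactly the case your argument cannot handle. A secondary point: the exclusion of Siegel disks is used to rule out the ``contained in a Siegel disk or Herman ring'' alternative in the Shrinking Lemma, not primarily to make modulus estimates close up near a Siegel boundary as you suggest.
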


We exclude the case $n=2$ because it is impossible to find a
non-degenerate critical annulus for the Yoccoz puzzle constructed in
this paper. The existence of a non-degenerate critical annulus is
technically necessary in our proof.

The paper is organized as follows:

In Section 2, we present some basic results on McMullen maps.

In Section 3, we construct`cut rays', each of which is a type of
Jordan curve that divides the Julia set into two different parts. We
first construct a Cantor set of angles on the unit circle which is
used to generate`cut rays'. We then discuss the construction of `cut
rays' based on the work of Devaney \cite{D1}.

In Section 4, basic knowledge of Yoccoz puzzles, graphs and tableaux
are presented. The aim of this section is to find a Yoccoz puzzle
with a non-degenerate critical annulus (see Section 4.2). A natural
construction of the `modified puzzle piece' is discussed (See
Section 4.3).

In Section 5, we discuss the renormalizations of McMullen maps in
the context of the puzzle piece.

In Section 6, we present a criterion of local connectivity. We
introduce a `\textbf{BD} condition' on the boundary of the immediate
basin of attraction. Such a condition can be considered as `local
semi-hyperbolicity'. We show that existence of the `\textbf{BD}
condition' implies good topology.

In Section 7, we study the local connectivity of $\partial
B_\lambda$ in all possible cases and show that $\partial B_\lambda$
enjoys higher regularity except in two special cases.

In Section 8, we study the local connectivity of the Julia set
$J(f_\lambda)$  based on  the `Characterization of Local
Connectivity' and the `Shrinking Lemma'.

\

\textbf{Acknowledgement} The authors would like to thank the
referees for their careful reading of the manuscript and their
helpful comments. This research was supported by the National
Natural Science Foundation of China (Grants No.10831004, 10871047)
and by the Science and Technology Commission of Shanghai
Municipality (NSF Grant No.10ZR1403700).

\section{Preliminaries and Notations}
In this section, we present some basic results and notations for the
family of rational maps $$f_\lambda(z)=z^n+\lambda/z^n$$ where
$\lambda\in\mathbb{C}^*$ and $n\geq3$. This type of map is known as
a `McMullen map' because it was first studied by McMullen, who
proved that when $|\lambda|$ is sufficiently small the Julia
set of $z\mapsto z^2+\lambda z^{-3}$ is a Cantor set of circles \cite{McM1}.

  For any $\lambda\in\mathbb{C}^*$, the map $f_\lambda$ has a superattracting fixed point at $\infty$. The immediate basin of $\infty$ is denoted by $B_\lambda$, and the component of $f_{\lambda}^{-1}(B_\lambda)$ that contains $0$ is denoted by $T_\lambda$. The set of all critical points of $f_\lambda$ is $\{0,\infty\} \cup C_\lambda$, where $C_\lambda=\{c ; c^{2n}=\lambda\}$. Besides
$\infty$, there are only two critical values for $f_\lambda$:
$v_\lambda^+=2\sqrt{\lambda}$ and $v_\lambda^-=-2\sqrt{\lambda}$. In
fact, there is only one critical orbit (up to a sign). Let
$P(f_\lambda)=\overline{\bigcup_{n\geq1}f_\lambda^k(C_\lambda)}\cup
\{\infty\}$ be the postcritical set.

The B\"{o}ttcher map $\phi_\lambda$ for $f_\lambda$ is defined in a
neighborhood of $\infty$ by
$\phi_\lambda(z)=\displaystyle\lim_{k\rightarrow\infty}(f^k_\lambda(z))^{n^{-k}}$.
The B\"{o}ttcher map is unique if we require
$\phi'_\lambda(\infty)=1$. It is known that the B\"{o}ttcher map
$\phi_\lambda$ can be extended to a domain ${\rm
Dom}(\phi_\lambda)\subset B_\lambda$ such that $\phi_\lambda: {\rm
Dom}(\phi_\lambda)\rightarrow \{z\in \mathbb{\bar{C}}: |z|>R\}$ is a
conformal isomorphism for some largest number $R\geq1$. In
particular, if $B_\lambda$ contains no critical point other than
$\infty$, then ${\rm Dom}(\phi_\lambda)=B_\lambda$; if  $B_\lambda$
contains a critical point  $c\in\{0\} \cup C_\lambda$, then by `The
Escape Trichotomy' Theorem \ref{1c}), the Julia set $J(f_\lambda)$
is a Cantor set.

The Green function $G_\lambda: B_\lambda\rightarrow (0,\infty]$ is
defined by
$$G_\lambda(z)=\lim_{k\rightarrow\infty}n^{-k}\log|f^k_\lambda(z)|.$$
By definition, $G_\lambda(f_\lambda(z))=nG_\lambda(z)$ for $z\in
B_\lambda$ and $G_\lambda(z)=\log|\phi_\lambda(z)|$ for $z\in {\rm
Dom}(\phi_\lambda)$. The Green function $G_\lambda$ can be extended
to $A_\lambda=\bigcup_{k\geq 0} f_\lambda^{-k}(B_\lambda)$ by
defining
$$G_\lambda(z)=n^{-k}G_\lambda(f_\lambda^k(z)) \text{ for } z\in
f_\lambda^{-k}(B_\lambda).$$

In the following, for a set $E$ in $\mathbb{\bar{C}}$ and $a\in
\mathbb{C}$, let $aE=\{az; z\in E\}$, $a+E=\{a+z; z\in E\}$,
$\bar{E}$ be the closure of $E$ and $\textrm{int}(E)$ be the
interior of $E$.

\begin{lem}[Symmetry of the Dynamical Plane]\label{1a}  Let $\omega$ satisfy $\omega^{2n}=1$; then,

1. $\omega J(f_\lambda)=J(f_\lambda)$.

2. $G_\lambda(\omega z)=G_\lambda(z)$ for $z\in A_\lambda$.

3. $\omega {\rm Dom}(\phi_\lambda)={\rm Dom}(\phi_\lambda)$, and $
\phi_\lambda(\omega z)=\omega\phi_\lambda(z)$ for $z\in {\rm
Dom}(\phi_\lambda)$.
\end{lem}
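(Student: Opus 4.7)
The plan rests on the single functional identity
\[
f_\lambda(\omega z)=\omega^n f_\lambda(z),\qquad \omega^{2n}=1,
\]
which follows at once from $(\omega z)^n=\omega^n z^n$ together with $\omega^{-n}=\omega^n$ (a direct consequence of $\omega^{2n}=1$). Iterating and noting that $\omega^{n^k}$ is again a $2n$-th root of unity for every $k\ge 1$, an immediate induction yields the covariance
\[
f_\lambda^k(\omega z)=\omega^{n^k}\,f_\lambda^k(z),\qquad k\ge 1,
\]
and all three parts of the lemma will be read off from this relation.

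For Part~1 I would use the normal-family characterization of the Fatou/Julia dichotomy. If a subsequence $\{f_\lambda^{k_j}\}$ converges locally uniformly on a neighbourhood of $z$, then after passing to a further subsequence on which the unit-modulus numbers $\omega^{n^{k_j}}$ converge, the covariance shows that $\{f_\lambda^{k_j}\}$ converges locally uniformly on a neighbourhood of $\omega z$ as well. Hence the Fatou set, and therefore $J(f_\lambda)$, is $\omega$-invariant. For Part~2, on $B_\lambda$ the identity
\[
G_\lambda(\omega z)=\lim_{k}n^{-k}\log|f_\lambda^k(\omega z)|=\lim_{k}n^{-k}\bigl(\log|\omega^{n^k}|+\log|f_\lambda^k(z)|\bigr)=G_\lambda(z)
\]
is forced by $|\omega^{n^k}|=1$. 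For a point $z\in f_\lambda^{-m}(B_\lambda)\subset A_\lambda$, I first invoke Part~1 applied to the Fatou component $B_\lambda$ (whose distinguished point $\infty$ is fixed by the $\omega$-action) to conclude $\omega B_\lambda=B_\lambda$, so $\omega^{n^m}f_\lambda^m(z)\in B_\lambda$; then the extension formula together with the $B_\lambda$-case gives $G_\lambda(\omega z)=n^{-m}G_\lambda(\omega^{n^m}f_\lambda^m(z))=G_\lambda(z)$.

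For Part~3, the domain $\mathrm{Dom}(\phi_\lambda)=\{z\in B_\lambda:G_\lambda(z)>\log R\}$ is $\omega$-invariant because $G_\lambda$ and $B_\lambda$ are. For the conjugation identity $\phi_\lambda(\omega z)=\omega\phi_\lambda(z)$ I would work directly with the limit formula
\[
\phi_\lambda(z)=\lim_{k\to\infty} f_\lambda^k(z)^{n^{-k}},
\]
where the $n^k$-th root branch is pinned down by the normalization $\phi_\lambda(z)\sim z$ near $\infty$. Substituting $\omega z$ and invoking the covariance, one must choose the $n^k$-th root of $\omega^{n^k}f_\lambda^k(z)$ whose limit near $\infty$ is $\sim\omega z$; this branch is precisely $\omega\cdot f_\lambda^k(z)^{n^{-k}}$, and passing to the limit yields the desired identity first in a neighbourhood of $\infty$ and then on all of $\mathrm{Dom}(\phi_\lambda)$ by analytic continuation. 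I deliberately avoid routing the argument through the B\"ottcher equation $\phi_\lambda\circ f_\lambda=\phi_\lambda^n$, since substituting $\omega z$ into it would produce the factor $\omega^{n-1}$ rather than $1$, forcing one to already know the identity being proved. The only genuine subtlety in the whole proof is this branch-selection step; everything else is a one-line consequence of the covariance $f_\lambda^k(\omega z)=\omega^{n^k}f_\lambda^k(z)$.
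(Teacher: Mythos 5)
Your proposal is correct, and it rests on the same key covariance as the paper ($f_\lambda^k(\omega z)=\omega^{n^k}f_\lambda^k(z)$, which the paper records in the equivalent form $f_\lambda^k(\omega z)=\pm f_\lambda^k(z)$ for $k\ge1$, since $\omega^{n^k}$ squares to $1$ for $k\ge1$). The differences lie in how you exploit this covariance in parts~1 and~3. For part~1 the paper observes that $A_\lambda$ is exactly the escaping set, so $f_\lambda^k(\omega z)=\pm f_\lambda^k(z)$ immediately gives $\omega A_\lambda=A_\lambda$, and then $J(f_\lambda)=\partial A_\lambda$ finishes it; you instead run the general normal-family argument. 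Your route is more portable (it would apply to any rational map admitting such a rotational covariance), while the paper's is shorter by leaning on the special structure of McMullen maps. For part~3 the paper identifies $\phi_\lambda(\omega z)$ and $\omega\phi_\lambda(z)$ as two conformal isomorphisms of $\mathrm{Dom}(\phi_\lambda)$ onto $\{|z|>R\}$ with the same derivative $\omega$ at $\infty$, and invokes uniqueness of the normalized Riemann map; you instead track branches in the defining limit $\lim_k f_\lambda^k(z)^{n^{-k}}$. Both are valid, but the Riemann-mapping argument sidesteps precisely the branch-selection delicacy that you rightly flag as the one subtle step in your version, so for this part the paper's route is the cleaner of the two. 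Part~2 is identical in substance; you have simply spelled out the ``by definition'' that the paper leaves implicit, correctly using $\omega B_\lambda=B_\lambda$ (a consequence of part~1 plus $\omega\cdot\infty=\infty$) to justify the extension formula step. Your side remark about the factor $\omega^{n-1}$ in the B\"ottcher equation is a bit loosely stated, but it does not affect the proof since you do not use that route.
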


\begin{proof}
 For 1, because $A_\lambda=\{z\in\mathbb{\bar{C}}; f_\lambda^k(z)\text{ tends to infinity as }
 k\rightarrow \infty\}$ and $f^k_\lambda(\omega z)=\pm f^k_\lambda(z)$ for
 $k\geq1$, $f_\lambda^k(\omega z)$ tends toward infinity if and only if
 $f_\lambda^k(z)$  tends toward infinity as $k\rightarrow \infty$. Thus,
 $\omega A_\lambda=A_\lambda$. The conclusion follows from the fact
 that $J(f_\lambda)=\partial A_\lambda$.

2. By the definition of $G_\lambda$.

3. Because ${\rm Dom}(\phi_\lambda)$ is the connected component of
$\{z\in B_\lambda;G_\lambda(z)>\log R\}$ that contains $\infty$, we
conclude that $\omega {\rm Dom}(\phi_\lambda)={\rm
Dom}(\phi_\lambda)$. Note that $\phi_\lambda(\omega z)$ and
$\omega\phi_\lambda(z)$ are two Riemann mappings of ${\rm
Dom}(\phi_\lambda)$ onto $\{z\in\mathbb{\bar{C}}; |z|>R \}$ with the
same derivative at $\infty$, we have $\phi_\lambda(\omega
z)=\omega\phi_\lambda(z)$ by the uniqueness of the Riemann mapping
theorem.
\end{proof}

The non-escape locus of this family is defined by
$$M=\{\lambda\in\mathbb{C}^*; \  f_\lambda^k(v^+_\lambda)\text{ does not  tend to infinity  as }
 k\rightarrow \infty\}.$$

\begin{lem}[Symmetry of the Parameter Plane]\label{1b} The non-escape locus $M$ satisfies:

1. $M$ is symmetric about the real axis.

2. $\nu M=M$ with $\nu^{n-1}=1$.

3. For any line $\ell \in \{\epsilon \mathbb{R};
\epsilon^{2n-2}=1\}$, $M$ is symmetric about $\ell$.
\end{lem}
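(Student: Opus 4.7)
The plan is to exhibit, for each claimed symmetry of $M$, a change of coordinates that sends the forward critical orbit of $f_\lambda$ to a sequence of the same modulus for the transformed parameter, so that escape of $f_\lambda^k(v^+_\lambda)$ to $\infty$ is preserved.

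For part 1, I would use the anti-holomorphic involution $\sigma(z)=\bar z$. A direct check gives $\sigma\circ f_\lambda\circ \sigma = f_{\bar\lambda}$, so $\sigma$ is a topological conjugacy between $f_\lambda$ and $f_{\bar\lambda}$. Choosing $\sqrt{\bar\lambda}=\overline{\sqrt\lambda}$ makes $v^+_{\bar\lambda}=\overline{v^+_\lambda}$, so $|f^k_{\bar\lambda}(v^+_{\bar\lambda})|=|f^k_\lambda(v^+_\lambda)|$ for all $k$, and $\bar\lambda\in M\iff\lambda\in M$.

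For part 2, fix $\nu$ with $\nu^{n-1}=1$ and pick a square root $\sqrt\nu$. The naive attempt to conjugate $f_{\nu\lambda}$ to $f_\lambda$ by a linear map $z\mapsto \alpha z$ forces both $\alpha^{n-1}=1$ and $\alpha^2=\nu$, which has no solution when $n-1$ is even and $\nu$ is not a square among $(n-1)$-th roots of unity; a genuine dynamical conjugacy is therefore not available in general. Instead I would track the critical orbit directly. Setting $\alpha_k:=(\sqrt\nu)^{n^k}$ and choosing $v^+_{\nu\lambda}=\sqrt\nu\cdot v^+_\lambda$, I would prove by induction on $k$ the orbit-matching identity
\[
f^k_{\nu\lambda}(v^+_{\nu\lambda}) \;=\; \alpha_k\, f^k_\lambda(v^+_\lambda).
\]
Expanding $f_{\nu\lambda}(\alpha_k w)$ with $w=f_\lambda^k(v^+_\lambda)$ and matching coefficients with $\alpha_{k+1}f_\lambda(w)$, the inductive step collapses to the single identity $\alpha_k^{2n}=\nu$, i.e.\ $\nu^{n^{k+1}-1}=1$, which holds because $(n-1)\mid(n^{k+1}-1)$ and $\nu^{n-1}=1$. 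Since $|\alpha_k|=1$, the two critical orbits have the same modulus term by term, and $\nu\lambda\in M\iff\lambda\in M$.

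Part 3 then comes for free: reflection across the line $\epsilon\mathbb{R}$ is $\lambda\mapsto\epsilon^2\bar\lambda$, and $\epsilon^{2n-2}=1$ makes $\epsilon^2$ an $(n-1)$-th root of unity, so parts 1 and 2 compose to give the symmetry. The one genuinely delicate point in the whole argument is the divisibility $(n-1)\mid(n^{k+1}-1)$ that appears in part 2: this is precisely what forces the phase factors $\alpha_k$ to lie on the unit circle, and hence what rescues the orbit-matching argument in the cases where no linear conjugacy exists. Locating the correct exponent $n^k$ in the definition of $\alpha_k$ is the main conceptual step; everything else is routine.
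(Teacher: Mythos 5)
Your proof is correct, and Parts 1 and 3 match the paper exactly. Part 2 is where you diverge, and it is worth comparing.

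The paper does \emph{not} run into the obstruction you describe, because it does not insist on conjugating $f_{\nu\lambda}$ to $f_\lambda$. Writing $\nu=e^{2\pi i/(n-1)}$ and $\varphi(z)=e^{\pi i/(n-1)}z$, a direct computation gives $\varphi^{-1}\circ f_{\nu\lambda}\circ\varphi=-f_\lambda$ when $n$ is odd (and $=f_\lambda$ when $n$ is even); since $f_\lambda$ is an odd function this iterates to $\varphi^{-1}\circ f^k_{\nu\lambda}\circ\varphi=(-1)^k f^k_\lambda$, and the sign is harmless for escape. So the conjugacy obstruction you identify — that $\alpha^{n-1}=1$, $\alpha^2=\nu$ has no solution for $n$ odd and $\nu$ a nonsquare — is real, but conjugating to $-f_\lambda$ rather than $f_\lambda$ sidesteps it entirely; your statement that ``a genuine dynamical conjugacy is therefore not available'' oversells the difficulty. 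Your orbit-matching argument with $\alpha_k=(\sqrt{\nu})^{n^k}$ is a correct and self-contained alternative (the inductive step indeed reduces to $\alpha_k^{2n}=\nu$, which follows from $(n-1)\mid(n^{k+1}-1)$), and it has the minor advantage of handling an arbitrary $(n-1)$-th root $\nu$ in one stroke rather than reducing to the primitive root. But it is essentially the same computation unwound at the orbit level: one can check that your $\alpha_k$ equals $\pm e^{\pi i/(n-1)}$, which is exactly the phase the paper's conjugacy produces. So the two proofs buy the same thing; the paper's is more conceptual and a line shorter, yours is more explicit and avoids any case split on the parity of $n$.
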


\begin{proof}
1. Because $\overline{f_\lambda(\bar{z})}=f_{\bar{\lambda}}(z)$, the
Critical orbit of $f_\lambda$ and the critical orbit of
$f_{\bar{\lambda}}$  are symmetric under the map $z\mapsto \bar{z}$,
they either both remain bounded or both tend to infinity. Thus, $M$
is symmetric about the real axis.

2. Let $\nu={e}^{2\pi i/(n-1)}$ and $\varphi(z)={e}^{\pi i/(n-1)}
z$. For $k\geq1$,
\begin{equation*}
\varphi^{-1}\circ f^k_{\nu\lambda}\circ \varphi(z)=\begin{cases}
 (-1)^k f^k_{\lambda}(z),\ \  &n \text{ odd},\\
f^k_{\lambda}(z),\ \ &n \text{ even}.
 \end{cases}
 \end{equation*}
Thus, the critical orbit of $f_\lambda$ tends toward infinity if and
only if the critical orbit of $f_{\nu\lambda}$ tends toward
infinity. Equivalently, $\lambda\in M$ if and only if $\nu\lambda\in
M$.

3. The conclusion follows from 1 and 2.
\end{proof}

From Lemma \ref{1b}, $f_\lambda$ and $f_{\lambda {e}^{2\pi
i/(n-1)}}$ have the same dynamical properties and their Julia sets
are identical up to a rotation. Thus, the fundamental domain of the
parameter plane is $\{\lambda\in \mathbb{C}^*; \arg \lambda \in [0,
\frac{2\pi}{n-1})\}$.

The following theorem of Devaney, Look and Uminsky gives a
classification of Julia sets of different topological types
\cite{DLU}.

\begin{thm}[Devaney-Look-Uminsky]\label{1c} The Escape
Trichotomy.

1. If $v_\lambda^+\in B_\lambda$, then $J(f_\lambda)$ is a Cantor
set.

2. If $v_\lambda^+\in T_\lambda\neq B_\lambda$, then $J(f_\lambda)$
is a Cantor set of circles.

3. If $f^k_\lambda(v_\lambda^+)\in T_\lambda\neq B_\lambda$ for some
$k\geq 1$, then $J(f_\lambda)$ is a Sierpi\'nski curve, which is
locally connected.

In all other cases, the critical orbits remain bounded and the Julia
set $J(f_\lambda)$ is connected.
\end{thm}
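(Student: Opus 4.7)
The plan is to split into the three escape cases based on where the critical value $v_\lambda^+$ lies, and to derive the fourth (bounded-orbit) statement by elimination. Lemma~\ref{1a} is essential throughout: since $B_\lambda$ and $T_\lambda$ are preserved by every symmetry $z\mapsto\omega z$ with $\omega^{2n}=1$, the hypothesis $v_\lambda^+\in B_\lambda$ automatically gives $v_\lambda^-\in B_\lambda$, and similarly for $T_\lambda$, so each hypothesis controls both critical orbits simultaneously. Moreover, because $\{0,\infty\}$ are the only $f_\lambda$-preimages of $\infty$, the set $f_\lambda^{-1}(B_\lambda)$ has at most the two components $B_\lambda$ and $T_\lambda$.

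For Case 1, with all critical values (and hence all critical orbits beyond $0,\infty$) escaping to $\infty$, I would set up a degree-$2n$ polynomial-like branched covering around the pair $\{0,\infty\}$ using the Green function. Choosing an equipotential threshold $t$ lying above $\max\{G_\lambda(v_\lambda^\pm)\}$ but below $n$ times this maximum, the component of $\{G_\lambda>t\}$ around $\infty$, together with its second $f_\lambda$-preimage around $0$, form a target and source disk pair on which $f_\lambda$ acts as a proper branched covering of total degree $2n$, with critical values landing outside the source. Iterated pullback under the two branches then yields a Cantor tree whose non-escaping limit set is $J(f_\lambda)$, identified with a Cantor set by standard symbolic dynamics.

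For Case 2, both critical values lie in $T_\lambda\neq B_\lambda$, so $B_\lambda$ and $T_\lambda$ are topological disks and $\Omega=\mathbb{\bar{C}}\setminus\overline{B_\lambda\cup T_\lambda}$ is an essential annulus. McMullen's original argument \cite{McM1} then applies: the symmetry forces $f_\lambda^{-1}(T_\lambda)$ to be a single essential sub-annulus of $\Omega$, so $f_\lambda^{-1}(\Omega)$ splits as two essential sub-annuli each mapping onto $\Omega$ as an unramified $n$-fold cover; iterated pullback yields a binary tree of nested annuli whose core curves converge to a Cantor family of Jordan curves. For Case 3 the critical orbits are strictly preperiodic to the superattractor $\infty$, so $f_\lambda$ is subhyperbolic on $J(f_\lambda)$, yielding local connectivity; since neither Case 1 nor Case 2 applies, every Fatou component is eventually mapped onto $B_\lambda$ and bounded by a Jordan curve, and checking pairwise disjointness of these curves together with emptiness of $\textrm{int}(J(f_\lambda))$ lets Whyburn's characterization identify $J(f_\lambda)$ as a Sierpi\'nski curve.

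The final statement follows by contraposition: if the critical orbit escapes, then the smallest $k\geq 0$ with $f_\lambda^k(v_\lambda^+)\in B_\lambda$ places us in Case 1 ($k=0$), Case 2 ($k=1$), or Case 3 ($k\geq 2$); connectedness of $J(f_\lambda)$ in the bounded case follows because the B\"ottcher map then extends conformally to all of $B_\lambda$, making $\partial B_\lambda\subset J(f_\lambda)$ a continuum. The principal obstacle I anticipate is the Cantor-of-circles conclusion in Case 2: qualitative nesting of annuli is insufficient, and ensuring that each limiting component is a single Jordan curve, rather than a fatter Cantor-like subset, requires the quantitative modulus control central to McMullen's argument.
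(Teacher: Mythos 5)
The paper does not give a proof of this theorem; it is stated with attribution and cited from Devaney--Look--Uminsky \cite{DLU}, so there is no in-paper argument to compare your sketch against. Taking the sketch on its own terms: the structural observation that every component of $f_\lambda^{-1}(B_\lambda)$ must contain a preimage of $\infty$ (since $f_\lambda$ is proper on components), forcing $f_\lambda^{-1}(B_\lambda)=B_\lambda\cup T_\lambda$, is indeed the fact that makes the trichotomy exhaustive, and Cases~2 and~3 follow the standard route (McMullen's nested-annulus argument for the Cantor family of circles, and hyperbolicity together with Whyburn's characterization for the Sierpi\'nski curve --- note the map is actually hyperbolic here, not merely subhyperbolic). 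You are also right that the genuine quantitative content sits in Case~2.

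Two steps remain underspecified. In Case~1, ``iterated pullback under the two branches'' yielding a Cantor tree is not literally accurate: each of the two preimage pieces maps onto the target disk with degree $n$ and is branched (at $\infty$ and at $0$, respectively), so you do not have two single-valued inverse branches, and the naive binary Cantor-tree picture does not immediately apply. One has to show the nested preimage components shrink to points via an expansion estimate, and that a symbol coding is injective on the non-escaping set; as written, compactness and shrinking of the pieces are not established. In the bounded case, extending the B\"{o}ttcher map to all of $B_\lambda$ shows $\partial B_\lambda$ is a continuum, but $\partial B_\lambda$ can be a proper subset of $J(f_\lambda)$: there may be other periodic Fatou components (attracting or parabolic basins, Siegel disks). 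To deduce that $J(f_\lambda)$ is connected one must show every Fatou component is simply connected, which needs a Riemann--Hurwitz count for the immediate basins and an argument excluding Herman rings; the appeal to $\partial B_\lambda$ alone does not close the case.
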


\begin{figure}
\begin{center}
\includegraphics[height=7cm]{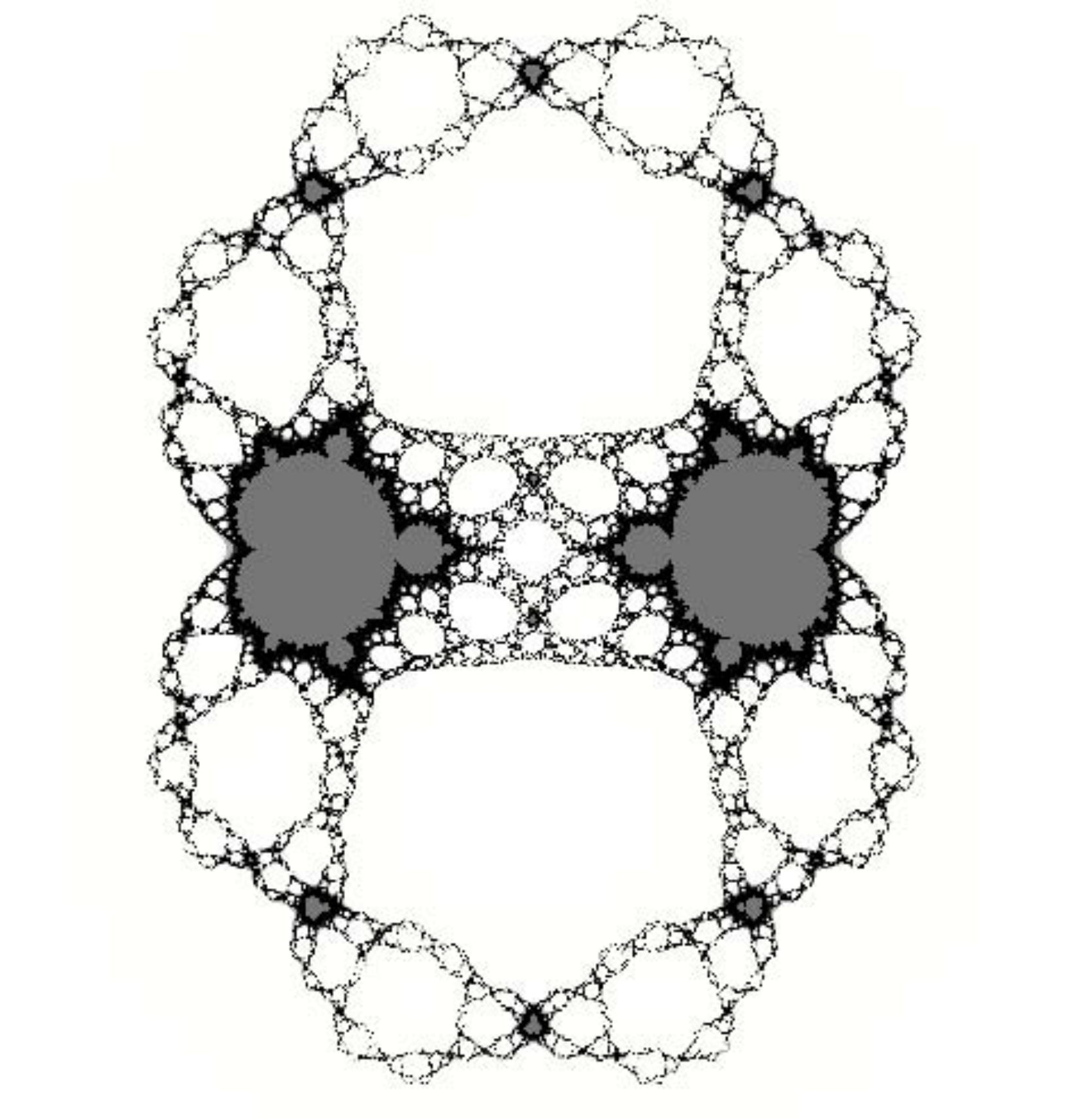}
\caption{Parameter plane for McMullen maps when $n=3$.}
\end{center}
\end{figure}

For $n\geq3$, it is known that the unbounded component of
$\mathbb{C}^*-M$ consists of the parameters for which the Julia set
is a Cantor set. This region is called a \textit{Cantor set locus}
(see Figure 1). The component of $\mathbb{C}^*-M$ that contains a
punctured neighborhood of $0$ is the region in which the Julia set
$J(f_\lambda)$ is a Cantor set of circles; this is referred to as
the \textit{McMullen domain} in honor of McMullen, who first
discovered this type of Julia set. The complement of these two
regions is the \textit{connected locus}. The small copies of the
quadratic Mandelbrot set correspond to the renormalizable
parameters, while the `holes' in the connected locus are always
called \textit{Sierpi\'nski holes} according to Devaney. These
regions correspond to the parameters for which the Julia set is a
Sierpi\'nski curve.

We will see later that, when the critical orbit tends to $\infty$,
the boundary $\partial B_\lambda$ is a quasi-circle if it is
connected. Thus, this case is already well studied.

In this paper, we will restrict our attention to the parameters
$\lambda\in \mathcal{H}=\{\lambda\in\mathbb{C}^*; \arg \lambda\in
(0,\frac{2\pi}{n-1})\}$ for the most part because of the symmetry of
the parameter plane. For these parameters, we can develop Yoccoz
puzzle techniques to study the local connectivity of Julia set.
However, for real parameters, Yoccoz puzzle theory cannot be applied
because of the absence of critical puzzle pieces. The real positive
parameters will be considered separately in Section 7.3.

 Therefore, if there is no further assumption, most discussions are based on the following:

\textbf{Hypothesis:} \textit{ $\lambda\in \mathcal{H}$ and the
critical orbits remain bounded, or equivalently, $C_\lambda\cap
A_\lambda=\emptyset$.}

\subsection{Notations}

Let $c_0=c_0(\lambda)=\sqrt[2n]{\lambda}$ be the critical point that
lies on $\mathbb{R}^+$ when $\lambda\in \mathbb{R}^+$ and varies
analytically as $\lambda$ ranges over $\mathcal{H}$. Let
$c_k=c_0{e}^{k\pi i/n}$ for $1\leq k\leq 2n-1$. The critical points
$c_k$ with $k$ even are mapped to $v_\lambda^+=2\sqrt{\lambda}$
while the critical points $c_k$ with $k$ odd are mapped to
$v_\lambda^-=-2\sqrt{\lambda}$.

\begin{figure}
\begin{center}
\includegraphics[height=7cm]{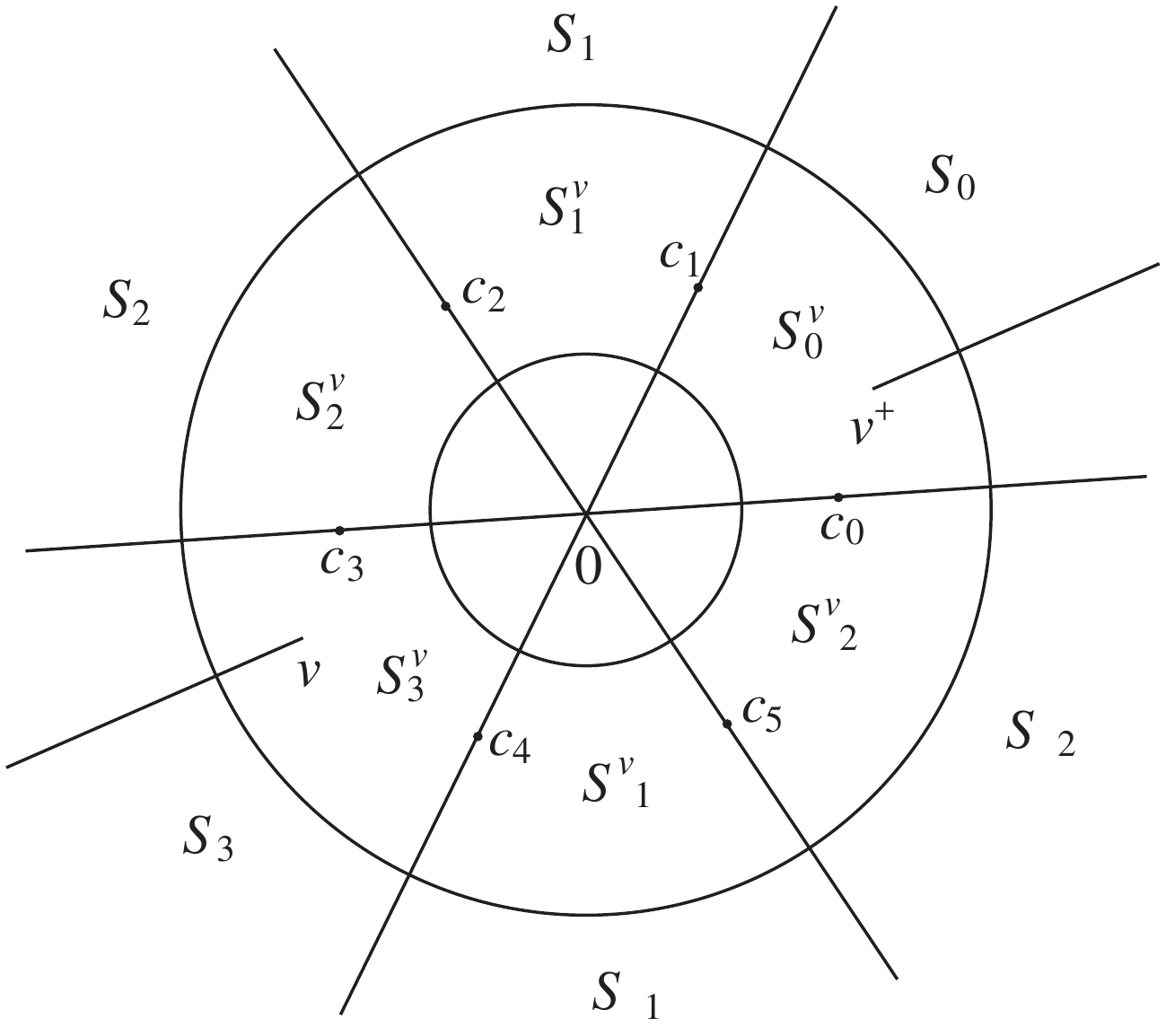}
\caption{Sectors in the dynamical plane when $n=3$.}
\end{center}
\end{figure}

Let $\ell_k=c_k \mathbb{R}^+ (\mathbb{R}^+:=[0, +\infty])$ be the
real straight line connecting the origin to $\infty$ and passing
through $c_k$ for $0\leq k\leq 2n-1$. We call $\ell_k$ a
\textit{critical ray}. The closed sector bounded by $\ell_k$ and
$\ell_{k+1}$ is denoted by $S_k$ for $0\leq k\leq n$. Define
$S_{-k}=-S_k$ for $1\leq k\leq n-1$. Therefore, the sectors are
arranged counterclockwise about the origin as
$S_0,S_1,\cdots,S_n,S_{-1},\cdots,S_{-(n-1)}$ (see Figure 2).

The critical value $v_\lambda^+$ always lies in $S_0$ because $\arg
c_0<\arg v_\lambda^+<\arg c_1$ for all $\lambda\in\mathcal{H}$.
Correspondingly, the critical value $v_\lambda^-$ lies in $S_n$. It
is easy to confirm that the image of $\ell_k$ under $f_\lambda$ is a
straight ray connecting one of the critical values to $\infty$; this
ray is called a \textit{critical value ray}. As a consequence,
$f_\lambda$ maps the interior of each of the sectors of
$\{S_{\pm1},\cdots,S_{\pm(n-1)}\}$ univalently onto a region
$\Upsilon_\lambda$, which can be identified as the complex sphere
$\mathbb{\bar{C}}$ minus two critical value rays.

Let $\mathcal{P}$ denote the set of all components of
$\bigcup_{k\geq0}f_\lambda^{-k}(B_\lambda)$. For $U\in \mathcal{P}$
and $v>0$, let $\mathbf{e}(U,v)=\{z\in U; G_\lambda(z)=v\}$ be the
equipotential curve. The annulus bounded by
$\mathbf{e}(B_\lambda,v)$ and $\mathbf{e}(T_\lambda,v)$ is denoted
by $Q_v$. We may choose a $v$ large enough that $\partial Q_v$
intersects with every critical ray  at exactly two points (to see
this, notice that the B\"ottcher map $\phi_\lambda:
B_\lambda\rightarrow \mathbb{\overline{C}}-\mathbb{\overline{D}}$
acts like the identity map near $\infty$; thus,
$\mathbf{e}(B_\lambda,v)$ looks like a circle when $v$ is large. The
curve $\mathbf{e}(T_\lambda,v)$ also looks like a circle because
$f_\lambda(\mathbf{e}(T_\lambda,v))=\mathbf{e}(B_\lambda,nv)$ and
$f_\lambda$ acts like $z\mapsto \lambda/z^n$ near zero). The bounded
and unbounded components of $\overline{\mathbb{C}}\setminus
\mathbf{e}(B_\lambda,v)$ are denoted by $\mathbf{V}(v)$ and
$\mathbf{U}(v)$, respectively.

Now, we define radial rays of $U$ for every
$U\in\mathcal{P}\setminus \{B_\lambda\}$. In the Hypothesis section,
we see that there is a unique Riemann mapping $\phi_{T_\lambda}:
T_\lambda\rightarrow\mathbb{D}$, such that
$$\phi_{T_\lambda}(z)^{-n}=\phi_{\lambda}(f_\lambda(z)), \ z\in T_\lambda ;\  \phi'_{T_\lambda}(0)=1/\sqrt[n]{\lambda}.$$
The radial ray $R_{T_\lambda}(\theta)$ of angle $\theta$ is defined
as $\phi_{T_\lambda}^{-1}((0,1)e^{2\pi i\theta})$. For $U\in
\mathcal{P}\setminus \{B_\lambda,T_\lambda\}$, there is a smallest
integer $k\geq1$, such that $f_\lambda^k: U\rightarrow T_\lambda$ is
a conformal map. The radial ray $R_U(\theta)$ is defined as the
pullback of $R_{T_\lambda}(\theta)$ under $f_\lambda^k$.

  Let $\mathbb{I}=\{0,n,\pm1,\dots,\pm(n-1)\}$ be an index set.
$S^v_k=\overline{Q_v}\cap S_k$ for $k\in \mathbb{I}$ and
$S^v=\bigcup_{k\in \mathbb{I}\setminus\{0,n\}} S^v_k$. The set of
all points with orbits that remain in $S^v$ under all iterations of
$f_\lambda$ is denoted by $\Lambda_\lambda$. Obviously,
$\Lambda_\lambda=\bigcap_{k\geq0}f_\lambda^{-k}(S^v)$.

For any $k\in \mathbb{I}\setminus\{0, n\}$, the map $f_\lambda:
\mathrm{int}(S_k)\rightarrow\Upsilon_\lambda$ is a conformal map;
its inverse is denoted by
$h_k:\Upsilon_\lambda\rightarrow\mathrm{int}(S_k)$.

Given a point $z\in \Lambda_\lambda$, suppose $f_\lambda^k(z)\in
S_{s_k}$ for $k\geq0$ and define the itinerary of $z$ as
$\mathbf{s}_\lambda(z)=(s_0,s_1,s_2,\cdots)$. The itinerary is
always well defined in the set $\Lambda_\lambda$ because if some
iteration $f_\lambda^k(z)$ lies on the boundary of two adjacent
sectors, then the next iteration $f_\lambda^{k+1}(z)$ will lie
inside $S_0\cup S_n$.

Let $\Sigma=\{\mathbf{s}=(s_0,s_1,s_2,\cdots);
 s_k\in \mathbb{I}\setminus\{0, n\} \text{ for\ every\ }k\geq0\}$ be the space of one-sided sequences of the symbols $\pm1,\dots,\pm(n-1)$.
For $\mathbf{s}=(s_0,s_1,s_2,\cdots)\in \Sigma$,and the shift map
$\sigma:\Sigma\rightarrow\Sigma$ is defined by
$\sigma(\mathbf{s})=(s_1,s_2,\cdots)$. If there is an integer $p>0$
such that $s_{k+p}=s_{k}$ for all $k\geq0$, we say the itinerary
$\mathbf{s}$ is periodic and the least integer $p$ is called the
period of $\mathbf{s}$. In this case, $\mathbf{s}$ is also denoted
by $(\overline{s_0,\cdots,s_{p-1}})$.

It is obvious that
$\mathbf{s}_\lambda(f_\lambda(z))=\sigma(\mathbf{s}_\lambda(z))$ for
$z\in\Lambda_\lambda$.

 \begin{lem}\label{2a}  The set $\Lambda_\lambda$ is a Cantor set, and the itinerary map $\mathbf{s}_\lambda: \Lambda_\lambda\rightarrow \Sigma$ is
bijective. Moreover, $\Lambda_\lambda\subset J(f_\lambda)$.
\end{lem}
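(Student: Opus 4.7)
The plan is to realize $\Lambda_\lambda$ as the limit set of the iterated function system generated by the inverse branches $h_k$, and then to read off the three claims via the standard symbolic-dynamics dictionary.

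The first step is to control the inverse branches on $S^v$. Because $\arg v_\lambda^+=\arg\sqrt{\lambda}$ lies strictly between $\arg c_0$ and $\arg c_1$ as already noted, the critical value ray out of $v_\lambda^+$ is contained in $\mathrm{int}(S_0)$, and symmetrically the one out of $v_\lambda^-$ lies in $\mathrm{int}(S_n)$. Consequently $S^v\subset\Upsilon_\lambda$, so $h_k$ is defined on $S^v$ for every $k\in\mathbb{I}\setminus\{0,n\}$. Next, the identity $G_\lambda\circ f_\lambda=nG_\lambda$ on $A_\lambda$ forces $f_\lambda^{-1}(\overline{Q_v})\subset\overline{Q_v}$: any $z\in B_\lambda\cup T_\lambda$ with $f_\lambda(z)\in\overline{Q_v}$ satisfies $G_\lambda(z)\le v/n$, and any $z\notin B_\lambda\cup T_\lambda$ lies in $\overline{Q_v}$ automatically, since $\mathbb{\bar{C}}\setminus B_\lambda\subset\overline{\mathbf{V}(v)}$ and such $z$ is outside the disk bounded by $\mathbf{e}(T_\lambda,v)$. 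Combining this with $h_k(\Upsilon_\lambda)=\mathrm{int}(S_k)$ yields $h_k(S^v)\subset S^v_k\subset S^v$.

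For the bijection, to each $\mathbf{s}=(s_0,s_1,\dots)\in\Sigma$ I associate the decreasing sequence of non-empty compact sets
\[
E_m(\mathbf{s})=h_{s_0}\circ h_{s_1}\circ\cdots\circ h_{s_{m-1}}(S^v_{s_m})=\bigl\{z\in S^v:f_\lambda^k(z)\in S^v_{s_k},\ 0\le k\le m\bigr\}.
\]
The heart of the argument is that the Euclidean diameter of $E_m(\mathbf{s})$ tends to $0$. The region $\Upsilon_\lambda$ is the complement in $\mathbb{\bar{C}}$ of the simple arc formed by the two critical value rays joined at $\infty$, so it is simply connected and hyperbolic; each $h_k$ is a conformal isomorphism of $\Upsilon_\lambda$ onto the proper subdomain $\mathrm{int}(S_k)$, hence a strict Schwarz-Pick contraction. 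A standard compactness argument on $S^v$ (which is compactly contained in $\Upsilon_\lambda$ by the preceding step) then produces a constant $c<1$ such that $d_{\Upsilon_\lambda}(h_k(z),h_k(w))\le c\,d_{\Upsilon_\lambda}(z,w)$ for all $z,w\in S^v$ and every admissible $k$. Iterating gives $\mathrm{diam}_{\Upsilon_\lambda}(E_m(\mathbf{s}))\le c^m\,\mathrm{diam}_{\Upsilon_\lambda}(S^v)$; since the hyperbolic and Euclidean metrics are comparable on the compact set $S^v$, the Euclidean diameters also tend to $0$. Therefore $\bigcap_m E_m(\mathbf{s})$ is a single point $z(\mathbf{s})$ with $\mathbf{s}_\lambda(z(\mathbf{s}))=\mathbf{s}$, giving surjectivity of $\mathbf{s}_\lambda$; injectivity comes from the same nested-intersection argument. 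Since $\Sigma$ is a Cantor space in the product topology and $\mathbf{s}_\lambda$ is a continuous bijection between compact Hausdorff spaces, $\mathbf{s}_\lambda$ is a homeomorphism and $\Lambda_\lambda$ is itself a Cantor set.

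Finally, $\Lambda_\lambda\subset J(f_\lambda)$ follows from the expansion $c^{-1}>1$: the shift $\sigma$ has dense periodic points in $\Sigma$, so their $\mathbf{s}_\lambda^{-1}$-images are dense in $\Lambda_\lambda$, and each corresponding periodic point of $f_\lambda$ of period $p$ has multiplier of modulus at least $c^{-p}>1$ by the contraction estimate, hence is repelling and lies in $J(f_\lambda)$; closedness of $J(f_\lambda)$ completes the argument. The main technical hurdle is the uniform hyperbolic contraction step: securing $c<1$ independent of the branch and of the points in $S^v$ hinges on $S^v$ being compactly contained in $\Upsilon_\lambda$, which is precisely what the initial verification that the critical value rays are interior to $S_0\cup S_n$ supplies. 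The remainder is the routine Cantor-set formalism for contracting iterated function systems.
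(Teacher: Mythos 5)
Your proof is correct and follows essentially the same route as the paper's: compactness of $S^v$ in $\Upsilon_\lambda$ gives a uniform hyperbolic contraction constant for the inverse branches, which yields the Cantor set structure and bijectivity of $\mathbf{s}_\lambda$; and repelling periodic points arising from periodic itineraries are dense in $\Lambda_\lambda$, placing it inside $J(f_\lambda)$. You do spell out two points the paper leaves implicit — that the critical value rays stay interior to $S_0\cup S_n$ (so $S^v\Subset\Upsilon_\lambda$), and that $h_k(S^v)\subset S^v_k$ so the compositions $h_{s_0}\circ\cdots\circ h_{s_m}(S^v)$ are genuinely nested — and you package the density step via the homeomorphism $\mathbf{s}_\lambda$ rather than by the paper's explicit $\varepsilon$–$m$ construction, but the underlying argument is identical.
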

\begin{proof} First, note that for any $\lambda\in \mathcal{H}$, $S^v$
is a compact subset of $\Upsilon_\lambda$. With respect to the
hyperbolic metric of $\Upsilon_\lambda$ and by the Schwarz Lemma,
there is a number $\delta \in (0,1)$ such that for any
$\mathbf{s}=(s_0,s_1,s_2,\cdots)\in \Sigma$ and any $m\geq0$,
 $$ \mathrm{Hyper. diam}\big(h_{s_0}\circ\cdots\circ h_{s_m}(S^v)\big)\leq
\mathrm{Hyper. diam }(S^v) \cdot \delta ^m.$$

Thus, $\bigcap_{k\geq0}h_{s_0}\circ\cdots\circ h_{s_k}(S^v)$
consists of a single point, say $z_\mathbf{s}$. Therefore,
$\Lambda_\lambda$ is a Cantor set, and the map $\mathbf{s}_\lambda:
\Lambda_\lambda\rightarrow \Sigma$ defined by
$\mathbf{s}_\lambda(z_\mathbf{s})=\mathbf{s}$ is bijective.

When ${\bf{s}}=(\overline{s_0,\cdots,s_{m-1}})\in \Sigma$ is a
periodic itinerary of period $m$, then $z_{\bf{s}}$ is a fixed point
of $h=h_{s_0}\circ\cdots\circ h_{s_{m-1}}$. Because $h: {\rm
int}(S^v)\to h({\rm int}(S^v))\subset {\rm int}(S_{s_0}^v)\Subset
{\rm int}(S^v)$ is strictly contractive, it follows by the Schwarz
lemma that the fixed point $z_{\bf{s}}$ is attracting. Therefore,
$z_{\bf{s}}$ is a repelling periodic point of $f_{\lambda}$.

To show $\Lambda_\lambda\subset J(f_\lambda)$, it suffices to prove
that any point of $\Lambda_\lambda$ can be approximated by a
sequence of repelling periodic points in $\Lambda_\lambda$. Suppose
$z\in \Lambda_\lambda$. 
For any $\varepsilon>0$, there is an integer $m>0$ such that
$\mathrm{Hyper. diam }(S^v) \cdot \delta ^m<\varepsilon$. Take a
periodic itinerary $\mathbf{s}\in\Sigma$ with first $m$ symbols that
are the same as those of $\mathbf{s}_\lambda(z)$. (Notice that such
an itinerary always exists.) Because the map $\mathbf{s}_\lambda$ is
bijective, there is a unique point $w\in \Lambda_\lambda$ with
$\mathbf{s}_\lambda(w)=\mathbf{s}$. The hyperbolic distance between
$z$ and $w$ is smaller than $\varepsilon$.  The previous argument
implies that $w$ is periodic and repelling.
\end{proof}

\section{Cut Rays in the Dynamical Plane}

In this section, we will construct the `cut ray', a type of Jordan
curve that cuts the Julia set into two different parts. The
construction is due to R. Devaney \cite{D1}. We give some additional
properties that will be used in our paper.

We first construct a Cantor set of angles on the unit circle and use
these angles to generate `cut rays' as in \cite{D1}. These angles
can be considered as a combinatorial invariant when the parameter
$\lambda$ ranges over $\mathcal{H}$.

To begin, we identify the unit circle
$\mathbb{S}=\mathbb{R}/\mathbb{Z}$ with $(0,1]$. We say that three
angles satisfy $t_1\leq t_2\leq t_3$ on $\mathbb{S}$ if
$t_1,t_2,t_3$ are in counterclockwise order.

\subsection{A Cantor set on the unit circle} In the following, we
construct a subset $\Theta$ of (0,1]. The set $\Theta$ is a Cantor
set and is used to generate `cut rays' in the next section.

First, define a map $\tau:  (0,1]\rightarrow (0,1]$ by $\tau
(\theta)=n\theta  \text{ mod 1} $. Let
$\Theta_k=(\frac{k}{2n},\frac{k+1}{2n}]$ for $0\leq k \leq n$ and
$\Theta_{-k}=\Theta_k+\frac{1}{2}$ for $1\leq k\leq n-1$. Obviously,
$(0,1]=\bigcup_{k\in\mathbb{I}}\Theta_k $.

Define a map $\chi:\mathbb{I}\rightarrow\mathbb{N}$ by
\begin{equation*}
\chi(k)=\begin{cases}
 k,\ \  &\text{ if } 0\leq k\leq n,\\
n-k,\ \ &\text{ if } -(n-1)\leq k\leq -1.
 \end{cases}
 \end{equation*}

For $k\in \mathbb{I}$, we have
\begin{equation*}
\tau(\Theta_k)\supset\begin{cases}
\bigcup_{j=1}^{n-1}\Theta_j,\ \  &\text{ if } \chi(k) \text{ is even},\\
\bigcup_{j=1}^{n-1}\Theta_{-j},\ \ &\text{ if } \chi(k) \text{ is
odd}.
 \end{cases}
 \end{equation*}

For $\theta\in (0,1]$, suppose $\tau^k(\theta)\in \Theta_{s_k}$ for
$k\geq0$ and define the itinerary $\mathbf{s}(\theta)$ of $\theta$
by $\mathbf{s}(\theta)=(s_0,s_1,s_2,\cdots)$.

Let $\Theta$ be the set of all angles $\theta\in (0,1]$ with orbits
that remain in $\mathcal{E}=\bigcup_{k=1}^{n-1}(\Theta_k\cup
\Theta_{-k})$ under all iterations of $\tau$. The set $\Theta$ can
be written as $\Theta=\bigcap_{k\geq0}\tau^{-k}(\mathcal{E})
=\bigcap_{k\geq0}\tau^{-k}(\overline{\mathcal{E}}).$ One can easily
verify that $\Theta$ is a Cantor set.

The image of $\Theta$ under the itinerary map is denoted by
$\Sigma_0=\{\mathbf{s}(\theta); \theta\in\Theta\}$. One can easily
verify that $\Sigma_0$ is a subspace of $\Sigma$ that consists of
all elements $\mathbf{s}=(s_0,s_1,s_2,\cdots)\in \Sigma$ such that
for $k\geq0$, if $\chi(s_k)$ is even, then $s_{k+1}\in
\{1,\cdots,n-1\}$; if $\chi(s_k)$ is odd, then $s_{k+1}\in
\{-1,\cdots,-(n-1)\}$.


The itinerary map $\mathbf{s}:\Theta\rightarrow\Sigma_0$ is
bijective because for any $\mathbf{s}=(s_0,s_1,s_2,\cdots)\in
\Sigma_0$, the intersection
$\bigcap_{k\geq0}\tau^{-k}(\Theta_{s_k})$ consists of a single
point. In the following, we first construct an inverse map for
$\mathbf{s}$ (Lemma \ref{3a}).

Let $\mathbf{s}=(s_0,s_1,s_2,\cdots)\in \Sigma$. We define a map
$\kappa:\Sigma\rightarrow (0,1]$ by
$$\mathbf{\kappa}(\mathbf{s})=\frac{1}{2}\bigg(\frac{\chi(s_0)}{n}+\sum_{k\geq1}\frac{|s_k|}{n^{k+1}}\bigg).$$

\begin{lem}\label{3a} $\kappa(\Sigma)=\Theta$ and $\kappa(\mathbf{s}(\theta))=\theta$ for all $\theta\in\Theta$.
\end{lem}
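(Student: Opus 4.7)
The plan is to verify both assertions together by showing that, restricted to the subshift $\Sigma_0$, the map $\kappa$ inverts the itinerary map $\mathbf{s}:\Theta\to\Sigma_0$. The first ingredient is an elementary bound: since $|s_k|\in\{1,\ldots,n-1\}$ for $k\geq 1$, the tail $\sum_{k\geq 1}|s_k|/(2n^{k+1})$ is strictly positive and bounded above by $(n-1)\sum_{k\geq 1}1/(2n^{k+1})=1/(2n)$, so for every $\mathbf{s}\in\Sigma$,
$$\kappa(\mathbf{s})\in\left(\frac{\chi(s_0)}{2n},\,\frac{\chi(s_0)+1}{2n}\right]=\Theta_{s_0}\subset\mathcal{E}.$$

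Next I would establish the semi-conjugacy $\tau\circ\kappa=\kappa\circ\sigma$ on $\Sigma_0$. Multiplying the defining formula of $\kappa$ by $n$ gives
$$n\kappa(\mathbf{s})=\frac{\chi(s_0)}{2}+\frac{|s_1|}{2n}+\sum_{k\geq 2}\frac{|s_k|}{2n^k},$$
where the same geometric estimate shows that the tail starting from $|s_1|/(2n)$ lies in $(0,1/2]$. Reducing modulo $1$ therefore only affects $\chi(s_0)/2$, and the defining alternation rule of $\Sigma_0$ is exactly what is needed to finish: if $\chi(s_0)$ is even then $\chi(s_0)/2\in\mathbb{Z}$, while $\Sigma_0$ forces $s_1\in\{1,\ldots,n-1\}$, so $\chi(s_1)=|s_1|$; if $\chi(s_0)$ is odd then $\chi(s_0)/2\equiv 1/2\pmod 1$, while $\Sigma_0$ forces $s_1\in\{-1,\ldots,-(n-1)\}$, so $\chi(s_1)=n+|s_1|$ and $\chi(s_1)/(2n)=1/2+|s_1|/(2n)$. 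In both cases the outcome is $\chi(s_1)/(2n)+\sum_{k\geq 2}|s_k|/(2n^k)=\kappa(\sigma(\mathbf{s}))$. Iterating and combining with the bound from the previous paragraph gives $\tau^k(\kappa(\mathbf{s}))\in\Theta_{s_k}\subset\mathcal{E}$ for all $k\geq 0$, so $\kappa(\Sigma_0)\subset\Theta$.

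To close the argument I observe that $\kappa(\mathbf{s})$ depends only on $\chi(s_0)$ and on the absolute values $|s_k|$ with $k\geq 1$, so any $\mathbf{s}\in\Sigma$ can be turned into an element of $\Sigma_0$ by flipping signs without altering $\kappa$; this gives $\kappa(\Sigma)=\kappa(\Sigma_0)\subset\Theta$. For the reverse inclusion and the second identity, take $\theta\in\Theta$ with itinerary $\mathbf{s}(\theta)\in\Sigma_0$: both $\theta$ and $\kappa(\mathbf{s}(\theta))$ have $\mathbf{s}(\theta)$ as their itinerary, so by the bijectivity of $\mathbf{s}:\Theta\to\Sigma_0$ recorded just before the lemma they must be equal, delivering $\kappa(\mathbf{s}(\theta))=\theta$ and $\Theta\subset\kappa(\Sigma)$ simultaneously. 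The main obstacle, and really the heart of the construction, is the sign-and-parity bookkeeping in the semi-conjugacy step: the $1/2$ residue produced when $\chi(s_0)$ is odd is exactly absorbed by the $n/(2n)$ hidden in $\chi(s_1)=n+|s_1|$, a compatibility that the alternation rule defining $\Sigma_0$ encodes by design.
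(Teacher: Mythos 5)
Your proof is correct and follows essentially the same strategy as the paper: show that $\kappa(\mathbf{s})$ has itinerary $\mathbf{s}$ for $\mathbf{s}\in\Sigma_0$ (so $\kappa$ inverts the bijection $\mathbf{s}:\Theta\to\Sigma_0$), then use the observation that $\kappa$ depends only on $\chi(s_0)$ and the $|s_k|$ to handle arbitrary $\mathbf{s}\in\Sigma$. The only difference is organizational: you establish the one-step semi-conjugacy $\tau\circ\kappa=\kappa\circ\sigma$ with a case split on the parity of $\chi(s_0)$ and iterate, whereas the paper writes a closed formula for $\tau^k(\hat\theta)$ with a case split on the parity of $n$ and simplifies via the identity $\frac{\chi(s_{j-1})}{2}+\frac{|s_j|}{2n}\equiv\frac{\chi(s_j)}{2n}\pmod 1$; these are the same computation packaged differently.
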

\begin{proof}
First, we show $\kappa(\mathbf{s}(\theta))=\theta$ for
$\theta\in\Theta$. Let $\mathbf{s}(\theta)=(s_0,s_1,s_2,\cdots)$ and
$\hat{\theta}=\kappa(\mathbf{s}(\theta))$. Because
$\mathbf{s}:\Theta\rightarrow\Sigma_0$ is bijective, it suffices to
show that $\mathbf{s}(\hat{\theta})=\mathbf{s}(\theta)$.

It follows that $\hat{\theta}\in \Theta_{s_0}$ because
$$\frac{\chi(s_0)}{2n}<\hat{\theta}\leq\frac{1}{2}\bigg(\frac{\chi(s_0)}{n}+\sum_{k\geq1}\frac{n-1}{n^{k+1}}\bigg)
=\frac{\chi(s_0)}{2n}+\frac{1}{2n}.$$

For $k\geq1$,
\begin{equation*}
\tau^k(\hat{\theta})=\begin{cases}
\frac{1}{2}(\chi(s_0)+|s_1|+\cdots+|s_{k-1}|)+
\frac{1}{2}\sum_{j\geq k}\frac{|s_j|}{n^{j-k+1}},\ \  &\text{if } n \text{ is odd},\\
\frac{|s_{k-1}|}{2}+ \frac{1}{2}\sum_{j\geq
k}\frac{|s_j|}{n^{j-k+1}},\ \ &\text{if } n \text{ is even}.
 \end{cases}
 \end{equation*}

Because $\mathbf{s}(\theta)=(s_0,s_1,s_2,\cdots)\in\Sigma_0$, we
have for $j\geq1$,
\begin{equation*}
\frac{|s_j|}{2}=\begin{cases}
\frac{1}{2}(\chi(s_j)-\chi(s_{j-1})) \text{ mod }1,\ \  &\text{if } n \text{ is odd},\\
\frac{1}{2}\chi(s_j) \text{ mod }1,\ \ &\text{if } n \text{ is
even}.
 \end{cases}
 \end{equation*}
and
$$\frac{\chi(s_{j-1})}{2}+\frac{|s_j|}{2n}=\frac{\chi(s_{j})}{2n}  \text{ mod }1.$$

Thus, we have

$$\tau^k(\hat{\theta})=\frac{\chi(s_{k-1})}{2}+
\frac{1}{2}\sum_{j\geq
k}\frac{|s_j|}{n^{j-k+1}}=\frac{\chi(s_{k})}{2n}+
\frac{1}{2}\sum_{j\geq k+1}\frac{|s_j|}{n^{j-k+1}}.$$

This means $\tau^k(\hat{\theta})\in \Theta_{s_k}$ for $k\geq1$.
Therefore, $\theta$ and $\hat{\theta}$ have the same itinerary.

In the following, we show $\kappa(\Sigma)=\Theta$. First, by the
previous argument, $\Theta=\kappa(\Sigma_0)\subset\kappa(\Sigma)$.
Conversely, for any $\mathbf{s}=(s_0,s_1,s_2,\cdots)\in\Sigma$,
there is a unique sequence of symbols $\epsilon_1,\epsilon_2,\cdots
\in \{\pm1\}$, such that
$\mathbf{s^*}=(s_0,\epsilon_1s_1,\epsilon_2s_2,\cdots)\in\Sigma_0$.
Thus, $\kappa(\mathbf{s})=\kappa(\mathbf{s^*})\in\Theta$.
\end{proof}

\begin{rem}
For any $\mathbf{s}=(s_0,s_1,s_2,\cdots)\in\Sigma$, one can verify
that
 $$\kappa^{-1}(\kappa(\mathbf{s}))=\{(s_0,\pm s_1,\pm
s_2,\cdots)\}.$$
\end{rem}

\begin{lem}\label{3b}  The set $\Theta$ satisfies:

1. $\tau(\Theta)=\Theta$.

2. $\Theta+\frac{1}{2}=\Theta$.

3. Periodic angles are dense in $\Theta$.

\end{lem}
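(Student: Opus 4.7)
The plan is to reduce each statement to a combinatorial fact about $\Sigma_0$ via the bijection $\kappa:\Sigma_0\to\Theta$ and the semi-conjugacy $\mathbf{s}(\tau(\theta))=\sigma(\mathbf{s}(\theta))$.

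For (1), the inclusion $\tau(\Theta)\subseteq\Theta$ is immediate from $\Theta=\bigcap_{k\geq 0}\tau^{-k}(\mathcal{E})$: if $\theta\in\Theta$, then $\tau(\theta)\in\mathcal{E}$ and $\tau^k(\tau(\theta))=\tau^{k+1}(\theta)\in\mathcal{E}$ for all $k\geq 0$. For the reverse inclusion, given $\theta\in\Theta$ with itinerary $(s_0,s_1,\ldots)\in\Sigma_0$, I will prepend a symbol $s_{-1}\in\mathbb{I}\setminus\{0,n\}$ so that $(s_{-1},s_0,s_1,\ldots)\in\Sigma_0$; then the corresponding point $\theta'\in\Theta$ satisfies $\tau(\theta')=\theta$. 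This prepending is possible because the $\Sigma_0$-constraint only requires that the parity of $\chi(s_{-1})$ match the sign of $s_0$, and since $n\geq 3$ the sets $\{1,\ldots,n-1\}$ and $\{-1,\ldots,-(n-1)\}$ each contain symbols of both parities of $\chi$.

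For (2), I compute $\tau(\theta+1/2)=n\theta+n/2\pmod 1$, which equals $\tau(\theta)$ if $n$ is even and $\tau(\theta)+1/2$ if $n$ is odd. By induction, $\tau^k(\theta+1/2)$ is either $\tau^k(\theta)$ or $\tau^k(\theta)+1/2$ for every $k\geq 1$. Since $\Theta_{-j}=\Theta_j+1/2$ for $1\leq j\leq n-1$, we have $\mathcal{E}+1/2=\mathcal{E}$, so the entire $\tau$-orbit of $\theta+1/2$ lies in $\mathcal{E}$ precisely when the $\tau$-orbit of $\theta$ does. Together with $\theta+1/2\in\mathcal{E}\Leftrightarrow\theta\in\mathcal{E}$, this gives $\theta+1/2\in\Theta\Leftrightarrow\theta\in\Theta$; and since the translation is an involution, $\Theta+1/2=\Theta$.

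For (3), I approximate an arbitrary $\theta\in\Theta$ with itinerary $(s_0,s_1,s_2,\ldots)\in\Sigma_0$ by periodic angles. For each $m\geq 1$, I will construct a periodic sequence $(\overline{s_0,\ldots,s_{m-1},t_m})\in\Sigma_0$ of period $m+1$ by choosing a single bridging symbol $t_m$ that meets two constraints: its sign is dictated by the parity of $\chi(s_{m-1})$, and the parity of $\chi(t_m)$ must match the sign of $s_0$ so the period closes. The existence of such $t_m$ is exactly the combinatorial lemma underlying (1), namely that for $n\geq 3$ both parities of $\chi$ are available among symbols of either sign. Applying $\kappa$ to this periodic itinerary yields a periodic point $\theta^{(m)}\in\Theta$ under $\tau$, and the explicit formula for $\kappa$ gives the estimate $|\theta-\theta^{(m)}|\leq n^{-m}$, proving density.

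The main technical point throughout is checking that the transition constraint defining $\Sigma_0$ can always be satisfied by a suitable choice of a bridging symbol; this is where the hypothesis $n\geq 3$ is essential, since for $n=2$ only one symbol of each sign is available and the corresponding subshift degenerates.
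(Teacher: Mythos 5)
Your proposal is correct and follows essentially the same route as the paper: part (1) via the observation that a suitable preimage (prepended symbol) stays in $\mathcal{E}$, part (2) via the identity $\mathcal{E}+\tfrac12=\mathcal{E}$ and the parity-dependent relation between $\tau^k(\theta+\tfrac12)$ and $\tau^k(\theta)$, and part (3) by truncating the itinerary and inserting a single bridging symbol $t_m$ to close the period, then estimating $|\theta-\theta^{(m)}|$ from the explicit formula for $\kappa$. You are a bit more explicit than the paper about why the bridging symbol always exists (both parities of $\chi$ occur among symbols of each sign when $n\geq 3$), which is a worthwhile point since the paper merely asserts this dichotomy.
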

\begin{proof}

1. It is obvious that $\tau(\Theta)\subset\Theta$. $\tau$ is
surjective because $\tau^{-1}(\theta)\cap \mathcal{E}\neq\emptyset$
for all $\theta\in\Theta$.

2. First note that $\mathcal{E}+\frac{1}{2}=\mathcal{E}$ mod 1. For
$k\geq1$, because $\tau^k(\theta+\frac{1}{2})=\tau^k(\theta)$ when
$n$ is even and
$\tau^k(\theta+\frac{1}{2})=\tau^k(\theta)+\frac{1}{2}$ when $n$ is
odd, we have $\tau^k(\theta+\frac{1}{2})\in \mathcal{E}$ if and only
if $\tau^k(\theta)\in \mathcal{E}$. Thus, $\theta\in\Theta$ if and
only if $\theta+\frac{1}{2}\in\Theta$.

3. Let $\theta\in \Theta$ with itinerary
$\mathbf{s}(\theta)=(s_0,s_1,s_2,\cdots)$. For any $k\geq1$, either
$(\overline{s_0,\cdots,s_k})\in\Sigma_0$, or there is a symbol
$s_{k+1}^*\in \{\pm 1,\cdots,\pm (n-1)\}$ such that
$(\overline{s_0,\cdots,s_k,s_{k+1}^*})\in\Sigma_0$. If
$(\overline{s_0,\cdots,s_k})\in\Sigma_0$ , let
$\theta_k=\kappa((\overline{s_0,\cdots,s_k}))$. Else, let
$\theta_k=\kappa((\overline{s_0,\cdots,s_k,s_{k+1}^*}))$. It's
obvious that $\theta_k$ is periodic. By Lemma \ref {3a},
$\theta_k\in \Theta$ and
$$|\theta-\theta_k|\leq C(n)n^{-k} ( \  \rightarrow0 \text { \ as \ } k\rightarrow\infty),$$
where $C(n)$ is a constant, depending only on $n$, which implies
that periodic angles are dense in $\Theta$.
\end{proof}

\begin{rem}
The Hausdorff dimension of $\Theta$ is $\frac{\log(n-1)}{\log n}$.
\end{rem}

For $\lambda\in \mathcal{H}$ and $k\in \mathbb{I}$, let
$\Theta_k^\lambda=\Theta_k+\frac{\arg
c_0(\lambda)}{2\pi}=\Theta_k+\frac{\arg \lambda}{4n\pi}$ mod 1.
Recall that for $\lambda\in \mathcal{H}$, $\arg \lambda\in(0,
\frac{2\pi}{n-1})$. It is easy to check that

\begin{equation*}
\tau(\Theta_k^\lambda)\supset\begin{cases}
 \bigcup_{j=1}^{n-1}\Theta_j^\lambda,\ \  &\text{ if } \chi(k) \text{ is even},\\
\bigcup_{j=1}^{n-1}\Theta_{-j}^\lambda,\ \ &\text{ if } \chi(k)
\text{ is odd}.
 \end{cases}
 \end{equation*}

 Again, we define $\Theta^\lambda$ as the set of all angles in $(0,1]$ whose orbits remain in
$\mathcal{E}^\lambda=\bigcup_{k=1}^{n-1}(\Theta_k^\lambda\cup
\Theta_{-k}^\lambda)$ under all iterations of $\tau$. Thus,
$\Theta^\lambda=\bigcap_{k\geq0}\tau^{-k}(\mathcal{E}^\lambda).$ For
$\theta\in (0,1]$, suppose $\tau^k(\theta)\in \Theta_{s_k}^\lambda$
for $k\geq0$ and define the itinerary of $\theta$ by
$\mathbf{s}^\lambda(\theta)=(s_0,s_1,s_2,\cdots)$. It is easy to
show that the itinerary map
$\mathbf{s}^\lambda:\Theta^\lambda\rightarrow \Sigma_0$ is
bijective.

\begin{lem}\label{3b1}  $\Theta^\lambda=\Theta$ and for any
$\theta\in\Theta$, $\mathbf{s}^\lambda(\theta)=\mathbf{s}(\theta)$.
\end{lem}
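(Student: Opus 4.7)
The plan is to exploit the explicit inverse $\kappa:\Sigma_0\to\Theta$ of the itinerary map $\mathbf{s}$ constructed in Lemma \ref{3a}, and to verify by direct computation that $\kappa(\mathbf{s})\in\Theta^\lambda$ with the same $\lambda$-itinerary. Fix $\mathbf{s}=(s_0,s_1,s_2,\dots)\in\Sigma_0$ and set $\theta=\kappa(\mathbf{s})\in\Theta$. Denote by $b_{s_0}$ the left endpoint of $\Theta_{s_0}$ (equal to $s_0/(2n)$ when $s_0\geq 0$ and $|s_0|/(2n)+1/2$ when $s_0<0$). The same case split used in the proof of Lemma \ref{3a} yields
$$\theta-b_{s_0}\;=\;\frac{1}{2}\sum_{k\geq 1}\frac{|s_k|}{n^{k+1}},$$
and since $1\leq |s_k|\leq n-1$ for every $k\geq 1$, this quantity lies in $[1/(2n(n-1)),\,1/(2n)]$.

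Next I use the hypothesis $\lambda\in\mathcal{H}$. Writing $\alpha=\arg\lambda/(4n\pi)$, the condition $\arg\lambda\in(0,2\pi/(n-1))$ gives the strict bound $\alpha\in(0,1/(2n(n-1)))$, so
$$\alpha\;<\;\frac{1}{2n(n-1)}\;\leq\;\theta-b_{s_0}\;\leq\;\frac{1}{2n}\;<\;\frac{1}{2n}+\alpha.$$
Recalling $\Theta_{s_0}^\lambda=(b_{s_0}+\alpha,\,b_{s_0}+\tfrac{1}{2n}+\alpha]$, this is precisely the assertion $\theta\in\Theta_{s_0}^\lambda$. To promote this to all iterates of $\tau$, I note that $\mathbf{s}(\tau(\theta))=\sigma(\mathbf{s}(\theta))$ together with the bijectivity of $\mathbf{s}:\Theta\to\Sigma_0$ forces the intertwining $\tau\circ\kappa=\kappa\circ\sigma$ on $\Sigma_0$. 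Applying the preceding step to $\sigma^k(\mathbf{s})\in\Sigma_0$ then gives $\tau^k(\theta)=\kappa(\sigma^k(\mathbf{s}))\in\Theta_{s_k}^\lambda$ for every $k\geq 0$. Hence $\theta\in\Theta^\lambda$ and $\mathbf{s}^\lambda(\theta)=\mathbf{s}=\mathbf{s}(\theta)$, which gives both $\Theta\subset\Theta^\lambda$ and the equality of the two itinerary maps on $\Theta$.

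The reverse inclusion $\Theta^\lambda\subset\Theta$ is then automatic from the bijectivity of $\mathbf{s}^\lambda:\Theta^\lambda\to\Sigma_0$: any $\theta'\in\Theta^\lambda$ with $\mathbf{s}^\lambda(\theta')=\mathbf{s}'$ must coincide with $\kappa(\mathbf{s}')\in\Theta$, since that point already lies in $\Theta^\lambda$ with $\lambda$-itinerary $\mathbf{s}'$. The only point requiring genuine care is the endpoint bookkeeping: the intervals $\Theta_{s_0}$ and $\Theta_{s_0}^\lambda$ are both half-open on the left, and the strict inequality $\alpha<1/(2n(n-1))$ furnished by $\mathcal{H}$ being open is exactly what prevents $\theta$ from landing on the excluded endpoint $b_{s_0}+\alpha$. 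I do not anticipate any other obstruction.
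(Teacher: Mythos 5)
Your proof is correct, and it takes a genuinely different route from the paper's. The paper argues by uniqueness: given $\alpha\in\Theta^\lambda$ and $\beta\in\Theta$ with $\mathbf{s}^\lambda(\alpha)=\mathbf{s}(\beta)=(s_0,s_1,\dots)$, it considers the nested sets $A_m=\bigcap_{0\le k\le m}\tau^{-k}(\Theta^\lambda_{s_k}\cap\Theta_{s_k})$, shows by induction that each $A_m$ is a nonempty half-open interval whose length contracts by the factor $n$, and concludes that $\bigcap_m \overline{A_m}$ is a single point equal to both $\alpha$ and $\beta$. You instead argue constructively from the explicit inverse $\kappa$ of Lemma \ref{3a}: the identity $\kappa(\mathbf{s})-\tfrac{\chi(s_0)}{2n}=\tfrac12\sum_{k\ge1}|s_k|/n^{k+1}$ is trapped in $[\tfrac{1}{2n(n-1)},\tfrac{1}{2n}]$, while $\lambda\in\mathcal{H}$ forces the shift $\alpha=\arg\lambda/(4n\pi)$ to lie strictly in $(0,\tfrac{1}{2n(n-1)})$, so $\kappa(\mathbf{s})\in\Theta_{s_0}^\lambda$ by a bare inequality; the intertwining $\tau\circ\kappa=\kappa\circ\sigma$ (which you correctly deduce from $\mathbf{s}\circ\tau=\sigma\circ\mathbf{s}$ and bijectivity) then promotes this to all iterates, and bijectivity of $\mathbf{s}^\lambda$ gives the reverse inclusion for free. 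What your version buys: it makes visible exactly why $\mathcal{H}$ is the right parameter sector (the margin $\tfrac{1}{2n(n-1)}$ in the geometric series matches the width of the allowed argument range), and it avoids having to justify that the sets $A_m$ in the paper's induction are nonempty intervals, a point the paper leaves terse. The paper's version is shorter once one accepts that inductive claim. Both are valid proofs.
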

\begin{proof}
It suffices to show that if
$\mathbf{s}^\lambda(\alpha)=\mathbf{s}(\beta)$ for
$\alpha\in\Theta^\lambda$ and $\beta\in\Theta$, then $\alpha=\beta$.

First, note that $\Theta_k^\lambda\cap\Theta_k\neq\emptyset$ for any
$k\in \mathbb{I}$. Suppose
$\mathbf{s}^\lambda(\alpha)=\mathbf{s}(\beta)=(s_0,s_1,s_2,\cdots)$,
and let $A_m=\bigcap_{0\leq k\leq
m}\tau^{-k}(\Theta^\lambda_{s_k}\cap\Theta_{s_k})$ for $m\geq 0$. By
induction, we see that $A_m$ is a connected interval of the form
$(a_m,b_m]$ with $a_{m+1}>a_m, b_{m+1}<b_m$ and
$n(b_{m+1}-a_{m+1})=b_m-a_m$ for $m\geq0$. Thus, $A_{m+1}\subset
\overline{A_{m+1}}\subset A_{m}$ and
$\bigcap_{k\geq0}A_m=\bigcap_{k\geq0}\overline{A_m}$ consists of a
single point, say $\theta$. On the other hand,
$$\{\theta\}=\bigcap_{k\geq0}A_m=\Big(\bigcap_{k\geq0}\tau^{-k}(\Theta^\lambda_{s_k})\Big)
\bigcap
\Big(\bigcap_{k\geq0}\tau^{-k}(\Theta_{s_k})\Big)=\{\alpha\}\cap\{\beta\}.$$

Thus, we have $\alpha=\beta=\theta$.
\end{proof}

\subsection{Cut rays}

In this section, for any $\lambda\in \mathcal{H}$ and any $\theta\in
\Theta$, we will construct a Jordan curve, say
$\Omega_\lambda^\theta$, that cuts the dynamical plane of
$f_\lambda$ into two parts. The curve will meet the Julia set
$J(f_\lambda)$ in a Cantor set of points. This kind of Jordan curve
$\Omega_\lambda^\theta$ will be called a 'cut ray' of angle
$\theta$. In the following, we construct such rays following a
slightly different presentation from Devaney's in \cite{D1}.

Recall that the itinerary map $\mathbf{s}_\lambda:
\Lambda_\lambda\rightarrow\Sigma$ from a Cantor set onto a symbolic
space is bijective. We first extend the definition of
$\mathbf{s}_\lambda$ to a larger set. Let
$E_\lambda=\bigcap_{k\geq0}f_\lambda^{-k}(\bigcup_{j\in\mathbb{I}\setminus\{0,n\}}S_j)$
be the set of all points in the dynamical plane with orbits that
remain in $\bigcup_{j\in\mathbb{I}\setminus\{0,n\}}S_j$ under all
iterations of $f_\lambda$. By definition, $E_\lambda$ is a compact
subset of $\mathbb{\overline{C}}$ containing $0$ and $\infty$.  The
assumption $\lambda\in \mathcal{H}$ implies that $E_\lambda$
contains no critcal points other than $0$ and $\infty$.

Let $O_\lambda=\cup_{k\geq0}f_\lambda^{-k}(\infty)$ be the grand
orbit of $\infty$. The map $\mathbf{s}_\lambda:
\Lambda_\lambda\rightarrow\Sigma$ can be extended to
$\mathbf{s}_\lambda: E_\lambda\setminus O_\lambda\rightarrow\Sigma$
as follows: for any $z\in E_\lambda\setminus O_\lambda$, suppose
$f_\lambda^k(z)\in S_{s_k}$ for $k\geq0$; the itinerary of $z$ is
then defined by $\mathbf{s}_\lambda(z)=(s_0,s_1,s_2,\cdots)$. One
can see that the map $\mathbf{s}_\lambda: E_\lambda\setminus
O_\lambda\rightarrow\Sigma$ is well-defined. (In fact, if
$f_\lambda^n(z)$ lies on the intersection of two sectors, then
$f_\lambda^{n+1}(z)$ will land on the critical value ray).

Given an angle $\theta\in \Theta$ with itinerary
$\mathbf{s}(\theta)=(s_0,s_1,s_2,\cdots)$, it is easy to check that
when $n$ is odd,
$\mathbf{s}(\theta+1/2)=(-s_0,-s_1,-s_2,\cdots)=-\mathbf{s}(\theta)$
and that when $n$ is even,
$\mathbf{s}(\theta+1/2)=(-s_0,s_1,s_2,\cdots)$. We consider the set
of all points in $E_\lambda\setminus O_\lambda$ with itineraries
that take the form $(s_0,\pm s_1,\pm s_2,\cdots)$. The closure of
this set is denoted by $\omega_\lambda^\theta$:
$$\omega_\lambda^\theta=\overline{\{z\in E_\lambda\setminus
O_\lambda; \mathbf{s}_\lambda(z) =(s_0,\pm s_1,\pm
s_2,\cdots)\}}=\overline{\{z\in E_\lambda\setminus O_\lambda;
\kappa(\mathbf{s}_\lambda(z))=\theta\}}.$$

According to Devaney, the set $\omega_\lambda^\theta$ is called a
'full ray' of angle $\theta$. Let
$\Omega_\lambda^\theta=\omega_\lambda^\theta\cup\omega_\lambda^{\theta+1/2}$;
we call the set $\Omega_\lambda^\theta$ a 'cut ray' of angle
$\theta$ (or $\theta+1/2$). One may verify that
$$\Omega_\lambda^\theta=\overline{\{z\in E_\lambda\setminus
O_\lambda; \mathbf{s}_\lambda(z)
 =(\pm s_0,\pm s_1,\pm s_2,\cdots)\}}
=\bigcap_{k\geq0} f_\lambda^{-k}(S_{s_k}\cup S_{-s_k}).$$

We first give an intuitive description of the cut ray
$\Omega_\lambda^\theta$. For $m\geq0$, let
$$\Omega_{\lambda,m}^\theta=\bigcap_{0\leq k\leq
m}f^{-k}_\lambda(S_{s_k}\cup S_{-s_k}).$$ Note that the set
$\Omega_{\lambda,0}^\theta$ is a union of the two closed sectors
$S_{s_0}$ and $S_{-s_0}$. $\Omega_{\lambda,1}^\theta$  is a string
of four closed disks that lie inside $\Omega_{\lambda,0}^\theta$.
Inductively, $\Omega_{\lambda,m}^\theta$ is a string of $2^{m+1}$
closed disks that are contained in $\Omega_{\lambda,m-1}^\theta$,
and each of these disks meets exactly two others at the preimages of
$\infty$. Hence, $\Omega_{\lambda,m}^\theta$ is a connected and
compact set. One can show that $\Omega_{\lambda,m}^\theta$ converges
to $\Omega_{\lambda}^\theta=\cap_{k\geq0} \Omega_{\lambda,k}^\theta$
in Hausdorff topology as $m\rightarrow\infty$ (because a shrinking
sequence of compact sets always converges in Hausdorff topology).
Roughly, the set $\Omega_{\lambda,m}^\theta$ becomes thinner when
$m$ becomes larger and $\Omega_{\lambda,m}^\theta$ finally shrinks
to $\Omega_{\lambda}^\theta$. It is therefore conjectured that
$\Omega_{\lambda}^\theta$ is a Jordan curve. (A rigorous proof of
this fact will be given in Proposition \ref {3e1}).

By construction, the cut ray satisfies:

$\bullet$ \ $\Omega_\lambda^\theta=-\Omega_\lambda^\theta$.

$\bullet$ \ $\Omega_\lambda^\theta\setminus \{0,\infty\}$ is
contained in the interior of $S_{s_0}\cup S_{-s_0}$.

$\bullet$ \ $f_\lambda:\Omega_\lambda^{\theta}
 \rightarrow\Omega_\lambda^{\tau(\theta)}$
is a two-to-one map.

$\bullet$ \ $\bigcup_{\theta\in
\Theta}\Omega_\lambda^{\theta}=E_\lambda$.

\begin{lem}\label{3c} Let $\lambda\in \mathcal{H}$; then, there is a
constant $v>0$ such that for any $\theta\in\Theta$,
$$\overline{R_\lambda(\theta)}\cap \mathbf{U}(v)=\{z\in E_\lambda\cap \mathbf{U}(v);
\mathbf{s}_\lambda(z)=\mathbf{s}(\theta)\}.$$
\end{lem}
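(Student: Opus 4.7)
The plan is to fix $v$ large enough that three conditions hold simultaneously: $\mathbf{U}(v)\subset \mathrm{Dom}(\phi_\lambda)$, so that external rays are well-defined smooth arcs in $\mathbf{U}(v)$; iterates of $\mathbf{U}(v)$ remain in $\mathbf{U}(v)$, which is automatic because $G_\lambda\circ f_\lambda=nG_\lambda$ gives $f_\lambda(\mathbf{U}(v))=\mathbf{U}(nv)\subset\mathbf{U}(v)$; and the key geometric condition $R_\lambda(\alpha)\cap\mathbf{U}(v)\subset \mathrm{int}(S_k)$ whenever $\alpha\in\Theta_k^\lambda$. This last condition is arrangeable because $\phi_\lambda(z)/z\to 1$ as $z\to\infty$, so $\arg\phi_\lambda(z)-\arg z\to 0$; each external ray $R_\lambda(\alpha)$ asymptotically approaches the straight radial ray of argument $2\pi\alpha$, while the critical rays $\ell_k$ sit exactly at the angular endpoints of the intervals $\Theta_k^\lambda$. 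For $v$ sufficiently large, every truncated ray is trapped in the correct open sector.

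With such a $v$ in hand, the forward inclusion proceeds by iteration. Fix $\theta\in\Theta$ with $\mathbf{s}(\theta)=(s_0,s_1,\ldots)$ and pick $z\in R_\lambda(\theta)\cap\mathbf{U}(v)$. The B\"ottcher conjugacy $\phi_\lambda\circ f_\lambda=\phi_\lambda^n$ yields $f_\lambda(R_\lambda(\theta))=R_\lambda(\tau\theta)$, so inductively $f_\lambda^k(z)\in R_\lambda(\tau^k\theta)\cap\mathbf{U}(n^k v)\subset R_\lambda(\tau^k\theta)\cap\mathbf{U}(v)$. By Lemma \ref{3b1}, $\Theta^\lambda=\Theta$, hence $\tau^k\theta\in \Theta_{s_k}^\lambda$, and the geometric condition forces $f_\lambda^k(z)\in \mathrm{int}(S_{s_k})$ for every $k\ge 0$. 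Thus $z\in E_\lambda\setminus O_\lambda$ with $\mathbf{s}_\lambda(z)=\mathbf{s}(\theta)$. The right-hand side is closed in $\mathbf{U}(v)$, so the inclusion passes to the closure.

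For the reverse inclusion, take $z\in E_\lambda\cap\mathbf{U}(v)$ with $\mathbf{s}_\lambda(z)=\mathbf{s}(\theta)=(s_0,s_1,\ldots)$. Since $\mathbf{U}(v)\subset\mathrm{Dom}(\phi_\lambda)$, there is a unique angle $\theta'\in(0,1]$ with $z\in R_\lambda(\theta')$. Running the previous argument in reverse, the itinerary conditions $f_\lambda^k(z)\in\mathrm{int}(S_{s_k})$ combined with the geometric condition force $\tau^k\theta'\in\overline{\Theta_{s_k}^\lambda}$ for every $k\geq 0$; hence $\theta'\in\Theta^\lambda=\Theta$ (Lemma \ref{3b1}) with $\mathbf{s}^\lambda(\theta')=\mathbf{s}(\theta)$. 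Bijectivity of $\mathbf{s}:\Theta\to\Sigma_0$ then forces $\theta'=\theta$, so $z\in R_\lambda(\theta)\subset\overline{R_\lambda(\theta)}$.

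The main obstacle is the geometric condition in the first paragraph: one must reconcile the two partitions of the plane, the sectors $S_k$ (defined geometrically by critical rays) and the angular sets $\Theta_k^\lambda$ (defined via B\"ottcher angles), and show that a \emph{single} threshold $v$ works simultaneously for the truncated ray and all its forward iterates. The saving observation is the monotonicity $\mathbf{U}(n^k v)\subset\mathbf{U}(v)$: iteration only pushes points deeper toward $\infty$, where the asymptotic $\phi_\lambda(z)\sim z$ is sharpest, so no deterioration of the estimate occurs along orbits.
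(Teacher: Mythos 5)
Your first paragraph posits the geometric condition ``$R_\lambda(\alpha)\cap\mathbf{U}(v)\subset \mathrm{int}(S_k)$ whenever $\alpha\in\Theta_k^\lambda$,'' and this is the pivot of the whole argument; unfortunately, as stated it is false for every finite $v$. The intervals $\Theta_k^\lambda$ share their endpoints with the arguments of the critical rays $\ell_k$ that bound $S_k$, and for any fixed $v$ the B\"ottcher distortion $|\arg\phi_\lambda(z)-\arg z|$ is bounded on $\mathbf{U}(v)$ only by some $\varepsilon(v)>0$ (not zero). An angle $\alpha\in\Theta_k^\lambda$ within distance $\varepsilon(v)/(2\pi)$ of an endpoint of the interval has a truncated ray that slips across the critical ray near the equipotential $\mathbf{e}(B_\lambda,v)$, out of $\mathrm{int}(S_k)$. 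Your observation that $\mathbf{U}(n^kv)\subset\mathbf{U}(v)$ (``no deterioration along orbits'') is correct but doesn't address this: the danger is not that iteration worsens the estimate, but that the estimate is already inadequate at the starting level $v$ for angles close to the sector boundaries.

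What saves the \emph{forward} inclusion is a fact you never isolate: the Cantor set $\Theta$ is uniformly bounded away from the endpoints of all the intervals $\Theta_k^\lambda$ by some $\varepsilon>0$ (equivalently $\Theta=\bigcap_j\tau^{-j}\bigl(\bigcup_k\overline{\Theta_{k}^\lambda}\bigr)$ with closures taken of the shrunk intervals, since the dynamics pushes orbits inward). The paper makes this explicit by introducing $\Theta_{k,\varepsilon}^\lambda$ and choosing $v$ so that $\varepsilon(v)<\varepsilon$. For the \emph{reverse} inclusion the gap is more serious: knowing $f_\lambda^k(z)\in\mathrm{int}(S_{s_k})$ does not ``force $\tau^k\theta'\in\overline{\Theta_{s_k}^\lambda}$'' from your premises, because $\arg f_\lambda^k(z)$ could lie within $\varepsilon(v)$ of an endpoint of $\Theta_{s_k}^\lambda$, so $\arg\phi_\lambda(f_\lambda^k(z))$ could fall in the adjacent interval $\Theta_{s_k\pm1}^\lambda$. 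What is needed, and what the paper proves, is the companion fact $E_\lambda=\bigcap_{k\geq0}f_\lambda^{-k}\bigl(\bigcup_j S_{j,\varepsilon}\bigr)$: points of $E_\lambda$ never come near the critical rays, so $f_\lambda^k(z)$ lies in the $\varepsilon$-shrunk sector $S_{s_k,\varepsilon}$, and only then does the bound $|\arg\phi_\lambda-\arg|<\varepsilon$ put $\arg\phi_\lambda(f_\lambda^k(z))$ inside $\Theta_{s_k}^\lambda$. Your proof never establishes this uniform $\varepsilon$-margin on either side, and without it the reverse direction has a genuine hole.
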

\begin{proof}
For any small number $\varepsilon>0$, we define
$\Theta_{k,\varepsilon}^\lambda=[\frac{\chi(k)}{2n}+\frac{\arg\lambda}{4n\pi}+\varepsilon,
\frac{\chi(k)+1}{2n}+\frac{\arg\lambda}{4n\pi}-\varepsilon]$,
$S_{k,\varepsilon}={\{z\in S_k\setminus\{0,\infty\}; \arg z \in
\Theta_{k,\varepsilon}^\lambda\}}\cup\{0,\infty\}$ for $k\in
\mathbb{I}\setminus \{0,n\}$. It is obvious that $S_{k,\varepsilon}$
is a closed subset of $S_k$. One can verify that there is an
$\varepsilon>0$ such that
$\Theta^\lambda=\bigcap_{j\geq0}\tau^{-j}(\bigcup_{k=1}^{n-1}(\Theta_{k,\varepsilon}^\lambda\cup
\Theta_{-k,\varepsilon}^\lambda))$ and
$E_\lambda=\bigcap_{k\geq0}f_\lambda^{-k}(\bigcup_{j\in\mathbb{I}\setminus\{0,n\}}S_{j,\varepsilon})$.
Thus, for any $\theta\in \Theta$ with
$\mathbf{s}(\theta)=(s_0,s_1,\cdots)$, the cut ray
$\Omega_\lambda^\theta=\bigcap_{k\geq0}
f_\lambda^{-k}(S_{s_k,\varepsilon}\cup S_{-s_k,\varepsilon})$. We
fix such $\varepsilon$ (notice that $\varepsilon$ is independent of
$\theta\in \Theta$).

Because $\phi_\lambda'(\infty)=1$, we may choose $v=v(\varepsilon)$
large enough such that $|\arg z-\arg\phi_\lambda(z)|<\varepsilon$
for all $z\in\mathbf{U}(v)$. We define a map $\zeta: \mathbf{U}(v)
\rightarrow \mathbb{S}$ by $\zeta(z)=\frac{\arg
\phi_\lambda(z)}{2\pi}$. The map $\zeta$ satisfies $\zeta\circ
f_\lambda=\tau\circ \zeta$.

If $z\in \overline{R_\lambda(\theta)}\cap \mathbf{U}(v)$ and $z\neq
\infty$, then for any $k\geq0$, $\arg\phi_\lambda(f_\lambda^k(z))\in
\Theta_{s_k,\varepsilon}^\lambda$. We conclude that $\arg
f^k_\lambda(z)\in \Theta_{s_k}^\lambda$. Or, equivalently,
$f^k_\lambda(z)\in S_{s_k}$ for all $k \geq 0$. Thus,
$\mathbf{s}_\lambda(z)=\mathbf{s}(\theta)$.

On the other hand, for any $\infty\neq z\in E_\lambda\cap
\mathbf{U}(v)$ with $\mathbf{s}_\lambda(z)=\mathbf{s}(\theta)$, we
know from the above that $f^k_\lambda(z)\in S_{s_k,\varepsilon}$ for
all $k \geq 0$, thus $\arg f^k_\lambda(z)\in
\Theta_{s_k,\varepsilon}^\lambda$. It turns out that $\arg
\phi_\lambda (f^k_\lambda(z))=\tau^k(\zeta(z))\in
\Theta_{s_k}^\lambda$. By Lemma \ref{3b1},
$\mathbf{s}^\lambda(\zeta(z))=\mathbf{s}(\theta)=\mathbf{s}^\lambda(\theta)$.
Thus, we have $\zeta(z)=\theta$; this means $z\in
{R_\lambda(\theta)}\cap \mathbf{U}(v)$.
\end{proof}

\begin{pro}\label{3d} For any $\lambda\in \mathcal{H}$ and any $\theta\in\Theta$,
the external ray $R_\lambda(\theta)$ lands at a unique point
$p_\lambda(\theta)\in\partial B_\lambda$ and
$\overline{R_\lambda(\theta)}=\{z\in E_\lambda\setminus O_\lambda;
\mathbf{s}_\lambda(z)=\mathbf{s}(\theta)\}\cup \{\infty\}=\{z\in
(E_\lambda\setminus O_\lambda)\cap B_\lambda;
\mathbf{s}_\lambda(z)=\mathbf{s}(\theta)\}\cup
\{p_\lambda(\theta)\}\cup \{\infty\}$.
\end{pro}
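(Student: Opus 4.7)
The plan is to combine the local itinerary characterisation of Lemma \ref{3c} on $\mathbf{U}(v)$ with the Schwarz-lemma contraction argument from the proof of Lemma \ref{2a}. Write $\mathbf{s}(\theta)=(s_0,s_1,\dots)$ and set $H_k:=h_{s_0}\circ h_{s_1}\circ\dots\circ h_{s_{k-1}}\colon\Upsilon_\lambda\to\mathrm{int}(S_{s_0})$, a single-valued conformal embedding. By Lemma \ref{3c}, $R_\lambda(\tau^k(\theta))\cap\mathbf{U}(v)\subset\overline{S_{s_k}}$; because $f_\lambda$ multiplies B\"ottcher angles by $n$ and $H_k$ is the inverse branch selected by the first $k$ sector symbols of $\theta$, one has
\[
H_k\bigl(R_\lambda(\tau^k(\theta))\cap\mathbf{U}(v)\bigr)=R_\lambda(\theta)\cap\mathbf{U}(v/n^k).
\]
Taking the union over $k\geq 0$ shows $R_\lambda(\theta)\subset S_{s_0}$; iterating the same argument gives $f_\lambda^j(R_\lambda(\theta))\subset S_{s_j}$ for every $j$, so every $z\in R_\lambda(\theta)\setminus\{\infty\}$ lies in $E_\lambda\setminus O_\lambda$ with $\mathbf{s}_\lambda(z)=\mathbf{s}(\theta)$.

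For landing, the tail $\overline{R_\lambda(\theta)}\setminus\mathbf{U}(v/n^k)$ is equal to $H_k\bigl(\overline{R_\lambda(\tau^k(\theta))}\setminus\mathbf{U}(v)\bigr)\subset H_k(\overline{S^v})$, and the Schwarz-lemma estimate used in Lemma \ref{2a} yields
\[
\mathrm{Hyper.\ diam}\bigl(H_k(\overline{S^v})\bigr)\leq\mathrm{Hyper.\ diam}(S^v)\cdot\delta^k\to 0
\]
as $k\to\infty$. Hence $\bigcap_{k\geq 0}\overline{H_k(S^v)}=\{z_{\mathbf{s}(\theta)}\}$, and $R_\lambda(\theta)$ lands at the unique point $p_\lambda(\theta):=z_{\mathbf{s}(\theta)}$. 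This landing point lies on $\partial B_\lambda$ since $R_\lambda(\theta)\subset B_\lambda$ while $z_{\mathbf{s}(\theta)}\in\Lambda_\lambda\subset J(f_\lambda)$.

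For the reverse inclusion, take $z\in E_\lambda\setminus O_\lambda$ with $\mathbf{s}_\lambda(z)=\mathbf{s}(\theta)$. If $z\in B_\lambda$, then $f_\lambda^k(z)$ eventually lies in $\mathbf{U}(v)$ with itinerary $\sigma^k(\mathbf{s}(\theta))$, hence on $R_\lambda(\tau^k(\theta))$ by Lemma \ref{3c}; pulling back via $H_k$ places $z$ on $R_\lambda(\theta)$. If $z\notin B_\lambda$, the orbit of $z$ is bounded, so $z\in\Lambda_\lambda$, and the bijectivity of the itinerary map from Lemma \ref{2a} forces $z=p_\lambda(\theta)$. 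This proves the first equality; the second merely separates the resulting set into its $B_\lambda$-part and the Julia-set point $p_\lambda(\theta)$. The main technical point, underlying the pullback identity in the first paragraph, is that $H_k$ correctly selects the ray $R_\lambda(\theta)$ among the $n^k$ preimages $R_\lambda(\theta+j/n^k)$ of $R_\lambda(\tau^k(\theta))$ in $B_\lambda$; this is guaranteed by the compatibility between the B\"ottcher-angle partition $\{\Theta^\lambda_k\}$ and the sector partition $\{S_k\}$ established in Lemma \ref{3b1}, which forces the sector labels $s_j$ used in each $h_{s_j}$ to match the B\"ottcher-angle combinatorics of $\theta$.
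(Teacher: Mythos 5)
Your argument follows the paper's proof in all essential respects: Lemma \ref{3c} pins down the itinerary of the ray inside $\mathbf{U}(v)$, the composed inverse branch $H_k=h_{s_0}\circ\cdots\circ h_{s_{k-1}}$ extends the ray into $S_{s_0}$, and the Schwarz-lemma contraction from the proof of Lemma \ref{2a} forces the tail to shrink to a single point of $\Lambda_\lambda$. The paper phrases the same pullback piecewise via $\ell_\lambda(v,\tau^{k+1}(\theta))$ while you bound the whole tail at once, but these are the same estimate.

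There is one slip to flag in the converse inclusion. You split on $z\in B_\lambda$ versus $z\notin B_\lambda$ and then assert that $z\notin B_\lambda$ forces a bounded orbit. That implication is not automatic: $E_\lambda\setminus B_\lambda$ does contain escaping points, for instance the radial-ray parts of a cut ray inside $T_\lambda$ described in Proposition \ref{3e}. The correct dichotomy is by orbit behaviour, not by membership in $B_\lambda$. If the orbit of $z$ escapes, your own first-case argument — $f_\lambda^k(z)$ eventually lies in $\mathbf{U}(v)$, hence on $R_\lambda(\tau^k(\theta))$ by Lemma \ref{3c}, and pulling back via $H_k$ — goes through unchanged and places $z\in R_\lambda(\theta)\subset B_\lambda$; notice that step never actually uses the hypothesis $z\in B_\lambda$. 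If the orbit stays bounded, then $z\in\Lambda_\lambda$ and bijectivity of $\mathbf{s}_\lambda|_{\Lambda_\lambda}$ gives $z=p_\lambda(\theta)$. This is precisely how the paper handles the escaping case, and it is what rules out radial-ray points of $T_\lambda$ carrying the itinerary $\mathbf{s}(\theta)$; as written, your second case quietly assumes that conclusion.
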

\begin{proof}
Suppose $\mathbf{s}(\theta)=(s_0,s_1,s_2,\cdots)$. Let
$\ell_\lambda(v,\theta)=\{z\in R_\lambda(\theta); v\leq
G_\lambda(z)\leq nv\}$ be the portion of $R_\lambda(\theta)$ that
lies between two equipotential curves $\mathbf{e}(B_\lambda, v)$ and
$\mathbf{e}(B_\lambda, nv)$. Based on Lemma \ref {3c}, we can assume
$v$ large enough such that for any $\beta\in\Theta$,
$\overline{R_\lambda(\beta)}\cap \mathbf{U}(v)=\{z\in E_\lambda\cap
\mathbf{U}(v); \mathbf{s}_\lambda(z)=\mathbf{s}(\beta)\}$. By
pulling back $\ell_\lambda(v,\tau(\theta))$ by $f_\lambda^{-1}$  to
$S_{s_0}$, we can extend the portion of
$\overline{R_\lambda(\theta)}$, say
$\gamma_0=\overline{R_\lambda(\theta)}\cap \mathbf{U}(v)$, to a
longer one $\gamma_1=h_{s_0}(\ell_\lambda(v,\tau(\theta)))\cup
\gamma_0$. Obviously, $\gamma_1\subset S_{s_0}\cap
\overline{R_\lambda(\theta)}$. Continuing inductively, suppose we
have already constructed a portion $\gamma_k$ of
$\overline{R_\lambda(\theta)}$; we then add a segment
$h_{s_0}\circ\cdots\circ
h_{s_k}(\ell_\lambda(v,\tau^{k+1}(\theta)))$ to $\gamma_k$ and
obtain $\gamma_{k+1}=\gamma_k\cup h_{s_0}\circ\cdots\circ
h_{s_k}(\ell_\lambda(v,\tau^{k+1}(\theta)))$. By construction, one
can confirm that $h_{s_0}\circ\cdots\circ
h_{s_k}(\ell_\lambda(v,\tau^{k+1}(\theta)))\subset S_{s_0}\cap
\overline{R_\lambda(\theta)}$, and that for any $z\in
h_{s_0}\circ\cdots\circ
h_{s_k}(\ell_\lambda(v,\tau^{k+1}(\theta)))$,
$\mathbf{s}_{\lambda}(z)=(s_0,s_1,s_2,\cdots)$. It turns out that
$$R_\lambda(\theta)\setminus \gamma_0=\bigcup_{k\geq0} h_{s_0}\circ\cdots\circ
h_{s_k}(\ell_\lambda(v,\tau^{k+1}(\theta))).$$\

In the following, we show that the external ray $R_\lambda(\theta)$
lands at $\partial B_\lambda$. Because $h_k:
\Upsilon_\lambda\rightarrow \Upsilon_\lambda$ contracts the
hyperbolic metric $\rho_\lambda$ of $\Upsilon_\lambda$ for any $k\in
\mathbb{I}\setminus\{0,n\}$, there is a constant $\delta\in (0,1)$
such that
$$\rho_\lambda(h_k(x), h_k(y))\leq \delta \rho_\lambda(x, y),\ \ \ \
\forall x,y \in \overline{\mathbf{V}(nv)}\cap\big(\cup_{j\in
\mathbb{I}\setminus \{0,n\}}S_j\big),\forall k\in
\mathbb{I}\setminus \{0,n\}.$$

Notice that $\bigcup_{\alpha\in \Theta}\ell_\lambda(v,\alpha)=
E_\lambda\cap \{z\in B_\lambda;  v\leq G_\lambda(z)\leq nv\}$ is a
compact subset of
 $\Upsilon_\lambda$, with
respect to the hyperbolic metric of $\Upsilon_\lambda$ we have
$$ \mathrm{Hyper. length}\big( h_{s_0}\circ\cdots\circ
h_{s_k}(\ell_\lambda(v,\tau^{k+1}(\theta)))\big)=\mathcal{O}(\delta^k).$$
This implies that $R_\lambda(\theta)\setminus \gamma_0$ has finite
hyperbolic length in $\Upsilon_\lambda$; thus, the external ray
$R_\lambda(\theta)$ lands at $\partial B_\lambda$. Let
$p_\lambda(\theta)$ be the landing point. It is easy to confirm that
$\mathbf{s}_\lambda(p_\lambda(\theta))=\mathbf{s}(\theta)$ and
$p_\lambda(\theta)\in \partial B_\lambda\cap \Lambda_\lambda$. Thus,
we have \bess \overline{R_\lambda(\theta)} &\subset&\{z\in
(E_\lambda\setminus O_\lambda)\cap B_\lambda;
\mathbf{s}_\lambda(z)=\mathbf{s}(\theta)\}\cup
\{p_\lambda(\theta)\}\cup \{\infty\}\\
&\subset&\{z\in E_\lambda\setminus O_\lambda;
\mathbf{s}_\lambda(z)=\mathbf{s}(\theta)\}\cup \{\infty\}\eess

Finally, we show $\overline{R_\lambda(\theta)}\supset\{z\in
E_\lambda\setminus O_\lambda;
\mathbf{s}_\lambda(z)=\mathbf{s}(\theta)\}\cup \{\infty\}$. For any
$x\in\{z\in E_\lambda\setminus O_\lambda;
\mathbf{s}_\lambda(z)=\mathbf{s}(\theta)\}$, we consider the orbit
of $x$.

If the orbit of $x$ remains bounded, then based on Lemma \ref {2a},
we have $x\in \Lambda_\lambda$. Because
$\mathbf{s}_\lambda|_{\Lambda_\lambda}: \Lambda_\lambda\rightarrow
\Sigma$ is bijective and
$\mathbf{s}_\lambda(x)=\mathbf{s}_\lambda(p_\lambda(\theta))=\mathbf{s}(\theta)$,
we conclude $x=p_\lambda(\theta)\in \overline{R_\lambda(\theta)}$.

If the orbit of $x$ tends toward $\infty$, then by Lemma \ref {3c},
there is an integer $M\geq1$ such that $f_\lambda^M(x)\in
R_\lambda(\tau^M(\theta))$. Note that for any $j\geq 0$, the above
argument implies $R_\lambda(\tau^j(\theta))\subset S_{s_j}$. Because
$f_\lambda(R_\lambda(\tau^{k-1}(\theta)))=R_\lambda(\tau^{k}(\theta))$
and $h_{s_{k-1}}$ is the inverse branch of $f: {\rm
int}(S_{s_{k-1}})\rightarrow\Upsilon_\lambda$, we conclude that for
all $k\geq1$,
$h_{s_{k-1}}(R_\lambda(\tau^k(\theta)))=R_\lambda(\tau^{k-1}(\theta))$
and $h_{s_{k-1}}(f_\lambda^k(x))=f_\lambda^{k-1}(x)$. It turns out
that $x\in R_\lambda(\theta)$ and
$\overline{R_\lambda(\theta)}\supset\{z\in E_\lambda\setminus
O_\lambda; \mathbf{s}_\lambda(z)=\mathbf{s}(\theta)\}\cup
\{\infty\}$.
\end{proof}

\begin{pro}\label{3e} For any $\lambda\in \mathcal{H}$ and any
$\theta\in\Theta$ with itinerary
$\mathbf{s}(\theta)=(s_0,s_1,s_2,\cdots)$, the cut ray
$\Omega_\lambda^\theta$ satisfies:

1. $\Omega_\lambda^\theta$ meets the Julia set $J(f_\lambda)$ in a
Cantor set of points. More precisely, $\Omega_\lambda^\theta\cap
J(f_\lambda)=(\kappa\circ\mathbf{s}_\lambda|_{\Lambda_\lambda})^{-1}(\{\theta,\theta+\frac{1}{2}\})$.

2.  $\Omega_\lambda^\theta$ meets the Fatou set $F(f_\lambda)$ in a
countable union of external rays and radial rays together with the
preimages of $\infty$ that lie in the closure of these rays. More
precisely, \bess \Omega_\lambda^\theta\cap
B_\lambda&=&R_\lambda(\theta)\cup
R_\lambda(\theta+\frac{1}{2})\cup\{\infty\}\\
 \Omega_\lambda^\theta\cap
T_{\lambda}&=&\begin{cases}
 h_{-s_0}(R_\lambda(\tau(\theta)))\cup
 h_{s_0}(R_\lambda(\tau(\theta)+\frac{1}{2}))\cup \{0\},\ \  &\text{ if } n \text{ is odd},\\
 h_{s_0}(R_\lambda(\tau(\theta)+\frac{1}{2}))\cup
 h_{-s_0}(R_\lambda(\tau(\theta)+\frac{1}{2}))\cup \{0\},\ \ &\text{ if } n \text{ is even}.
 \end{cases}
 \eess
For any $U\in \mathcal{P}\setminus \{B_\lambda,T_\lambda\}$ with
 $U\cap \Omega_\lambda^\theta\neq\emptyset$, $U$ is of the form
 $h_{b_0}\circ\cdots\circ h_{b_{k-1}}(T_\lambda)$, where $k\geq1$
 and $(b_0,\cdots,b_{k-1})\in \{(\pm s_0,\cdots,\pm s_{k-1})\}$.
 Moreover,
\bess &&\Omega_\lambda^\theta\cap U=
h_{b_0}\circ\cdots\circ h_{b_{k-1}}(\Omega_\lambda^{\tau^k(\theta)}\cap T_\lambda)\\
&=&\begin{cases}
 h_{b_0}\circ\cdots\circ h_{b_{k-1}}\Big( h_{-s_k}(R_\lambda(\tau^{k+1}(\theta)))\cup
  h_{s_k}(R_\lambda(\tau^{k+1}(\theta)+\frac{1}{2}))\cup \{0\}\Big),\ &\text{if } n \text{ is odd},\\
  h_{b_0}\circ\cdots\circ h_{b_{k-1}}\Big( h_{-s_k}(R_\lambda(\tau^{k+1}(\theta)+\frac{1}{2}))\cup
  h_{s_k}(R_\lambda(\tau^{k+1}(\theta)+\frac{1}{2}))\cup \{0\}\Big),\ &\text{if } n \text{ is even}.
 \end{cases}
 \eess
\end{pro}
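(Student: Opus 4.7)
The plan is to use the two characterizations of the cut ray,
$$\Omega_\lambda^\theta=\bigcap_{k\geq 0}f_\lambda^{-k}(S_{s_k}\cup S_{-s_k})=\overline{\{z\in E_\lambda\setminus O_\lambda:\mathbf{s}_\lambda(z)=(\pm s_0,\pm s_1,\ldots)\}},$$
together with Proposition \ref{3d} and the bijection $\mathbf{s}_\lambda|_{\Lambda_\lambda}$ from Lemma \ref{2a}, to decompose $\Omega_\lambda^\theta$ along the Fatou--Julia partition and, within the Fatou set, along the components of $\mathcal{P}$.

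For Part 1, the grand orbit satisfies $O_\lambda\subset F(f_\lambda)$ (preimages of the superattracting fixed point $\infty$), so $\Omega_\lambda^\theta\cap J(f_\lambda)\subset E_\lambda\setminus O_\lambda$. The forward orbit of such a point stays in the compact set $J(f_\lambda)\cap\bigcup_{j\in\mathbb{I}\setminus\{0,n\}}S_j$, which for $v$ large enough lies in $S^v$; hence the point lies in $\Lambda_\lambda$ with itinerary $(\pm s_0,\pm s_1,\ldots)$. By the remark following Lemma \ref{3a}, $\kappa^{-1}(\{\theta,\theta+1/2\})=\{(\pm s_0,\pm s_1,\ldots)\}$, giving the stated equality. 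Since $\mathbf{s}_\lambda|_{\Lambda_\lambda}$ is a homeomorphism and the symbol set $\{(\pm s_0,\pm s_1,\ldots)\}$ is homeomorphic to $\{0,1\}^{\mathbb{N}}$, this intersection is a Cantor set.

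For Part 2, I treat $B_\lambda$, $T_\lambda$, and the remaining components in turn. Under the Hypothesis, $\mathrm{Dom}(\phi_\lambda)=B_\lambda$, so every point of $B_\lambda\setminus\{\infty\}$ lies on a unique external ray; membership in $\Omega_\lambda^\theta$ forces its angle's itinerary into $\Sigma_0\cap\{(\pm s_0,\pm s_1,\ldots)\}$, which the parity constraints of $\Sigma_0$ reduce to exactly $\mathbf{s}(\theta)$ and $\mathbf{s}(\theta+1/2)$, yielding the formula for $B_\lambda$ via Proposition \ref{3d}. For $T_\lambda$, the condition $f_\lambda(z)\in B_\lambda\cap\Omega_\lambda^{\tau(\theta)}$ together with $z\in S_{s_0}\cup S_{-s_0}$ reduces matters to pulling back $R_\lambda(\tau(\theta))\cup R_\lambda(\tau(\theta)+1/2)$ by the two inverse branches $h_{s_0}$ and $h_{-s_0}$. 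Using $f_\lambda(\omega z)=-f_\lambda(z)$ for $\omega=e^{i\pi/n}$ together with the derived identity $\phi_{T_\lambda}(\omega z)=\omega\phi_{T_\lambda}(z)$ (forced by the normalization at $0$), adjacent sectors of $T_\lambda$ carry radial rays whose external images differ by $1/2$; whether the opposing sectors $S_{s_0}$ and $S_{-s_0}$ realize the same external angle or angles differing by $1/2$ is then determined by the parity of $n$, producing the two cases in the formula. For a general $U\in\mathcal{P}\setminus\{B_\lambda,T_\lambda\}$ with $U\cap\Omega_\lambda^\theta\neq\emptyset$, I induct on the smallest $k\geq 1$ with $f_\lambda^k(U)=T_\lambda$: the Hypothesis prevents critical points in $U$, so each $f_\lambda^j|_U$ is conformal; writing $U\subset S_{b_0}$, $f_\lambda(U)\subset S_{b_1}$, and so on, the intersection condition forces $b_j\in\{\pm s_j\}$ and $U=h_{b_0}\circ\cdots\circ h_{b_{k-1}}(T_\lambda)$, and pulling back the $T_\lambda$ formula by the conformal composition $h_{b_0}\circ\cdots\circ h_{b_{k-1}}$ gives the stated identity.

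The main obstacle is the parity-dependent case split inside $T_\lambda$: one must carefully combine the involutive symmetry $f_\lambda(\omega z)=-f_\lambda(z)$ with the local expansion $f_\lambda(z)\sim\lambda/z^n$ near $0$ to decide, for each of the sectors $S_{s_0}$ and $S_{-s_0}$, whether its unique radial ray in $T_\lambda$ maps to $R_\lambda(\tau(\theta))$ or to $R_\lambda(\tau(\theta)+1/2)$; the rest of the argument is then an essentially formal pullback.
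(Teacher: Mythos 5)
Your overall plan mirrors the paper's proof: reduce Part~1 to the itinerary bijection of Lemma~\ref{2a}, and in Part~2 treat $B_\lambda$, then $T_\lambda$, then pull back to the remaining Fatou components. Your treatment of $T_\lambda$ via the rotational symmetry $\phi_{T_\lambda}(\omega z)=\omega\phi_{T_\lambda}(z)$ is a nice packaging of the parity phenomenon that the paper instead verifies by directly computing $f_\lambda(\Omega_\lambda^\theta\cap T_\lambda\cap S_{\pm s_0})$; both are sound, and you are right that the only residual work is deciding the explicit sign assignment for each of $S_{s_0},S_{-s_0}$, which you flag.

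There is, however, a genuine gap in your $B_\lambda$ case, at the step ``membership in $\Omega_\lambda^\theta$ forces its angle's itinerary into $\Sigma_0\cap\{(\pm s_0,\pm s_1,\ldots)\}$.'' The geometric itinerary $\mathbf{s}_\lambda(z)$, which is what membership in $\Omega_\lambda^\theta$ constrains, records which Euclidean sectors $S_{\pm s_k}$ the orbit visits; the itinerary of the external angle $\alpha$ with $z\in R_\lambda(\alpha)$ records which angular sectors $\Theta_k^\lambda$ the orbit $\tau^j(\alpha)$ visits. These two notions coincide only inside $\mathbf{U}(v)$, where $\arg z$ and $\arg\phi_\lambda(z)$ are uniformly close (this is precisely the content of Lemma~\ref{3c}); near $\partial B_\lambda$ nothing forces them to agree, and in particular nothing in the definitions alone puts $\alpha$ in $\Theta$ or constrains its first several $\tau$-iterates. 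The paper closes this gap by iterating $z$ forward until $f_\lambda^M(z)\in\mathbf{U}(v)$, applying Lemma~\ref{3c} there to identify $\tau^M(\alpha)$ and conclude $f_\lambda^M(z)\in R_\lambda(\tau^M(\theta))\cup R_\lambda(\tau^M(\theta)+\tfrac12)$, and then pulling back one step at a time through the sectors $S_{\pm s_{M-1}},\ldots,S_{\pm s_0}$ (with separate bookkeeping for $n$ odd and $n$ even, since the preimage rays landing in $S_{\pm s_{k}}\cap B_\lambda$ differ between the two parities). That forward-then-pullback step is the heart of the $B_\lambda$ case, and your outline needs it --- or an explicit substitute --- before the parity reduction to $\{\mathbf{s}(\theta),\mathbf{s}(\theta+\tfrac12)\}$ and the appeal to Proposition~\ref{3d} can be made.
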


See Figure 3 for the combinatorial structure of a part of a cut ray.

\begin{figure}
\begin{center}
\includegraphics[height=7.5cm]{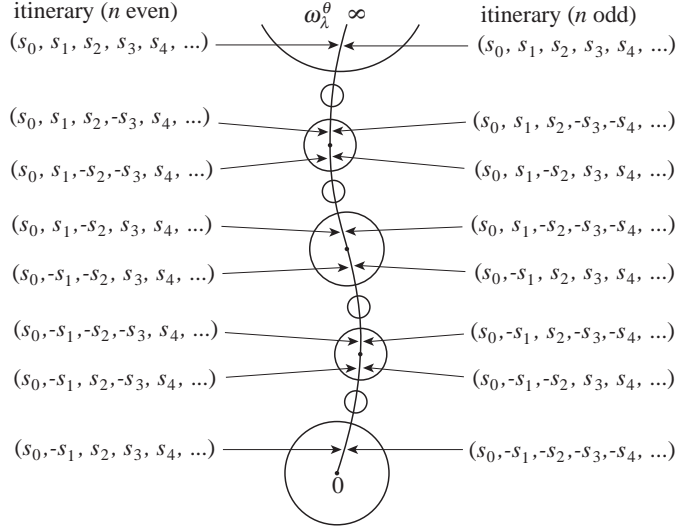}
\caption{Combinatorial structure of a full ray
$\omega_\lambda^\theta$ with
$\mathbf{s}(\theta)=(s_0,s_1,s_2,\cdots)$.}
\end{center}
\end{figure}

\begin{proof}
1. For $z\in  \Omega_\lambda^\theta$, first note that $z\in
\Omega_\lambda^\theta\cap J(f_\lambda)$ if and only if the orbit of
$z$ remains bounded, if and only if $z\in\Lambda_\lambda$ and
$\mathbf{s}_\lambda(z)\in \{(\pm s_0,\pm s_{1},\pm
s_{2},\cdots)\}=\kappa^{-1}(\{\theta,\theta+\frac{1}{2}\})$. Thus,
we have $\Omega_\lambda^\theta\cap
J(f_\lambda)=(\kappa\circ\mathbf{s}_\lambda|_{\Lambda_\lambda})^{-1}(\{\theta,\theta+\frac{1}{2}\})$.

2. Let $U$ be a Fatou component such that $U\cap
\Omega_\lambda^\theta\neq\emptyset$. Then, by 1, $U$ is eventually
mapped onto $B_\lambda$.

\textbf{Case 1: $U=B_\lambda$. }  By Proposition \ref {3d},
 $\Omega_\lambda^\theta\cap
B_\lambda\supset R_\lambda(\theta)\cup
R_\lambda(\theta+\frac{1}{2})\cup\{\infty\}$.  On the other hand,
for any $z\in (\Omega_\lambda^\theta\cap B_\lambda)\setminus
\{\infty\}$, there is an integer $M\geq1$ such that
$f_\lambda^M(z)\in \mathbf{U}(v)$, where $v$ is a positive constant
chosen by Lemma \ref {3c}. Because
$\mathbf{s}_\lambda(f_\lambda^M(z))\in \{(\pm s_M,\pm s_{M+1},\pm
s_{M+2},\cdots)\}$, we conclude that the itinerary of
$f_\lambda^M(z)$ must be the same as that of some angle
$\beta\in\Theta$. Thus,
\begin{equation*}
\mathbf{s}_\lambda(f_\lambda^M(z))=\begin{cases}
(s_M,s_{M+1},s_{M+2},\cdots) \text{ or }(-s_M,-s_{M+1},-s_{M+2},\cdots),\ \  &\text{ if } n \text{ is odd},\\
(s_M,s_{M+1},s_{M+2},\cdots),\ \ &\text{ if } n \text{ is even}.
 \end{cases}
 \end{equation*}

\begin{figure}
\begin{center}
\includegraphics[height=8cm]{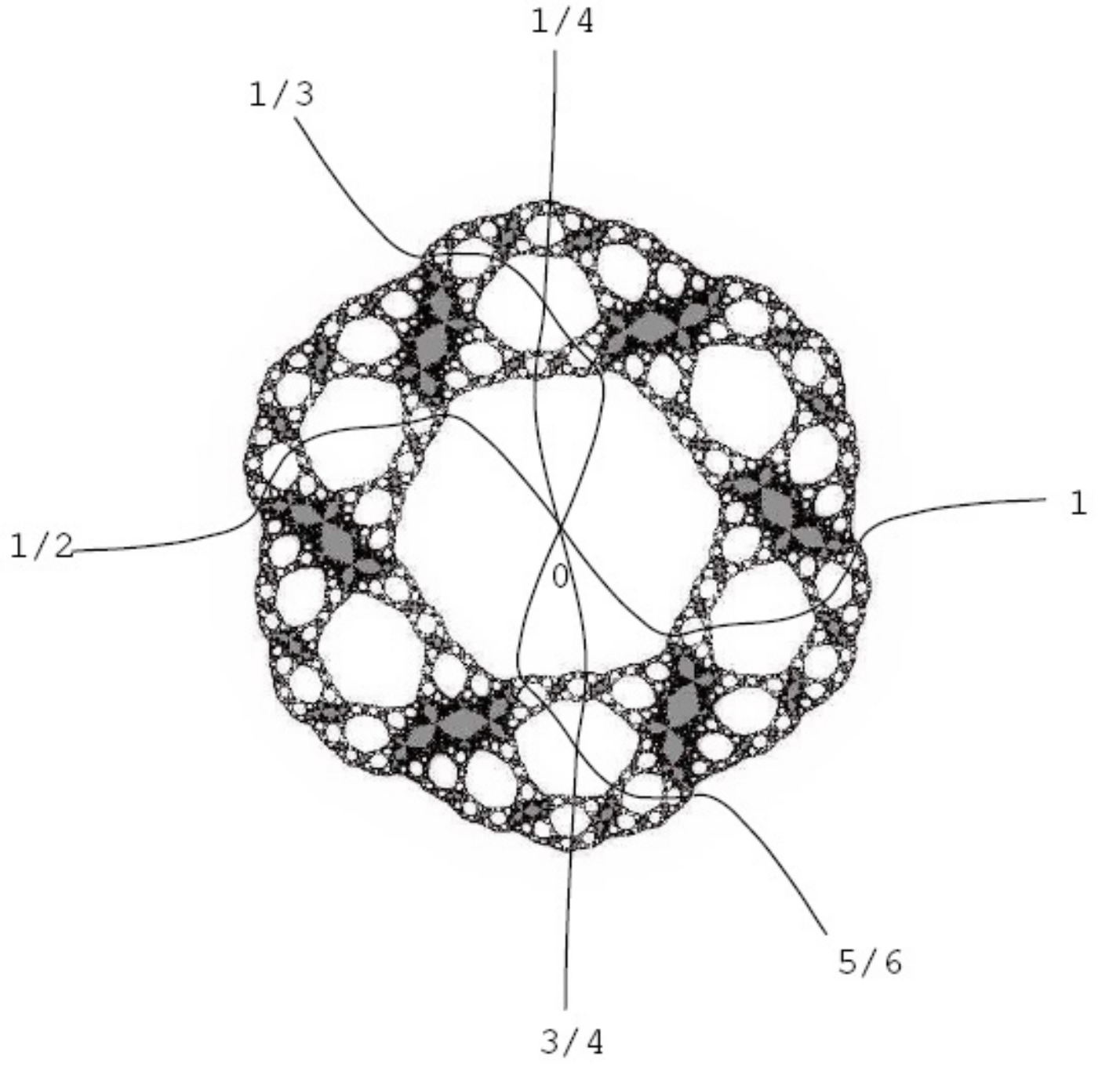}
\caption{Cut rays with angles $1/4,1/3, 1/2$ when $n=3$.}
\end{center}
\end{figure}

\textbf{Case 1.1. $n$ is odd. } By Proposition \ref {3d},
$f_\lambda^M(z)\in R_\lambda(\tau^M(\theta))\cup
R_\lambda(\tau^M(\theta)+\frac{1}{2})$. Note that
$f_\lambda^{-1}(R_\lambda(\tau^M(\theta)))\cap (S_{s_{M-1}}\cup
S_{-s_{M-1}})\cap B_\lambda=R_\lambda(\tau^{M-1}(\theta))$,
$f_\lambda^{-1}(R_\lambda(\tau^M(\theta)+\frac{1}{2}))\cap
(S_{s_{M-1}}\cup S_{-s_{M-1}})\cap
B_\lambda=R_\lambda(\tau^{M-1}(\theta)+\frac{1}{2})$. We conclude
that $f_\lambda^{M-1}(z)\in R_\lambda(\tau^{M-1}(\theta))\cup
R_\lambda(\tau^{M-1}(\theta)+\frac{1}{2})$. It turns out that $z\in
R_\lambda(\theta)\cup R_\lambda(\theta+\frac{1}{2})$ by induction.
So in this case, $\Omega_\lambda^\theta\cap
B_\lambda=R_\lambda(\theta)\cup
R_\lambda(\theta+\frac{1}{2})\cup\{\infty\}.$

\textbf{Case 1.2. $n$ is even. } By Proposition \ref {3d},
$f_\lambda^M(z)\in R_\lambda(\tau^M(\theta))$. Because
$f_\lambda^{-1}(R_\lambda(\tau^M(\theta)))\cap (S_{s_{M-1}}\cup
S_{-s_{M-1}})\cap B_\lambda=R_\lambda(\tau^{M-1}(\theta))\cup
R_\lambda(\tau^{M-1}(\theta)+\frac{1}{2})$, we have
$f_\lambda^{M-1}(z)\in R_\lambda(\tau^{M-1}(\theta))\cup
R_\lambda(\tau^{M-1}(\theta)+\frac{1}{2})$. If $M=1$, then $z\in
R_\lambda(\theta)\cup R_\lambda(\theta+\frac{1}{2})$, and the proof
is done. If $M>1$, then we claim $f_\lambda^{M-1}(z)\in
R_\lambda(\tau^{M-1}(\theta))$. This is because
$f_\lambda^{-1}(R_\lambda(\tau^{M-1}(\theta)+\frac{1}{2}))\cap
(S_{s_{M-2}}\cup S_{-s_{M-2}})\cap B_\lambda=\emptyset$. Again, by
induction, we have  $z\in R_\lambda(\theta)\cup
R_\lambda(\theta+\frac{1}{2})$ in this case.

\textbf{Case 2. $U=T_\lambda$. } In this case, if $n$ is odd, then
$f_\lambda(\Omega_\lambda^\theta\cap T_\lambda\cap
S_{s_0})=\Omega_\lambda^{\tau(\theta)}\cap B_\lambda\cap
S_{-s_1}=R_\lambda(\tau(\theta)+\frac{1}{2})\cup \{\infty\}$ and
$f_\lambda(\Omega_\lambda^\theta\cap T_\lambda\cap
S_{-s_0})=\Omega_\lambda^{\tau(\theta)}\cap B_\lambda\cap
S_{s_1}=R_\lambda(\tau(\theta))\cup \{\infty\}$. So
$\Omega_\lambda^\theta\cap T_\lambda=
h_{-s_0}(R_\lambda(\tau(\theta)))\cup
 h_{s_0}(R_\lambda(\tau(\theta)+\frac{1}{2}))\cup \{0\}$; if $n$ is even, then
$f_\lambda(\Omega_\lambda^\theta\cap T_\lambda\cap
S_{s_0})=f_\lambda(\Omega_\lambda^\theta\cap T_\lambda\cap
S_{-s_0})=\Omega_\lambda^{\tau(\theta)}\cap B_\lambda\cap
S_{-s_1}=R_\lambda(\tau(\theta)+\frac{1}{2})\cup \{\infty\}$. So
$\Omega_\lambda^\theta\cap T_\lambda=
 h_{s_0}(R_\lambda(\tau(\theta))+\frac{1}{2})\cup
 h_{-s_0}(R_\lambda(\tau(\theta)+\frac{1}{2}))\cup \{0\}.$

 \textbf{Case 3. $U\in \mathcal{P}\setminus \{B_\lambda,T_\lambda\}$.
 } In this case, there is a smallest integer $k\geq1$ such that
 $f_\lambda^k(U)=T_\lambda$.
 Because $f_\lambda^k:U\rightarrow T_\lambda$ is a conformal map and
 for any $0\leq j\leq k-1$, $f_\lambda^j(U)$ lies inside some sector
 $S_{k_j}$, we conclude $U$ must take the form
 $h_{b_0}\circ\cdots\circ h_{b_{k-1}}(T_\lambda)$ for some
 $(b_0,\cdots,b_{k-1})\in \{(\pm s_0,\cdots,\pm s_{k-1})\}$. By
 pulling back $f_\lambda^k(U\cap
 \Omega_\lambda^\theta)=\Omega_\lambda^{\tau^k(\theta)}\cap T_\lambda$ via
 $f_\lambda^{k}$, we have $\Omega_\lambda^\theta\cap U=
h_{b_0}\circ\cdots\circ
h_{b_{k-1}}(\Omega_\lambda^{\tau^k(\theta)}\cap T_\lambda)$. The
conclusion follows by case 2.
\end{proof}

\begin{pro}\label{3e1} For any $\lambda\in \mathcal{H}$ and any $\theta\in\Theta$, the cut ray $\Omega_\lambda^\theta$  is a Jordan curve.
\end{pro}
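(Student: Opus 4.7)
The cut ray $\Omega_\lambda^\theta$ is described explicitly by Proposition \ref{3e} as the disjoint union of the Cantor set $(\kappa\circ\mathbf{s}_\lambda|_{\Lambda_\lambda})^{-1}(\{\theta,\theta+\frac{1}{2}\})$ and countably many ``ray-pairs'' $\Omega_\lambda^\theta\cap U$ for $U\in\mathcal{P}$ (two external or radial rays through the center of $U$). My plan is to build a homeomorphism $\psi\colon\mathbb{S}\to\Omega_\lambda^\theta$ by exploiting the nested cyclic chain structure $\Omega_\lambda^\theta=\bigcap_m\Omega_{\lambda,m}^\theta$ observed in the paper: each $\Omega_{\lambda,m}^\theta$ is a cyclic chain of $2^{m+1}$ closed topological disks $D_{\varepsilon_0\cdots\varepsilon_m}^m=\bigcap_{k=0}^m f_\lambda^{-k}(S_{\varepsilon_k s_k})$, consecutive disks in the chain share exactly one pinch point (a preimage of $\infty$), and when $m$ grows by one each disk splits into two adjacent sub-disks in the refined cyclic order. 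Subdividing $\mathbb{S}$ into $2^{m+1}$ equal closed arcs and sending each arc into the disk with the matching address (dyadic endpoints to the corresponding pinch points) produces a coherent family of continuous approximations $\psi_m\colon\mathbb{S}\to\Omega_{\lambda,m}^\theta$ with $\psi_{m+1}$ refining $\psi_m$.

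The heart of the proof is showing that for every $t\in\mathbb{S}$ the nested intersection $K(t)=\bigcap_m D^m(t)$ is either (i) a single point of $\Lambda_\lambda$, or (ii) the closure of a single ray-pair $\Omega_\lambda^\theta\cap U$ for some $U\in\mathcal{P}$. Case (i) follows from the hyperbolic Schwarz-lemma contraction already used in Lemma \ref{2a} and Proposition \ref{3d}: each $D^m(t)$ is the image of $S_{\pm s_m}$ under a composition of the branches $h_{\varepsilon_k s_k}$, each a strict hyperbolic contraction on the compact subset $S^v$ of $\Upsilon_\lambda$, so whenever $t$ never forces the nested disks to enclose a Fatou component, the hyperbolic (hence spherical) diameter of $D^m(t)$ decays geometrically. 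Case (ii) occurs precisely when at some finite level $m_0$ the disk $D^{m_0}(t)$ already contains a Fatou component $U$ with $f_\lambda^{m_0}(U)=B_\lambda$; from that level on, Proposition \ref{3e}(2) says the nested intersections are exhausted by the explicit ray-pair $\Omega_\lambda^\theta\cap U$. On the arc of $\mathbb{S}$ assigned to such a $U$, I would then refine $\psi_m$ so that it traces the ray-pair of $U$ monotonically, using the landing of external rays at angles in $\Theta$ (Proposition \ref{3d}), together with its pullback to $U$ under $f_\lambda^{m_0}$, to identify the endpoints of the arc with the Julia-set limit points coming from neighbouring addresses.

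With both cases in hand, $\psi=\lim_m\psi_m$ is a continuous surjection onto $\Omega_\lambda^\theta$. Injectivity follows from two inputs: the bijectivity of $\mathbf{s}_\lambda|_{\Lambda_\lambda}\colon\Lambda_\lambda\to\Sigma$ from Lemma \ref{2a}, which prevents two distinct Julia addresses from meeting at one point; and the fact that distinct components $U\in\mathcal{P}$ contribute disjoint ray-pairs, which handles the Fatou/Fatou and Fatou/Julia collisions. A continuous bijection from the compact space $\mathbb{S}$ to the Hausdorff space $\Omega_\lambda^\theta$ is automatically a homeomorphism, and the proposition is proved. The main obstacle will be the gluing in case (ii): one must check that the landing points of the two rays comprising $\Omega_\lambda^\theta\cap U$ coincide with the Julia limits $\psi(t^{\pm})$ at the arc-endpoints in $\mathbb{S}$, so that the refined $\psi_m$ converges uniformly through the boundary between Fatou arcs and their Julia neighbours; this is where Proposition \ref{3d} and its B\"ottcher-coordinate pullback to each $U\in\mathcal{P}\setminus\{B_\lambda\}$ are used in an essential way.
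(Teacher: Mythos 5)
There is a genuine gap, and it occurs at the very first step of your construction. Subdividing $\mathbb{S}$ into $2^{m+1}$ \emph{equal} closed arcs forces every infinite address $(\varepsilon_0,\varepsilon_1,\dots)$ to correspond to a single parameter $t\in\mathbb{S}$, but the fibre $K(t)=\bigcap_m D^m(t)$ is \emph{not} a point for the ``ray addresses'': e.g.\ for $(+,+,\dots)$ one has $\overline{R_\lambda(\theta)}\subset D^m_{(+,\dots,+)}$ for every $m$, so $K(t)$ contains a ray of fixed positive diameter, and the putative limit $\psi$ would have to send the single parameter $t$ to an entire arc. Consequently the claimed geometric decay of $\mathrm{diam}(D^m(t))$ fails exactly there; moreover the Schwarz--Pick contraction of Lemma~\ref{2a} applies only on the compact slice $S^v\subset\Upsilon_\lambda$ between equipotentials, whereas the sets $D^m(t)$ reach $0$, $\infty$ and their preimages, which lie on (or are mapped to) $\partial\Upsilon_\lambda$, so their hyperbolic diameter in $\Upsilon_\lambda$ is not even finite. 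The dichotomy itself is also misstated: for a ray address $K(t)$ picks up only \emph{one} of the two rays of a ray-pair (e.g.\ $R_\lambda(\theta)$ and $R_\lambda(\theta+\tfrac12)$ sit in different disks), the set-theoretic intersection $\bigcap_{k\le m}f_\lambda^{-k}(S_{\varepsilon_k s_k})$ additionally picks up spurious preimages of $\infty$ such as $0$ and $q_{s_0}$ that do not lie on $\overline{R_\lambda(\theta)}$, and the criterion ``$D^{m_0}(t)$ already contains a Fatou component'' does not correctly describe when case (ii) occurs.

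These are precisely the difficulties the paper's proof is built to sidestep, and the route is genuinely different from yours. Rather than a dyadic subdivision, the paper constructs a Cantor set $T_\infty\subset\mathbb{S}$ via the $\times 3$ map whose complementary gaps have \emph{fixed positive length} — the gaps carry the Fatou rays, $T_\infty$ carries the Julia points — so there is room on $\mathbb{S}$ for the ray-pairs. Rather than the raw sets $\bigcap f_\lambda^{-k}(S_{\pm s_k})$, the paper uses the thickened sets $\widehat{\Omega}_{\lambda,k}^\theta$ whose complement has exactly two components $D_k^\pm$, and what is parametrized are the Jordan curves $\partial D_k^\pm$; the contraction estimate is applied only on $T_k$, where the successive curves land in the same component of $\bigcap_{j\le k}f_\lambda^{-j}(S^v_{s_j}\cup S^v_{-s_j})$ (a compact subset of $\Upsilon_\lambda$), while on $\mathbb{S}\setminus T_k$ the curves already coincide. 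Finally, and this is the step your proposal omits entirely, injectivity is obtained not by analyzing fibres but by the Pilgrim--Tan common-boundary argument: since $\Omega_\lambda^\theta$ is the common boundary of the two complementary disks $D_\infty^\pm$ and is locally connected, two radial segments of a Riemann map of $D_\infty^+$ landing at a common point would separate an arc of $\partial D_\infty^+$ from $D_\infty^-$, which is impossible. Fixing all of the above would essentially reproduce the paper's construction; as written, the continuity and injectivity of $\psi$ do not go through.
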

\begin{proof} Suppose $\mathbf{s}(\theta)=(s_0,s_1,s_2,\cdots)$.
 For $k\geq0$, define
$$\widehat{\Omega}_{\lambda,0}^{\tau^k(\theta)}=\Omega_\lambda^{\tau^k(\theta)}\cup
S^v_{s_k}\cup S^v_{-s_k},\ \
\widehat{\Omega}_{\lambda,k}^\theta=\bigcap_{0\leq j\leq
k}f_\lambda^{-j}(\widehat{\Omega}_{\lambda,0}^{\tau^j(\theta)}).$$

\begin{figure}[h]
\centering{
\includegraphics[height=7cm]{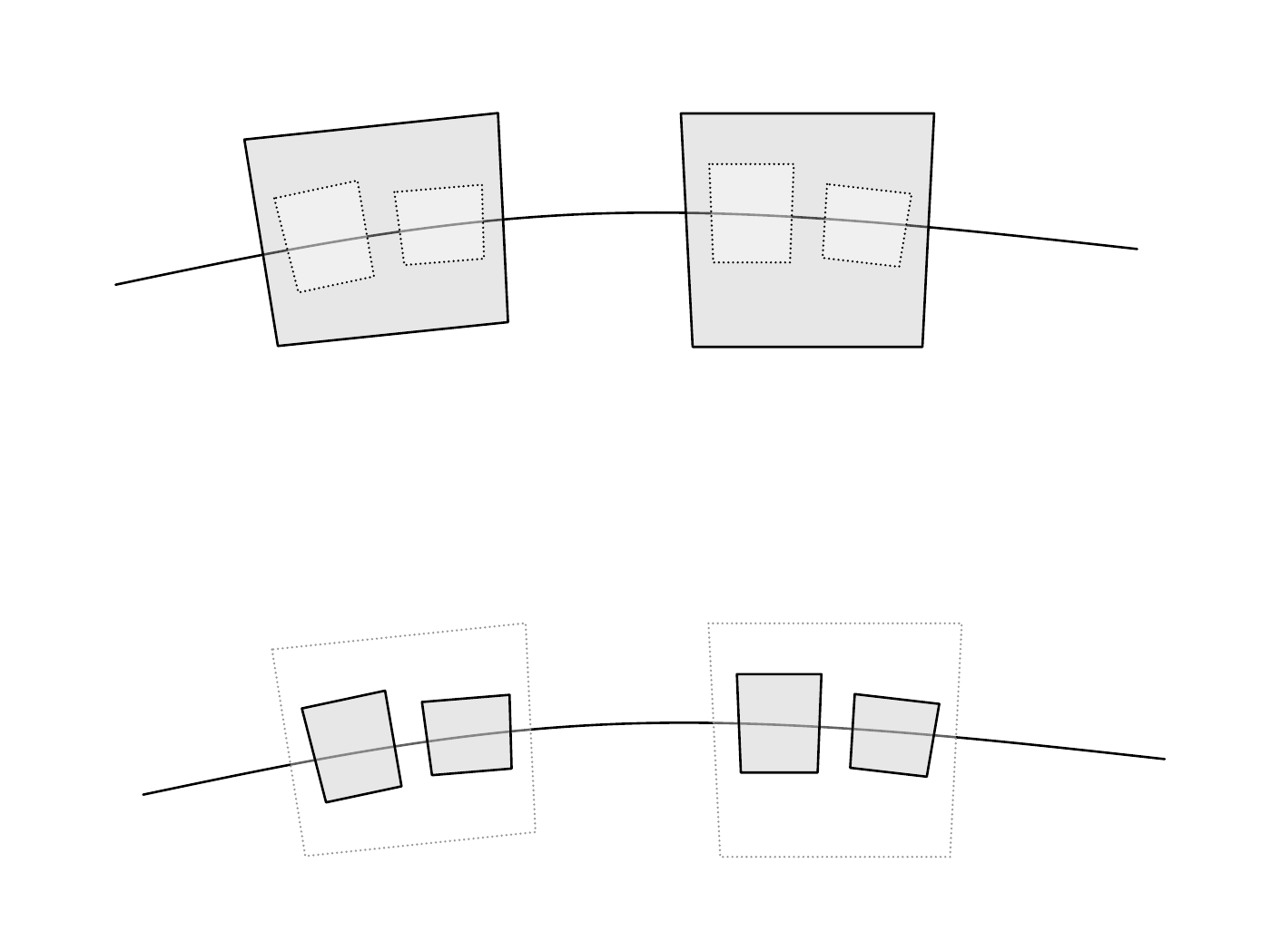}
\put(-150,180){$D_k^+$} \put(-150,130){$D_k^-$}
\put(-30,140){$\theta$} \put(-30,45){$\theta$}
 \put(-150,80){$D_{k+1}^+$}
\put(-150,10){$D_{k+1}^-$}
 \caption{The cut ray union  with the shadow regions is $\widehat{\Omega}_{\lambda,k}^\theta$ (resp.
 $\widehat{\Omega}_{\lambda,k+1}^\theta$).
 The two components of $\mathbb{\overline{C}}-\widehat{\Omega}_{\lambda,k}^\theta$ (resp.
  $\mathbb{\overline{C}}-\widehat{\Omega}_{\lambda,k+1}^\theta$) are $D_k^+$ and $D_k^-$ (resp. $D_{k+1}^+$ and $D_{k+1}^-$) .}}
\end{figure}

The set $\widehat{\Omega}_{\lambda,k}^\theta$ is connected and
compact, and it contains $\Omega_\lambda^\theta$. It is easy to
check that $\widehat{\Omega}_{\lambda,k}^\theta\supset
\widehat{\Omega}_{\lambda,k+1}^\theta$ and $\bigcap_{k\geq0}
\widehat{\Omega}_{\lambda,k}^\theta=\Omega_\lambda^\theta$. In the
following discussion, we can assume $k$ is sufficiently large that
$\widehat{\Omega}_{\lambda,k}^\theta$ avoids the critical values
$v_\lambda^{\pm}$. Let $D_k^+$ be the component of
$\overline{\mathbb{C}}\setminus \widehat{\Omega}_{\lambda,k}^\theta$
that contains $v_\lambda^+$ and $D_k^-$ be the component of
$\overline{\mathbb{C}}\setminus \widehat{\Omega}_{\lambda,k}^\theta$
that contains $v_\lambda^-$. Let $D_\infty^+=\bigcup_{k\geq0}D_k^+$
and $D_\infty^-=\bigcup_{k\geq0}D_k^-$; then, $D_\infty^+\cup
D_\infty^-\cup \Omega_\lambda^\theta=\overline{\mathbb{C}}$.

We first construct a Cantor set on
$\mathbb{S}=\mathbb{R}/\mathbb{Z}$. Let $E_1=(5/24, 13/24),
E_2=(17/24, 25/24)$ be two open intervals on $\mathbb{S}$ and
$\zeta$ be the map $t\mapsto 3t $ mod $\mathbb{Z}$. By definition,
$\zeta(E_i)\supset \overline{E_1\cup E_2}$. Let $T_k=\bigcap_{0\leq
j\leq k }\zeta^{-j}(E_1\cup E_2)$. Then, $T_k\supset T_{k+1}$ and
$T_k$ has $2^{k+1}$ components. The intersection $\bigcap_{k\geq0}
T_k$ is denoted by $T_\infty$. Because
$T_\infty=\bigcap_{k\geq0}\zeta^{-k}(E_1\cup
E_2)=\bigcap_{k\geq0}\zeta^{-k}(\overline{E_1}\cup \overline{E_2})$,
we conclude that $T_\infty$ is a Cantor set.

Now, we define two sequences of Jordan curves
$\{\gamma_k^+:\mathbb{S}\rightarrow \partial D_k^+\},\
\{\gamma_k^-:\mathbb{S}\rightarrow \partial D_k^-\}$ in the
following manner: for $k$ large enough,

1. $\gamma_{k+1}^+|_{\mathbb{S}\setminus
T_k}=\gamma_{k}^+|_{\mathbb{S}\setminus
T_k}=\gamma_{k}^-|_{\mathbb{S}\setminus
T_k}=\gamma_{k+1}^-|_{\mathbb{S}\setminus T_k}$.

2. $\gamma_{k}^+(\mathbb{S}\setminus T_k)=\Omega_\lambda^\theta\cap
\partial D_k^+ =\Omega_\lambda^\theta\cap
\partial D_k^-
=\gamma_{k}^-(\mathbb{S}\setminus T_k)$.

3. $\gamma_{k}^+(T_k)=\partial D_k^+\setminus
\Omega_\lambda^\theta,\ \gamma_{k}^-(T_k)=\partial D_k^-\setminus
\Omega_\lambda^\theta$.

In the following, we show that each sequence of maps
$\{\gamma_k^+:\mathbb{S}\rightarrow \partial D_k^+\},\
\{\gamma_k^-:\mathbb{S}\rightarrow \partial D_k^-\}$ converges in
the spherical metric. By construction,
$\gamma_{k+1}^+|_{\mathbb{S}\setminus
T_k}=\gamma_{k}^+|_{\mathbb{S}\setminus T_k}$, and for any component
$W$ of $T_k$, $\gamma_{k+1}^+(W)$ and $\gamma_{k}^+(W)$ are
contained in the same component of $\bigcap_{0\leq j \leq
k}f_\lambda^{-j}(S^v_{s_j}\cup S^v_{-s_j})$. Because the spherical
metric and the  hyperbolic metric are comparable in any compact
subset of $\Upsilon_\lambda$, we conclude by Lemma \ref {2a} that
$$\max_{t\in \mathbb{S}}\
   \mathrm{dist}_{\overline{\mathbb{C}}}\big(\gamma_{k+1}^+(t),\gamma_{k}^+(t))=\mathcal{O}(\delta^k\big),$$
where $\mathrm{dist}_{\overline{\mathbb{C}}}$ is the spherical
metric and $\delta\in (0,1)$ is a constant. Thus, the sequence
$\{\gamma^+_k\}$ has a limit map $\gamma^+_\infty:
\mathbb{S}\rightarrow \partial D_\infty^+$ that is continuous and
surjective. Similarly, the sequence $\{\gamma^-_k\}$ also has a
limit map $\gamma^-_\infty: \mathbb{S}\rightarrow \partial
D_\infty^-$ that is continuous and surjective. The limit maps
$\gamma^+_\infty$ and $\gamma^-_\infty$ satisfy
$\gamma^+_\infty|_{\mathbb{S}\setminus
T_\infty}=\gamma^-_\infty|_{\mathbb{S}\setminus T_\infty}$. By
continuity, $\gamma^+_\infty$ and  $\gamma^-_\infty$ are identical
on $\mathbb{S}$. This implies that $\partial D_\infty^+=\partial
D_\infty^-=\Omega_\lambda^\theta$ and $\Omega_\lambda^\theta$ is
locally connected.

To finish, we show that $\Omega_\lambda^\theta$ is a Jordan curve following the idea in \cite{PT}.
Let $\Phi: \mathbb{D}\rightarrow D_\infty^+$ be a Riemann mapping.
Because $\partial D_\infty^+$ is locally connected, $\Phi$ has an
extension from $\overline{\mathbb{D}}$ to $\overline{D_\infty^+}$.
If two distinct radial segments $\Phi((0,1)e^{2\pi i \theta_1})$ and
$\Phi((0,1)e^{2\pi i \theta_2})$ converge on the same point $p$,
then the Jordan curve $\Phi((0,1)e^{2\pi i \theta_1})\cup
\Phi((0,1)e^{2\pi i \theta_2})\cup\{\Phi(0), p\}$ separates  a
section of the boundary $\partial D_\infty^+$ from $ D_\infty^-$.
But this is a contradiction because $ D_\infty^+$ and $ D_\infty^-$
share a common boundary.
\end{proof}

\begin{pro}\label{3f} For $\lambda\in \mathcal{H}$ and
$\theta\in\Theta$, all periodic points on $\Omega_\lambda^\theta\cap
J(f_\lambda)$ are repulsive.
\end{pro}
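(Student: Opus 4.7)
The plan is to reduce the claim to a fact already established inside the proof of Lemma \ref{2a}. By Proposition \ref{3e} (part 1), we have $\Omega_\lambda^\theta\cap J(f_\lambda)\subset \Lambda_\lambda$, so any periodic point $z\in \Omega_\lambda^\theta\cap J(f_\lambda)$ lies in the Cantor set $\Lambda_\lambda$ and therefore has a well-defined itinerary $\mathbf{s}_\lambda(z)=(s_0,s_1,s_2,\ldots)\in\Sigma$.

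First I would use the shift property $\mathbf{s}_\lambda(f_\lambda(z))=\sigma(\mathbf{s}_\lambda(z))$, which was established right before Lemma \ref{2a}. If $z$ is periodic for $f_\lambda$ of period $p$, then $\sigma^p(\mathbf{s}_\lambda(z))=\mathbf{s}_\lambda(z)$, so $\mathbf{s}_\lambda(z)$ is a periodic itinerary of period dividing $p$. Replacing $p$ by the exact period of the itinerary (a divisor of the original $p$, which in fact must equal it by injectivity of $\mathbf{s}_\lambda$ on $\Lambda_\lambda$), we write $\mathbf{s}_\lambda(z)=(\overline{s_0,\ldots,s_{p-1}})$.

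Next I invoke exactly the argument given in the proof of Lemma \ref{2a}: setting $h=h_{s_0}\circ\cdots\circ h_{s_{p-1}}$, the map $h$ sends $\mathrm{int}(S^v)$ into a compactly contained subset of itself, so by the Schwarz lemma it is strictly contracting with respect to the hyperbolic metric of $\Upsilon_\lambda$, and $z$ is its unique (attracting) fixed point. Since $h$ is a branch of $f_\lambda^{-p}$ on a neighborhood of $z$, we get $|(f_\lambda^p)'(z)|>1$, i.e. $z$ is a repelling periodic point of $f_\lambda$.

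The one minor point to check is that $z$ is an interior point of $S^v$ rather than on the critical rays bounding it, so that the inverse branch $h$ is actually defined in a neighborhood of $z$. This is automatic: the hypothesis $\lambda\in\mathcal{H}$ rules out $z\in\{0,\infty\}$, and the itinerary of any point on a critical ray eventually enters $S_0\cup S_n$, contradicting $\mathbf{s}_\lambda(z)\in\Sigma$. So no periodic point of $\Omega_\lambda^\theta\cap J(f_\lambda)$ lies on the boundary of a sector, and the Schwarz-lemma argument applies without modification. There is no serious obstacle; the whole proposition is essentially a two-line corollary of Proposition \ref{3e} and Lemma \ref{2a}.
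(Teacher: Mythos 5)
Your proof is correct and follows essentially the same route as the paper's: both reduce to the observation that a periodic point of $\Omega_\lambda^\theta\cap J(f_\lambda)$ lies in $\Lambda_\lambda$ with a periodic itinerary, and then apply the Schwarz-lemma contraction of the inverse branches $h_k$ on $\Upsilon_\lambda$ established in the proof of Lemma \ref{2a}. The only difference is organizational: the paper re-derives the contraction by choosing $N$ large and working with the nested compacts $S^v_{a_0\cdots a_N}$, whereas you invoke the first-return branch $h=h_{s_0}\circ\cdots\circ h_{s_{p-1}}$ directly, which is exactly the step already carried out inside Lemma \ref{2a}; your check that $z$ lies in the interior of $S^v$ (away from the critical rays and the equipotential boundary) is a legitimate point that the paper leaves implicit.
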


\begin{proof} Suppose $\mathbf{s}(\theta)=(s_0,s_1,s_2,\cdots)$. Let
$z\in \Omega_\lambda^\theta\cap J(f_\lambda)$ be a periodic point
with period $p$. The itinerary of $z$ is then of the form
$(\overline{a_0,a_1,\cdots, a_{p-1}})$, where $a_j\in \{\pm s_j\}$
for $0\leq j\leq p-1$. Let $a_k=a_{k \ {\rm  mod } \ p }$ for
$k\geq0$ and $S^v_{a_0 \cdots a_{s}}=\bigcap_{0\leq k\leq
s}f_\lambda^{-k}(S^v_{a_k})$. By Lemma \ref {2a}, the hyperbolic
diameter of $S^v_{a_0 \cdots a_{s}}$ is $\mathcal{O}(\delta^s)$ when
$s$ is large. We can therefore choose an $N$ sufficiently large that
$f_\lambda^p:{\rm int}(S^v_{a_0 \cdots a_{N}})\rightarrow {\rm
int}(S^v_{a_p \cdots a_{N}})={\rm int}(S^v_{a_0 \cdots a_{N-p}})$ is
a conformal map. Because $z\in {\rm int}(S^v_{a_0 \cdots
a_{N}})\subset S^v_{a_0 \cdots a_{N}}\subset {\rm int}(S^v_{a_0
\cdots a_{N-p}})$, we conclude $|(f_\lambda^p)'(z)|>1$ by the
Schwarz Lemma. Thus, $z$ is a repelling periodic point.
 \end{proof}

Proposition \ref {3e} tells us the combinatorial structure of the
cut ray $\Omega_\lambda^\theta$. The following proposition shows
that the iterated preimages of $\Omega_\lambda^\theta$ have the same
combinatorial structure as $\Omega_\lambda^\theta$ provided that
$\Omega_\lambda^\theta$ does not meet the critical orbit.

\begin{pro}\label{3g} For $\lambda\in \mathcal{H}$ and
$\theta\in\Theta$, suppose the cut ray $\Omega_\lambda^\theta$ does
not meet the critical orbit. Then, for any $\alpha\in
\bigcup_{k\geq0}\tau^{-k}(\theta)$, there is a unique ray
$\omega_\lambda^\alpha$ such that:

1. $\omega_\lambda^\alpha$ is a continuous curve connecting $0$ with
$\infty$.

2. $\omega_\lambda^{\alpha+{1}/{2}}=-\omega_\lambda^\alpha$.

3. $
f_\lambda(\omega_\lambda^\alpha)=\omega_\lambda^{\tau(\alpha)}\cup
\omega_\lambda^{\tau(\alpha)+{1}/{2}}$.

4. $\omega_\lambda^\alpha\cap B_\lambda=R_\lambda(\alpha)\cup
\{\infty\}$.
\end{pro}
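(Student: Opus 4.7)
The plan is to induct on $k$ for $\alpha\in\tau^{-k}(\theta)$, constructing $\omega_\lambda^\alpha$ at each step as a univalent $f_\lambda^{-1}$-lift, through the inverse branch $h_{s'_0}$ selected by the first symbol of $\mathbf{s}(\alpha)$, of the rays produced at the previous stage.

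For the base case $k=0$, take $\omega_\lambda^\theta$ to be the full ray of Section 3.2 (the arc of the Jordan curve $\Omega_\lambda^\theta$ lying in $S_{s_0}$, where $s_0$ is the first symbol of $\mathbf{s}(\theta)$), and set $\omega_\lambda^{\theta+1/2}:=-\omega_\lambda^\theta$. Property~(1) follows from Proposition~\ref{3e1} together with the fact that $\Omega_\lambda^\theta$ passes through $0$ and $\infty$ and is split into these two arcs; (2) is built in; (3) is the ``$f_\lambda:\Omega_\lambda^\theta\to\Omega_\lambda^{\tau(\theta)}$ is two-to-one'' item in the bulleted list after the construction of the cut ray; (4) combines Proposition~\ref{3e}(2) with the inclusion $R_\lambda(\theta)\subset S_{s_0}$ from the proof of Proposition~\ref{3d}.

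For the inductive step, fix $\alpha\in\tau^{-(k+1)}(\theta)$ and let $s'_0$ be its first itinerary symbol, so $\alpha\in\Theta_{s'_0}^\lambda$ and $R_\lambda(\alpha)\subset\mathrm{int}(S_{s'_0})$. A short parity check using $\tau(\beta+1/2)=\tau(\beta)$ ($n$ even) or $\tau(\beta)+1/2$ ($n$ odd) shows that $\tau(\alpha)$ and $\tau(\alpha)+1/2$ have both been treated by the inductive hypothesis (directly or via the previously declared rule $\omega_\lambda^{\beta+1/2}:=-\omega_\lambda^\beta$). Their union is the cut ray $\Gamma:=\Omega_\lambda^{\tau(\alpha)}$, a Jordan curve through $0$ and $\infty$. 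The cut-ray identity $f_\lambda^{k}(\Gamma)=\Omega_\lambda^{\tau^{k+1}(\alpha)}=\Omega_\lambda^\theta$ shows $\Gamma$ is an iterated $f_\lambda$-preimage of $\Omega_\lambda^\theta$, and forward invariance of the critical orbit then forces $\Gamma$ itself to avoid that orbit; in particular $v_\lambda^\pm\notin\Gamma$. Combined with $\Gamma\setminus\{0,\infty\}\subset\mathrm{int}(S_{s_1}\cup S_{-s_1})$, which automatically misses the two critical value rays, this gives $\Gamma\setminus\{0,\infty\}\subset\Upsilon_\lambda$, and the univalent inverse branch $h_{s'_0}:\Upsilon_\lambda\to\mathrm{int}(S_{s'_0})$ of $f_\lambda$ is defined on all of it. Put
\[
\omega_\lambda^\alpha:=h_{s'_0}\bigl(\Gamma\setminus\{0,\infty\}\bigr)\cup\{0,\,z_{s'_0},\,\infty\},
\]
where $z_{s'_0}$ is the unique preimage of $0$ lying in $\overline{S_{s'_0}}$, and declare $\omega_\lambda^{\alpha+1/2}:=-\omega_\lambda^\alpha$.

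The crucial step is to show that this really is a simple continuous arc from $0$ to $\infty$; this reduces to analysing the boundary behaviour of $h_{s'_0}$ at the three ``ideal boundary points'' of $\Upsilon_\lambda$. Both halves of $\Gamma$ enter $0$ from inside the Fatou component $T_\lambda$, and $h_{s'_0}$ sends every such access to $z_{s'_0}$ (the only preimage of $0$ in $\overline{S_{s'_0}}$), which accomplishes the internal gluing. The two halves approach $\infty$ along the external rays $R_\lambda(\tau(\alpha))$ and $R_\lambda(\tau(\alpha)+1/2)$, which lie in the opposite sectors $S_{\pm s_1}$ and therefore access $\infty$ through two distinct prime ends of $\Upsilon_\lambda$; the branch $h_{s'_0}$ separates these prime ends onto the two corners $0$ and $\infty$ of $\overline{S_{s'_0}}$, and the choice of $s'_0$ pins down $h_{s'_0}(R_\lambda(\tau(\alpha)))=R_\lambda(\alpha)$, identifying which corner is paired with each half. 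This establishes (1) and (4) simultaneously. Property~(2) is Lemma~\ref{1a} applied to the lift, using $\Gamma=-\Gamma$ and $f_\lambda(-z)=\pm f_\lambda(z)$; (3) follows from $f_\lambda\circ h_{s'_0}=\mathrm{id}$ on $\Upsilon_\lambda$ plus $f_\lambda(\{0,z_{s'_0},\infty\})=\{0,\infty\}\subset\Gamma$. Uniqueness is forced by univalence of $f_\lambda|_{\mathrm{int}(S_{s'_0})}$: (3) confines any candidate to $f_\lambda^{-1}(\Gamma)$, and (4) singles out the component containing $R_\lambda(\alpha)$. I expect the trickiest step to be the prime-end analysis at $\infty$ combined with the parity case-split ($n$ odd versus $n$ even) that already appears in Proposition~\ref{3e}, but once the matching between the two halves of $\Gamma$ and the two corners of $\overline{S_{s'_0}}$ is pinned down, the remaining verifications are routine.
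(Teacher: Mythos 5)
The overall strategy (induct on the depth $k$, lift the previously constructed cut ray $\Omega_\lambda^{\tau(\alpha)}$, and anchor uniqueness via the external ray $R_\lambda(\alpha)$) matches the paper. What differs is the mechanism for proving that the lift connects $0$ with $\infty$: the paper defines $\omega_\lambda^\beta$ abstractly as a path lift and then, assuming it were a Jordan curve through $\infty$ missing $0$, produces $4n$ pairwise disjoint preimage curves of $\Omega_\lambda^\alpha$ from the symmetries $z\mapsto e^{k\pi i/n}z$ and $z\mapsto\sqrt[n]{\lambda}/z$, contradicting $\deg f_\lambda=2n$; you instead realize the lift concretely as the image under a single inverse branch $h_{s'_0}$ and track the prime ends of $\Upsilon_\lambda$ at $\infty$ onto the two corners of $\overline{S_{s'_0}}$.

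There is, however, a genuine gap in your inductive step. You write ``$\Gamma\setminus\{0,\infty\}\subset\mathrm{int}(S_{s_1}\cup S_{-s_1})$, which automatically misses the two critical value rays, this gives $\Gamma\setminus\{0,\infty\}\subset\Upsilon_\lambda$,'' and then apply the branch $h_{s'_0}:\Upsilon_\lambda\to\mathrm{int}(S_{s'_0})$ to all of $\Gamma\setminus\{0,\infty\}$. This is only valid when $s_1$, the first itinerary symbol of $\tau(\alpha)$, lies in $\{\pm1,\dots,\pm(n-1)\}$: the sectors $S_{s_1}$ and $S_{-s_1}$ then avoid $S_0\cup S_n$, where the two critical value rays live. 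That is automatic only when $\tau(\alpha)\in\Theta$ (i.e.\ at depth $k=0$). For $k\geq1$, the angle $\tau(\alpha)\in\tau^{-k}(\theta)$ need not lie in $\Theta$, and its first symbol can perfectly well be $0$ or $n$ --- e.g.\ for $n=3$, $\theta=1/2$, the preimage $1/6\in\tau^{-1}(\theta)$ already has first symbol $0$, so $\Omega_\lambda^{1/6}\subset S_0\cup S_n$. The hypothesis only guarantees that iterated preimages of $\Omega_\lambda^\theta$ avoid the (countable) critical orbit, not the (uncountable) critical value rays $[v_\lambda^\pm,\infty]$, so in such cases $\Gamma\setminus\{0,\infty\}$ need not be contained in $\Upsilon_\lambda$ and the single branch $h_{s'_0}$ is not defined on it. Your construction then breaks down, whereas the paper's path-lifting definition of $\omega_\lambda^\beta$ only needs $\Omega_\lambda^\alpha$ to avoid the critical values $v_\lambda^\pm$ themselves, and its degree-counting argument sidesteps the issue entirely. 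To repair your approach you would need either to lift across $\{v_\lambda^\pm,\infty\}$ by monodromy (not a single $h$-branch), or to verify separately that the intermediate cut rays never meet the critical value rays --- neither of which you supply.
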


For this reason, we still call $\omega_\lambda^\alpha$ a full ray of
angle $\alpha$ and $\Omega_\lambda^\alpha=\omega_\lambda^\alpha\cup
\omega_\lambda^{\alpha+{1}/{2}}$ a cut ray of angle $\alpha$ (or
$\alpha+\frac{1}{2}$).

\begin{proof} The proof is based on an inductive argument. Suppose $\alpha\in
\bigcup_{k\geq0}\tau^{-k}(\theta)$ is an angle such that the full
ray $\omega_\lambda^\alpha$ and the cut ray $\Omega_\lambda^\alpha$
satisfy 1,2,3,4. Then, for $\beta\in\tau^{-1}(\alpha)$, we define
$\omega_\lambda^\beta$ by lifting $\Omega_\lambda^\alpha$ in the
following way:
$$f_\lambda(\omega_\lambda^\beta)=\Omega_\lambda^\alpha,\ \
\omega_\lambda^\beta\cap B_\lambda=R_\lambda(\beta)\cup
\{\infty\}.$$
 The ray $\omega_\lambda^\beta$ is unique because we require $\omega_\lambda^\beta\cap B_\lambda=R_\lambda(\beta)\cup
\{\infty\}$. Also, by uniqueness of lifting maps, we conclude
$\omega_\lambda^{\beta+\frac{1}{2}}=-\omega_\lambda^\beta$ by the
fact $R_\lambda(\beta+\frac{1}{2})=-R_\lambda(\beta)$ and
$\Omega_\lambda^{\alpha}=-\Omega_\lambda^\alpha$.

In the following, we show that $\omega_\lambda^\beta$ connects
$\infty$ and $0$. If not, then $\omega_\lambda^\beta$ must be a
curve connecting $\infty$ with itself, hence a Jordan curve. This
implies that $\omega_\lambda^\beta$ does not meet $0$. Because
$\Omega_\lambda^{\alpha}=-\Omega_\lambda^\alpha$, all curves in the
set $\mathcal{C}=\{e^{k\pi i/n}\omega_\lambda^\beta,\
H_\lambda(e^{k\pi i/n}\omega_\lambda^\beta); 0\leq k< 2n\}$ are
preimages of $\Omega_\lambda^\alpha$, where
$H_\lambda(z)=\sqrt[n]{\lambda}/z$. Because $\Omega_\lambda^\alpha$
does not meet the critical orbit, we conclude that for any
$\gamma_1,\gamma_2\in \mathcal{C}$ with $\gamma_1\neq\gamma_2$,
$\gamma_1$ and $\gamma_2$ are disjoint outside $\{0,\infty\}$. This
means $\#\mathcal{C}=4n$. However, this is a contradiction because
the degree of $f_\lambda$ is $2n$.
\end{proof}

Recall that for any $\theta\in \Theta$ with itinerary
$\mathbf{s}(\theta)=(s_0,s_1,s_2,\cdots)$, the cut ray
$\Omega_{\lambda}^\theta$ contains at least two points, $0$ and
$\infty$, and $\Omega_{\lambda}^\theta\setminus \{0,\infty\}$ is
contained in the interior of $S_{s_0}\cup S_{-s_0}$. Now, given two
angles $\alpha,\beta\in \Theta$ with $\Omega_{\lambda}^\alpha\neq
\Omega_{\lambda}^\beta$, suppose
$\mathbf{s}(\alpha)=(s^\alpha_0,s^\alpha_1,s^\alpha_2,\cdots),\
\mathbf{s}(\beta)=(s^\beta_0,s^\beta_1,s^\beta_2,\cdots)$. Let
$\mathbf{J}(\alpha,\beta)$ be the first integer $k\geq0$ such that
$|s^\alpha_k|\neq |s^\beta_k|$. Note that the intersection
$\Omega_{\lambda}^\alpha\cap \Omega_{\lambda}^\beta$ consists of at
least two points $0$ and $\infty$. Furthermore, if
$\mathbf{J}(\alpha,\beta)=0$, then $\Omega_{\lambda}^\alpha\cap
\Omega_{\lambda}^\beta=\{0,\infty\}$. The following proposition
tells us the number of intersection points in the general case.

\begin{pro}\label{3h} Let $\alpha,\beta\in \Theta$ with $\Omega_{\lambda}^\alpha\neq
\Omega_{\lambda}^\beta$; then, the intersection
$\Omega_{\lambda}^\alpha\cap\Omega_{\lambda}^\beta$ consists of
$2^{\mathbf{J}(\alpha,\beta)+1}$ points.
\end{pro}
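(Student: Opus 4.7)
\medskip
\noindent\textbf{Proof plan.} I would prove the statement by induction on $J=\mathbf{J}(\alpha,\beta)$, using the two-to-one dynamics $f_\lambda:\Omega_\lambda^\theta\to\Omega_\lambda^{\tau(\theta)}$ listed in the bullets after the construction of $\Omega_\lambda^\theta$. Write $\mathbf{s}(\alpha)=(s_0^\alpha,s_1^\alpha,\ldots)$ and $\mathbf{s}(\beta)=(s_0^\beta,s_1^\beta,\ldots)$, and recall the intrinsic description $\Omega_\lambda^\theta=\bigcap_{k\geq 0}f_\lambda^{-k}(S_{s_k}\cup S_{-s_k})$, together with the fact that $\Omega_\lambda^\theta\setminus\{0,\infty\}$ lies in the interior of $S_{s_0}\cup S_{-s_0}$.

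\medskip
\noindent\textbf{Base case $J=0$.} Here $|s_0^\alpha|\neq|s_0^\beta|$, so the unordered pairs $\{s_0^\alpha,-s_0^\alpha\}$ and $\{s_0^\beta,-s_0^\beta\}$ are disjoint. Consequently the interiors of $S_{s_0^\alpha}\cup S_{-s_0^\alpha}$ and $S_{s_0^\beta}\cup S_{-s_0^\beta}$ are disjoint, and these two closed regions meet only along critical rays, all of which pass through $\{0,\infty\}$. Since each cut ray avoids the critical rays off $\{0,\infty\}$, we get $\Omega_\lambda^\alpha\cap\Omega_\lambda^\beta=\{0,\infty\}$, i.e.\ $2^{0+1}$ points.

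\medskip
\noindent\textbf{Inductive step.} Assume the statement for $J-1$ and suppose $\mathbf{J}(\alpha,\beta)=J\geq 1$. Set $\alpha'=\tau(\alpha)$, $\beta'=\tau(\beta)$; then $\mathbf{J}(\alpha',\beta')=J-1$, and by hypothesis $|\Omega_\lambda^{\alpha'}\cap\Omega_\lambda^{\beta'}|=2^{J}$. Since $|s_0^\alpha|=|s_0^\beta|$, the two sets $S_{s_0^\alpha}\cup S_{-s_0^\alpha}$ and $S_{s_0^\beta}\cup S_{-s_0^\beta}$ coincide; call this common set $S$. Using the intrinsic description of a cut ray, a direct verification shows
\[
\Omega_\lambda^\alpha\cap\Omega_\lambda^\beta \;=\; f_\lambda^{-1}\bigl(\Omega_\lambda^{\alpha'}\cap\Omega_\lambda^{\beta'}\bigr)\cap S.
\]
Thus it suffices to count, for each $p\in\Omega_\lambda^{\alpha'}\cap\Omega_\lambda^{\beta'}$, the cardinality of $f_\lambda^{-1}(p)\cap S$.

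\medskip
\noindent\textbf{Counting preimages.} If $p=\infty$, then $f_\lambda^{-1}(\infty)=\{0,\infty\}\subset S$, contributing $2$ preimages. If $p\neq\infty$, I first note that $p$ lies in the interior of some sector $S_k$ with $k\in\mathbb{I}\setminus\{0,n\}$ (because the cut ray avoids $S_0\cup S_n$ off $\{0,\infty\}$, as the critical value rays $v_\lambda^\pm\mathbb{R}^+$ are contained in $\mathrm{int}(S_0)\cup\mathrm{int}(S_n)$ for $\lambda\in\mathcal{H}$). Hence $p\in\Upsilon_\lambda$, and by the conformality of $f_\lambda|_{\mathrm{int}(S_k)}$ for each $k\in\mathbb{I}\setminus\{0,n\}$, we obtain $f_\lambda^{-1}(p)\cap S=\{h_{s_0^\alpha}(p),\,h_{-s_0^\alpha}(p)\}$, again $2$ preimages. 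Combining, $|\Omega_\lambda^\alpha\cap\Omega_\lambda^\beta|=2\cdot 2^{J}=2^{J+1}$, completing the induction.

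\medskip
\noindent\textbf{Main technical point.} The step requiring the most care is the identity $\Omega_\lambda^\alpha\cap\Omega_\lambda^\beta=f_\lambda^{-1}(\Omega_\lambda^{\alpha'}\cap\Omega_\lambda^{\beta'})\cap S$ and, in tandem with it, the verification that the $h_{\pm s_0^\alpha}$-preimages of a point in $\Omega_\lambda^{\alpha'}\cap\Omega_\lambda^{\beta'}$ actually lie in both $\Omega_\lambda^\alpha$ and $\Omega_\lambda^\beta$. Both follow from the fact that $|s_0^\alpha|=|s_0^\beta|$ (so the first-level sector condition is automatic and identical for $\alpha$ and $\beta$) together with the shift-equivariance $f_\lambda(\Omega_\lambda^\theta)=\Omega_\lambda^{\tau(\theta)}$, but they need to be stated explicitly to avoid over- or under-counting.
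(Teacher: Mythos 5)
Your proof is correct and follows essentially the same route as the paper's: the paper simply asserts that $f_\lambda$ restricts to a two-to-one map $\Omega_\lambda^{\tau^k(\alpha)}\cap\Omega_\lambda^{\tau^k(\beta)}\to\Omega_\lambda^{\tau^{k+1}(\alpha)}\cap\Omega_\lambda^{\tau^{k+1}(\beta)}$ for $0\leq k<\mathbf{J}(\alpha,\beta)$ and multiplies, while your base case plus inductive step (via the identity $\Omega_\lambda^\alpha\cap\Omega_\lambda^\beta=S\cap f_\lambda^{-1}(\Omega_\lambda^{\alpha'}\cap\Omega_\lambda^{\beta'})$) is exactly a careful verification of that two-to-one claim. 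One small slip: $p=0$ lies in $\Omega_\lambda^{\alpha'}\cap\Omega_\lambda^{\beta'}$ but not in the interior of any sector $S_k$; nevertheless $0\in\Upsilon_\lambda$ (it is not on a critical value ray), so $h_{\pm s_0^\alpha}(0)$ are still defined, one in each of the two sectors of $S$, and your preimage count of $2$ remains valid.
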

\begin{proof} We consider the orbit of
$\Omega_{\lambda}^\alpha\cap\Omega_{\lambda}^\beta$ under
$f_\lambda$:
$$\Omega_{\lambda}^\alpha\cap\Omega_{\lambda}^\beta\rightarrow
 \Omega_{\lambda}^{\tau(\alpha)}\cap\Omega_{\lambda}^{\tau(\beta)}\rightarrow\cdots \rightarrow
 \Omega_{\lambda}^{\tau^{\mathbf{J}(\alpha,\beta)}(\alpha)}\cap\Omega_{\lambda}^{\tau^{\mathbf{J}(\alpha,\beta)}(\beta)}$$
Note that for any $0\leq k\leq
 \mathbf{J}(\alpha,\beta)-1$, $f_\lambda:
 \Omega_{\lambda}^{\tau^{k}(\alpha)}\cap\Omega_{\lambda}^{\tau^{k}(\beta)}
 \rightarrow\Omega_{\lambda}^{\tau^{k+1}(\alpha)}\cap\Omega_{\lambda}^{\tau^{k+1}(\beta)}$
 is a two-to-one map; thus, we have
 $$\#(\Omega_{\lambda}^\alpha\cap\Omega_{\lambda}^\beta)=2\#(\Omega_{\lambda}^{\tau(\alpha)}\cap\Omega_{\lambda}^{\tau(\beta)})
 =\cdots=2^{\mathbf{J}(\alpha,\beta)}\#(\Omega_{\lambda}^{\tau^{\mathbf{J}
 (\alpha,\beta)}(\alpha)}\cap\Omega_{\lambda}^{\tau^{\mathbf{J}(\alpha,\beta)}(\beta)})=2^{\mathbf{J}(\alpha,\beta)+1}.$$
\end{proof}

\begin{rem}
From the proof of proposition \ref {3h}, we know that any two
distinct cut rays $\Omega_{\lambda}^\alpha$ and
$\Omega_{\lambda}^\beta$ intersect at the preimages of $\infty$.
More precisely,
$\Omega_{\lambda}^\alpha\cap\Omega_{\lambda}^\beta\subset
\bigcup_{0\leq k\leq
\mathbf{J}(\alpha,\beta)+1}f_\lambda^{-k}(\infty)$, and for $2\leq
k\leq \mathbf{J}(\alpha,\beta)+1$, the intersection
$\Omega_{\lambda}^\alpha\cap\Omega_{\lambda}^\beta\cap
f_\lambda^{-(k-1)}(0)$ consists of $2^{k-1}$ points.
\end{rem}

\section{Puzzles, Graphs and Tableaux}

\subsection{The Yoccoz Puzzle}

Let $X_\lambda=\mathbb{\bar{C}}\setminus\{z\in B_\lambda;
G_\lambda(z)\geq 1\}=\mathbf{V}(1)$.
 Given $N$ periodic angles
$\theta_1,\cdots,\theta_N$ that lie in different periodic cycles of
$\Theta$, let
$$g_\lambda(\theta_1,\cdots,\theta_N)=\bigcup_{k\geq0}\Big(\Omega_\lambda^{\tau^k(\theta_1)}
\cup \cdots\cup\Omega_\lambda^{\tau^k(\theta_N)}\Big).$$ Obviously,
$g_\lambda(\theta_1,\cdots,\theta_N)$ is $f_\lambda$-invariant. The
graph $\mathbf{G}_\lambda(\theta_1,\cdots,\theta_N)$ generated by
$\theta_1,\cdots,\theta_N$  is defined as follows:
$$\mathbf{G}_\lambda(\theta_1,\cdots,\theta_N)=\partial X_\lambda \cup \Big(X_\lambda\cap g_\lambda(\theta_1,\cdots,\theta_N)\Big).$$

The Yoccoz Puzzle induced by the graph
$\mathbf{G}_\lambda(\theta_1,\cdots,\theta_N)$ is constructed in the
following way. The Yoccoz Puzzle of depth zero consists of all
connected components of $X_\lambda\setminus
\mathbf{G}_\lambda(\theta_1,\cdots,\theta_N)$, and each component is
called a puzzle piece of depth zero. The Yoccoz Puzzle of greater
depth can be constructed by induction as follows: if
$P_d^{(1)},\cdots,P_d^{(m)}$ are the puzzle pieces of depth $d$,
then the connected components of the set $f_\lambda^{-1}(P_d^{(i)})$
are the puzzle pieces $P_{d+1}^{(j)}$ of depth $d+1$. One can verify
that the puzzle pieces of depth $d$ consist of all connected
components of $f_\lambda^{-d}(X_\lambda\setminus
\mathbf{G}_\lambda(\theta_1,\cdots,\theta_N))$ and that each puzzle
piece is a disk.

In applying the Yoccoz puzzle theory, we should avoid a situation in
which the critical orbits touch the set
$\mathbf{G}_\lambda(\theta_1,\cdots,\theta_N)$. If the critical
orbits touch the graph
$\mathbf{G}_\lambda(\theta_1,\cdots,\theta_N)$, we say the graph
$\mathbf{G}_\lambda(\theta_1,\cdots,\theta_N)$ is touchable. In this
case, we cannot find a sequence of shrinking puzzle pieces such that
each piece contains a critical point in its interior (that is to
say, we cannot find a non-degenerate critical annulus that plays a
crucial role in the Yoccoz puzzle theory). For this reason, because
there are infinite periodic angles in $\Theta$, we can change the
$N$-tuple $(\theta_1,\cdots,\theta_N)$ to another $N$-tuple
$(\theta'_1,\cdots,\theta'_N)$ to make the graph not touchable.

\begin{figure}
\begin{center}
\includegraphics[height=7cm]{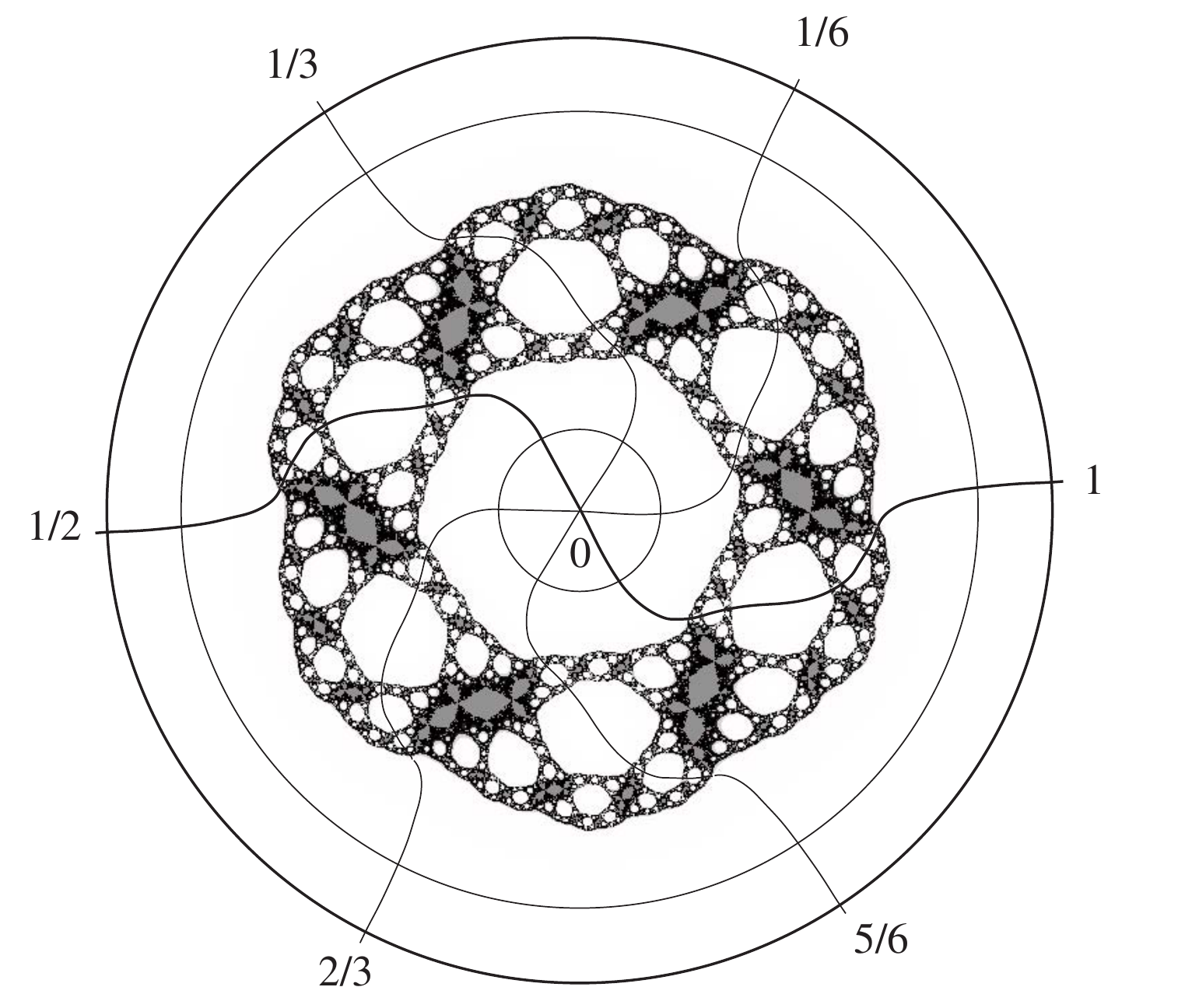}
\caption{A graph with Yoccoz puzzle to depth one ($n=3$ and
$\mathbf{G}_\lambda=\mathbf{G}_\lambda(1/2)$).}
\end{center}\label{f5}
\end{figure}

 Let $J_0$ be the set of all points on the Julia set $J(f_\lambda)$ with orbits that eventually meet the graph $\mathbf{G}_\lambda(\theta_1,\cdots,\theta_N)$. Then
$J_0= \bigcup _{k\geq0}
f_\lambda^{-k}(\mathbf{G}_\lambda(\theta_1,\cdots,\theta_N)\cap
J(f_\lambda))$. For any $z\in
\mathbb{\overline{C}}\setminus(A_\lambda\cup J_0)$, there is a
unique sequence of puzzle pieces $P_0(z)\supset P_1(z)\supset
P_2(z)\supset \cdots$ that contain $z$. By Proposition \ref {3f}, if
$f_\lambda$ has a non-repelling cycle in $\mathbb{C}$, say
$\mathcal{C}=\{z,f_\lambda(z),\cdots, f_\lambda^p(z)=z\}$, then this
cycle must avoid the graph
$\mathbf{G}_\lambda(\theta_1,\cdots,\theta_N)$. This implies that
$\mathcal{C}\subset\mathbb{\overline{C}}\setminus(A_\lambda\cup
J_0)$. Thus, for any $d\geq0$ and any $x\in \mathcal{C}$, the puzzle
piece $P_d(x)$ is well defined.

\begin{lem}\label{4a} Suppose the graph
$\mathbf{G}_\lambda(\theta_1,\cdots,\theta_N)$ is not touchable,
then for any $z\in \mathbb{\overline{C}}\setminus(A_\lambda\cup
J_0)$, the puzzle pieces satisfy:
$$-{P}_0
(z)={P}_0 (-z);\ \  \omega {P}_d (z)={P}_d (\omega z),\
\omega^{2n}=1,\ d\geq 1.$$
\end{lem}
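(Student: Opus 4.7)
The plan is to reduce the lemma to the algebraic identity $f_\lambda(\omega z)=\omega^n f_\lambda(z)$ (valid whenever $\omega^{2n}=1$) together with the cut-ray symmetry $-\Omega_\lambda^\theta=\Omega_\lambda^\theta$ recorded among the bullet properties after the construction of $\Omega_\lambda^\theta$. The first propagates $\omega$-symmetry from depth $0$ to depth $d\geq 1$; the second supplies the $\omega=-1$ symmetry at depth $0$.

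For the depth-zero identity, I first verify that $\mathbf{G}_\lambda(\theta_1,\ldots,\theta_N)$ is invariant under $z\mapsto -z$. The boundary $\partial X_\lambda=\mathbf{e}(B_\lambda,1)$ is invariant under every $\omega$ with $\omega^{2n}=1$ by Lemma \ref{1a} (which gives $\omega B_\lambda=B_\lambda$ and $G_\lambda\circ\omega=G_\lambda$), while each cut ray in $g_\lambda(\theta_1,\ldots,\theta_N)$ satisfies $-\Omega_\lambda^\theta=\Omega_\lambda^\theta$. Hence $-\mathbf{G}_\lambda=\mathbf{G}_\lambda$, and writing $V=X_\lambda\setminus\mathbf{G}_\lambda$ one has $-V=V$. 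Since $z\mapsto -z$ is a homeomorphism of $\overline{\mathbb{C}}$ preserving $V$, it permutes the connected components of $V$ and sends $P_0(z)$ to $P_0(-z)$.

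For depth $d\geq 1$ and $\omega^{2n}=1$, using $\omega^{-n}=\omega^n\in\{\pm 1\}$ I compute
\[ f_\lambda(\omega z)=(\omega z)^n+\lambda/(\omega z)^n=\omega^n z^n+\omega^{-n}\lambda z^{-n}=\omega^n f_\lambda(z). \]
Iterating, and using $f_\lambda(\pm w)=(\pm 1)^n f_\lambda(w)$ at each step, induction on $d$ produces a sign $\epsilon_d=\epsilon_d(\omega)\in\{\pm 1\}$ with $f_\lambda^d(\omega z)=\epsilon_d\,f_\lambda^d(z)$. Writing $W=f_\lambda^{-d}(V)$ and $\sigma_\omega(w)=\omega w$, this gives
\[ \sigma_\omega^{-1}(W)=\{w:f_\lambda^d(\omega w)\in V\}=\{w:\epsilon_d f_\lambda^d(w)\in V\}=f_\lambda^{-d}(\epsilon_d V)=W, \]
the last equality invoking the depth-$0$ identity $-V=V$. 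Since $\sigma_\omega$ is a homeomorphism of $\overline{\mathbb{C}}$ preserving $W$, it permutes the connected components of $W$ and carries the component $P_d(z)$ to the one containing $\omega z$, giving $\omega P_d(z)=P_d(\omega z)$. No step here is delicate; the lemma is essentially a bookkeeping exercise combining the two ingredients above, and the non-touchability hypothesis enters only implicitly, ensuring that the puzzle pieces in question are honest disks containing the relevant points in their interiors.
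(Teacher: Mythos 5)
Your proof is correct and follows essentially the same route as the paper: establish $-V=V$ for $V=X_\lambda\setminus\mathbf{G}_\lambda$ at depth zero via the Green-function symmetry (Lemma \ref{1a}) and the cut-ray symmetry $-\Omega_\lambda^\theta=\Omega_\lambda^\theta$, then propagate to depth $d$ using $f_\lambda(\omega z)=\pm f_\lambda(z)$. Your rewriting of the inductive step as $f_\lambda^{-d}(V)=\sigma_\omega^{-1}(f_\lambda^{-d}(V))$ via $f_\lambda^d(\omega z)=\epsilon_d f_\lambda^d(z)$ is equivalent to the paper's induction on $f_\lambda^{-d}(V)=-f_\lambda^{-d}(V)$.
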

\begin{proof} By the definition of the graph
$\mathbf{G}_\lambda(\theta_1,\cdots,\theta_N)$ and the symmetry of
the Green function $G_\lambda: A_\lambda\rightarrow (0,+\infty]$
(see Lemma \ref{1a}), we have $X_\lambda\setminus
\mathbf{G}_\lambda(\theta_1,\cdots,\theta_N)=-X_\lambda\setminus
\mathbf{G}_\lambda(\theta_1,\cdots,\theta_N)$. Thus $-{P}_0
(z)={P}_0 (-z)$. Suppose that for some $d\geq 0$,
$$f_\lambda^{-d}(X_\lambda\setminus
\mathbf{G}_\lambda(\theta_1,\cdots,\theta_N))=-f_\lambda^{-d}(X_\lambda\setminus
\mathbf{G}_\lambda(\theta_1,\cdots,\theta_N)).$$

Because $f_\lambda(\omega z)=\pm f_\lambda(z)$ and $G_\lambda(\omega
z)=G_\lambda(z)$, we have $f_\lambda(z)\in
f_\lambda^{-{d}}(X_\lambda\setminus
\mathbf{G}_\lambda(\theta_1,\cdots,\theta_N))$ if and only if
$f_\lambda(\omega z)\in f_\lambda^{-{d}}(X_\lambda\setminus
\mathbf{G}_\lambda(\theta_1,\cdots,\theta_N))$. Thus
$$f_\lambda^{-{(d+1)}}(X_\lambda\setminus
\mathbf{G}_\lambda(\theta_1,\cdots,\theta_N))=\omega
f_\lambda^{-{(d+1)}}(X_\lambda\setminus
\mathbf{G}_\lambda(\theta_1,\cdots,\theta_N)).$$

The conclusion follows by induction.
\end{proof}

\begin{lem}\label{4aa} Suppose the graph
$\mathbf{G}_\lambda(\theta_1,\cdots,\theta_N)$ is not touchable,
then for any $d\geq0$ and any puzzle piece $P_d$ of depth $d$, the
intersection $\overline{P}_d\cap J(f_\lambda)$ is connected.
\end{lem}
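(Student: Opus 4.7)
The plan is to prove the statement by induction on the depth $d$.

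At depth $d=0$, a puzzle piece $P_0$ is a Jordan disk whose boundary consists of finitely many arcs of two kinds: equipotential arcs lying in $\partial X_\lambda\subset B_\lambda$, which are disjoint from $J(f_\lambda)$; and arcs of cut rays inside $\overline{X_\lambda}$, each of which is a Jordan arc through a preimage of $0$ and meets $J(f_\lambda)$ in a (closed subset of a) Cantor set by Proposition \ref{3e}. I would establish connectedness of $\overline{P}_0\cap J(f_\lambda)$ by contradiction. Suppose $\overline{P}_0\cap J(f_\lambda)=A\sqcup B$ with $A,B$ nonempty, disjoint and closed. Since $J(f_\lambda)$ is connected (Theorem \ref{1c} together with the running hypothesis that $J(f_\lambda)$ is not a Cantor set), pick a continuum $K\subset J(f_\lambda)$ joining a point of $A$ to a point of $B$. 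If $K\subset \overline{P}_0$ this already contradicts $A\sqcup B$. Otherwise $K$ exits $\overline{P}_0$ through one of the bounding cut-ray arcs, and using Propositions \ref{3e1} and \ref{3h} (each cut ray is a Jordan curve separating the sphere into two Jordan disks, and distinct cut rays only meet at iterated preimages of $\infty$), together with the density of repelling periodic points in $\Lambda_\lambda\subset J(f_\lambda)$ coming from Lemma \ref{2a}, I would deform $K$ into a continuum lying entirely in $\overline{P}_0\cap J(f_\lambda)$, producing the desired contradiction.

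For the inductive step, assume the statement holds at depth $d$. Let $P_{d+1}$ be a depth-$(d+1)$ puzzle piece and let $P_d$ be the unique depth-$d$ piece with $f_\lambda(P_{d+1})\subset P_d$. Because $P_{d+1}$ is by construction a connected component of $f_\lambda^{-1}(P_d)$ and puzzle pieces are topological disks, $f_\lambda:\overline{P}_{d+1}\to \overline{P}_d$ is a proper holomorphic branched covering between closed disks. Using $f_\lambda^{-1}(J(f_\lambda))=J(f_\lambda)$ we obtain
\[
\overline{P}_{d+1}\cap J(f_\lambda)=\overline{P}_{d+1}\cap f_\lambda^{-1}(\overline{P}_d\cap J(f_\lambda)).
\]
Since $\overline{P}_d\cap J(f_\lambda)$ is connected by induction and is full in $\overline{P}_d$ (its complement is the intersection of $\overline{P}_d$ with the Fatou set, which is a disjoint union of open connected Fatou-component pieces), its preimage under the branched cover, restricted to the single component $\overline{P}_{d+1}$, is connected. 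This gives the inductive step.

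The principal obstacle is the base case, specifically the step of pushing a continuum of $J(f_\lambda)$ back into $\overline{P}_0$ across a boundary cut-ray arc. This step relies on the Jordan-curve structure of cut rays developed in Section 3 and on the density of repelling periodic orbits in $\Lambda_\lambda$, so that any continuum in $J$ exiting $\overline{P}_0$ through a Cantor point of $\Omega_\lambda^\theta\cap J(f_\lambda)$ can be approximated by orbits of $f_\lambda$ whose itineraries have been modified to keep them in the correct sector. A secondary subtlety in the inductive step is verifying fullness of $\overline{P}_d\cap J(f_\lambda)$ in $\overline{P}_d$ at every stage, which guarantees that the preimage under the branched cover does not artificially split into additional components.
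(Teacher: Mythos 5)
Your proposal takes a different route from the paper, which proves the lemma directly for all $d$ at once: it uses the planar fact that a compact set $K\subset\overline{\mathbb{C}}$ is connected if and only if every component of $\overline{\mathbb{C}}\setminus K$ is simply connected, applies this with $K=\overline{P}_d\cap J(f_\lambda)$, and then verifies simple connectivity for the two types of complementary components — those contained in $P_d$ (Fatou components, simply connected because $J(f_\lambda)$ is connected) and the single component meeting $\partial P_d$ (analyzed via the cut-ray structure in Proposition~\ref{3e}). Your induction on $d$, by contrast, has gaps in both the base case and the inductive step.

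In the base case, ``deform $K$ into a continuum lying entirely in $\overline{P}_0\cap J(f_\lambda)$'' is not a step you can carry out: a continuum in a Julia set cannot be deformed within the Julia set, and the density of repelling periodic orbits in $\Lambda_\lambda$ supplies no mechanism for replacing $K$ by a new subcontinuum that still joins $A$ to $B$ but stays inside $\overline{P}_0$. You flag this as the principal obstacle, and it remains one. The inductive step invokes a false principle: the preimage, under the branched cover $f_\lambda\colon\overline{P}_{d+1}\to\overline{P}_d$, of a connected full compact set need not be connected. For $z\mapsto z^2$ on $\overline{\mathbb{D}}$, the set $K=\{1/2\}$ is connected and full, yet $f^{-1}(K)=\{\pm 1/\sqrt{2}\}$ is disconnected; connectivity of a branched-cover preimage depends on whether $K$ contains the critical values or separates them from $\partial P_d$, neither of which you establish (and the critical values $v_\lambda^{\pm}$ may well lie in the Fatou set). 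Moreover, $\overline{P}_d\cap J(f_\lambda)$ is in general \emph{not} full in $\overline{P}_d$: your own parenthetical describes $\overline{P}_d\setminus J(f_\lambda)$ as a disjoint union of several open Fatou pieces, which is disconnected. The paper's non-inductive argument bypasses both of these difficulties.
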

\begin{proof} It is equivalent to prove that every connected
component of $\mathbb{\overline{C}}\setminus(\overline{P}_d\cap
J(f_\lambda))$ is simply connected. Because the Julia set
$J(f_\lambda)$ is connected, every component of
$\mathbb{\overline{C}}\setminus(\overline{P}_d\cap J(f_\lambda))$
that lies inside $P_d$ is simply connected. Therefore, we only need
to consider the components of
$\mathbb{\overline{C}}\setminus(\overline{P}_d\cap J(f_\lambda))$
that intersect with $\partial P_d$. Note that the puzzle piece $P_d$
is bounded by finitely many cut rays, say
$\Omega_\lambda^{\beta_1},\cdots, \Omega_\lambda^{\beta_s}$,
together with finitely many equipotential curves
$\mathbf{e}(U_1,v),\cdots, \mathbf{e}(U_t,v)$. By the structure of
cut rays (Proposition \ref{3e}), there is exactly one component of
$\mathbb{\overline{C}}\setminus(\overline{P}_d\cap J(f_\lambda))$
that intersects with the boundary $\partial P_d$. This component is
the union of $\mathbb{\overline{C}}\setminus\overline{P}_d$ and
countably many Fatou components that intersect with the cut rays
$\Omega_\lambda^{\beta_1},\cdots, \Omega_\lambda^{\beta_s}$. Thus,
it is also simply connected.
\end{proof}

\subsection{Admissible graphs}

Given the  point $z\in \mathbb{\overline{C}}\setminus(A_\lambda\cup
J_0)$, the difference set $A_d(z)=P_d(z)\setminus
\overline{P_{d+1}(z)}$ is an annulus, either degenerate or of
positive modulus. Here, $d$ is called the depth of $A_d(z)$. For
$d\geq1$ and $c\in C_\lambda$, the annulus $A_d(z)$ is called
off-critical, $c$-critical or $c$-semi-critical if $P_d(z)$ contains
no critical points, $P_{d+1}(z)$ contains the critical point $c$ or
$A_d(z)$ contains the critical point $c$, respectively.

 Because the critical annuli play a crucial role in our discussion, we will devote ourselves to finding a graph such that with respect to the Yoccoz puzzle induced by such a graph, the critical annulus $A_d(c)$ is non-degenerate for some $d\geq1$. By Lemma \ref{4a}, if some critical annulus $A_d(c)$ of depth $d\geq1$ is non-degenerate, then all critical
annuli of the same depth are non-degenerate. The graph that
satisfies this property is of special interest.

\begin{defi} We say the graph
$\mathbf{G}_\lambda(\theta_1,\cdots,\theta_N)$ is admissible if it
is not touchable and if with respect to the Yoccoz puzzle induced by
$\mathbf{G}_\lambda(\theta_1,\cdots,\theta_N)$ there exists a
non-degenerate critical annulus $A_d(c)$ for some critical point
$c\in C_\lambda$ and some depth $d\geq 1$. Otherwise, we say the
graph $\mathbf{G}_\lambda(\theta_1,\cdots,\theta_N)$ is
non-admissible.
\end{defi}

By definition, a non-admissible graph either is touchable or
contains no non-degenerate critical annulus of depth greater than
one with respect to its induced Yoccoz puzzle. In the definition of
an admissible graph, we require that the critical annulus $A_d(c)$
is non-degenerate for some depth $d\geq 1$ rather than $d=0$ because
the puzzle pieces of depth zero have only two-fold symmetry and the
puzzle pieces of depth greater than zero have $2n$-fold symmetry
(see Lemma \ref{4a}).

The following remark tells us that a graph may be non-admissible in
some cases.

\begin{rem}
There exist non-admissible graphs. For example, for any $n\geq3$,
suppose $f_\lambda$ is 1-renormalizable at $c_0$ (see Section 5 for
definition). Then, the graph $\mathbf{G}_\lambda(1)$ is
non-admissible because $A_d(c_0)$ is degenerate for all depths
$d\geq 1$ (see Figure 6). One should note that $A_0(c_1)$ is
non-degenerate and $A_d(c_1)=e^{\pi i/3}A_d(c_0)$ is degenerate for
all $d\geq1$.
\end{rem}

However, even if non-admissible graphs exist, we can always find an
admissible graph based on an elaborate choice. The aim of this
section is to prove the existence of admissible graphs for $n\geq3$.

\begin{pro}\label{4ab} For any $n\geq3$ and any $\lambda\in \mathcal{H}$, if $f_\lambda$ is not critically finite, then there always exists an  admissible graph.
\end{pro}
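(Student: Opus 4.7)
The plan is to construct an admissible graph in two stages: first secure non-touchability by choosing periodic angles outside a finite ``bad'' set, then refine if necessary to obtain a non-degenerate critical annulus.

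\emph{Stage 1 (non-touchability).} By Proposition~\ref{3e}(1), each critical orbit point $z \in E_\lambda \setminus O_\lambda$ lies on a unique cut ray $\Omega_\lambda^{\theta(z)}$ modulo the identification $\theta \leftrightarrow \theta + 1/2$, while critical orbit points lying in $S_0 \cup S_n$ avoid all cut rays entirely. A periodic angle $\theta_1 \in \Theta$ produces a touchable graph if and only if $\theta(z) \in \{\tau^k(\theta_1), \tau^k(\theta_1)+1/2\}$ for some $k \geq 0$ and some critical orbit point $z \in E_\lambda$. Because $\theta(f_\lambda z) \in \{\tau(\theta(z)), \tau(\theta(z))+1/2\}$, all forward iterates of a single critical orbit point whose $\theta(\cdot)$ is $\tau$-periodic share one $\tau$-cycle, so only finitely many periodic angles can be bad. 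Since periodic angles are dense in $\Theta$ by Lemma~\ref{3b}(3), they form a countably infinite set, and non-bad periodic angles exist; any such $\theta_1$ produces a non-touchable graph $\mathbf{G}_\lambda(\theta_1)$.

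\emph{Stage 2 (admissibility).} Given a non-touchable $\mathbf{G}_\lambda(\theta_1)$, examine the critical puzzle-piece chain $P_0(c) \supset P_1(c) \supset \cdots$ for a fixed $c \in C_\lambda$. If some $A_d(c)$ with $d \geq 1$ is non-degenerate, we are done. Otherwise every such annulus is degenerate, so $\partial P_d(c) \cap \partial P_{d+1}(c) \neq \emptyset$ for each $d \geq 1$. Puzzle-piece boundaries are built from iterated preimages of $\Omega_\lambda^{\tau^k(\theta_1)}$ together with equipotentials, so these tangencies can be tracked under the dynamics and, via Proposition~\ref{3h} on intersections of cut rays, reveal a repelling periodic point at which infinitely many boundaries $\partial P_d(c)$ accumulate. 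I would then select a second periodic angle $\theta_2 \in \Theta$ whose cut ray $\Omega_\lambda^{\theta_2}$ enters $P_0(c)$ and separates $c$ from this accumulation; by Stage~1, $\theta_2$ can be chosen so that $\mathbf{G}_\lambda(\theta_1,\theta_2)$ remains non-touchable. The refined critical puzzle piece then sits strictly inside its parent at some depth $d \geq 1$, yielding the required non-degenerate critical annulus.

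The main obstacle is Stage~2, specifically establishing the existence of a separating $\theta_2$ with the requisite combinatorial property. This is exactly where the hypothesis $n \geq 3$ is indispensable: the Cantor set $\Theta$ has positive Hausdorff dimension $\log(n-1)/\log n$ precisely when $n \geq 3$ (Remark after Lemma~\ref{3b}), providing an abundance of periodic angles whose cut rays enter each non-critical sector $S_k$ with $k \in \mathbb{I} \setminus \{0,n\}$. For $n = 2$ this combinatorial richness collapses, consistent with the introduction's note that $n = 2$ must be excluded because non-degenerate critical annuli need not exist for the puzzles considered here.
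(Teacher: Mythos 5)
Your Stage~1 (non-touchability) is essentially sound: since the critical orbit in $J(f_\lambda)$ lies in $\Lambda_\lambda$, each critical orbit point contributes a single angle class $[\theta(z)]$ under $\kappa\circ\mathbf{s}_\lambda$, and these classes are linked under $\tau$; hence at most one $\tau$-cycle of periodic angles can be bad, and density of periodic angles in $\Theta$ gives a non-touchable $\mathbf{G}_\lambda(\theta_1)$.

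Stage~2 is where the proof breaks down, and you correctly flag it as the obstacle -- but flagging it is not the same as closing it. Two claims are asserted without proof. First, you claim that if every annulus $A_d(c)$ ($d\geq1$) is degenerate, then the tangencies between $\partial P_d(c)$ and $\partial P_{d+1}(c)$ ``reveal a repelling periodic point at which infinitely many boundaries accumulate.'' This is not established: the successive touching points live on preimages of $\infty$ of increasing depth and need not accumulate at a single periodic point, so there is no obvious target to separate. Second, even granting such a point, you do not show that a periodic $\theta_2\in\Theta$ exists whose cut ray actually passes between $c$ and that target while keeping $\mathbf{G}_\lambda(\theta_1,\theta_2)$ non-touchable; invoking the positive Hausdorff dimension of $\Theta$ says nothing about where the cut rays $\Omega_\lambda^{\theta_2}$ sit relative to the critical sector $S_0$, which is the actual geometric constraint. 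Indeed, the paper's proof (Lemmas~\ref{4b}, \ref{4b1}, \ref{4b2}) is a direct construction, not a contradiction argument: for $n\geq5$ it exploits the existence of ``central'' sectors $S_{\pm2},\dots,S_{\pm(n-2)}$ so that cut rays for $\theta\in\widehat\Theta_{per}$ stay away from $S_1\cup S_0\cup S_{-(n-1)}$, forcing $\overline{P_1(v_\lambda^+)}\subsetneq P_0(v_\lambda^+)$; for $n=3,4$ there is no such buffer and one must track the exact location of $v_\lambda^+$ relative to the finitely many explicit cut rays $\Omega_\lambda^{1/4},\Omega_\lambda^{1/2}$ (resp.\ $\Omega_\lambda^{1/3},\Omega_\lambda^{2/3},\Omega_\lambda^1$), with the case analysis showing that the only failure mode is the critical orbit landing on a specific repelling cycle -- which is excluded by the ``not critically finite'' hypothesis. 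Your argument neither reproduces this geometry nor supplies a substitute, so the proposition is not proved.
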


The proof is divided into three lemmas: Lemma \ref{4b}, Lemma
\ref{4b1} and Lemma \ref{4b2}. In fact, these lemmas enable us to
prove much more: when $n\geq5$, there always exist infinitely many
admissible graphs 
$f_\lambda^{k+1}(\sqrt[2n]{\lambda})=f_\lambda^{k}(\sqrt[2n]{\lambda})$
or
$f_\lambda^{k+2}(\sqrt[2n]{\lambda})=f_\lambda^{k}(\sqrt[2n]{\lambda})$
for some $k\geq1$).

\begin{lem}\label{4b} When $n=3$, there exists an admissible graph except when the critical orbit of $f_\lambda$ eventually lands at a repelling cycle of period one or two. More precisely,

1. If neither $\mathbf{G}_\lambda(1/4)$ nor
$\mathbf{G}_\lambda(1/2)$ is touchable, then at least one of the
graphs $\mathbf{G}_\lambda(1/4)$, $\mathbf{G}_\lambda(1/2)$,
$\mathbf{G}_\lambda(1/4,1/2)$ is admissible.

2. If $\mathbf{G}_\lambda(1/2)$ is touchable, then either
$\mathbf{G}_\lambda(1/4)$ is admissible or the critical orbit of
$f_\lambda$ eventually lands at a repelling cycle of period two.

3. If $\mathbf{G}_\lambda(1/4)$ is touchable, then either
$\mathbf{G}_\lambda(1/2)$ is admissible or the critical orbit of
$f_\lambda$ eventually lands at a repelling fixed point.
\end{lem}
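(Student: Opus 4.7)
The plan is to analyze the Yoccoz puzzles induced by the two $f_\lambda$-invariant cut rays available for $n=3$: $\Omega_\lambda^{1/2}$, arising from the unique $\tau$-fixed angle $1/2 \in \Theta$, and $\Omega_\lambda^{1/4}$, arising from the $\tau$-period-two pair $\{1/4, 3/4\}$ (which produces the same Jordan curve since $\Omega_\lambda^{\theta+1/2} = \Omega_\lambda^\theta$). By Proposition \ref{3h} these two cut rays meet only at $\{0, \infty\}$. Combined with the $2n=6$-fold rotational symmetry of puzzle pieces at depths $\ge 1$ from Lemma \ref{4a}, it is enough to verify non-degeneracy of the depth-$1$ critical annulus $A_1(c_0)$ for a single critical point $c_0 \in C_\lambda$.

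The core reduction is via pull-back: since $f_\lambda : P_1(c_0) \to P_0(v_\lambda^+)$ is a degree-two branched cover ramified at $c_0$, the annulus $A_1(c_0)$ is non-degenerate if and only if $A_0(v_\lambda^+) := P_0(v_\lambda^+) \setminus \overline{P_1(v_\lambda^+)}$ is. The boundary $\partial P_1(v_\lambda^+)$ consists of preimage-cut-ray arcs coming from Proposition \ref{3g}, and by Proposition \ref{3h} and its remark any touching with $\partial P_0(v_\lambda^+)$ occurs at preimages of $\infty$ of prescribed combinatorial type. For Part 1, with neither graph touchable, I would inspect $A_0(v_\lambda^+)$ for each of $\mathbf{G}_\lambda(1/2)$, $\mathbf{G}_\lambda(1/4)$ and $\mathbf{G}_\lambda(1/4, 1/2)$ in turn. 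If one of the simple graphs already yields non-degeneracy we are done; otherwise the combined graph strictly refines both, and the additional preimage curves introduced by the crossings of the two invariant cut rays at $\{0, \infty\}$ will separate $v_\lambda^+$ compactly inside its depth-$0$ piece, giving admissibility of $\mathbf{G}_\lambda(1/4, 1/2)$.

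For Parts 2 and 3, the touchability hypothesis yields strong dynamical restrictions. If $\mathbf{G}_\lambda(1/2)$ is touchable, some iterate $f_\lambda^k(c)$ lies on $\Omega_\lambda^{1/2} \cap J(f_\lambda)$, and the $f_\lambda$-invariance of the cut ray confines the subsequent forward orbit to this Cantor set. Simultaneous touchability of $\mathbf{G}_\lambda(1/4)$ would force the orbit to enter $\Omega_\lambda^{1/2} \cap \Omega_\lambda^{1/4} = \{0, \infty\}$, contradicting the hypothesis of bounded critical orbits; so $\mathbf{G}_\lambda(1/4)$ must be non-touchable. If $\mathbf{G}_\lambda(1/4)$ is moreover not admissible, then every critical annulus $A_d(c_0)$ with $d \ge 1$ is degenerate. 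I would argue that this universal degeneracy corresponds to a renormalization-like collapse of the critical puzzle pieces onto a repelling periodic cycle, whose period matches the $\tau$-period of the generating angle of the non-admissible graph: period $2$ in Part 2 (the unique period-$2$ cycle on $\Omega_\lambda^{1/2} \cap J(f_\lambda)$ has itinerary $(\overline{2, -2})$) and period $1$ in Part 3 (a repelling fixed point on $\Omega_\lambda^{1/4} \cap J(f_\lambda)$ with itinerary $(\overline{1})$ or $(\overline{-1})$). The classification of such periodic points uses the bijection $\mathbf{s}_\lambda : \Lambda_\lambda \to \Sigma$ from Lemma \ref{2a} together with their repelling character from Proposition \ref{3f}.

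The main obstacle will be the combinatorial verification in Part 1 that failure of admissibility for both simple graphs forces admissibility of their union; this requires explicit enumeration of the depth-$1$ preimage cut rays of $\Omega_\lambda^{1/2}$ and $\Omega_\lambda^{1/4}$ near $v_\lambda^+$ via Proposition \ref{3g}, and a verification that the union of the two preimage families always provides a separating curve that neither simple family does. A secondary delicate step in Parts 2 and 3 is promoting the analytic fact ``all critical annuli are degenerate'' into the dynamical statement ``the critical orbit eventually lands on the specific periodic cycle''; for this one must show that the persistent annulus degeneracy forces the orbit to enter the periodic kernel of the nested critical puzzle pieces in finite time, with period dictated by the $\tau$-period of the relevant cut ray angle.
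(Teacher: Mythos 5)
Your reduction for Part 1 (pull back $A_1(c_0)$ to $A_0(v_\lambda^+)$ and inspect the depth-zero picture) matches the paper's starting point, but your proposed dichotomy -- ``if both simple graphs fail, the combined graph must work'' -- is not what the paper does, and you yourself flag that you cannot close it. The paper instead uses the two preimage full rays $\omega_\lambda^{1/12}\in f_\lambda^{-1}(\Omega_\lambda^{1/4})$ and $\omega_\lambda^{1/6}\in f_\lambda^{-1}(\Omega_\lambda^{1/2})$ to cut $S_0$ into four explicit regions $D_1,\dots,D_4$; depending on which region contains $v_\lambda^+$, one reads off \emph{which} of the three graphs gives a non-degenerate $A_0(v_\lambda^+)$. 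This is a direct positive statement, not a reduction-by-contradiction, and it is exactly the enumeration you defer. Without carrying it out you do not have a proof of Part 1.

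For Parts 2 and 3 the gap is more serious: your route through ``$\mathbf{G}_\lambda(1/4)$ not admissible $\Rightarrow$ every $A_d(c_0)$ degenerate $\Rightarrow$ renormalization-like collapse onto a cycle whose period matches the $\tau$-period of the generating angle'' is both unsubstantiated and not the mechanism in play. The paper never argues from universal degeneracy of annuli; it argues directly from the itinerary. Concretely, once $f_\lambda^p(c)\in\Omega_\lambda^{1/2}$, the forward orbit stays on $\Omega_\lambda^{1/2}$, so $\mathbf{s}_\lambda(f_\lambda^p(c))=(s_0,s_1,\dots)$ with each $s_j\in\{\pm 2\}$. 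The crucial tool you are missing is the preimage cut ray $\Omega_\lambda^{5/12}\subset f_\lambda^{-1}(\Omega_\lambda^{1/4})$ lying inside $S_2\cup S_{-2}$: it partitions $\Omega_\lambda^{1/2}$ into the four arcs $\Omega_\lambda^{1/2}(\pm2,\pm2)$, and for any Julia point on $\Omega_\lambda^{1/2}(2,2)\cup\Omega_\lambda^{1/2}(-2,-2)$ the depth-zero annulus for the $\mathbf{G}_\lambda(1/4)$-puzzle is non-degenerate. Hence either some block $(s_m,s_{m+1})\in\{(2,2),(-2,-2)\}$ occurs -- which gives a non-degenerate $A_{m+p}(c)$ and admissibility of $\mathbf{G}_\lambda(1/4)$ -- or the signs alternate forever, i.e.\ $\mathbf{s}_\lambda(f_\lambda^p(c))=(\overline{2,-2})$ or $(\overline{-2,2})$; by the bijection $\mathbf{s}_\lambda|_{\Lambda_\lambda}$ (Lemma \ref{2a}) and Proposition \ref{3f}, this forces $f_\lambda^p(c)$ to \emph{be} the repelling period-two point, not merely to accumulate on one. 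Part 3 is the same with $\Omega_\lambda^{1/3}\subset f_\lambda^{-1}(\Omega_\lambda^{1/2})$ inside $S_1\cup S_{-1}$, where the alternating-sign case degenerates to $(\overline{1})$ or $(\overline{-1})$, a repelling fixed point. Your proposal correctly identifies the candidate cycles but lacks the cut-ray subdivision and the itinerary dichotomy that actually produce the conclusion.
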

\begin{proof} First, note that
$$f_\lambda^{-1}(\Omega_\lambda^{1/4})=\Omega_\lambda^{1/12}\cup\Omega_\lambda^{1/4}\cup\Omega_\lambda^{5/12},\
f_\lambda^{-1}(\Omega_\lambda^{1/2})=\Omega_\lambda^{1/6}\cup\Omega_\lambda^{1/3}\cup\Omega_\lambda^{1/2}.$$

1.  In this case, the full rays $\omega_\lambda^{1/12}$ and
$\omega_\lambda^{1/6}$ decompose $S_0$ into four domains:
$D_1,D_2,D_3$ and $D_4$ (see Figure 7). If neither
$\mathbf{G}_\lambda(1/4)$ nor $\mathbf{G}_\lambda(1/2)$ is
touchable, then the critical orbit has no intersection with
$\Omega_\lambda^{1/4}\cup\Omega_\lambda^{1/2}$.

 We consider the location of the critical value $v_\lambda^+$; there are four possibilities:

\text{\bf Case 1: $v_\lambda^+\in D_1$.} In this case, the annulus
$A_0(v_\lambda^+)=P_0(v_\lambda^+)\setminus
\overline{P_{1}(v_\lambda^+)}$ is non-degenerate with respect to the
Yoccoz puzzle as induced by either of the graphs
$\mathbf{G}_\lambda(1/4)$, $\mathbf{G}_\lambda(1/2)$ and
$\mathbf{G}_\lambda(1/4,1/2)$. It turns out that the critical
annulus $A_1(c)$ is non-degenerate for all $c\in C_\lambda$. Thus,
in this case, all the graphs $\mathbf{G}_\lambda(1/4)$,
$\mathbf{G}_\lambda(1/2)$, $\mathbf{G}_\lambda(1/4,1/2)$ are
admissible.

\text{\bf Case 2: $v_\lambda^+\in D_2$.} The annulus
$A_0(v_\lambda^+)=P_0(v_\lambda^+)\setminus
\overline{P_{1}(v_\lambda^+)}$ is non-degenerate with respect to the
Yoccoz puzzle induced by the graph $\mathbf{G}_\lambda(1/4)$.
Therefore, all critical annuli $A_1(c)$ are non-degenerate. Thus,
the graph $\mathbf{G}_\lambda(1/4)$ is admissible.

\text{\bf Case 3: $v_\lambda^+\in D_3$.} The annulus
$A_0(v_\lambda^+)$ is non-degenerate with respect to the Yoccoz
puzzle induced by the graph $\mathbf{G}_\lambda(1/4,1/2)$.
Therefore, all critical annuli $A_1(c)$ are non-degenerate, and the
graph $\mathbf{G}_\lambda(1/4,1/2)$ is admissible.

\text{\bf Case 4: $v_\lambda^+\in D_4$.}  Based on an argument
similar to that used above, we conclude that the graph
$\mathbf{G}_\lambda(1/2)$ is admissible.

\begin{figure}
\begin{center}
\includegraphics[height=8cm]{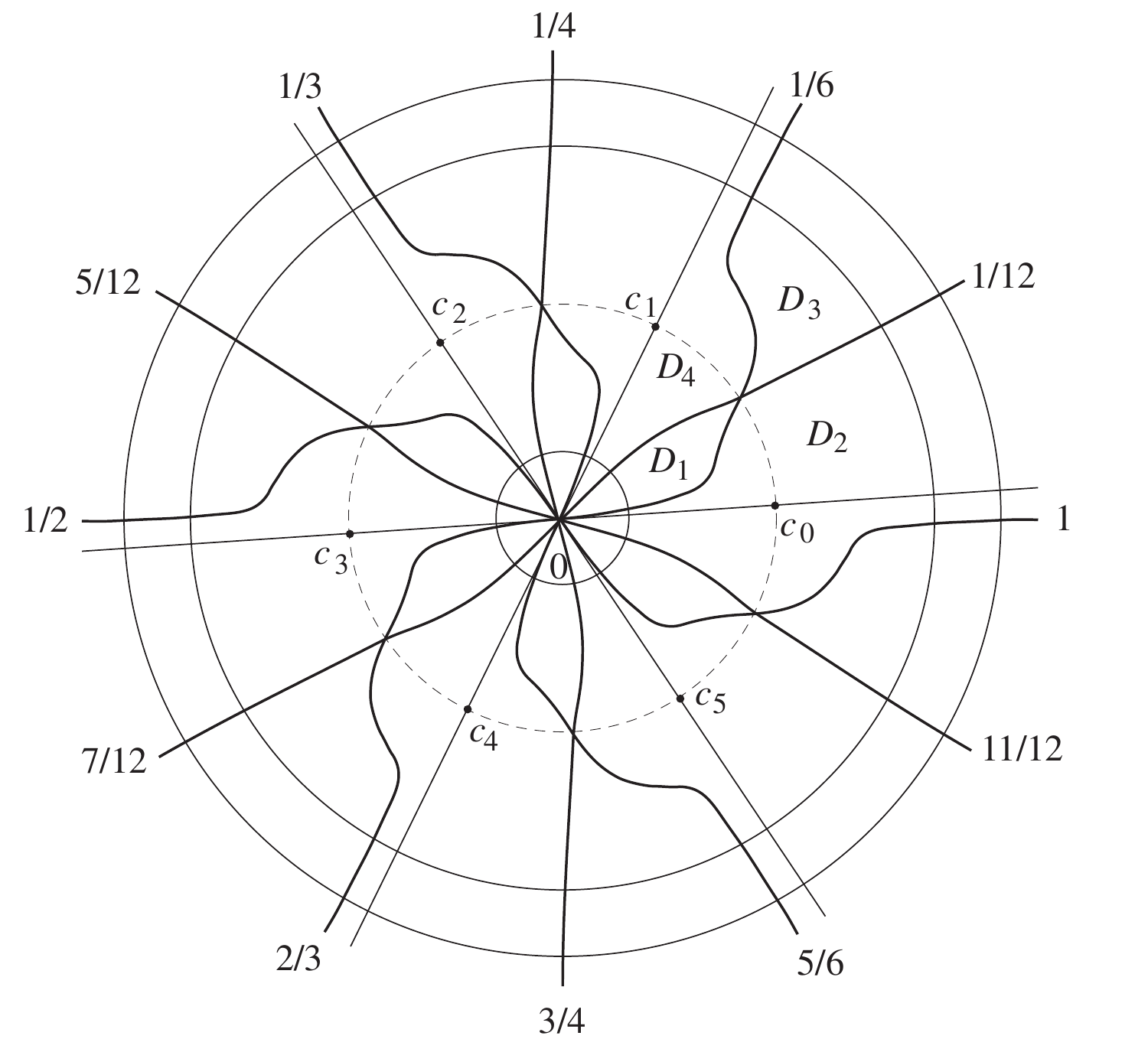}
\caption{Candidates for admissible graph when $n=3$ .}
\end{center}
\end{figure}

2. In this case, the graph $\mathbf{G}_\lambda(1/4)$ is necessarily
untouchable. First, note that the cut ray $\Omega_\lambda^{5/12}$
decomposes $\Omega_\lambda^{1/2}$ into four parts:
$\Omega_\lambda^{1/2}(2,2)$, $\Omega_\lambda^{1/2}(2,-2)$,
$\Omega_\lambda^{1/2}(-2,2)$ and $\Omega_\lambda^{1/2}(-2,-2)$,
where
$$\Omega_\lambda^{1/2}(\epsilon_0,\epsilon_1)=\overline{\{z\in\Omega_\lambda^{1/2}\setminus O_\lambda;
\mathbf{s}_\lambda(z)=(\epsilon_0,\epsilon_1,\pm2,\pm2,\cdots)} \},\
\epsilon_0,\epsilon_1=\pm2.$$ Moreover, for any $z\in
(\Omega_\lambda^{1/2}(2,2)\cup \Omega_\lambda^{1/2}(-2,-2))\cap
J(f_\lambda)$, the annulus $A_0(z)$ is non-degenerate with respect
to the Yoccoz puzzle induced by the graph $\mathbf{G}_\lambda(1/4)$.

Because $\mathbf{G}_\lambda(1/2)$ is touchable, there exist an
integer $p\geq1$ and a critical point $c\in C_\lambda$ such that
$f^p_\lambda(c)\in \Omega_\lambda^{1/2}$. Consider the itinerary of
$f^p_\lambda(c)$, say
$\mathbf{s}_\lambda(f^p_\lambda(c))=(s_0,s_1,s_2,\cdots)$. There are
two possibilities:

\text{\bf Case 1}. There is an integer $n\geq0$ such that
$(s_n,s_{n+1})=(2,2)$ or $(-2,-2)$. In this case,
$f^{n+p}_\lambda(c)\in (\Omega_\lambda^{1/2}(2,2)\cup
\Omega_\lambda^{1/2}(-2,-2))\cap J(f_\lambda)$; thus, the annulus
$A_0(f^{n+p}_\lambda(c))$ is non-degenerate. It turns out that the
critical annulus $A_{n+p}(c)$ is non-degenerate. Therefore, the
graph $\mathbf{G}_\lambda(1/4)$ is admissible.

\text{\bf Case 2}. For any integer $n\geq0$, $(s_n,s_{n+1})=(2,-2)$
or $(-2,2)$. In this case, either
$\mathbf{s}_\lambda(f^p_\lambda(c))=(2,-2,2,-2,\cdots)={(\overline{2,-2})}$
or
$\mathbf{s}_\lambda(f^p_\lambda(c))=(-2,2,-2,2,\cdots)={(\overline{-2,2})}$.
By Proposition \ref {3f}, $f^p_\lambda(c)$ lies in a repelling cycle
of period two.

3. The proof is similar to the proof of $2$. In this case, the graph
$\mathbf{G}_\lambda(1/2)$ is necessarily untouchable. First, note
that the cut ray $\Omega_\lambda^{1/3}$ decomposes
$\Omega_\lambda^{1/4}$ into four parts:
$\Omega_\lambda^{1/4}(1,-1)$, $\Omega_\lambda^{1/4}(1,1)$,
$\Omega_\lambda^{1/4}(-1,-1)$ and $\Omega_\lambda^{1/4}(-1,1)$,
where
$$\Omega_\lambda^{1/4}(\epsilon_0,\epsilon_1)=\overline{\{z\in\Omega_\lambda^{1/4}\setminus O_\lambda;
\mathbf{s}_\lambda(z)=(\epsilon_0,\epsilon_1,\pm1,\pm1,\cdots) \}},\
\ \epsilon_0,\epsilon_1=\pm1.$$ Moreover, for any $z\in
(\Omega_\lambda^{1/4}(1,-1)\cup \Omega_\lambda^{1/4}(-1,1))\cap
J(f_\lambda)$, the annulus $A_0(z)$ is non-degenerate with respect
to the Yoccoz puzzle induced by the graph $\mathbf{G}_\lambda(1/2)$.

Because $\mathbf{G}_\lambda(1/4)$ is touchable, there are an integer
$p\geq1$ and a critical point $c\in C_\lambda$ such that
$f^p_\lambda(c)\in \Omega_\lambda^{1/4}$. Consider the itinerary of
$f^p_\lambda(c)$, say
$\mathbf{s}_\lambda(f^p_\lambda(c))=(s_0,s_1,s_2,\cdots)$. There are
two possibilities:

\text{\bf Case 1}. There is an integer $n\geq0$ such that
$(s_n,s_{n+1})=(-1,1)$ or $(1,-1)$. In this case,
$f^{n+p}_\lambda(c)\in (\Omega_\lambda^{1/4}(1,-1)\cup
\Omega_\lambda^{1/4}(-1,1))\cap J(f_\lambda)$; thus, the annulus
$A_0(f^{n+p}_\lambda(c))$ is non-degenerate. It turns out that the
critical annulus $A_{n+p}(c)$ is non-degenerate. Therefore, the
graph $\mathbf{G}_\lambda(1/2)$ is admissible.

\text{\bf Case 2}. For any integer $n\geq0$, $(s_n,s_{n+1})=(1,1)$
or $(-1,-1)$. In this case, either
$\mathbf{s}_\lambda(f^p_\lambda(c))=(1,1,\cdots)={(\overline{1})}$
or
$\mathbf{s}_\lambda(f^p_\lambda(c))=(-1,-1,\cdots)={(\overline{-1})}$.
By Proposition \ref {3f}, $f^p_\lambda(c)$ is a repelling fixed
point.
\end{proof}

\begin{lem}\label{4b1} When $n=4$, if $\mathbf{G}_\lambda(1/3)$ is
not touchable, then $\mathbf{G}_\lambda(1/3)$ is admissible; if
$\mathbf{G}_\lambda(1/3)$ is touchable, then
$\mathbf{G}_\lambda(2/3,1)$ is admissible.
\end{lem}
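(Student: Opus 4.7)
The plan is to exploit that, for $n=4$, the three angles $1/3,\ 2/3,\ 1$ are all fixed by $\tau(\theta)=4\theta\bmod 1$, so they generate three distinct cut rays lying in well-separated pairs of sectors: $\Omega_\lambda^{1/3}\subset S_2\cup S_{-2}$, $\Omega_\lambda^{2/3}\subset S_1\cup S_{-1}$, and $\Omega_\lambda^{1}\subset S_3\cup S_{-3}$. Their pairwise intersections equal $\{0,\infty\}$ by Proposition~\ref{3h}, since the first symbols of their itineraries already differ. I will also repeatedly use the symmetry $f_\lambda(iz)=f_\lambda(z)$ together with the identity $i\,\Omega_\lambda^\theta=\Omega_\lambda^{\theta+1/4}$ (which follows from $\phi_\lambda(iz)=i\phi_\lambda(z)$ in Lemma~\ref{1a}) to locate preimage cut rays inside specific sectors.

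For the first case, suppose $\mathbf{G}_\lambda(1/3)$ is not touchable. Computing $f_\lambda^{-1}(\Omega_\lambda^{1/3})=\Omega_\lambda^{1/3}\cup\Omega_\lambda^{1/12}\cup\Omega_\lambda^{5/24}\cup\Omega_\lambda^{11/24}$ and applying the rotational symmetry, I would place $\Omega_\lambda^{1/12}=i\,\Omega_\lambda^{1/3}\subset S_0\cup S_4$, $\Omega_\lambda^{5/24}\subset S_1\cup S_{-1}$, and $\Omega_\lambda^{11/24}\subset S_3\cup S_{-3}$. The depth-zero puzzle piece $P_0(v_\lambda^+)$ contains $S_{-1}\cup S_0\cup S_1$ (intersected with $X_\lambda$), together with the portions of $S_{\pm 2}$ on the $v_\lambda^+$-side of $\Omega_\lambda^{1/3}$. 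Non-touchability forces $f_\lambda(v_\lambda^+)\notin\Omega_\lambda^{1/3}$, hence $v_\lambda^+\notin\Omega_\lambda^{1/12}$; so the $S_0$-strand of $\Omega_\lambda^{1/12}$ is a Jordan arc from $\mathbf{e}(T_\lambda,1/n)$ to $\mathbf{e}(B_\lambda,1/n)$ strictly avoiding $v_\lambda^+$, and analogous statements hold for the $S_{\pm 1}$-strands of $\Omega_\lambda^{5/24}$. The next step is to verify that these three preimage strands, concatenated with arcs of the depth-one equipotentials in $B_\lambda$ and $T_\lambda$, close up into a simple closed curve inside $P_0(v_\lambda^+)$ that encloses $v_\lambda^+$ and lies strictly disjoint from the $\Omega_\lambda^{1/3}$-part of $\partial P_0(v_\lambda^+)$. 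Consequently $\overline{P_1(v_\lambda^+)}\Subset P_0(v_\lambda^+)$ and $A_0(v_\lambda^+)$ is non-degenerate; pulling back through the degree-two branched cover $f_\lambda\colon P_1(c_0)\to P_0(v_\lambda^+)$ then produces the non-degenerate critical annulus $A_1(c_0)$, and $\mathbf{G}_\lambda(1/3)$ is admissible.

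For the second case, assume $\mathbf{G}_\lambda(1/3)$ is touchable, so $f_\lambda^{k}(v_\lambda^+)\in\Omega_\lambda^{1/3}$ for some minimal $k\geq 0$. First I would show that $\mathbf{G}_\lambda(2/3,1)$ is not touchable: if some $f_\lambda^i(v_\lambda^+)$ lay on $\Omega_\lambda^{2/3}$ (respectively $\Omega_\lambda^{1}$), the forward $f_\lambda$-invariance of that cut ray would keep the critical orbit from step $i$ onwards on it, so with $m=\max(i,k)$ one would have $f_\lambda^{m}(v_\lambda^+)\in\Omega_\lambda^{1/3}\cap\Omega_\lambda^{2/3}=\{0,\infty\}$ by Proposition~\ref{3h}, contradicting the Hypothesis that the critical orbit is bounded. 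The admissibility of $\mathbf{G}_\lambda(2/3,1)$ then follows by a parallel analysis to Case~1: the depth-zero piece $P_0(v_\lambda^+)$ is now the component containing $S_0$ among the four regions cut out by $\Omega_\lambda^{2/3}\cup\Omega_\lambda^{1}$, and among the preimage cut rays one finds both $\Omega_\lambda^{1/8}\subset S_0\cup S_4$ (a preimage of $\Omega_\lambda^{1}$) and $\Omega_\lambda^{1/24}\subset S_0\cup S_4$ (a preimage of $\Omega_\lambda^{2/3}$); their strands in $S_0$, together with strands of other preimage cut rays in the adjacent sectors and arcs of the depth-one equipotentials, again close up into a Jordan curve inside $P_0(v_\lambda^+)$ enclosing $v_\lambda^+$. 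The hardest step in both cases is this topological verification---that the preimage cut-ray strands, together with the depth-one equipotentials, really do bound a simple closed curve strictly inside $P_0(v_\lambda^+)$ and around the critical value---and it rests on a careful bookkeeping of itineraries via Proposition~\ref{3e} and on the $\mathbb{Z}/4$-symmetry of $f_\lambda$.
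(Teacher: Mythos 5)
Your Case~1 is in the same spirit as the paper's argument, but you mislocate the second strand bounding $P_1(v_\lambda^+)$. Since the graph is $\mathbf{G}_\lambda(1/3)$, the piece $P_0(v_\lambda^+)$ has $B_\lambda$-angles in $(5/6,1]\cup(0,1/3)$, hence contains $S_{-3}\cup S_0\cup S_1$ (not $S_{-1}\cup S_0\cup S_1$ as you claim). The $S_{-1}$-strand $\omega_\lambda^{17/24}$ of $\Omega_\lambda^{5/24}$ therefore lies in $P_0(v_\lambda^-)$ and cannot be part of any Jordan curve inside $P_0(v_\lambda^+)$; the correct partner of $\omega_\lambda^{5/24}$ is $\omega_\lambda^{23/24}\subset S_{-3}$, which is exactly what the paper uses. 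With that correction the strategy goes through: any depth-one puzzle piece meeting $\mathrm{int}(S_0)$ is trapped between $\omega_\lambda^{5/24}$ and $\omega_\lambda^{23/24}$, and the latter domain is compactly contained in $P_0(v_\lambda^+)$.

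Your Case~2 has a genuine gap. You aim to prove that $A_0(v_\lambda^+)$ is non-degenerate for the puzzle of $\mathbf{G}_\lambda(2/3,1)$, but that is not always true. The depth-zero piece $P_0(v_\lambda^+)$ is bounded by $\omega_\lambda^{1}$ (in $S_{-3}$) and $\omega_\lambda^{1/6}$ (in $S_1$), and the only depth-one cut-ray strands lying inside it are $\omega_\lambda^{1/24}$ and $\omega_\lambda^{1/8}$, both in $S_0$. Touchability of $\mathbf{G}_\lambda(1/3)$ places no constraint on where $v_\lambda^+$ sits in $S_0$; if $v_\lambda^+$ lies between the boundary strand $\omega_\lambda^{1}$ and $\omega_\lambda^{1/24}$ (or between $\omega_\lambda^{1/8}$ and $\omega_\lambda^{1/6}$), then $\overline{P_1(v_\lambda^+)}$ meets $\partial P_0(v_\lambda^+)$ and $A_0(v_\lambda^+)$ is degenerate. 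So no Jordan curve of the kind you describe need exist. The paper avoids this by working at the point $f_\lambda^p(c)\in\Omega_\lambda^{1/3}\subset S_2\cup S_{-2}$ rather than at $v_\lambda^+$: the preimages of $\Omega_\lambda^{2/3}$ and $\Omega_\lambda^1$ lying in $S_2\cup S_{-2}$ are $\Omega_\lambda^{7/24}$ and $\Omega_\lambda^{3/8}$, and since $7/24<1/3<3/8$, the cut ray $\Omega_\lambda^{1/3}$ on which $f_\lambda^p(c)$ is forced to lie is strictly sandwiched between them. Hence $A_0(f_\lambda^p(c))$ is non-degenerate for every such $\lambda$, and pulling back $p$ times gives a non-degenerate $A_p(c)$. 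That key change of base point is what makes Case~2 work, and it is missing from your argument.

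Your verification that $\mathbf{G}_\lambda(2/3,1)$ is untouchable whenever $\mathbf{G}_\lambda(1/3)$ is touchable (via $\tau$-invariance of the fixed angles and Proposition~\ref{3h} giving $\Omega_\lambda^{1/3}\cap\Omega_\lambda^{2/3}=\Omega_\lambda^{1/3}\cap\Omega_\lambda^{1}=\{0,\infty\}$, together with the boundedness of the critical orbit) is correct and is a step the paper leaves implicit; it is a worthwhile addition.
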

\begin{proof}
First, note that $\mathbf{s}(1/3)=(2,2,\cdots)=(\overline{2}),\
\mathbf{s}(2/3)=(-1,-1,\cdots)=(\overline{-1})$ and $
\mathbf{s}(1)=(-3,-3,\cdots)=(\overline{-3})$. Thus,
$\Omega_\lambda^{1/3}\subset S_{2}\cup S_{-2},
\Omega_\lambda^{2/3}\subset S_{1}\cup S_{-1}$ and $
\Omega_\lambda^{1}\subset S_{3}\cup S_{-3}$ (see Figure 8). It is
easy to verify
$$f_\lambda^{-1}(\Omega_\lambda^{1/3})=\Omega_\lambda^{1/12}\cup
\Omega_\lambda^{5/24}\cup\Omega_\lambda^{1/3}\cup\Omega_\lambda^{11/24}.$$

If the graph $\mathbf{G}_\lambda(1/3)$ is not touchable, then the
critical orbit has no intersection with $\Omega_\lambda^{1/3}$. With
respect to the Yoccoz puzzle induced by $\mathbf{G}_\lambda(1/3)$,
the puzzle piece $P_1(v_\lambda^+)$ is a subset of the domain
bounded by $\omega_\lambda^{5/24}$ and $\omega_\lambda^{23/24}$
together with the equipotential curves $\mathbf{e}(B_\lambda,1/n)$
and $\mathbf{e}(T_\lambda,1/n)$. Thus, the annulus
$A_0(v_\lambda^+)$ is non-degenerate. It turns out that all critical
annuli $A_1(c)$ are non-degenerate. Therefore, the graph
$\mathbf{G}_\lambda(1/3)$ is admissible.

\begin{figure}
\begin{center}
\includegraphics[height=8cm]{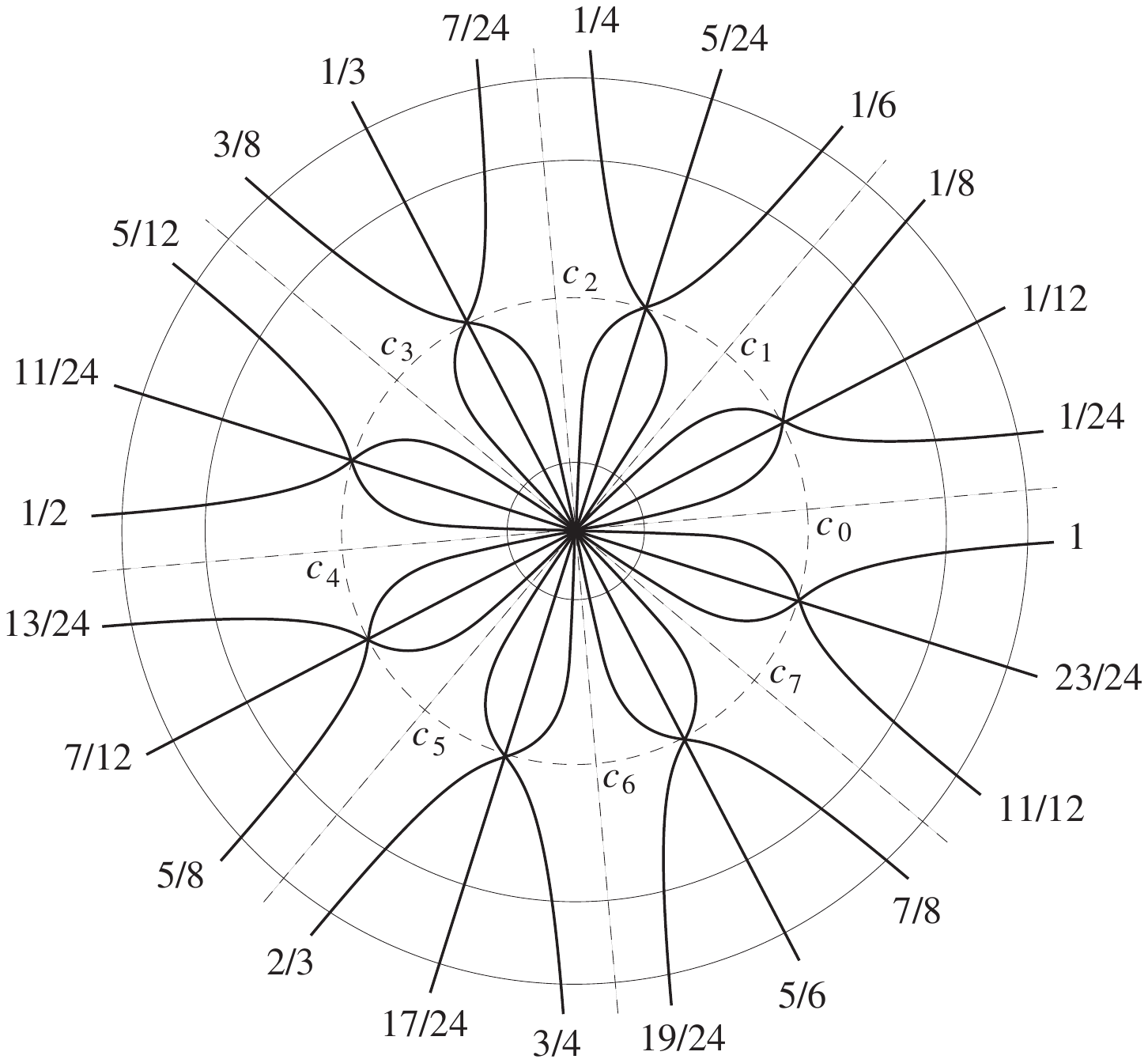}
\caption{Candidates for admissible graph when $n=4$ .}
\end{center}
\end{figure}

If the graph $\mathbf{G}_\lambda(1/3)$ is touchable, then there
exist an integer $p\geq1$ and a critical point $c\in C_\lambda$ such
that $f_\lambda^p(c)\in \Omega_\lambda^{1/3}$. Note that the
preimage of $\Omega_\lambda^{2/3}$ that lies in $S_2\cup S_{-2}$ is
$\Omega_\lambda^{7/24}$ and the preimage of $\Omega_\lambda^{1}$
that lies in $S_2\cup S_{-2}$ is $\Omega_\lambda^{3/8}$. In this
case, with respect to the Yoccoz puzzle induced by the graph
$\mathbf{G}_\lambda(2/3,1)$, the puzzle piece $P_1(f_\lambda^p(c))$
is bounded by $\Omega_\lambda^{7/24}$ and $\Omega_\lambda^{3/8}$;
thus, the annulus $A_0(f_\lambda^p(c))$ is non-degenerate. It
follows that all critical annuli $A_p(c)$ are non-degenerate, and
the graph $\mathbf{G}_\lambda(2/3,1)$ is admissible.
\end{proof}

\begin{figure}
\begin{center}
\includegraphics[height=8cm]{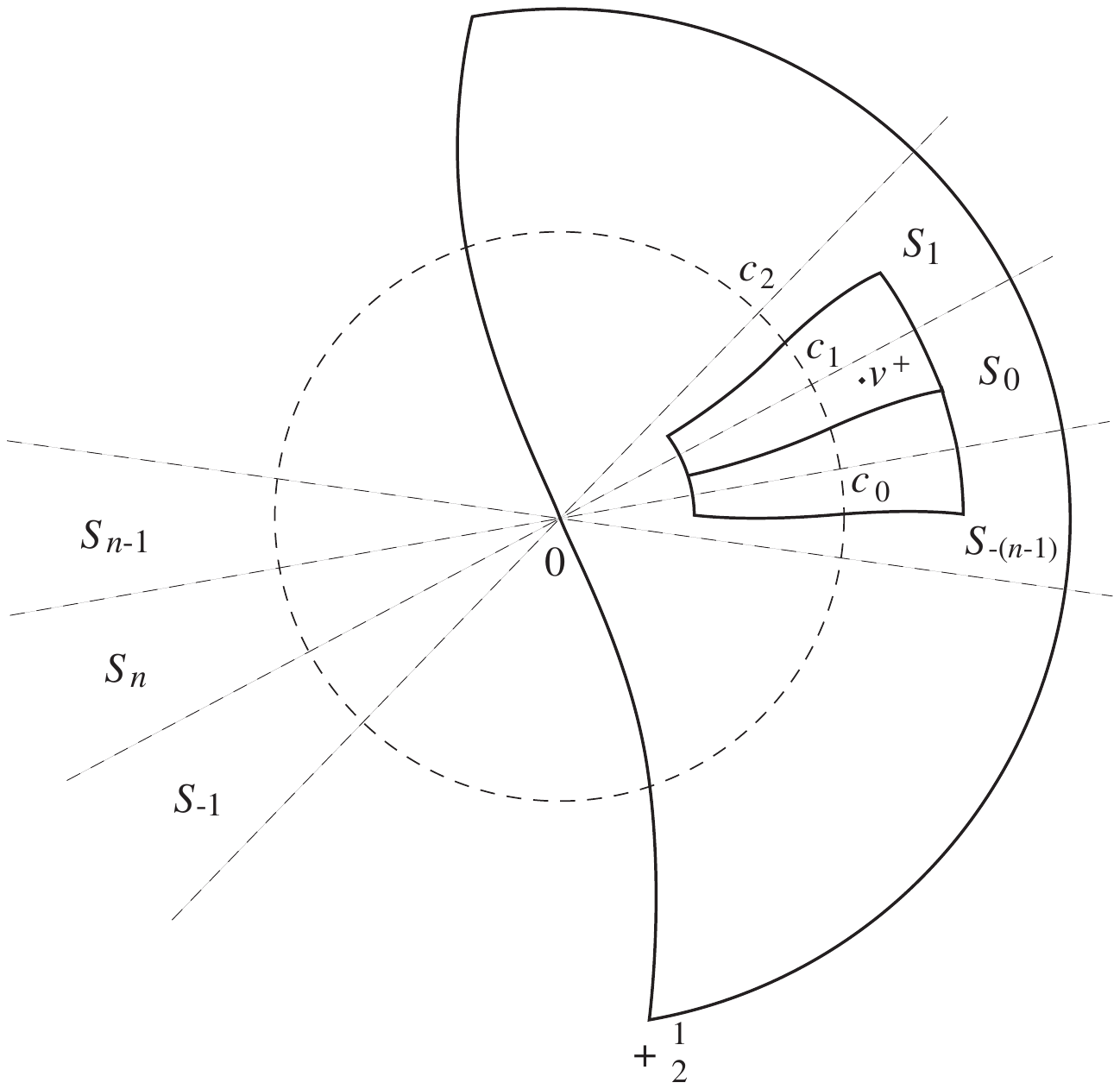}
\caption{Candidates for admissible graph when $n\geq5$ .}
\end{center}
\end{figure}

In the following, we will consider the case when $n\geq5$. Let
$$\widehat{\Theta}= \bigcap_{j\geq 0}\tau^{-j}\Big(\bigcup_{2\leq
k\leq n-2}(\Theta_k\cup\Theta_{-k})\Big)$$ be the set of all angles
in $\Theta$ whose orbits remain in $\bigcup_{2\leq k\leq
n-2}(\Theta_k\cup\Theta_{-k})$ under all iterations of $\tau$, and
let $\widehat{\Theta}_{per}$ be the set of all periodic angles in
$\widehat{\Theta}$. Based on a similar argument as for Lemma
\ref{3b}, we can show that $\widehat{\Theta}_{per}$ is a dense
subset of $\widehat{\Theta}$. By Lemma \ref{3a},one can check that
the set $\widehat{\Theta}_{per}$ can be written as
$$\widehat{\Theta}_{per}=\bigcup_{p\geq1}\{\kappa(\mathbf{s}); \mathbf{s}=
 (\overline{s_0,\cdots, s_{p-1}})\in \Sigma_0  \textrm{\ and\ } s_0,\cdots, s_{p-1}\in \{\pm2, \cdots,\pm (n-2)\}\}$$
and that any angle $\theta\in \widehat{\Theta}_{per}$ is of the form
$$ \theta=\frac{1}{2}\bigg(\frac{\chi(s_0)}{n}+\frac{|s_0|}{n(n^p-1)}+
\frac{n^p}{n^p-1}\sum_{1\leq k < p}\frac{|s_{k}|}{n^{k+1}}\bigg).$$
\begin{lem}\label{4b2} When $n\geq 5$, there are infinitely many
periodic angles $\theta\in \Theta$ such that the graph
$\mathbf{G}_\lambda(\theta)$ is admissible.
\end{lem}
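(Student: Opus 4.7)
The plan is to draw $\theta$ from the countable set $\widehat{\Theta}_{per}$, exploit the combinatorial room created when $n\ge 5$, and check admissibility for all but finitely many choices (tacitly assuming $f_\lambda$ is not critically finite, as in Proposition~\ref{4ab}). First I would note that when $n\ge 5$ the alphabet $\{\pm 2,\dots,\pm(n-2)\}$ has $2(n-3)\ge 4$ letters, and the $\Sigma_0$-parity constraint (the parity of $\chi(s_k)$ forces the sign of $s_{k+1}$) still leaves $n-3\ge 2$ admissible continuations at every position. Hence for each period $p$ the number of admissible $p$-periodic sequences $(\overline{s_0,\dots,s_{p-1}})\in\Sigma_0$ with $|s_i|\in\{2,\dots,n-2\}$ grows at least like $(n-3)^p$, so $\widehat{\Theta}_{per}$ contains infinitely many distinct $\tau$-orbits and therefore infinitely many distinct graphs $\mathbf{G}_\lambda(\theta)$.

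Next I would verify that every $\theta\in\widehat{\Theta}_{per}$ produces a non-degenerate critical annulus at depth $1$. For such a $\theta$ every iterate $\Omega_\lambda^{\tau^k(\theta)}$ lies in $S_{\pm|s_k|}$ with $|s_k|\in\{2,\dots,n-2\}$, so the whole family avoids $S_0\cup S_{\pm 1}\cup S_{\pm(n-1)}\cup S_n$. Because $v_\lambda^+\in\mathrm{int}(S_0)$, the depth-$0$ piece $P_0(v_\lambda^+)$ contains $(S_0\cup S_{\pm 1})\cap X_\lambda$ in its entirety. Of the $n$ preimages of $\Omega_\lambda^\theta$ (parametrized by $\tau^{-1}(\theta)$) exactly one, say $\Omega_\lambda^\beta$, has $|s_0(\beta)|=1$ and hence lies in $S_1\cup S_{-1}$; this preimage cut ray sits in the interior of $P_0(v_\lambda^+)$, is not part of $\mathbf{G}_\lambda(\theta)$, and contributes a genuinely new boundary arc inside $P_0(v_\lambda^+)$ at depth $1$. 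This forces $P_1(v_\lambda^+)\subsetneq P_0(v_\lambda^+)$, so $A_0(v_\lambda^+)$ is non-degenerate; pulling back under $f_\lambda$ at each $c\in C_\lambda$ (a simple critical point) and invoking the $2n$-fold symmetry of Lemma~\ref{4a} then yields $A_1(c)$ non-degenerate for all $c\in C_\lambda$.

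It remains to rule out touchability for all but finitely many $\theta\in\widehat{\Theta}_{per}$. Since $f_\lambda$ is not critically finite, the critical orbit avoids $O_\lambda$, so a critical orbit point $x$ touches $\mathbf{G}_\lambda(\theta)$ only through the cut-ray identification: $x\in E_\lambda$ and $\alpha(x):=\kappa(\mathbf{s}_\lambda(x))$ lies in the $\tau$-orbit of $\theta$ or of $\theta+\tfrac{1}{2}$. For periodic $\theta$ this forces $\alpha(x)$ itself to be periodic, and by the remark after Lemma~\ref{3a} this is equivalent to $|\mathbf{s}_\lambda(x)|$ being periodic. I would argue that the set of periodic values of $\alpha(x)$ arising from the critical orbit is finite: if the absolute-value itinerary of the critical orbit is not eventually periodic, then no $\alpha(x)$ is periodic at all; otherwise this itinerary is periodic from some index $k_0$ onward with some period $p^{\ast}$, and for $k\ge k_0$ the angle $\alpha(f_\lambda^k(c))$ depends only on the fixed periodic tail and on the signed first symbol $s_k$, which ranges over only finitely many values compatible with the periodic absolute values. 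Hence the set of periodic $\gamma$ arising as $\alpha(x)$ or $\alpha(x)+\tfrac{1}{2}$ is finite; each such $\gamma$ excludes exactly one $\tau$-orbit in $\widehat{\Theta}_{per}$; and by Step~1 infinitely many $\tau$-orbits remain, each giving an admissible graph.

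The main obstacle is this finiteness assertion in the touchability step: one must turn ``$\alpha(x)$ is periodic under $\tau$'' into a tight combinatorial constraint on the critical orbit and use the non-critical-finiteness hypothesis to bound the number of distinct periodic values of $\alpha(x)$. The sector bookkeeping in Step~2 (identifying the preimage cut ray in $S_{\pm 1}$) and the growth estimate in Step~1 are then essentially elementary within the symbolic framework of Section~3.
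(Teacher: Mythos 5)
Your overall plan coincides with the paper's: pick $\theta\in\widehat{\Theta}_{per}$, observe that the graph's cut rays keep away from a wedge of sectors around $S_0$, deduce that the depth-$1$ piece around $v_\lambda^+$ is compactly contained in the depth-$0$ piece, pull back to get a non-degenerate critical annulus $A_1(c)$, and note there are infinitely many admissible choices. However, the execution of the geometric step contains a labeling mistake and a logical gap.

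The mistake: in the paper's conventions the sectors run counterclockwise as $S_0,S_1,\dots,S_n,S_{-1},\dots,S_{-(n-1)}$, so the two sectors adjacent to $S_0$ are $S_1$ and $S_{-(n-1)}$, \emph{not} $S_{\pm 1}$. The sector $S_{-1}$ sits next to $S_n$ (which contains $v_\lambda^-$), so $(S_0\cup S_1\cup S_{-1})\cap X_\lambda$ is disconnected and $S_{-1}\cap X_\lambda$ lies in $P_0(v_\lambda^-)=-P_0(v_\lambda^+)$, not in $P_0(v_\lambda^+)$. For the same reason the preimage cut ray $\Omega_\lambda^\beta$ with $|s_0(\beta)|=1$ has one arm in $S_1$ (near $P_0(v_\lambda^+)$) and the other arm in $S_{-1}$ (near $P_0(v_\lambda^-)$); it does not ``sit in the interior of $P_0(v_\lambda^+)$''. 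The correct statement, which is what the paper uses, is that $P_0(v_\lambda^+)$ contains the interior of $(S_0\cup S_1\cup S_{-(n-1)})\cap X_\lambda$.

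The gap: exhibiting a single new boundary arc and concluding ``$P_1(v_\lambda^+)\subsetneq P_0(v_\lambda^+)$, so $A_0(v_\lambda^+)$ is non-degenerate'' does not follow. The strict inclusion $P_1\subsetneq P_0$ is automatic (the equipotential drops from level $1$ to level $1/n$), and by itself it says nothing about the modulus of $A_0=P_0\setminus\overline{P_1}$. Non-degeneracy requires that the \emph{entire} closure $\overline{P_1(v_\lambda^+)}$ lie in the open set $P_0(v_\lambda^+)$. This needs three observations, not one: (i) the equipotential arcs of $\partial P_1$ are at level $1/n$, strictly inside $X_\lambda$ and bounded away from $0$; (ii) all new boundary cut-ray arcs of $P_1$ — on the $S_1$ side \emph{and} on the $S_{-(n-1)}$ side (and possibly in $S_0$) — are preimages that do not belong to $g_\lambda(\theta)$; (iii) they are disjoint from $\partial P_0(v_\lambda^+)$ away from $0,\infty$, and $\overline{P_1}$ avoids $0,\infty$. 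This is precisely the content of the paper's one-line assertion $\overline{P_1(v_\lambda^+)}\subset\mathrm{int}(S_1\cup S_0\cup S_{-(n-1)})\subset P_0(v_\lambda^+)$. You should phrase your argument in these terms rather than through a single arc.

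Your touchability step (Step 3) is more elaborate than the paper needs (the paper simply asserts the existence of infinitely many $\theta$ avoiding the critical orbit), but it is essentially sound: for a fixed $\lambda$ that is not critically finite, the critical tail contributes at most one forbidden shift-orbit of absolute-value itineraries, so infinitely many $\tau$-orbits in $\widehat{\Theta}_{per}$ survive. The counting in Step 1 is correct but also more than is required.
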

\begin{proof}

We can choose an angle $\theta\in \widehat{\Theta}_{per}$ such that
the critical orbit avoids the graph $\mathbf{G}_\lambda(\theta)$.
(Note that there are infinitely many such choices of angle
$\theta$). When $n\geq5$, the set
$\cup_{j\geq0}\Omega_\lambda^{\tau^j(\theta)}-\{0,\infty\}$ lies
outside $S_1\cup S_0\cup S_{-(n-1)}$ (see Figure 9). Then, with
respect to the Yoccoz puzzle induced by the graph
$\mathbf{G}_\lambda(\theta)$, $\overline{P_1(v_\lambda^+)}$ is
contained in the interior of $S_1\cup S_0\cup S_{-(n-1)}$ and is a
proper subset of $P_0(v_\lambda^+)$. Because
$f_\lambda(P_2(c_0))=P_1(v_\lambda^+)$ and
$f_\lambda(P_1(c_0))=P_0(v_\lambda^+)$, we know that
$A_1(c_0)=P_1(c_0)\setminus \overline{P_2(c_0)}$ is non-degenerate.
Thus, the graph $\mathbf{G}_\lambda(\theta)$ is admissible.
\end{proof}

In the remainder of this section, we prove an important property of
the cut rays that are used to generate admissible graphs. Let
\begin{equation*}
\Theta_{ad}=\begin{cases}
 \{\frac{1}{4},\frac{1}{2}\},\ \  &n=3,\\
 \{\frac{1}{3},\frac{2}{3},1\},\ \  &n=4,\\
 \widehat{\Theta}_{per},\ \  &n\geq5.
 \end{cases}
 \end{equation*}
 Note that for any admissible graph
 $\mathbf{G}_\lambda(\theta_1,\cdots,\theta_N)$ constructed by Lemma \ref{4b}, Lemma
\ref{4b1} and Lemma \ref{4b2}, $\{\theta_1,\cdots,\theta_N\}\subset
 \Theta_{ad}$. In the following, we will prove

\begin{pro}\label{4b3} For any $\theta\in \Theta_{ad}$, the
intersection $\Omega_\lambda^\theta\cap \partial B_\lambda$ consists
of two points.
\end{pro}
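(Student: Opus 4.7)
The plan is to verify $\{p_\lambda(\theta),p_\lambda(\theta+1/2)\}\subseteq \Omega_\lambda^\theta\cap\partial B_\lambda$ with the two landing points distinct, and then establish the reverse inclusion via an itinerary count. For the forward inclusion, Proposition~\ref{3d} gives that $R_\lambda(\theta)$ and $R_\lambda(\theta+1/2)$ land at $p_\lambda(\theta), p_\lambda(\theta+1/2)\in \partial B_\lambda$, and both landing points lie on $\Omega_\lambda^\theta$ by the description in Proposition~\ref{3e}. By the symmetry $\phi_\lambda(-z)=-\phi_\lambda(z)$ from Lemma~\ref{1a}, $p_\lambda(\theta+1/2)=-p_\lambda(\theta)$; since $0\in T_\lambda$ is disjoint from $\partial B_\lambda$, $p_\lambda(\theta)\neq 0$ and hence $p_\lambda(\theta)\neq p_\lambda(\theta+1/2)$.

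For the reverse inclusion, take $z\in\Omega_\lambda^\theta\cap\partial B_\lambda$. Since $\infty\in B_\lambda$ and $0\in T_\lambda$, $z\notin\{0,\infty\}$, and Proposition~\ref{3e} places $z$ in $\mathrm{int}(S_{s_0})\cup\mathrm{int}(S_{-s_0})$, so its itinerary is well-defined. Because the forward orbit of $z$ remains in $\partial B_\lambda\subset J(f_\lambda)\subset\overline{Q_v}$ and avoids every critical ray, $z\in\Lambda_\lambda$. Combining Proposition~\ref{3e}(1) with the Remark after Lemma~\ref{3a}, we obtain $\mathbf{s}_\lambda(z)=(\epsilon_0 s_0,\epsilon_1 s_1,\ldots)$ for some sign sequence $(\epsilon_k)\in\{+,-\}^{\mathbb{N}}$, where $\mathbf{s}(\theta)=(s_0,s_1,\ldots)$.

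The combinatorial heart of the proof is to enumerate the sign patterns for which $(\epsilon_k s_k)\in\Sigma_0$. Writing $\chi(s)=|s|$ for $s>0$ and $\chi(s)=n+|s|$ for $s<0$: for $n$ odd the parities of $\chi(s)$ and $\chi(-s)$ are opposite, so comparing the $\Sigma_0$-rule for $(\epsilon_k s_k)$ against that for $(s_k)$ forces $\epsilon_{k+1}=\epsilon_k$, leaving only the constant sign patterns and yielding $\mathbf{s}(\theta)$ or $\mathbf{s}(\theta+1/2)=-\mathbf{s}(\theta)$. For $n$ even the two parities agree, so the $\Sigma_0$-rule forces $\epsilon_{k+1}=+$ for every $k\geq 0$; only $\epsilon_0$ is free, giving $\mathbf{s}(\theta)$ or $\mathbf{s}(\theta+1/2)=(-s_0,s_1,s_2,\ldots)$. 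In both cases $\{(\pm s_0,\pm s_1,\ldots)\}\cap\Sigma_0=\{\mathbf{s}(\theta),\mathbf{s}(\theta+1/2)\}$, so by the bijectivity of $\mathbf{s}:\Theta\to\Sigma_0$ and Proposition~\ref{3d}, any $z$ with $\mathbf{s}_\lambda(z)\in\Sigma_0$ must equal $p_\lambda(\theta)$ or $p_\lambda(\theta+1/2)$.

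The main obstacle is to prove $\mathbf{s}_\lambda(z)\in\Sigma_0$, i.e.\ to rule out sign sequences lying in $\{(\pm s_0,\pm s_1,\ldots)\}\setminus\Sigma_0$. This is handled by transporting the sector constraint through the B\"ottcher semi-conjugacy: pick $z_m\in B_\lambda$ with $z_m\to z$; since $z\in\mathrm{int}(S_{\epsilon_0 s_0})$, for $m$ large the $z_m$ lie there too, and after passing to a subsequence the B\"ottcher angles $\arg\phi_\lambda(z_m)/(2\pi)$ converge to some $\alpha\in\overline{\Theta^\lambda_{\epsilon_0 s_0}}$. Iterating and using $\phi_\lambda\circ f_\lambda=\phi_\lambda^{n}$ together with $f_\lambda^k(z)\in\mathrm{int}(S_{\epsilon_k s_k})$ gives $n^k\alpha\bmod 1\in\overline{\Theta^\lambda_{\epsilon_k s_k}}$ for every $k$; Lemma~\ref{3b1} then yields $\alpha\in\Theta$ with $\mathbf{s}^\lambda(\alpha)=\mathbf{s}_\lambda(z)\in\Sigma_0$, closing the argument and proving $\Omega_\lambda^\theta\cap\partial B_\lambda=\{p_\lambda(\theta),p_\lambda(\theta+1/2)\}$.
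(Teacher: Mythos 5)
Your proposal takes a genuinely different route from the paper, but the key step does not hold. The paper proves Proposition~\ref{4b3} via Lemma~\ref{4b4}: it introduces two combinatorial conditions (C1 and C2) on the angle $\theta$, shows by a geometric squeezing argument that either condition forces $\Omega_\lambda^\theta\cap\overline{B_\lambda}\subset\overline{R_\lambda(\theta)}\cup\overline{R_\lambda(\theta+1/2)}$, and then verifies case-by-case (for $n=3,4$, and $n\geq 5$) that every $\theta\in\Theta_{ad}$ satisfies C1 or C2. The squeezing argument uses auxiliary cut rays $\Omega_\lambda^{\theta_k^\pm}$ (or a monotone sequence $\Omega_\lambda^{\theta_k}$) to separate the unwanted segments $\Omega_\lambda^\theta(\epsilon_0,\dots,\epsilon_m)$ of the cut ray from $\overline{B_\lambda}$; and when $\theta$ satisfies C2 it has to dispose, via a careful analysis near iterated preimages of $0$, of the extra itineraries of the form $\pm(s_0,\dots,s_k,-s_{k+1},-s_{k+2},\dots)$ that survive the squeeze.

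The gap in your argument is the transport of the sector constraint through the B\"ottcher map. You assert that if $z_m\in B_\lambda$ lies in $\mathrm{int}(S_{\epsilon_0 s_0})$ then (up to subsequence) the B\"ottcher angle $\arg\phi_\lambda(z_m)/(2\pi)$ lies in $\overline{\Theta^\lambda_{\epsilon_0 s_0}}$, and similarly for all iterates. But the B\"ottcher angle of a point records which external ray passes through it, not which straight-line sector the point sits in; the two notions only agree approximately in the region $\mathbf{U}(v)$ far from the Julia set, which is exactly the content of Lemma~\ref{3c} (and that lemma is explicitly restricted to $\mathbf{U}(v)$, relying on $\phi_\lambda'(\infty)=1$). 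Near $\partial B_\lambda$ an external ray $R_\lambda(\alpha)$ with $\alpha\in\Theta_1^\lambda$, say, can wander into other sectors, so the points $z_m\to z\in\partial B_\lambda$ staying in $S_{\epsilon_0 s_0}$ put no constraint on $\arg\phi_\lambda(z_m)$. Your conclusion that the limit $\alpha$ lies in $\Theta$ and that $\mathbf{s}^\lambda(\alpha)=\mathbf{s}_\lambda(z)$ is therefore unsupported. A further warning sign is that your argument never invokes the hypothesis $\theta\in\Theta_{ad}$: if correct it would apply to every $\theta\in\Theta$, whereas the paper defines $\Theta_{ad}$ specifically so that conditions C1/C2 can be verified, and then proves the proposition only for those angles. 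What is missing is precisely the paper's mechanism for ruling out the ``mixed-sign'' itineraries $(\pm s_0,\pm s_1,\dots)\notin\Sigma_0$, and the B\"ottcher coordinate alone cannot supply that.
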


The proof is based on the following:

\begin{lem}\label{4b4} Suppose $\theta\in\Theta$, and $\theta$
satisfies one of the following conditions:

C1. There are two sequences, $\{\theta_k^+\}_{k\geq1},
\{\theta_k^-\}_{k\geq1}\subset\Theta$ such that for all $k\geq1$,
$\theta_k^-<\theta<\theta_k^+$ and
$\mathbf{J}(\theta_k^+,\theta)=\mathbf{J}(\theta_k^-,\theta)\rightarrow\infty$
as $k\rightarrow\infty$.

C2. There is a sequence $\{\theta_k\}_{k\geq1}\subset\Theta$ such
that $\theta_1<\theta_2<\theta_3<\dots$ (or
$\theta_1>\theta_2>\theta_3>\dots$) and
$\mathbf{J}(\theta_k,\theta)=k$ for any $k\geq1$.

Then the intersection $\Omega_\lambda^\theta\cap \partial B_\lambda$
consists of two points.
\end{lem}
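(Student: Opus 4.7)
The lower bound $\#(\Omega_\lambda^\theta \cap \partial B_\lambda) \geq 2$ is immediate from Proposition \ref{3d}: the rays $R_\lambda(\theta)$ and $R_\lambda(\theta+\frac{1}{2})$ land at two points $p_\lambda(\theta), p_\lambda(\theta+\frac{1}{2}) \in \Omega_\lambda^\theta \cap \partial B_\lambda$, distinct because $\mathbf{s}(\theta) \ne \mathbf{s}(\theta+\frac{1}{2})$ and $\mathbf{s}_\lambda|_{\Lambda_\lambda}$ is injective.

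For the upper bound I argue by contradiction: suppose $z \in \Omega_\lambda^\theta \cap \partial B_\lambda$ with $z \notin \{p_\lambda(\theta), p_\lambda(\theta+\frac{1}{2})\}$. Then $z \in J(f_\lambda) \setminus O_\lambda$, so Proposition \ref{3e}(1) together with the remark following Lemma \ref{3a} places $z \in \Lambda_\lambda$ with $\mathbf{s}_\lambda(z) = (s_0, \epsilon_1 s_1, \epsilon_2 s_2, \ldots)$ for some signs $\epsilon_j \in \{\pm 1\}$, where $\mathbf{s}(\theta) = (s_0, s_1, \ldots)$ and without loss of generality $\kappa(\mathbf{s}_\lambda(z)) = \theta$; since $z \ne p_\lambda(\theta)$, some $\epsilon_j = -1$. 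The plan is to use the approximating cut rays furnished by C1 or C2 as Jordan-curve obstructions that trap $z$ in a region whose trace on $\partial B_\lambda$ shrinks to one of the two landing points, producing a contradiction.

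The quantitative engine is the uniform Schwarz-lemma contraction $\delta \in (0,1)$ extracted in the proof of Lemma \ref{2a}: every nested pullback $h_{t_0} \circ \cdots \circ h_{t_{m-1}}(S^v)$ has hyperbolic diameter $O(\delta^m)$ in $\Upsilon_\lambda$. In Case C1 let $m_k = \mathbf{J}(\theta_k^\pm, \theta) \to \infty$; once $k$ is large enough that $\theta_k^\pm, \theta$ all lie in the common arc $\Theta_{s_0}^\lambda$, the sign-rules defining $\Sigma_0$ propagate the equality of absolute values of the first $m_k - 1$ symbols into full equality of the signed itineraries. Hence $p_\lambda(\theta)$ and $p_\lambda(\theta_k^\pm)$ sit in a common nested pullback set of spherical diameter $O(\delta^{m_k}) \to 0$. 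By Proposition \ref{3h}, $\Omega_\lambda^\theta \cap \Omega_\lambda^{\theta_k^\pm}$ consists only of the $2^{m_k+1}$ preimages of $\infty$, so $z$ lies in a definite open component $W_k$ of $\overline{\mathbb{C}} \setminus (\Omega_\lambda^{\theta_k^+} \cup \Omega_\lambda^{\theta_k^-})$. Identifying $W_k$ with the component that in the B\"ottcher chart of $B_\lambda$ corresponds to the angular sector $\{\theta_k^- < \arg \phi_\lambda/(2\pi) < \theta_k^+\}$, one obtains that $\overline{W_k} \cap \partial B_\lambda$ is contained in the above shrinking set, hence converges to $\{p_\lambda(\theta)\}$; a symmetric argument at $\theta+\frac{1}{2}$ handles the companion region. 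Tracing the component of the two-curve complement containing $z$ from its itinerary prefix shows $z \in \overline{W_k}$ (or its companion) for all large $k$, forcing $z \in \{p_\lambda(\theta), p_\lambda(\theta+\frac{1}{2})\}$, the desired contradiction.

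Case C2 is treated analogously using the nested monotone sequence $\Omega_\lambda^{\theta_k}$ on one side together with the shifted sequence $\Omega_\lambda^{\theta_k + 1/2}$ on the other, producing shrinking tubes around both $R_\lambda(\theta)$ and $R_\lambda(\theta+\frac{1}{2})$. The principal obstacle in either case is the combinatorial bookkeeping: verifying that the specific component $W_k$ containing $z$ is indeed the shrinking tube whose boundary trace on $\partial B_\lambda$ is caught by the Schwarz-lemma estimate. This reduces to carrying the itinerary prefix of $z$ through the pullbacks $h_{s_0} \circ \cdots \circ h_{s_{m_k - 2}}$ and invoking the uniform contraction, after which the conclusion follows from the combinatorial structure described in Propositions \ref{3e} and \ref{3h}.
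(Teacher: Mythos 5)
Your lower bound and setup are fine, and for Case C1 you have the right ingredients on the table, but the core step of your contradiction argument is miscalibrated and the treatment of C2 misses the actual difficulty.

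In C1, under the reductio hypothesis $z\notin\{p_\lambda(\theta),p_\lambda(\theta+\tfrac12)\}$, the itinerary $\mathbf{s}_\lambda(z)=(s_0,\epsilon_1 s_1,\dots)$ has a sign flip at some finite position $j$. Once $m_k=\mathbf{J}(\theta_k^\pm,\theta)>j$, the point $z$ therefore lies in a ``middle'' segment $\Omega_\lambda^\theta(s_0,\epsilon_1 s_1,\dots,\epsilon_{m_k}s_{m_k})$ that is \emph{separated from} $\overline{B_\lambda}$ by the two Jordan curves $\Omega_\lambda^{\theta_k^+}\cup\Omega_\lambda^{\theta_k^-}$ --- that is, $z\notin\overline{W_k}$. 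Your key claim ``$z\in\overline{W_k}$ (or its companion) for all large $k$'' is therefore false under your own hypothesis, and the intended chain of implications collapses. What you should instead extract is the separation itself: $z\in\partial B_\lambda\subset\overline{B_\lambda}$ together with ``$z$ is separated from $\overline{B_\lambda}$'' is already the contradiction. This is precisely the paper's route, and it needs no metric shrinking whatsoever. Your auxiliary claim that $\overline{W_k}\cap\partial B_\lambda$ is contained in the nested pullback $h_{s_0}\circ\cdots\circ h_{s_{m_k-1}}(S^v)$ is also unjustified: $W_k$ is a full complementary component of two cut rays, so it runs all the way out to $0$ and $\infty$, whereas the pullback set to which the Schwarz-lemma estimate applies is a compact subset of $\Upsilon_\lambda$. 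The diameter bound you invoke simply does not control $\overline{W_k}\cap\partial B_\lambda$ a priori (note that at this point in the paper one does not yet know that $\partial B_\lambda$ is a Jordan curve).

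Case C2 is \emph{not} analogous, and the sentence dismissing it conceals the real work. A one-sided monotone sequence $\{\theta_k\}$ does not trap $R_\lambda(\theta)$ in a shrinking tube; and note that $\Omega_\lambda^{\theta_k+1/2}=\Omega_\lambda^{\theta_k}$ as sets, so adjoining the ``shifted sequence'' adds nothing. After separating away the bounded components of $\overline{\mathbb{C}}\setminus\bigcup_{j\le k}\Omega_\lambda^{\theta_j}$, there survive segments of $\Omega_\lambda^\theta$ with itinerary of the form $\pm(s_0,\dots,s_j,-s_{j+1},-s_{j+2},\dots)$ (a single sign flip in the tail). These segments lie in the unbounded component, so they are \emph{not} excluded by the separation, and by the structure of cut rays (Proposition \ref{3e}) such a point $z$ is the landing point of a radial ray of some Fatou component $U\ne B_\lambda$. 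If such a $z$ were also on $\partial B_\lambda$, it would be a pinch point between $\partial B_\lambda$ and $\partial U$, and one must rule this out by a separate argument: pushing forward to $T_\lambda$ one finds that two distinct rays, a radial ray of $T_\lambda$ and an external ray of $B_\lambda$, would land at the same point and map under $f_\lambda$ to the same external ray, forcing that point to lie in the critical set $C_\lambda$; but cut rays are disjoint from $C_\lambda$. Nothing in your sketch engages with this pinch-point case, which is the heart of the C2 argument.
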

\begin{proof}

1. Suppose $\theta$ satisfies C1 and
$\mathbf{s}(\theta)=(s_0,s_1,s_2,\cdots)$. By Proposition \ref{3h},
the cut rays $\Omega_\lambda^{\theta_k^+}$ and
$\Omega_\lambda^{\theta_k^-}$ both intersect with
$\Omega_\lambda^{\theta}$ at $2^{\mathbf{J}(\theta_k^+,\theta)+1}$
Points; they hence decompose $\Omega_\lambda^{\theta}$ into
$2^{\mathbf{J}(\theta_k^+,\theta)+1}$ parts:
$$\Omega_\lambda^{\theta}(\epsilon_0,\epsilon_1,\cdots,
\epsilon_{\mathbf{J}(\theta_k^+,\theta)}), \epsilon_j=\pm s_j, 0\leq
j\leq \mathbf{J}(\theta_k^+,\theta).$$
 Here
$\Omega_\lambda^{\theta}(\epsilon_0,\epsilon_1,\cdots,
\epsilon_{p}):=\overline{\{z\in\Omega_\lambda^{\theta}\setminus
O_\lambda; \mathbf{s}_\lambda(z)=(\epsilon_0,\epsilon_1,\cdots,
\epsilon_{p}, \pm s_{p+1},\pm s_{p+2},\cdots)\}}.$

\begin{figure}[h]
\centering{
\includegraphics[height=4cm]{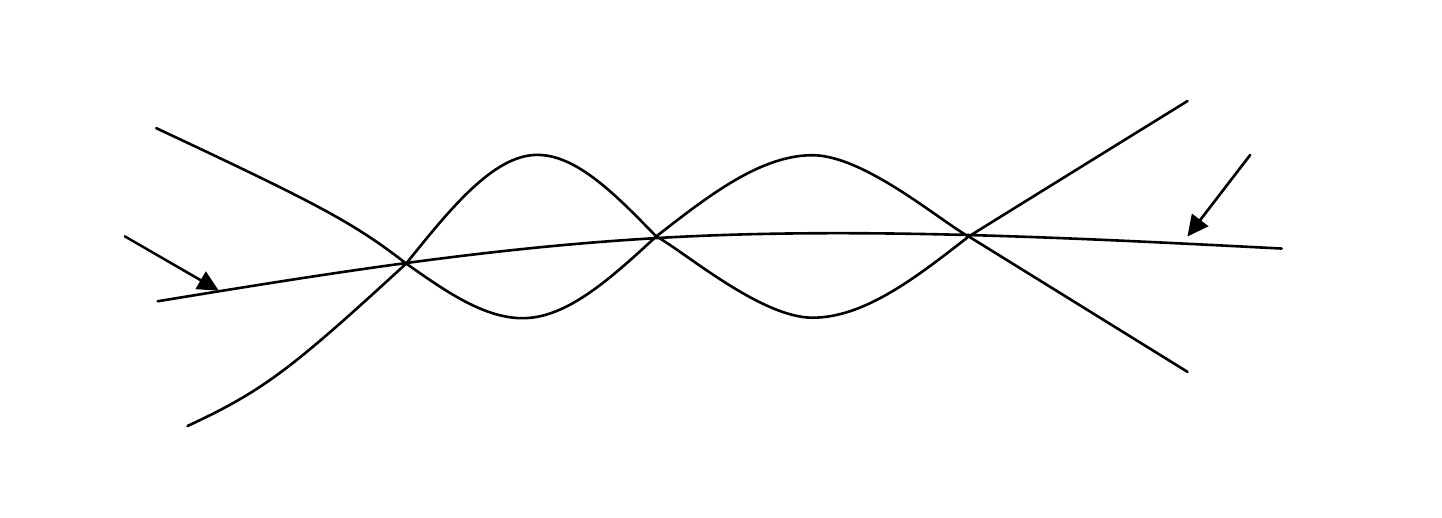}
\put(-55,20){$\theta_k^-$} \put(-30,55){$\theta$}
\put(-350,65){$\Omega_\lambda^{\theta}(-s_0,(-1)^n s_1)$}
\put(-175,50){$0$} \put(-40,80){$\Omega_\lambda^{\theta}(s_0,s_1)$}
\put(-55,95){$\theta_k^+$}
 \caption{Three cuts rays with angles $\theta_k^+>\theta>
 \theta_k^-$. In this figure,
 $\mathbf{J}(\theta_k^+,\theta)=\mathbf{J}(\theta_k^-,\theta)=1$.
 Exactly two segments of
 $\Omega_\lambda^\theta$ intersect with $\overline{B_\lambda}$: $\Omega_\lambda^{\theta}(s_0,s_1)$ and
 $\Omega_\lambda^{\theta}(-s_0,(-1)^ns_1)$.
 }}
\end{figure}

 Based on the structure of the cut rays (Proposition \ref{3e}) and because the angle $\theta$ satisfies condition C1, we conclude that of these
$2^{\mathbf{J}(\theta_k^+,\theta)+1}$ parts, only two intersect with
$\overline{B_\lambda}$: $\Omega_\lambda^{\theta}(s_0,s_1,\cdots,
s_{\mathbf{J}(\theta_k^+,\theta)})$ and
$\Omega_\lambda^{\theta}(-s_0,(-1)^n s_1,\cdots, (-1)^n
s_{\mathbf{J}(\theta_k^+,\theta)})$. We should remark that here we
use two cut rays
$\Omega_\lambda^{\theta_k^+},\Omega_\lambda^{\theta_k^-}$ with
$\mathbf{J}(\theta_k^+,\theta)=\mathbf{J}(\theta_k^-,\theta)$ to
separate the other segments of $\Omega_\lambda^\theta$ from
$\overline{B}_\lambda$ (see Figure 10). Moreover,
$\Omega_\lambda^{\theta}\cap\overline{B_\lambda}\subset\Omega_\lambda^{\theta}(s_0,s_1,\cdots,
s_{\mathbf{J}(\theta_k^+,\theta)})\cup\Omega_\lambda^{\theta}(-s_0,(-1)^n
s_1,\cdots, (-1)^n s_{\mathbf{J}(\theta_k^+,\theta)})$ for any
$k\geq1$. It turns out that \bess
\Omega_\lambda^{\theta}\cap\overline{B_\lambda}&\subset&\bigcap_{k\geq1}\Big(\Omega_\lambda^{\theta}(s_0,s_1,\cdots,
s_{\mathbf{J}(\theta_k^+,\theta)})\cup\Omega_\lambda^{\theta}(-s_0,(-1)^n
s_1,\cdots, (-1)^n s_{\mathbf{J}(\theta_k^+,\theta)})\Big)\\
&=&\{z\in\Omega_\lambda^{\theta};\mathbf{s}_\lambda(z)=(s_0,s_1,s_2,\cdots)
\text{\ or \ } (-s_0,(-1)^ns_1,(-1)^ns_2,\cdots)\}\\
&=&\overline{R_\lambda(\theta)}\cup
\overline{R_\lambda(\theta+1/2)}.\eess

By Proposition \ref{3e}, the intersection $\Omega_\lambda^\theta\cap
\partial B_\lambda$ consists of two points. These two points are the
landing points of the external rays $R_\lambda(\theta)$ and
$R_\lambda(\theta+1/2)$.

2. Now we suppose that $\theta$ satisfies C2 and
$\mathbf{s}(\theta)=(s_0,s_1,s_2,\cdots)$. We only prove the case
when $n$ is odd. The argument applies equally well to the case when
$n$ is even. Let $\{\theta_k\}_{k\geq1}\subset\Theta$ be a sequence
such that $\theta_1<\theta_2<\theta_3<\dots$ and
$\mathbf{J}(\theta_k,\theta)=k$ for any $k\geq1$. The following
facts are straightforward:

\textbf{Fact 1.} Let $z\in\Omega_\lambda^\theta$. If the itinerary
$\mathbf{s}_\lambda(z)$ is of the form
$(\epsilon_0,\cdots,\epsilon_k,s_{k+1},s_{k+2},\cdots)$ or
$(\epsilon_0,\cdots,\epsilon_k,-s_{k+1},-s_{k+2},\cdots)$ for some
$k\geq0$, then $\mathbf{s}_\lambda(f_\lambda^{k+1}(z))=\pm
(s_{k+1},s_{k+2},\cdots)=\mathbf{s}(\tau^{k+1}(\theta))\text{ or
}\mathbf{s}(\tau^{k+1}(\theta)+\frac{1}{2})$. By Proposition
\ref{3d}, $f_\lambda^{k+1}(z)\in
\overline{R_\lambda(\tau^{k+1}(\theta))}\cup
\overline{R_\lambda(\tau^{k+1}(\theta)+\frac{1}{2})}$. Thus, $z$
lies in the closure of some external ray or radial ray
$R_U(\theta_U)$ for $U\in \mathcal{P}$.

\textbf{Fact 2.} For any $k>1$, $\overline{B_\lambda}$ has no
intersection with any bounded component of
$\mathbb{\overline{C}}\setminus\bigcup_{1\leq j\leq
k}\Omega_\lambda^{\theta_j}$; see Figure 11. (The proof is almost
immediate from Proposition \ref{3d}.)

\begin{figure}[h]
\centering{
\includegraphics[height=4cm]{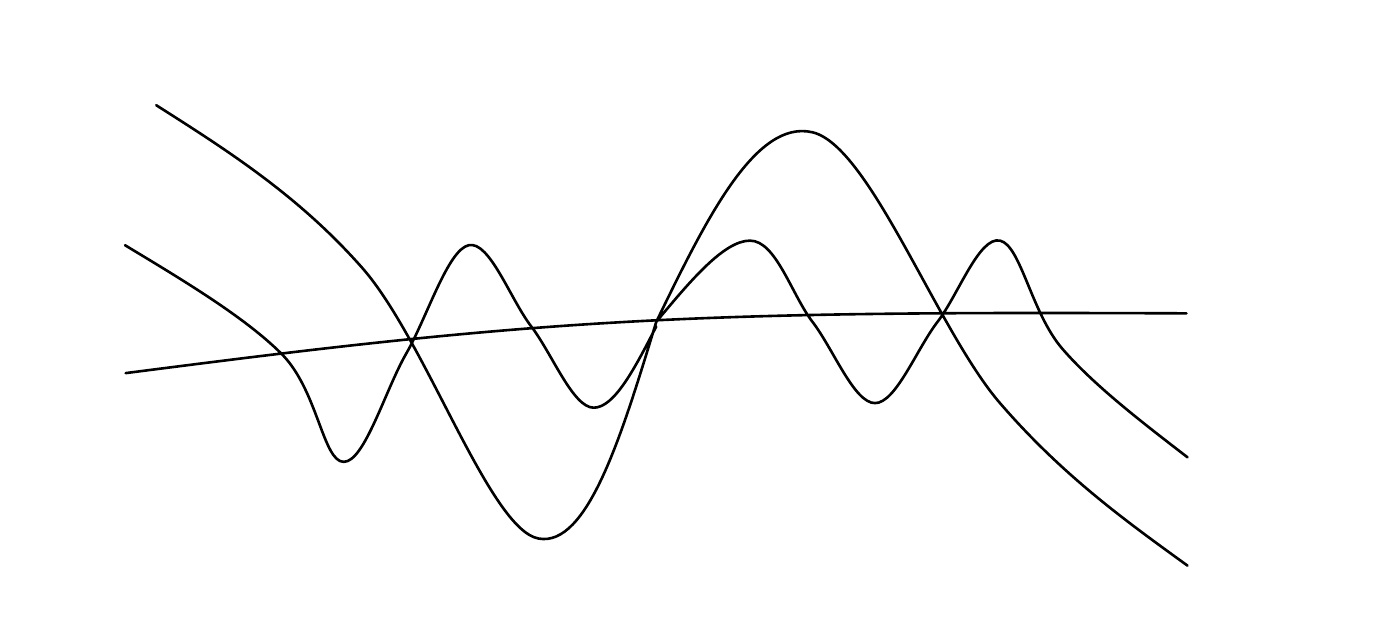}
\put(-30,10){$\theta_1$} \put(-30,45){$\cdots$}
\put(-30,30){$\theta_2$} \put(-30,60){$\theta$} \put(-100,55){$W_2$}
\put(-128,45){$0$} \put(-165,40){$W_1$}
 \caption{Cut rays with angles $\theta_1<\theta_2<\cdots<\theta$, $\mathbf{J}(\theta,\theta_1)=1, \mathbf{J}(\theta,\theta_2)=2,\cdots$.
 Moreover, $\overline{B_\lambda}$ has no intersection with the bounded components $W_1$ and $W_2$ of
$\mathbb{\overline{C}}\setminus(\Omega_\lambda^{\theta_1}\cup\Omega_\lambda^{\theta_2})$.}}
\end{figure}

\textbf{Fact 3.} The sections of $\Omega_\lambda^\theta$ that
intersect with the unbounded component of
$\mathbb{\overline{C}}\setminus\bigcup_{1\leq j\leq
k}\Omega_\lambda^{\theta_j}$ are as follows: \bess
&\Omega_\lambda^\theta(s_0,\cdots,s_{k}),\
\Omega_\lambda^\theta(-s_0,\cdots,-s_{k}),&\\
&\Omega_\lambda^\theta(s_0,\cdots,s_{j},-s_{j+1},\cdots,-s_k),\
\Omega_\lambda^\theta(-s_0,\cdots,-s_{j},s_{j+1},\cdots,s_k),\ 0\leq
j<k.& \eess Let $\mathcal{E}_k$ be the collection of these sections.

Based on Facts 2 and 3, we have $\overline{B_\lambda}\cap
\Omega_\lambda^\theta\subset\bigcup_{E\in \mathcal{E}_k}E$ for any
$k>1$. It follows that $\overline{B_\lambda}\cap
\Omega_\lambda^\theta\subset\bigcap_{k>1}\bigcup_{E\in
\mathcal{E}_k}E=\{z\in \Omega_\lambda^\theta, \mathbf{s}_\lambda(z)$
$ \text{ is of the form } \pm \mathbf{s}(\theta)$ $ \text{ or }
 \pm(s_0,s_1,\cdots,s_k,-s_{k+1},$ $-s_{k+2},\cdots)  \text{ for some
 }k\geq0\}$.

By Fact 1, for any $z\in \overline{B_\lambda}\cap
\Omega_\lambda^\theta$, either $z\in
\overline{R_\lambda(\theta)}\cup \overline{R_\lambda(\theta+1/2)}$
or there exist $U\in\mathcal{P}\setminus\{B_\lambda\}$ and an angle
$\theta_U$ such that $z\in \overline{R_U(\theta_U)}$. In the
following, we show that the latter is impossible. In fact, if $z\in
\overline{B_\lambda}\cap\Omega_\lambda^\theta\cap\overline{R_U(\theta_U)}$,
then $z\in \partial B_\lambda\cap\partial U$. Let $p\geq0$ be the
first integer such that $f_\lambda^p(U)=T_\lambda$.

After $p$ iterations, we see that $f_\lambda^p(z)\in\partial
B_\lambda\cap\partial T_\lambda$ and $f_\lambda^p(z)$ is the landing
point of the radial ray
$R_{T_\lambda}(\theta_{T_\lambda})=f_\lambda^p(R_U(\theta_U))$. On
the other hand, $f_\lambda^{p+1}(z)$ is the landing point of the
external ray
$R_{\lambda}(\theta_{\lambda})=f_\lambda^{p+1}(R_U(\theta_U))$.
Therefore, $f_\lambda^p(z)$ is also a landing point of some external
ray $R_{\lambda}(\beta)$, $\beta\in \tau^{-1}(\theta_{\lambda})$.
Because both $R_{T_\lambda}(\theta_{T_\lambda})$ and
$R_{\lambda}(\beta)$ land at
 $f_\lambda^p(z)$, and $f_\lambda(R_{T_\lambda}(\theta_{T_\lambda}))
 =f_\lambda(R_{\lambda}(\beta))=R_{\lambda}(\theta_{\lambda})$, $f_\lambda^p(z)$ is necessarily a critical point in $C_\lambda$.

  However, the result that $f_\lambda^p(z)\in f_\lambda^p(\Omega_\lambda^\theta)\cap C_\lambda$ leads to a contradiction because for any
$\alpha\in\Theta$, the cut ray $\Omega_\lambda^\alpha$ avoids the
critical set $C_\lambda$.

Now, we are in the situation $\overline{B_\lambda}\cap
\Omega_\lambda^\theta\subset\overline{R_\lambda(\theta)}\cup
\overline{R_\lambda(\theta+1/2)}$, and the conclusion follows.
\end{proof}

\noindent\textit{Proof of Proposition \ref{4b3}.} It suffices to
verify that for any $\theta\in \Theta_{ad}$, $\theta$ satisfies
either C1 or C2 by Lemma \ref{4b4}.

When $n=3$, $\mathbf{s}(1/4)=(\overline{1,-1}),\
\mathbf{s}(1/2)=(\overline{2})$. Define two sequences of angles
$\{\alpha_k\}_{k\geq1}, \{\beta_k\}_{k\geq1}\subset \Theta$ such
that \bess \mathbf{s}(\alpha_1)&=&(1,-2,-1,1,-1,1,\cdots),\
\mathbf{s}(\beta_1)=(2,1,-1,2,2,2,\cdots),\\
\mathbf{s}(\alpha_2)&=&(1,-1,2,1,-1,1,\cdots),\ \ \
\mathbf{s}(\beta_2)=(2,2,1,-1,2,2,\cdots),\\
\mathbf{s}(\alpha_3)&=&(1,-1,1,-2,-1,1\cdots),\
\mathbf{s}(\beta_3)=(2,2,2,1,-1,2,\cdots),\\
&&\cdots
 \eess
 Then, $\alpha_1>\alpha_2>\alpha_3>\cdots$ and
$\mathbf{J}(\alpha_k,1/4)=k$ for any $k\geq1$;
$\beta_1<\beta_2<\beta_3<\cdots$ and $\mathbf{J}(\beta_k,1/2)=k$.
Thus, both $1/4$ and $1/2$ satisfy condition C2.

When $n=4$, $\mathbf{s}(1/3)=(\overline{2}),\
\mathbf{s}(2/3)=(\overline{-1}),\ \mathbf{s}(1)=(\overline{-3})$.
Define three sequences of angles $\{\alpha_k\}_{k\geq1},
\{\beta_k\}_{k\geq1},\{\gamma_k\}_{k\geq1}\subset \Theta$ such that
\bess \mathbf{s}(\alpha_1)&=&(2,1,-2,2,2,\cdots),\
\mathbf{s}(\beta_1)=(-1,-3,-1,-1,\cdots), \mathbf{s}(\gamma_1)=(-3,-1,-3,-3,\cdots),\\
 \mathbf{s}(\alpha_2)&=&(2,2,1,-2,2,\cdots),\
\mathbf{s}(\beta_2)=(-1,-1,-3,-1,\cdots),\mathbf{s}(\gamma_2)=(-3,-3,-1,-3,\cdots),\\
 \mathbf{s}(\alpha_3)&=&(2,2,2,1,-2,\cdots),\
\mathbf{s}(\beta_3)=(-1,-1,-1,-3,\cdots),\mathbf{s}(\gamma_3)=(-3,-3,-3,-1,\cdots),\\
&&\cdots \eess
 Then $\alpha_1<\alpha_2<\alpha_3<\cdots$ and
$\mathbf{J}(\alpha_k,1/3)=k$; $\beta_1>\beta_2>\beta_3>\cdots$ and
$\mathbf{J}(\beta_k,2/3)=k$; $\gamma_1<\gamma_2<\gamma_3<\cdots$ and
$\mathbf{J}(\gamma_k,1)=k$. Thus, $1/3,2/3,1$ all satisfy condition
C2.

When $n\geq5$, we can prove that for any
$\theta\in\widehat{\Theta}_{per}$, $\theta$ satisfies condition C1.
(In fact, this is true for all $\theta\in \widehat{\Theta}$).) The
proof is as follows. Suppose
$\mathbf{s}(\theta)=(s_0,s_1,s_2,\cdots)$. For any $k\geq1$, we
choose $s_{k}^-,s_{k}^+\in \{\pm1,\pm(n-1)\}$ and
$s_{k+1}^-,s_{k+1}^+\in \mathbb{I}\setminus\{0,n\}$ such that

(1) $|s_{k}^-|<|s_{k}|<|s_{k}^+|$,

(2)
$(s_0,\cdots,s_{k-1},s_{k}^-,s_{k+1}^-,s_{k+2},s_{k+3},\cdots),(s_0,\cdots,,s_{k-1},s_{k
}^+,s_{k+1}^+,s_{k+2},s_{k+3},\cdots)\in \Sigma_0$. Let
\bess\theta_k^+&=&\kappa((s_0,\cdots,s_{k-1},s_{k}^
+,s_{k+1}^+,s_{k+2},s_{k+3},\cdots)),\\
\theta_k^-&=&\kappa((s_0,\cdots,s_{k-1},s_{k}^-,s_{k+1}^-,s_{k+2},s_{k+3},\cdots)).\eess
It is easy to check that $\theta_k^-<\theta <\theta_k^+$ and
$\mathbf{J}(\theta_k^+,\theta)=\mathbf{J}(\theta_k^-,\theta)=k\rightarrow\infty$
as $k\rightarrow\infty$.
 \hfill $\Box$

\subsection{Modified puzzle piece}

 Consistent with the idea of the `thickened puzzle piece' used in \cite{M2} to study the quadratic Julia set, we construct the `modified puzzle piece' for McMullen maps. The `modified puzzle piece' can be used to study the local connectivity of $J(f_\lambda)$ in the non-renormalizable case (see
Lemma \ref{7a}). It is also used to define renormalizations (see
Remark \ref{5ba}).

Given an angle $\theta\in\Theta$ with itinerary
$\mathbf{s}(\theta)=(s_0,s_1,s_2,\cdots)$, the cut ray
$\Omega_\lambda^\theta$ is identified as
$\Omega_\lambda^\theta=\bigcap_{k\geq0}f^{-k}_\lambda(S_{s_k}\cup
S_{-s_k})$; it can be approximated by the sequence of compact sets
$\{\Omega_{\lambda,m}^\theta=\bigcap_{0\leq k\leq
m}f^{-k}_\lambda(S_{s_k}\cup S_{-s_k})\}_{m\geq0}$ in Hausdorff
topology.
 Now, we consider the set $\mathbb{\bar{C}}\setminus
 \Omega_{\lambda,m}^\theta$.
The open set $\mathbb{\bar{C}}\setminus \Omega_{\lambda,m}^\theta$
consists of two connected components, and the boundary of each
component is a Jordan curve. Denote these two boundary curves by
$\gamma_{\lambda,m}^1(\theta)$ and $\gamma_{\lambda,m}^2(\theta)$.
Let
$V_m(\theta)=\gamma_{\lambda,m}^1(\theta)\cap\gamma_{\lambda,m}^2(\theta)$
be the intersection of these two curves. It is obvious that
$V_m(\theta)$ consists of finitely many points and that
$V_m(\theta)=\Omega_\lambda^\theta \cap\Big(\bigcup_{0\leq k\leq
m+1}f_\lambda^{-k}(\infty)\Big)$. For any $v\in V_m(\theta)$, let
$D(v)$ be the connected component of $\{z\in A_\lambda;
G_\lambda(z)>1\}$ that contains $v$. Obviously, $D(v)$ is a disk.

\begin{figure}
\centering{
\includegraphics[height=6cm]{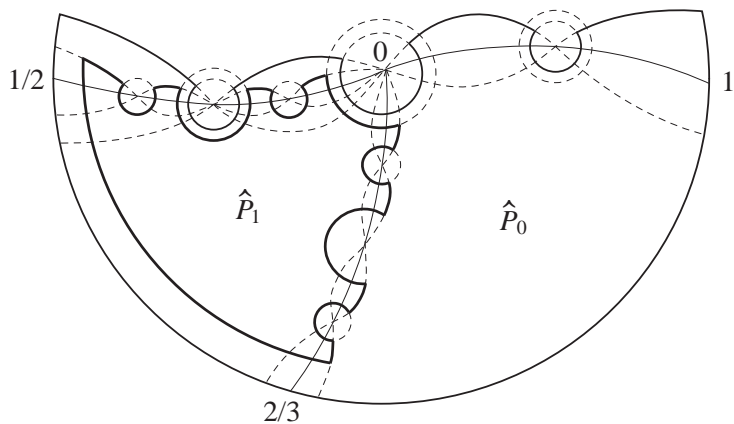}
\caption{An example of `modified puzzle pieces', to depth one.}
\label{Fig5} }
\end{figure}

In the following, we construct the `modified puzzle piece'. For the
Yoccoz puzzle induced by the graph
$\mathbf{G}_\lambda(\theta_1,\cdots,\theta_N)$, recall that each
puzzle piece $P_0$ of depth zero is contained in a unique component
of $\mathbb{\bar{C}}\setminus g_\lambda(\theta_1,\cdots,\theta_N)$.
This component is simply connected and is denoted by $Q_0$. We may
choose a $m$ large enough so that for any $\alpha,\beta\in
\{\tau^k(\theta_j); 1\leq j\leq N, k\geq 0\}$ with
$\Omega_\lambda^\alpha\neq\Omega_\lambda^\beta$,
$$\Omega_{\lambda,m}^\alpha\cap\Omega_{\lambda,m}^\beta=\Omega_{\lambda}^\alpha\cap\Omega_{\lambda}^\beta. $$
The disk $Q_0$ is bounded by some collection of cut rays, say
$\{\Omega_{\lambda}^\alpha; \alpha\in \Lambda(Q_0)\}$, where
$\Lambda(Q_0)$ is an index set induced by $Q_0$. For any $\alpha\in
\Lambda(Q_0)$, choose a curve
$\gamma(\alpha)\in\{\gamma_{\lambda,m}^1(\alpha),\gamma_{\lambda,m}^2(\alpha)\}$
such that $\gamma(\alpha)\cap Q_0=\emptyset$. Let $\widehat{Q}_0$ be
the connected component of $\mathbb{\bar{C}}\setminus
\bigcup_{\alpha\in \Lambda(Q_0)}\gamma(\alpha)$ that contains $Q_0$,
and let $V({Q}_0)=\bigcup_{\alpha\in
\Lambda(Q_0)}(V_m(\alpha)\cap\partial Q_0)$. The modified puzzle
piece $\widehat{P}_0$ of ${P}_0$ is defined as follows:
$$\widehat{P}_0=\widehat{Q}_0-\bigcup_{v\in V({Q}_0)}\overline{D(v)}.$$
Roughly speaking, we can obtain $\widehat{P}_0$ from $Q_0$ by
thickening $Q_0$ near $\partial Q_0\setminus V({Q}_0)$ and
truncating $Q_0$ near the points in $ V({Q}_0)$.  The puzzle piece
$P_0$ is not contained in $\widehat{P}_0$; for this reason, we call
$\widehat{P}_0$ the `modified puzzle piece' of ${P}_0$ rather than
the `thickened puzzle piece' of $P_0$.

Modified puzzle pieces of greater depth can be constructed by the
usual inductive procedure; if $\widehat{P}_d^{(j)}$ is the modified
puzzle piece of depth $d$, then each component of
$f_\lambda^{-1}(\widehat{P}_d^{(j)})$ is the modified puzzle piece
of depth $d+1$ (see Figure 12).

 The advantage of these modified puzzle pieces is as follows: if a puzzle piece ${P}_d^{(j)}$ contains ${P}_{d+1}^{(k)}$, then the modified puzzle
piece $\widehat{P}_d^{(j)}$ contains
$\overline{\widehat{P}_{d+1}^{(k)}}$, which can be easily proved by
induction. In other words, this construction replaces all of our
annuli with non-degenerate annuli.

For $z\in \mathbb{\overline{C}}\setminus(A_\lambda\cup J_0)$, let
$\widehat{P}_d (z)$ be the modified puzzle piece of ${P}_d (z)$. We
will only make use of modified puzzle pieces that are small enough
to satisfy the following additional restriction: if $\widehat{P}_d
(z)$ contains a critical point, then ${P}_d (z)$ must already
contain this critical point. Note that if the graph
$G_\lambda(\theta_1,\cdots,\theta_N)$ is not touchable, then this
requirement is easily satisfied for any bounded value of depth $d$
by choosing $m$ large enough, which will suffice for the
applications.

Based on construction, the puzzle piece ${P}_d (z)$ and the modified
puzzle piece $\widehat{P}_d (z)$ satisfy the following relation:
$$\overline{{P}_d (z)}\subset \widehat{P}_d (z)\cup A_\lambda,\ \
\bigcap_{d\geq 0}\overline{{P}_d (z)}\subset\bigcap_{d\geq
0}\widehat{P}_d (z).$$

The modified puzzle pieces also satisfy the following symmetry
properties: For any $z\in
\mathbb{\overline{C}}\setminus(A_\lambda\cup J_0)$,
$$-\widehat{P}_0
(z)=\widehat{P}_0 (-z);\ \  \omega\widehat{P}_d (z)=\widehat{P}_d
(\omega z),\ \omega^{2n}=1,\ d\geq 1.$$

\subsection{Tableaux}

In this section, we present some basic information on tableaux,
based on Milnor's Lecture \cite{M2}. Applications of tableaux
analysis combined with puzzle techniques can be found in \cite
{BH},\cite{H}, \cite{M2}, \cite{PQRTY}, \cite {QY}, \cite{R},
\cite{RY}, \cite{YZ} and many other papers.

Recall that $J_0$ is the set of all points on $J(f_\lambda)$ with
orbits that eventually touch the graph
$\mathbf{G}_\lambda(\theta_1,\cdots,\theta_N)$. For $x\in
\mathbb{\overline{C}}\setminus(A_\lambda\cup J_0)$, the tableau
$T(x)$ is defined as the two-dimensional array
$(P_{d,l}(x))_{d,l\geq0}$, where
$P_{d,l}(x)=f_\lambda^l(P_{d+l}(x))=P_{d}(f_\lambda^l(x))$. The
position $(d,l)$ is called critical if $P_{d,l}(x)$ contains a
critical point in $C_\lambda$. If $P_{d,l}(x)$ contains a critical
point $c\in C_\lambda$, the position $(d,l)$ is called a
$c$-position. 

For any $x\in \mathbb{\overline{C}}\setminus(A_\lambda\cup J_0)$,
the tableau $T(x)$ satisfies the following three rules:

(T1)\  For each column $l\geq0$, either the position $(d,l)$ is
critical for all $d\geq0$ or there is a unique integer $d_0\geq0$
such that the position $(d,l)$ is critical for all $d<d_0$ and not
critical for $d\geq d_0$.

(T2)\  If $P_{d,l}(x)=P_d(y)$ for some $y\in
\mathbb{\overline{C}}\setminus(A_\lambda\cup J_0)$, then
$P_{i,l+j}(x)=P_{i,j}(y)$ for $0\leq i+j\leq d$.

(T3)\  Let $T(c)$ be a tableau with $c\in C_\lambda$. Assume

 (a)
$P_{d+1-l,l}(c)=P_{d+1-l}(c')$ for some critical point $c'\in
C_\lambda$, $0\leq l<d$, and $P_{d-i,i}(c)$ contains no critical
points for $0<i<l$;

(b) $P_{d,m}(x)=P_{d}(c)$ and $P_{d+1,m}(x)\neq P_{d+1}(c)$ for some
$m>0$.

Then, $P_{d+1-l,m+l}(x)\neq P_{d+1-l}(c')$.

\begin{rem}
The tableau rule (T3) is based on the fact that every puzzle piece
of depth $d\geq1$ contains at most one critical point in
$C_\lambda$.
\end{rem}

\begin{defi}
1. The tableau $T(x)$ is non-critical if there is an integer
$d_0\geq0$ such that $(d_0,j)$ is not critical for all $j>0$.
Otherwise, $T(x)$ is called critical. (One should be careful to note
that $T(x)$ is critical does not mean $x\in C_\lambda$.)

2. The tableau $T(x)$ is called pre-periodic if there exist two
integers $l\geq0$ and $p\geq1$ such that $P_{d,l+p}(x)=P_{d,l}(x)$
for all $d\geq0$. In this case, if $l=0$, $T(x)$ is called periodic,
and the smallest integer $p\geq1$ is called the period of $T(x)$.

3. Let ${\rm Row}_c(d)$ be the $d$-th row of the tableau $T(c)$ with
$c\in C_\lambda$. We say ${\rm Row}_c(d+l)$ with $l>0$ is a child of
${\rm Row}_c(d)$ if there is a critical point $c'\in C_\lambda$ such
that $A_d(f_\lambda^l(c))=A_d(c')$ and
$f_\lambda^l:A_{d+l}(c)\rightarrow A_{d}(c')$ is a degree two
covering map.

4. Let $c\in C_\lambda$. For $d\geq1$, we say ${\rm Row}_c(d)$ is
excellent if $A_d(f_\lambda^l(c))$ is not semi-critical for all
$l\geq0$.

\end{defi}

\begin{rem}
By Lemma \ref{4a} and the fact that $f_\lambda^k(\omega z)=\pm
f_\lambda^k(z)$ for $k\geq1, \omega^{2n}=1$, we have

1. If $(d,l)$ is a critical position for some tableau $T(c)$ with
$c\in C_\lambda$, then $(d,l)$ is a critical position of $T(c')$ for
every $c'\in C_\lambda$.

2. If there is $c\in C_\lambda$ such that the tableau $T(c)$ is
critical, non-critical or pre-periodic, then for every $c'\in
C_\lambda$, the tableau $T(c')$ is critical, non-critical or
pre-periodic, respectively.

3. If ${\rm Row}_c(d)$ is excellent or has a child ${\rm
Row}_c(d+l)$ for some critical point $c\in C_\lambda$, then for
every $c'\in C_\lambda$, ${\rm Row}_{c'}(d)$ is excellent or has a
child ${\rm Row}_{c'}(d+l)$, respectively.
\end{rem}

\begin{lem}\label{4c} Suppose some tableau $T(c)$ with $c\in C_\lambda$ is critical but not pre-periodic, then

1. For every $d\geq1$, ${\rm Row}_c(d)$ has at least one child.

2. If ${\rm Row}_c(d)$ is excellent, then ${\rm Row}_c(d)$ has at
least two children.

3. If ${\rm Row}_c(d)$ is excellent and ${\rm Row}_c(d+l)$ is its
child, then ${\rm Row}_c(d+l)$ is also excellent.

4. If ${\rm Row}_c(d)$ has only one child, say ${\rm Row}_c(d+l)$,
then ${\rm Row}_c(d+l)$ is excellent.

\end{lem}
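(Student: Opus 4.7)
The plan is to prove the four parts in sequence, exploiting the three tableau rules (T1)--(T3) together with the $2n$-fold symmetry from Lemma \ref{4a}, which implies that the pattern of critical positions in $T(c)$ is independent of the choice of $c \in C_\lambda$ (so throughout we may relabel the critical point distinguishing a $c$-position).

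For (1), I would take $l$ to be the smallest positive integer with position $(d+1,l)$ critical in $T(c)$; such an $l$ exists since $T(c)$ is critical. Let $c' \in C_\lambda$ be the critical point in $P_{d+1}(f_\lambda^l(c))$. Minimality of $l$ gives that $(d+1,j)$ is non-critical for $0<j<l$, and rule (T1) then forces $(d',j)$ to be non-critical for every $d' \geq d+1$; applied with $d' = d+l-j$ this means the whole intermediate diagonal $(d+l-j, j)$, $0 < j < l$, is non-critical. Therefore $f_\lambda^l \colon P_{d+l}(c) \to P_d(c')$ has its only branching at $c$ itself, so $f_\lambda^l \colon A_{d+l}(c) \to A_d(c')$ is a degree-$2$ covering, producing the desired child ${\rm Row}_c(d+l)$.

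For (3), suppose for contradiction that ${\rm Row}_c(d+l)$ is not excellent, so $A_{d+l}(f_\lambda^m(c))$ is semi-critical for some $m$; using the symmetry of Lemma \ref{4a} and the remark that excellence is independent of the critical point, I may arrange the witnessing critical point to be $c$ itself, so $(d+l,m)$ is a $c$-position while $(d+l+1,m)$ is not. Since a $c$-position at depth $d+l$ is automatically a $c$-position at every smaller depth, $(d,m)$ is a $c$-position, and excellence of ${\rm Row}_c(d)$ then promotes this to $(d+1,m)$ being a $c$-position. I would then apply rule (T3) with the diagonal data supplied by the child structure (namely $(d+1,l)$ is a $c'$-position and the intermediate diagonal is non-critical) and with hypothesis (b) supplied by the semi-critical pair $(d+l,m), (d+l+1,m)$, to conclude that $(d+1, m+l)$ is not a $c'$-position. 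On the other hand, $f_\lambda^m(c) \in P_{d+l}(c)$ implies $f_\lambda^{m+l}(c) \in f_\lambda^l(P_{d+l}(c)) = P_d(c')$, so $(d, m+l)$ is a $c'$-position; a second application of excellence of ${\rm Row}_c(d)$ then forces $(d+1, m+l)$ to be a $c'$-position, the desired contradiction.

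Parts (2) and (4) rest on similar tableau bookkeeping. For (2), beyond the first child from (1) obtained at return time $l_1$, non-preperiodicity guarantees a further return time $l_2 > l_1$ with $(d+1, l_2)$ critical; excellence of ${\rm Row}_c(d)$ prevents any critical position from appearing on the diagonal $(d+l_2-j, j)$, $0 < j < l_2$ (any such would, via the downward-closure/excellence/(T3) chain used in part (3), be inconsistent), giving a second degree-$2$ covering and hence a second child. For (4), a contrapositive: if the unique child ${\rm Row}_c(d+l)$ were not excellent, the witnessing semi-critical configuration would furnish, via pullback through the child map $f_\lambda^l$, a second child of ${\rm Row}_c(d)$, contradicting uniqueness. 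The main obstacle will be part (3): aligning (T3) requires that all critical points appearing in the argument be simultaneously identified with $c$, which must be arranged up front using Lemma \ref{4a}; once this is done, the contradiction is produced by a clean double use of excellence sandwiching the output of (T3).
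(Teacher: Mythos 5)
Your argument for part (1) is correct and matches the paper's: take $l$ minimal with $(d+1,l)$ critical, and use (T1) to push non-criticality of $(d+1,j)$ up the column to get non-criticality along the diagonal $(d+l-j,j)$, $0<j<l$, yielding the degree-$2$ covering. Your argument for part (3) also works, and is in fact a nicely explicit version of what the paper compresses into one line; the one caveat is that the ``WLOG the witnessing critical point is $c$'' step is not always achievable by an $\omega^{2n}=1$ conjugation (for $n$ odd, $\omega\mapsto\omega^{n-1}$ is not surjective on $2n$th roots, so $c/c''$ may not be in its image), but this is harmless: one simply applies (T3) with base tableau $T(c'')$, whose diagonal has the same critical/non-critical pattern by the Remark following the tableau definitions.

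The gap is in part (2). You claim that the \emph{next} return time $l_2>l_1$ with $(d+1,l_2)$ critical already gives a second child, on the grounds that excellence of ${\rm Row}_c(d)$ forces the diagonal $(d+l_2-j,j)$, $0<j<l_2$, to be entirely non-critical. This is false in general: at $j=l_1$ the diagonal passes through column $l_1$ at depth $d+l_2-l_1$, and column $l_1$ is by hypothesis critical at depth $d+1$ and hence (by T1) critical up to some threshold $d_0>d+1$; if $d+l_2-l_1<d_0$ (e.g.\ $l_2=l_1+1$ and the threshold is $\ge d+2$), then $(d+l_2-l_1,l_1)$ \emph{is} critical and $f^{l_2}\colon A_{d+l_2}(c)\to A_d(\cdot)$ has degree $\ge 4$, so ${\rm Row}_c(d+l_2)$ is not a child. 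Moreover the parenthetical ``any such would be inconsistent via the (T3) chain'' does not hold: a critical diagonal position at $(d+l_2-j,j)$ propagates down to a $c''$-\emph{critical} (not semi-critical) annulus at depth $d$, which is perfectly compatible with excellence. This is exactly why the paper's proof of (2) first locates a depth $d'>d$ at which column $l$ ceases to be critical ($A_{d'}(f^l(c))$ semi-critical), uses (T3) and excellence to show the diagonal from $(d',l)$ is non-critical all the way down to an \emph{off-critical} annulus at $(d,\,l+d'-d)$, and only then takes the smallest return $l'>l+d'-d$; the wait past column $l+d'-d$ is what guarantees the new diagonal clears the critical column $l$. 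A similar issue affects your sketch of (4): ``pullback through the child map'' is not a well-defined operation here, and the paper's argument instead pushes the semi-criticality forward to row $d$ (as in your own part (3) but without the excellence contradiction, since excellence of ${\rm Row}_c(d)$ is not assumed in (4)), then locates a subsequent return past that off-critical column to manufacture the second child.
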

\begin{proof}
1. By hypothesis, for every $d\geq1$, we can find a smallest integer
$l>0$ such that the annulus $A_d(f_\lambda^l(c))$ is $c'$-critical
for some $c'\in C_\lambda$. The map
$f_\lambda^l:A_{d+l}(c)\rightarrow A_{d}(c')$ is a degree two
covering map, which implies that ${\rm Row}_c(d+l)$ is a child of
${\rm Row}_c(d)$.

2. Following 1, there exists $d'>d$ such that the annulus
$A_{d'}(f_\lambda^l(c))$ is $c'$-semi-critical. Because ${\rm
Row}_c(d)$ is excellent, by tableau rule (T3),
$A_{d'-t}(f_\lambda^{l+t}(c))$ is either off-critical or
semi-critical for $0<t\leq d'-d$. In particular,
$A_{d}(f_\lambda^{l+d'-d}(c))$ is off-critical. Hence, we can find a
smallest integer $l'>l+d'-d$ such that the annulus
$A_d(f_\lambda^{l'}(c))$ is critical; therefore, ${\rm Row}_c(d+l')$
is another child of ${\rm Row}_c(d)$.

3. If ${\rm Row}_c(d+l)$ is not excellent, then there is a column
$l'\geq l$ such that $A_{d+l}(f_\lambda^{l'}(c))$ is semi-critical.
By tableau rule (T3), $A_{d}(f_\lambda^{l+l'}(c))$ is also
semi-critical, which contradicts the fact that ${\rm Row}_c(d)$ is
excellent.

4. If ${\rm Row}_c(d+l)$ is not excellent, then as in (3),
$A_{d}(f_\lambda^{l+l'}(c))$ is semi-critical for some $l'\geq l$.
Suppose $l'\geq l$ is the smallest integer. We can find a smallest
integer $t>l'+l$ such that $A_{d}(f_\lambda^{t}(c))$ is
$c'$-critical for some $c'\in C_\lambda$. Then ${\rm Row}_c(d+t)$ is
also a child of ${\rm Row}_c(d)$, which is a contradiction.
\end{proof}

\begin{lem}\label{4d} Suppose some tableau $T(c)$ with $c\in C_\lambda$ is critical and pre-periodic.

1. If $n$ is odd, then there exist exactly two critical points $\pm
c'\in C_\lambda$ such that $T(c')$ and $T(-c')$ are periodic.

2. If $n$ is even, then there is a unique critical point
$\tilde{c}\in C_\lambda$ such that $T(\tilde{c})$ is periodic.
\end{lem}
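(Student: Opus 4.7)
My plan has an existence part and a counting part. For existence, I will combine pre-periodicity and criticality of $T(c_0)$ with tableau rules (T1), (T2) to produce one critical point with periodic tableau. Let $l_0$ and $p$ denote the pre-period and period of $T(c_0)$. Criticality forces, at every depth $d$, some column $l>0$ to be critical at $(d,l)$; by pre-periodicity I can take $l$ in the finite window $[l_0, l_0+p)$, so pigeonhole as $d\to\infty$ produces some $l^*$ in this window whose entire column is critical. Because a depth-$\geq 1$ puzzle piece contains at most one point of $C_\lambda$ (the remark after (T3)), the decreasing sequence $\{P_d(f_\lambda^{l^*}(c_0))\}_d$ pins down a unique $c^* \in C_\lambda$ satisfying $P_d(f_\lambda^{l^*}(c_0)) = P_d(c^*)$ for every $d\geq 0$. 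Applying (T2) with $x = c_0$, $l=l^*$, $y = c^*$ and letting the controlled depth grow yields $P_{i, l^*+j}(c_0) = P_{i,j}(c^*)$ for all $i,j\geq 0$, i.e., the tableau $T(c^*)$ is the left-shift of $T(c_0)$ by $l^*$ columns. Since $l^*\geq l_0$, every column of $T(c^*)$ sits inside the periodic part of $T(c_0)$, so $T(c^*)$ is itself periodic.

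For counting, I use the orbit identities $f_\lambda^j(c_k) = (-1)^k f_\lambda^j(c_0)$ for $j\geq 1$ when $n$ is odd, and $f_\lambda^j(c_k) = f_\lambda^j(c_0)$ for $j\geq 2$ when $n$ is even, both proved by induction from $f_\lambda(c_k) = (-1)^k v_\lambda^+$ and $f_\lambda(-z) = (-1)^n f_\lambda(z)$, together with the $2n$-fold symmetry $P_d(c_k) = \omega^k P_d(c_0)$ for $d\geq 1$ and $\omega = e^{\pi i /n}$ (Lemma \ref{4a}). If $T(c_{k_1})$ and $T(c_{k_2})$ are both periodic, I may pass to their lcm period $p$ and write $P_d(f_\lambda^p(c_{k_i})) = P_d(c_{k_i})$ at depth $d\geq 1$ for $i=1,2$. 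Rewriting the left-hand sides via the orbit identities and the right-hand sides via the symmetry, the ratio of the two equations produces a relation of the form $\zeta^{k_2-k_1} = 1$ for an explicit $2n$-th root of unity $\zeta$; here I use that the stabilizer in $\langle\omega\rangle$ of the puzzle piece $P_d(c_0)$ is trivial for $d\geq 1$, since distinct rotates of $c_0$ are distinct critical points lying in disjoint puzzle pieces. Working out $\zeta$ case by case, one finds $\zeta$ has order $2n$ whenever $n$ is even, forcing $k_2 \equiv k_1 \pmod{2n}$, and $\zeta = -\omega$ has order exactly $n$ when $n$ is odd, forcing $k_2 - k_1 \in \{0,n\} \pmod{2n}$.

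To close the odd case I verify that $-c^*$ actually has a periodic tableau: for $n$ odd the map $z\mapsto -z$ commutes with $f_\lambda$ and with the depth-$\geq 1$ puzzle labelling via $P_d(-z) = -P_d(z)$, so $T(-c^*)$ is the entrywise negation of $T(c^*)$ and hence periodic of the same period. Together with the count, this gives exactly the two critical points $\pm c^*$. For $n$ even the analogous transfer breaks, because column zero satisfies $P_d(-c^*) = -P_d(c^*)\neq P_d(c^*)$ at depth $d\geq 1$, so $\tilde c = c^*$ is the unique solution. The main obstacle, I expect, will be the existence step — converting pre-periodicity plus criticality into the all-critical column $l^*$ and aligning the two tableaux through (T2); after that, the counting is a routine bookkeeping computation with $2n$-th roots of unity.
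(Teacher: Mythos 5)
Your counting step is a genuine and arguably cleaner departure from the paper's argument. The paper dichotomizes on whether $f_\lambda(c)=f_\lambda(c')$ or $f_\lambda(c)=-f_\lambda(c')$ (the only alternatives, since $f_\lambda(C_\lambda)=\{\pm v_\lambda^+\}$), derives period $p$ or $2p$ accordingly, and then verifies by hand that the remaining $2n-2$ critical tableaux are strictly pre-periodic, with explicit identifications when $p=1$. You compress all of this into the single identity $\zeta^{k_2-k_1}=1$ plus a computation of the order of $\zeta$; this is shorter and makes the role of the parity of $n$ transparent. Two small cautions: for $n$ even with period $p=1$ the orbit identity you quote requires $j\geq 2$ and does not yet apply, and the relevant $\zeta$ in that subcase is $-\omega$ rather than $\omega$ (though still of order $2n$, so the conclusion survives); and the trivial-stabilizer step should cite the remark after (T3) — that a puzzle piece of depth $\geq 1$ holds at most one critical point — since that is exactly what makes the $2n$ rotates of $P_d(c_0)$ pairwise disjoint.

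The existence step, by contrast, has a real gap. You assert that at each depth the critical column $l>0$ "can be taken in $[l_0,l_0+p)$ by pre-periodicity," but pre-periodicity only makes the criticality pattern $p$-periodic among columns $j\geq l_0$; it does nothing to rule out that at some depth every critical column $l>0$ lies strictly below $l_0$, in which case the window is empty and the pigeonhole collapses. Filling the gap needs an extra input, e.g.\ the remark preceding the lemma that criticality is shared by all $T(c')$, $c'\in C_\lambda$: if every fully critical column of $T(c_0)$ lay in $(0,l_0)$, shifting to the critical point sitting in the largest such column would make its tableau non-critical at large depths (its only surviving critical column would be column $0$), a contradiction. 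The paper sidesteps this entirely: it produces a first fully critical column $p$ (this follows because the first critical column $j(d)$ is nondecreasing and bounded — if $j(d)\geq l_0+p$ then $(d,j(d)-p)$ would also be critical, contradicting minimality), and then deduces periodicity of $T(c')$ directly from $f_\lambda(c)=\pm f_\lambda(c')$, never needing $p\geq l_0$. You should either supply the missing argument or adopt the paper's derivation of periodicity, which is insensitive to whether the fully critical column lies past the pre-period.
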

\begin{proof} Because $T(c)$ is critical and pre-periodic, there exist a smallest integer $p\geq1$ and a unique critical point $c'\in C_\lambda$ such that $(d,p)$ is a $c'$-position for all $d\geq0$.

1. If $n$ is odd, there are two possibilities: either
$f_\lambda(c)=f_\lambda(c')$ or $f_\lambda(c)+f_\lambda(c')=0$.

If $f_\lambda(c)=f_\lambda(c')$, then both $T(c')$ and $T(-c')$ are
periodic with period $p$. In this case, there is an integer
$d_0\geq0$ such that for any $d\geq d_0, 0<l<p$, the position
$(d,l)$ is not critical. It is easy to check that for any
$\tilde{c}\in C_\lambda\setminus \{\pm c'\}$, the tableau
$T(\tilde{c})$ is strictly pre-periodic. In particular, if $p=1$,
then $P_d(c')=P_d(f_\lambda(c'))$ for all $d\geq0$. This means that
for any $d\geq0$, $c'$ and $f_\lambda(c')$ lie in the same puzzle
piece of depth $d$. Thus, we conclude $\{\pm c'\}=\{c_0,c_n\}$.

If $f_\lambda(c)+f_\lambda(c')=0$, then both $T(c')$ and $T(-c')$
are periodic with period $2p$. Consider the tableau $T(c')$; there
is an integer $d_0\geq0$ such that for any $d\geq d_0, 0<l<p$, the
position $(d,l)$ is not critical and for any $d\geq 0$ the position
$(d,p)$ is $(-c')$-critical. It is easy to confirm that for any
$\tilde{c}\in C_\lambda\setminus \{\pm c'\}$, the tableau
$T(\tilde{c})$ is strictly pre-periodic. In particular, if $p=1$,
then $P_d(-c')=P_d(f_\lambda(c'))$ for all $d\geq0$. Therefore, for
any $d\geq0$, $-c'$ and $f_\lambda(c')$ lie in the same puzzle piece
of depth $d$. Thus, we conclude $\{\pm c'\}=\{c_1,c_{n+1}\}$.

2. $n$ is even. In this case, based on the fact that
$f_\lambda^k(v_\lambda^+)=f_\lambda^k(v_\lambda^-)$ for all
$k\geq1$, we conclude the tableau $T(f_\lambda(c'))$ is periodic
With a period $p$ and the tableau $T(-f_\lambda(c'))$ is strictly
pre-periodic. There is thus a unique critical point $\tilde{c}\in
f_\lambda^{-1}(f_\lambda(c'))$ such that $T(\tilde{c})$ is periodic.
For this tableau, there is an integer $d_0\geq0$ such that for any
$d\geq d_0, 0<l<p$, the position $(d,l)$  is not critical. It is
easy to check that for any $c''\in C_\lambda\setminus
\{\tilde{c}\}$, the tableau $T(c'')$ is strictly pre-periodic. In
particular, if $p=1$ and $T(v_\lambda^+)$ is periodic, then
$\tilde{c}=c_0$; if $p=1$ and $T(v_\lambda^-)$ is periodic, then
$\tilde{c}=c_{n+1}$.
\end{proof}

\section{Renormalizations}

In this section, we discuss the renormalization of McMullen maps
with respect to the puzzle piece.

\begin{defi} If there exist a critical point $c$ of $f_\lambda$, an
integer $p\geq1$ and two disks $U$ and $V$ containing $c$ such that
$$\epsilon f_\lambda^p: U\rightarrow V$$ is a quadratic-like map whose Julia set is connected (here $\epsilon\in \{\pm1\}$ is a symbol), then we say $f_\lambda$ is $p$-renormalizable at $c$ if  $\epsilon=1$ and  $f_\lambda$ is $p$-$*$-renormalizable at $c$ if  $\epsilon=-1$. In the former case, the triple $(f_\lambda^p,U,V)$ is called a $p$-renormalization of $f_\lambda$ at $c$. In the latter case, the triple
$(-f_\lambda^p,U,V)$ is called a $p$-$*$-renormalization of
$f_\lambda$ at $c$.
\end{defi}

In the following, we use $K_c=\{z\in U; (\epsilon
f_\lambda^p)^k(z)\in U, \forall\ k\geq0\}=\bigcap_{k\geq0}(\epsilon
f_\lambda^p)^{-k}(U)$ to denote the small filled Julia set of the
($*$-)renormalization $(\epsilon f_\lambda^p,U,V)$. By the
straightening theorem of Douady and Hubbard \cite{DH2}, if
$(\epsilon f_\lambda^p,U,V)$ is a $p$-($*$-)renormalization of
$f_\lambda$, then $\epsilon f_\lambda^p$ is conjugated by a
quasi-conformal map $\sigma$ to a unique quadratic polynomial
$p_\mu(z)=z^2+\mu$ in a neighborhood of the filled Julia set $K_c$.
Let $\beta$ be the $\beta$-fixed point (i.e., the landing point of
the zero external ray) of $p_\mu$ and $\beta'$ be the other preimage
of $\beta$. We call $\beta_c=\sigma^{-1}(\beta)$ the $\beta$-fixed
point of the renormalization $(\epsilon f_\lambda^p,U,V)$. The other
preimage of $\beta_c$ under the renormalization is
$\beta_c'=\sigma^{-1}(\beta')$ .

In this section, we always assume that the graph
$\mathbf{G}_\lambda(\theta_1,\cdots,\theta_N)$ is admissible.

\subsection{From tableau to renormalizations}

\begin{lem}\label{5a} Suppose some tableau $T(c)$ with $c\in C_\lambda$ is
pre-periodic.

1. If $T(c)$ is non-critical, then $f_\lambda$ is critically finite.

2. If $T(c)$ is critical, then $f_\lambda$ is either renormalizable
or $*$-renormalizable.
\end{lem}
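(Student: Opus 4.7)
The plan is to analyze the first-return map $\epsilon f_\lambda^p$ on a suitable nested pair of (modified) puzzle pieces, exploiting two facts: every point of $C_\lambda$ is a simple critical point of $f_\lambda$, so each critical position in the tableau contributes exactly a factor of $2$ to the degree of the return map; and by Lemma \ref{4a}, the critical dichotomy propagates symmetrically to all $c'' \in C_\lambda$. The non-critical/critical dichotomy of $T(c)$ will then sort itself into conformal return (Part 1) versus degree-$2$ return (Part 2).

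For Part 1, fix $l_0 \geq 0$ and $p \geq 1$ with $P_{d, l_0+p}(c) = P_{d, l_0}(c)$ for all $d$, and fix $d_0$ so that no positive column is critical at depth $\geq d_0$; this forces $l_0 \geq 1$ (else column $p > 0$ would be critical, contradicting non-criticality). For every $d \geq d_0$ and $0 \leq j < p$, the position $(d+p-j,\, l_0+j)$ lies in a positive column at depth $\geq d_0$ and is therefore non-critical, so $f_\lambda^p \colon P_{d+p}(f_\lambda^{l_0}(c)) \to P_d(f_\lambda^{l_0}(c))$ is a conformal isomorphism between properly nested disks. Its inverse is then a strict hyperbolic contraction of $P_d(f_\lambda^{l_0}(c))$, so $\bigcap_{k \geq 0} P_{d+kp}(f_\lambda^{l_0}(c))$ is a single point. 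Both $f_\lambda^{l_0}(c)$ and $f_\lambda^{l_0+p}(c)$ lie in this intersection, so they coincide, and the forward orbit of $c$ is finite. By the symmetry in Lemma \ref{4a}, every $T(c'')$ with $c'' \in C_\lambda$ is again pre-periodic and non-critical, so the same argument applies and yields critical finiteness of $f_\lambda$.

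For Part 2, Lemma \ref{4d} supplies a critical $c' \in C_\lambda$ with $T(c')$ periodic. The cases in Lemma \ref{4d} also identify a minimal return period $q$ and a critical point $c^* \in \{c', -c'\}$ (taking $c^* = \tilde c = c'$ when $n$ is even) marking the first always-critical positive column of $T(c')$; set $\epsilon \in \{\pm 1\}$ so that $\epsilon c^* = c'$. For large $d$, all positions $(d+q-j, j)$ with $0 < j < q$ are non-critical, so $f_\lambda^q \colon P_{d+q}(c') \to P_d(c^*)$ is a degree-$2$ branched cover whose unique critical point is $c'$. By the symmetry $-\widehat{P}_d(-c') = \widehat{P}_d(c')$ (the modified-piece analogue of Lemma \ref{4a}) together with the compact nesting $\overline{\widehat{P}_{d+q}(c')} \subset \widehat{P}_d(c')$ guaranteed by Section 4.3, the map
\[
\epsilon f_\lambda^q \colon \widehat{P}_{d+q}(c') \longrightarrow \widehat{P}_d(c')
\]
is a proper degree-$2$ holomorphic map with compactly contained domain, i.e.\ a quadratic-like map. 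Its filled Julia set is connected because $(\epsilon f_\lambda^q)^k(c') = \epsilon^k f_\lambda^{kq}(c')$, and the alternation $\epsilon^k$ exactly matches the alternation of $f_\lambda^{kq}(c')$ between $P_d(c')$ and $P_d(-c')$ recorded in $T(c')$, so the orbit of $c'$ is trapped in $\widehat{P}_{d+q}(c')$. This yields a $q$-renormalization when $\epsilon = +1$ and a $q$-$*$-renormalization when $\epsilon = -1$.

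The technically sharpest point is the proper nesting $\overline{\widehat{P}_{d+q}(c')} \subset \widehat{P}_d(c')$ together with the degree-$2$ properness of $\epsilon f_\lambda^q$ onto $\widehat{P}_d(c')$; both are supplied by the modified puzzle piece construction of Section 4.3, which exists precisely to replace degenerate annuli by non-degenerate ones and to manufacture quadratic-like restrictions. A secondary bookkeeping task is matching the sign flips of $\epsilon^k$ with the $2q$-periodicity of $T(c')$ in the $*$-case, which is what ensures the critical orbit remains captured inside the modified domain and hence that the small Julia set is connected.
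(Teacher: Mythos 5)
Your overall strategy matches the paper's: in the non-critical case you show the nested pieces $P_{d+kp}(f_\lambda^{l_0}(c))$ shrink to a point, forcing $f_\lambda^{l_0+p}(c)=f_\lambda^{l_0}(c)$; in the critical case you invoke Lemma~\ref{4d}, isolate the first always-critical positive column, and package the degree-two return map as a quadratic-like map. Your Part~2 is correct and in fact streamlines the paper's four-way case split ($n$ odd/even $\times$ $\epsilon=\pm1$) into one computation with $(\epsilon,q,c^*)$; your explicit appeal to modified puzzle pieces for the compact nesting $\overline{\widehat P_{d+q}(c')}\subset\widehat P_d(c')$ and the trapping of the $(\epsilon f_\lambda^q)$-orbit is, if anything, more careful than the paper's bare use of $P_{d_0+p}(c')\subset P_{d_0}(c')$.

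There is, however, a gap in Part~1. You assert that $f_\lambda^p\colon P_{d+p}(f_\lambda^{l_0}(c))\to P_d(f_\lambda^{l_0}(c))$ is a conformal isomorphism between \emph{properly nested} disks, so that its inverse is a strict hyperbolic contraction and $\bigcap_k P_{d+kp}(f_\lambda^{l_0}(c))$ is a singleton. But the containment $P_{d+p}\subset P_d$ need not be compact: the annulus $P_d\setminus\overline{P_{d+p}}$ may be degenerate, and the Schwarz lemma then gives only pointwise strict contraction, not a uniform factor, so the iterated images need not shrink to a point. This is exactly the issue the modified puzzle pieces of Section~4.3 exist to cure, and the paper sidesteps it here by citing Lemma~\ref{7a} (whose proof runs the contraction argument in the modified pieces $\widehat P_{d_0-1}^{(i)}$, which \emph{are} compactly nested). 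You should either invoke Lemma~\ref{7a} directly, as the paper does, or carry out your Part~1 contraction in the modified pieces, exactly as you already do in Part~2. With that repair the proof is sound.
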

\begin{proof}
 Because $T(c)$ is pre-periodic, there exist two integers $l\geq0$ and $p\geq1$ such that
$P_d(f_\lambda^{l+p}(c))=P_{d,l+p}(c)=P_{d,l}(c)=P_d(f_\lambda^{l}(c))$
for all $d\geq0$.

1. $T(c)$ is  non-critical. In this case, the tableaux
$T(f_\lambda^{l}(c))$ and $T(f_\lambda^{l+p}(c))$ are also
non-critical. Based on Lemma \ref{7a},
$\{f_\lambda^{l+p}(c)\}=\bigcap_{d\geq0}P_d(f_\lambda^{l+p}(c))=
\bigcap_{d\geq0}P_d(f_\lambda^{l}(c))=\{f_\lambda^{l}(c)\}.$
Therefore, $f_\lambda^{l+p}(c)=f_\lambda^{l}(c)$, and $f_\lambda$ is
critically finite.

2. $T(c)$ is  critical. If $n$ is odd, then based on Lemma \ref{4d},
there are exactly two critical points $\pm c'\in C_\lambda$ such
that $T(c')$ and $T(-c')$ are periodic. Suppose the period is $p$,
and consider the tableau $T(c')$. There are two possibilities:

Case 1. There is an integer $d_0\geq0$ such that for any $d\geq d_0,
0<l<p$, the position $(d,l)$ is not critical. Then, $f^p_\lambda:
P_{d_0+p}(c')\rightarrow P_{d_0}(c')$ is a quadratic-like map and
$\{f^{kp}_\lambda(c'); k\geq0\}\subset P_{d_0+p}(c')$. Thus,
$(f^p_\lambda, P_{d_0+p}(c'), P_{d_0}(c'))$ is a $p$-renormalization
of $f_\lambda$ at $c'$. Because $f_\lambda$ is an odd function,
$(f^p_\lambda, P_{d_0+p}(-c'), P_{d_0}(-c'))$ is a
$p$-renormalization of $f_\lambda$ at $-c'$.

Case 2. $p$ is even and there is an integer $d_0\geq0$ such that for
any $d\geq d_0, 0<l<p/2$, the position $(d,l)$ is not critical, and
for any $d\geq 0$, the position $(d,p/2)$ is $(-c')$-critical. Then,
$-f^{p/2}_\lambda: P_{d_0+p/2}(c')\rightarrow P_{d_0}(c')$ is a
quadratic-like map with $\{(-1)^kf^{kp/2}_\lambda(c');
k\geq0\}\subset P_{d_0+p/2}(c')$. Thus, $(-f^{p/2}_\lambda,
P_{d_0+p/2}(c'), P_{d_0}(c'))$ is a $p/2$-$*$-renormalization of
$f_\lambda$ at $c'$. It turns out that $(-f^{p/2}_\lambda,
P_{d_0+p/2}(-c'), P_{d_0}(-c'))$ is a $p/2$-$*$-renormalization of
$f_\lambda$ at $-c'$.

If $n$ is even, then based on Lemma \ref{4d}, there is a unique
critical point $\tilde{c}\in C_\lambda$ such that $T(\tilde{c})$ is
periodic. Suppose the period is $p$; there is then an integer
$d_0\geq0$ such that for any $d\geq d_0, 0<l<p$, the position
$(d,l)$ is not critical. Then, $f^p_\lambda:
P_{d_0+p}(\tilde{c})\rightarrow P_{d_0}(\tilde{c})$ is a
quadratic-like map and $\{f^{kp}_\lambda(\tilde{c}); k\geq0\}\subset
P_{d_0+p}(\tilde{c})$. Thus, $(f^p_\lambda, P_{d_0+p}(\tilde{c}),
P_{d_0}(\tilde{c}))$ is a $p$-renormalization of $f_\lambda$ at
$\tilde{c}$. Because $f_\lambda$ is an even function,
$(-f^p_\lambda, P_{d_0+p}(-\tilde{c}), P_{d_0}(-\tilde{c}))$ is a
$p$-$*$-renormalization of $f_\lambda$ at $-\tilde{c}$.
\end{proof}

\begin{rem}\label{5ba} Lemma \ref{5a} also holds when the graph $\mathbf{G}_\lambda(\theta_1,\cdots,\theta_N)$ is not touchable. Indeed, in this case, we can use modified puzzle pieces to define renormalizations.
\end{rem}

\begin{pro}\label{5b} Suppose $f_\lambda$ has a non-repelling cycle
in $\mathbb{C}$; then $f_\lambda$ is either renormalizable or
$*$-renormalizable. In this situation, there are three
possibilities:

1. If $f_\lambda$ is renormalizable and $n$ is odd, then $f_\lambda$
has exactly two non-repelling cycles in $\mathbb{C}$.

2. If $f_\lambda$ is $*$-renormalizable and $n$ is odd, then
$f_\lambda$ has exactly one non-repelling cycle in $\mathbb{C}$.

3. If $f_\lambda$ is renormalizable and $n$ is even, then
$f_\lambda$ has exactly one non-repelling cycle in $\mathbb{C}$.
\end{pro}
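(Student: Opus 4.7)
The plan is to combine Fatou's classical theorem with the tableau criterion for renormalization established in Lemma \ref{5a}. By Fatou, any non-repelling cycle $\mathcal{C}$ of a rational map either contains a critical point (super-attracting case) or lies in the $\omega$-limit set of a forward critical orbit. Since $f_\lambda(0)=\infty$ sends $0$ into $B_\lambda$ immediately, the critical orbits of $0$ and $\infty$ remain in $B_\lambda$ and cannot accumulate on a cycle in $\mathbb{C}$, so the relevant critical orbit originates from $C_\lambda$. By Proposition \ref{3f}, $\mathcal{C}$ avoids the admissible graph, so puzzle pieces $P_d(z)$ are well-defined for every $z\in\mathcal{C}$, and the tableau $T(z_0)$ is periodic of period dividing $p$.

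The next step is to show that $T(c)$ is both critical and pre-periodic for some $c\in C_\lambda$, so that Lemma \ref{5a}(2) yields that $f_\lambda$ is renormalizable or $*$-renormalizable. Criticality follows because admissibility (Proposition \ref{4ab}) produces non-degenerate critical annuli whose preimages catch the iterated image of $c$ once its orbit approaches $\mathcal{C}$, forcing a critical position in every row of $T(c)$. Pre-periodicity uses that the cycle tableau $T(z_0)$ is strictly periodic of period $p$: once $\{f_\lambda^k(c)\}$ is captured by the basin (or Siegel/Cremer accumulation) of $\mathcal{C}$, it visits the periodic sequence $P_d(z_0),\ldots,P_d(z_{p-1})$ at every depth $d$, forcing $P_{d,l+p}(c)=P_{d,l}(c)$ for sufficiently large $l$ and every $d$. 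The main obstacle lies precisely here, in the non-super-attracting cases where the orbit merely converges to but never lands on $\mathcal{C}$; this step requires the modified puzzle piece machinery of Section 4.3, so that the non-degenerate-inclusion property of modified pieces propagates the periodic combinatorics to all depths.

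For the cycle count, invoke Lemma \ref{4d} together with the Douady--Hubbard straightening theorem. In Case 1 (renormalizable, $n$ odd), Lemma \ref{4d}(1) yields two critical points $\pm c'$ with periodic tableaux of period $p$, giving two disjoint $p$-renormalizations $(f_\lambda^p,P_{d_0+p}(\pm c'),P_{d_0}(\pm c'))$; straightening conjugates each to a quadratic polynomial which carries at most one non-repelling cycle, and the small filled Julia sets $K_{\pm c'}$ lie in disjoint puzzle pieces, producing exactly two distinct $f_\lambda$-cycles. In Case 2 ($*$-renormalizable, $n$ odd), the two $*$-renormalizations $(-f_\lambda^{p/2},U_{\pm c'},V_{\pm c'})$ are conjugate via $z\mapsto -z$ (since $f_\lambda$ is odd); a non-repelling periodic point $z_0$ of $-f_\lambda^{p/2}$ in $K_{c'}$ satisfies $f_\lambda^{p/2}(z_0)=-z_0\in K_{-c'}$, so its $f_\lambda$-orbit visits both $K_{c'}$ and $K_{-c'}$ and assembles into a single $f_\lambda$-cycle of period $p$. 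In Case 3 (renormalizable, $n$ even), Lemma \ref{4d}(2) gives a unique $\tilde c$ with periodic tableau and a single $p$-renormalization at $\tilde c$; the accompanying $*$-renormalization at $-\tilde c$ (which exists because $f_\lambda$ is even) produces a fixed point $w_0$ of $-f_\lambda^p$ with $-w_0$ fixed by $f_\lambda^p$ in $K_{\tilde c}$ (using $f_\lambda^p(-z)=f_\lambda^p(z)$ for $n$ even), so the two renormalizations share the same non-repelling $f_\lambda$-cycle.
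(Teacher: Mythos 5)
Your cycle-count arguments in the three cases are essentially the paper's (cases (P1)--(P3)), and your invocations of Lemma~\ref{4d} and the straightening theorem are the right tools. But the crucial opening step --- producing a critical point $c\in C_\lambda$ whose tableau $T(c)$ is critical and pre-periodic, so that Lemma~\ref{5a}(2) can be applied --- has a genuine gap, and it is precisely the step the paper handles differently.

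The paper establishes in one stroke that some $z\in\mathcal{C}$ shares its puzzle piece with a critical point at \emph{every} depth, by contradiction via the Schwarz lemma: if no such pair $(z,c)$ existed, then past some depth $d_0$ no puzzle piece containing a cycle point would contain a critical point, so $f_\lambda^q\colon P_{d_0+q}(z_0)\to P_{d_0}(z_0)$ would be conformal between nested disks and $z_0$ would be repelling. This immediately gives $P_d(c)=P_d(z_0)$ for all $d$, hence $T(c)=T(z_0)$ is both critical and \emph{periodic} (not merely pre-periodic), with no further work. Your route replaces this with two separate claims derived from Fatou's theorem and orbit accumulation, and neither is sound. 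For criticality, the assertion that non-degenerate critical annuli ``catch the iterated image of $c$ once its orbit approaches $\mathcal{C}$'' does not follow: the critical annuli sit around the critical points, not around $\mathcal{C}$, and accumulation of the orbit of $c$ on $\mathcal{C}$ gives no information about whether that orbit ever returns to a critical puzzle piece. That $\mathcal{C}$ lies in the same end as a critical point is exactly the content to be proved, and the Schwarz-lemma contradiction, not ``orbit capture,'' is what proves it. For pre-periodicity, the fact that the orbit of $c$ eventually visits the cycle pieces $P_d(z_0),\dots,P_d(z_{p-1})$ at each fixed depth $d$ only yields $P_{d,l+p}(c)=P_{d,l}(c)$ for $l\geq l(d)$, where the threshold $l(d)$ grows with $d$ (the deeper the piece, the longer the orbit takes to enter it). Pre-periodicity of the tableau requires a single $l$ valid for all $d$ simultaneously. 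You flag this as ``the main obstacle'' and defer to modified puzzle pieces, but the modified pieces of Section~4.3 serve to thicken degenerate annuli for modulus estimates; they do not repair the depth-dependent threshold, so the gap remains. Once the Schwarz-lemma claim is in place, the rest of your argument (the case analysis via Lemma~\ref{4d}, the odd/even symmetry observations, the one-versus-two cycle count) matches the paper in substance.
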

\begin{proof} Let $\mathcal{C}=\{z_0,
f_\lambda(z_0),\cdots,f^q_\lambda(z_0)=z_0\}$ be the non-repelling
cycle of $f_\lambda$  in $\mathbb{C}$.  By Proposition \ref{4ab}, we
can find an admissible graph
$\mathbf{G}_\lambda(\theta_1,\cdots,\theta_N)$. By Proposition
\ref{3f}, the cycle $\mathcal{C}$ avoids the graph
$\mathbf{G}_\lambda(\theta_1,\cdots,\theta_N)$. Thus, for any
$z\in\mathcal{C}$ and any integer $d\geq0$, the puzzle piece
$P_d(z)$ is well-defined.

We claim that there exist $z\in\mathcal{C}$ and a critical point
$c\in C_\lambda$ such that $P_d(z)=P_d(c)$ for all $d\geq0$.
Otherwise, the tableau $T(z)$ is non-critical for any
$z\in\mathcal{C}$. It follows that there is an integer $d_0\geq0$
such that the map $f_\lambda^q: {P}_{d_0+q}(z_0)\rightarrow
{P}_{d_0}(z_0)$ is conformal. Based on the Schwarz lemma,
$|(f_\lambda^q)'(z_0)|>1$, which is a contradiction.

In this way, we can find a critical point $c\in C_\lambda$ with
tableau $T(c)$ that is periodic. Based on Lemma \ref{5a},
$f_\lambda$ is either renormalizable or $*$-renormalizable.

To continue, suppose the period of $T(c)$ is $p$, which is
necessarily a divisor of $q$. Based on Lemma \ref{5a}, there are
three possibilities:

(P1). $n$ is odd and $(f_\lambda^p, P_{d_0+p}(c),P_{d_0}(c))$ is a
$p$-renormalization of $f_\lambda$ at $c$. In this case,
$(f_\lambda^p, P_{d_0+p}(c),P_{d_0}(c))$ is quasi-conformally
conjugate to a polynomial $z\mapsto z^2+\mu$. Because a quadratic
polynomial has at most one non-repelling cycle (see \cite {CG} or
\cite{S}), it turns out that $\mathcal{C}$ is the only non-repelling
cycle contained in $\bigcup_{0\leq j<p}f_\lambda^j(K_c)$. On the
other hand, $-\mathcal{C}$ is the only non-repelling cycle contained
in $\bigcup_{0\leq j< p}f_\lambda^j(-K_c)$. Because there are
exactly two critical points whose tableaux are periodic in this case
and $(\cup_{0\leq j< p}f_\lambda^j(K_c))\cap(\cup_{0\leq j<
p}f_\lambda^j(-K_c))=\emptyset$, we conclude that $f_\lambda$ has
exactly two non-repelling cycles in $\mathbb{C}$.

(P2). $n$ is odd and $(-f_\lambda^{p/2}, P_{d_0+p/2}(c),P_{d_0}(c))$
is a $p/2$-$*$- reorganization of $f_\lambda$ at $c$. In this case,
the cycle $\mathcal{C}$ meets both $K_c$ and $-K_c$.
 By a similar argument as above, one sees that $\mathcal{C}$ is the only non-repelling  cycle contained in $\bigcup_{0\leq j<p}f_\lambda^j(K_c)$. Because the cycle $-\mathcal{C}$ is also contained in
$\bigcup_{0\leq j< p}f_\lambda^j(K_c)$, it turns out that
$\mathcal{C}=-\mathcal{C}$.

(P3). $n$ is even and $(f_\lambda^p, P_{d_0+p}(c),P_{d_0}(c))$ is a
$p$-renormalization of $f_\lambda$ at $c$. In this case, $c$ is the
only critical point whose tableau $T(c)$ is periodic. Based on a
similar argument as made above, we show that $\mathcal{C}$ is the
only non-repelling cycle in $\mathbb{C}$ .
\end{proof}


In the following, we discuss the case when $f_\lambda$ has an
indifferent cycle of multiplier $e^{2\pi i \theta}$. Douady
\cite{Do} conjectured that for any rational map, whenever it is
linearizable (i.e., the map is conformally conjugate to an
irrational rotation) near an indifferent fixed point of multiplier
$e^{2\pi i \theta}$, then $\theta$ must be a Brjuno number. Here, an
irrational number $\theta$ of convergents $p_k/q_k$ (rational
approximations obtained by the continued fraction expansion) is a
Brjuno number (denoted by $\mathcal{B}$) if
$$\sum_{k\geq1}\frac{\log q_{k+1}}{q_k}<+\infty.$$
According to Cremer, Siegel and Brjuno, if $\theta\in\mathcal{B}$,
then every germ $f(z)=e^{2\pi i \theta}z+\mathcal{O}(z^2)$ is
linearizable. Yoccoz \cite{Y} shows that if the quadratic polynomial
$z\mapsto e^{2\pi i \theta}z+z^2$ is linearizable, then
$\theta\in\mathcal{B}$. For a general case, Geyer \cite{G} shows
that for any $d\geq2$, if $z\mapsto z^d+c$ has an indifferent cycle
of multiplier $e^{2\pi i \theta}$ near which the map is
linearizable, then $\theta\in\mathcal{B}$. Based on these results
and Proposition \ref{5b}, we immediately establish:

\begin{pro}\label{5baa} Suppose $f_\lambda$ has an
indifferent cycle of multiplier $e^{2\pi i \theta}$; then
$f_\lambda$ is linearizable near the indifferent cycle if and only
if $\theta\in\mathcal{B}$.
\end{pro}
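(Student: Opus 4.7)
The plan is to prove the two implications separately. For the backward direction, if $\theta\in\mathcal{B}$, then by the classical Siegel-Brjuno theorem any germ of the form $e^{2\pi i\theta}z+\mathcal{O}(z^2)$ is holomorphically linearizable; applied to the first-return map $f_\lambda^q$ at any point of the cycle (of period $q$), this gives a local linearization of $f_\lambda$ near the cycle.

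The forward direction is the substantive one, and the strategy is to pipeline through renormalization and reduce to Geyer's theorem for $d=2$. Suppose $f_\lambda$ is linearizable near an indifferent cycle $\mathcal{C}$ of multiplier $e^{2\pi i\theta}$. Since $\mathcal{C}$ is non-repelling and lies in $\mathbb{C}$, Proposition~\ref{5b} produces a critical point $c\in C_\lambda$, a sign $\epsilon\in\{\pm 1\}$, and integers $p\geq 1$, $d_0\geq 0$ for which $(\epsilon f_\lambda^p,P_{d_0+p}(c),P_{d_0}(c))$ is a $p$-$(*$-$)$renormalization of $f_\lambda$ at $c$, with the cycle $\mathcal{C}$ meeting the small filled Julia set $K_c$ (as recorded in the proof of Proposition~\ref{5b}). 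By the Douady-Hubbard straightening theorem, there is a quasi-conformal hybrid equivalence $\sigma$ conjugating $\epsilon f_\lambda^p$ in a neighborhood of $K_c$ to a quadratic polynomial $p_\mu(z)=z^2+\mu$.

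Pick $z_0\in\mathcal{C}\cap K_c$ and let $m$ be its period under $\epsilon f_\lambda^p$; the identity $(\epsilon f_\lambda^p)^m=\epsilon^m f_\lambda^{pm}$ combined with $f_\lambda^q(z_0)=z_0$ forces $\epsilon^m=1$ and $pm=q$, whence $((\epsilon f_\lambda^p)^m)'(z_0)=(f_\lambda^q)'(z_0)=e^{2\pi i\theta}$. Since $\sigma$ is conformal almost everywhere on $K_c$, multipliers of periodic cycles in $K_c$ are preserved, so $\sigma(z_0)$ lies on an indifferent cycle of $p_\mu$ of multiplier $e^{2\pi i\theta}$. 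Linearizability transfers as well: a quasi-conformal conjugacy between two irrational rotations is conformal, so $f_\lambda$ is linearizable at $\mathcal{C}$ if and only if $p_\mu$ is linearizable at $\sigma(z_0)$. Applying Geyer's theorem \cite{G} to the quadratic polynomial $p_\mu$ then yields $\theta\in\mathcal{B}$.

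The main technical point is the multiplier transfer under hybrid equivalence and the bookkeeping with the sign $\epsilon$; the former is standard (conformality of $\sigma$ on the interior of $K_c$ and on cycles in $K_c$), and the latter is forced by the simple observation that $(\epsilon f_\lambda^p)^m$ must equal a pure power of $f_\lambda$ in order to realize a cycle return, so $\epsilon^m=1$ automatically. Everything else is direct assembly from the renormalization of Proposition~\ref{5b} and the classical linearization theorems.
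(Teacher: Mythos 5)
Your overall strategy — invoke Proposition~\ref{5b} to obtain a renormalization $(\epsilon f_\lambda^p,P_{d_0+p}(c),P_{d_0}(c))$, straighten to a quadratic $z^2+\mu$, and apply Geyer's theorem — is exactly what the paper intends by ``Based on these results and Proposition~\ref{5b} we immediately establish.'' The backward direction via Siegel--Brjuno is fine. But there is a gap in the multiplier bookkeeping for the $*$-renormalization case.

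You assert that $(\epsilon f_\lambda^p)^m(z_0)=z_0$ \emph{forces} $\epsilon^m=1$ and $pm=q$, with the justification that ``$(\epsilon f_\lambda^p)^m$ must equal a pure power of $f_\lambda$ in order to realize a cycle return.'' This is false. The return condition is $\epsilon^m f_\lambda^{pm}(z_0)=z_0$, and when $\epsilon=-1$ this can be satisfied with $\epsilon^m=-1$ and $f_\lambda^{pm}(z_0)=-z_0$. That is not a pathology: in the $*$-renormalizable case the paper's proof of Proposition~\ref{5b} (case P2) shows $\mathcal{C}=-\mathcal{C}$, whence $q$ is even and $f_\lambda^{q/2}(z_0)=-z_0$. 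Writing $k=2p$ for the tableau period, one has $k\mid q$; if $r:=q/k$ is odd (for instance $q=k$, so $r=1$), the first-return period of $z_0$ under $g=-f_\lambda^p$ is $m=r$, with $\epsilon^m=-1$ and $pm=q/2$. Then $(g^m)'(z_0)=-(f_\lambda^{q/2})'(z_0)=\pm e^{\pi i\theta}$, since $((f_\lambda^{q/2})'(z_0))^2=(f_\lambda^q)'(z_0)=e^{2\pi i\theta}$ by oddness of $f_\lambda^{q/2}$ for $n$ odd. So the straightened quadratic carries an indifferent cycle of multiplier $e^{2\pi i\theta'}$ with $2\theta'\equiv\theta\pmod{1}$, and Geyer only yields $\theta'\in\mathcal{B}$, not $\theta\in\mathcal{B}$. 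To conclude you must invoke the (true, but not obvious) fact that $\mathcal{B}$ is invariant under $\theta\mapsto 2\theta\pmod 1$ — equivalently, under the $GL_2(\mathbb{Q})$ action — and your proof nowhere supplies this; it is exactly the step your spurious claim ``$\epsilon^m=1$, $pm=q$'' was trying to skip.

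A minor point: ``$\sigma$ conformal a.e.\ on $K_c$, hence multipliers preserved'' is not a valid deduction as stated. What one actually uses is that, the cycle being a Siegel cycle, the Siegel disk lies in $\mathrm{int}\,K_c$, where a hybrid conjugacy is genuinely holomorphic by Weyl's lemma (so the rotation number on the Siegel disk, i.e.\ the multiplier, is preserved); alternatively cite Naishul's theorem on topological invariance of the rotation number of an irrationally indifferent cycle.
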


\subsection{Properties of renormalizations}

In this section, we assume that some tableau $T(c)$ with $c\in
C_\lambda$ is periodic with period $k$. By Lemma \ref{5a},
$f_\lambda$ is either $k$-renormalizable at $c$ or
$k/2$-$*$-renormalizable at $c$. Let $(\epsilon f^p_\lambda,
P_{d_0+p}(c), P_{d_0}(c))$ be the corresponding renormalization,
where
 \begin{equation*}
(\epsilon,p)=\begin{cases}
(1,k),\ \  &\text{ if $f_\lambda$ is $k$-renormalizable at $c$},\\
(-1,k/2),\ \ &\text{ if $f_\lambda$ is  $k/2$-$*$-renormalizable at
$c$}.
 \end{cases}
\end{equation*}
The small filled Julia set $K_c=\bigcap_{d\geq0}
\overline{P_d(c)}=\bigcap_{d\geq0} P_d(c)$.

If $K_c\cap
\partial B_\lambda\neq\emptyset$, we will show that there is a unique
external ray in $B_\lambda$ converging on $K_c$. Before the proof,
we need a classic result for quadratic polynomials:

\begin{lem}\label{5c}  Let $p_\mu(z)=z^2+\mu$ be a quadratic
polynomial with a connected filled Julia set $K$. If there is a
curve $\delta\subset \mathbb{C}\setminus K$ converging to $x\in K$
and $p_\mu(\delta)\supset \delta$, then $x$ is the $\beta$-fixed
point of $p_\mu$.
\end{lem}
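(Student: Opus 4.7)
The plan is to work in B\"{o}ttcher coordinates, which reduces the hypothesis to invariance under squaring and pins down the landing angle. Let $\phi:\mathbb{C}\setminus K\to\{|z|>1\}$ denote the B\"{o}ttcher map conjugating $p_\mu$ to $z\mapsto z^2$, and set $\tilde\delta=\phi(\delta)$. Then $p_\mu(\delta)\supset\delta$ translates to $\tilde\delta^2\supset\tilde\delta$, and $\delta(t)\to x\in K$ forces $|\tilde\delta(t)|\to 1$ at the corresponding endpoint.

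First I would verify that $x$ is a fixed point of $p_\mu$. Starting from any $w_0\in\delta$ and using $p_\mu(\delta)\supset\delta$ inductively, choose $w_k\in\delta$ with $p_\mu(w_k)=w_{k-1}$. In B\"{o}ttcher coordinates $|\tilde w_k|=|\tilde w_0|^{1/2^k}\to 1$, so $w_k$ approaches $K$; since $w_k\in\delta$ and $\delta$ accumulates only at $x$, we get $w_k\to x$, and continuity of $p_\mu$ applied to $p_\mu(w_k)=w_{k-1}$ yields $p_\mu(x)=x$. As $x\in\partial K$, the fixed point $x$ must be repelling, parabolic, or Cremer.

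The central step is to identify $x$ as $\beta$ by analysing the cluster angle set $I\subset\partial\mathbb{D}$ of $\tilde\delta$ at its endpoint. As the cluster set of a connected curve, $I$ is compact and connected; the inclusion $\tilde\delta^2\supset\tilde\delta$ gives $I\subset 2I$; and whenever $\tilde\delta(t_n)\to e^{2\pi i\theta}$ with $\theta\in I$ we have $\delta(t_n)\to x$, so $x$ lies in the Carath\'{e}odory cluster set $\Phi(\theta)$ of $\phi^{-1}$ at $e^{2\pi i\theta}$ for every $\theta\in I$. I will argue that $I$ is a single point by contradiction: if $I$ were a non-degenerate arc, it would contain a dense set of periodic rational angles $\theta=p/q$ with $q$ odd; by the Douady--Hubbard landing theorem each such ray $R_\theta$ lands at a unique point, and $\Phi(\theta)$ reduces to this singleton landing point (via local connectivity of $K$ at its repelling/parabolic periodic landing points). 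The condition $x\in\Phi(\theta)$ would then force $x$ to be the common landing point of infinitely many periodic external rays, contradicting the finite ray-landing theorem at a fixed point. Hence $I=\{\theta_*\}$, and $I\subset 2I$ gives $2\theta_*\equiv\theta_*\pmod 1$, so $\theta_*=0$. Consequently $\tilde\delta$ converges to $1\in\partial\mathbb{D}$, which places $x$ in $\Phi(0)=\{\beta\}$ (the landing point of $R_0$), yielding $x=\beta$.

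The main obstacle I anticipate is the arc-exclusion in the previous step, which depends on two classical but non-trivial ingredients: the Douady--Hubbard landing theorem for rational angles in a connected Julia set, and the reduction of $\Phi(\theta)$ to a singleton at periodic $\theta$ through local connectivity of $K$ at repelling and parabolic periodic points. The Cremer case for $x$ itself requires no separate argument, since $\beta$ is never Cremer and the conclusion $x=\beta$ automatically excludes it.
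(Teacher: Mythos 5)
Your skeleton is the right one: pass to B\"ottcher coordinates, show $x$ is fixed by a backward orbit along $\delta$, form the boundary cluster set $I$ of $\tilde\delta$, prove $I\subset 2I$, rule out a non-degenerate $I$, and conclude $\theta_*=0$, $x=\beta$. The opening steps and the inclusion $I\subset 2I$ are correct. The gap is the step where you reduce the prime-end impression $\Phi(\theta)$ at a rational periodic $\theta$ to the singleton $\{y_\theta\}$, which you use both to exclude a non-degenerate arc and again at the very end for $\Phi(0)=\{\beta\}$. Your stated justification, ``local connectivity of $K$ at its repelling/parabolic periodic landing points,'' is not a citable theorem, and even if $K$ were locally connected at $y_\theta$, that pointwise fact does not by itself force the impression of the corresponding prime end to be a singleton: the implication from local connectivity to trivial impressions (Carath\'eodory's continuity theorem) is a global statement about $\partial K$. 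Triviality of rational impressions is true, but it rests on a linearization/Fatou-coordinate argument near $y_\theta$, which you neither carry out nor cite precisely; as written this is a hole, and the ``classical but non-trivial'' hedge does not close it.

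There is a clean repair that sidesteps impressions entirely. If $I$ is a non-degenerate arc, the connectedness of $I$ forces any continuous lift of $\arg\tilde\delta(t)$ to oscillate across an interval of length $|I|$ as $t\to1$, so for every $\theta$ in the interior of $I$ the curve $\tilde\delta$ actually crosses the radial segment at angle $\theta$ along a sequence $t_k\to1$ with $|\tilde\delta(t_k)|\to1$; then $\delta(t_k)\in R_\theta$ with potential tending to $0$, hence $\delta(t_k)\to y_\theta$ by the Douady--Hubbard landing theorem, and $x=y_\theta$. Since the landing point of a periodic ray is repelling or parabolic, and only finitely many rays land at such a point, a non-degenerate $I$ is impossible without any appeal to impressions. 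Thus $I=\{\theta_*\}$ with $2\theta_*\equiv\theta_*\ (\mathrm{mod}\ 1)$, so $\theta_*=0$. For the last step, once $\tilde\delta\to1$, use Lindel\"of's theorem for normal functions (univalent maps are normal): $\phi^{-1}$ has asymptotic value $\beta$ along the radial approach to $1$ and asymptotic value $x$ along $\tilde\delta$, so $x=\beta$. For reference, the paper does not prove this lemma itself but cites \cite{McM2}.
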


Here, a curve $\delta\subset \mathbb{C}\setminus K$ converges to
$x\in K$ means that $\delta$ can be parameterized as $\delta:
[0,1)\rightarrow \mathbb{C}\setminus K$ such that
$\lim_{t\rightarrow1}\delta(t)$ exists and
$\lim_{t\rightarrow1}\delta(t)=x\in K$. See \cite{McM2} for a proof
of Lemma \ref{5c}. The conclusion also holds for quadratic-like
maps.

\begin{lem}\label{5d}  Suppose some tableau $T(c)$ with $c\in C_\lambda$  is $k$-periodic and $K_c\cap
\partial B_\lambda\neq\emptyset$, then

1. The small filled Julia sets $K_c, f_\lambda(K_c), \cdots,
f^{k-1}_\lambda(K_c)$ are pairwise disjoint.

2. There is a unique external ray $R_\lambda(t)$ in $B_\lambda$
accumulating on $K_c$. This external ray lands at $\beta_c\in K_c$
and the angle $t$ is $k$-periodic.
\end{lem}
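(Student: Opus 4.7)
The plan is to handle Part 2 first via an angle-shrinking argument, and then to deduce Part 1 from the uniqueness it provides together with Lemma~\ref{5c}. I work throughout in the $k$-renormalizable case $(\epsilon,p)=(1,k)$; the $k/2$-$*$-renormalizable case (possible only for $n$ odd) is parallel because $f^k_\lambda=(-f^{k/2}_\lambda)^2$ also preserves $K_c$.

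For Part 2, I introduce, for each depth $d\geq d_0$, the closed set of angles
$$I_d=\overline{\{t\in\mathbb{S}\colon R_\lambda(t)\cap P_d(c)\neq\emptyset\}}\subset\mathbb{S}.$$
By Proposition~\ref{3e}, $\partial P_d(c)$ is a finite union of equipotential arcs and cut-ray arcs whose intersections with $\overline{B_\lambda}$ are pairs of external rays, so $I_d$ is a single closed sub-arc of $\mathbb{S}$, and the hypothesis $K_c\cap\partial B_\lambda\neq\emptyset$ forces $I_d\neq\emptyset$ for every $d$. The $k$-periodicity of $T(c)$ gives $f^k_\lambda(P_{d+k}(c))=P_d(c)$, which under the B\"ottcher parametrisation of $B_\lambda$ translates into $\tau^k(I_{d+k})=I_d$; once $|I_d|<n^{-k}$, this restriction is a homeomorphism expanding by $n^k$, so $|I_d|\to 0$ and $\bigcap_d I_d=\{t^*\}$. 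Passing to the limit in the same relation yields $\tau^k(t^*)=t^*$, so $t^*$ is rational with exact $\tau$-period $k$. By the Douady--Hubbard landing theorem, $R_\lambda(t^*)$ lands at a repelling periodic point $x\in\partial B_\lambda$, and $x\in\bigcap_d\overline{P_d(c)}=K_c$ since $\overline{R_\lambda(t^*)}$ meets every $P_d(c)$. Finally, the curve $R_\lambda(t^*)\subset B_\lambda$ is disjoint from $K_c$ and $f^k_\lambda$-invariant; transported by the straightening $\sigma$ of the renormalization it becomes a $p_\mu$-invariant curve in the complement of the filled Julia set of $p_\mu$ converging to $\sigma(x)$, so Lemma~\ref{5c} identifies $\sigma(x)$ with the $\beta$-fixed point of $p_\mu$, whence $x=\beta_c$. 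Uniqueness of the accumulating ray is exactly $\bigcap_d I_d=\{t^*\}$.

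For Part 1, I argue by contradiction. Suppose $z\in f^i_\lambda(K_c)\cap f^j_\lambda(K_c)$ for some $0\leq i<j<k$; applying $f^{k-i}_\lambda$ reduces to $i=0$. The minimality of the tableau period $k$ makes the puzzle pieces $P_d(c)$ and $P_d(f^j_\lambda(c))$ distinct and interior-disjoint for every $d\geq d_0$, while equality $K_c=f^j_\lambda(K_c)$ would force a tableau period dividing $j<k$; hence the interiors of $K_c$ and $f^j_\lambda(K_c)$ are disjoint and $z\in\partial K_c\cap\partial f^j_\lambda(K_c)\subset J(f_\lambda)$ must lie on the common cut-ray arcs bounding both pieces at every depth. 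Running the Part 2 argument with $P_d(c)$ replaced by $P_d(f^j_\lambda(c))$ produces a unique $\tau$-periodic ray $R_\lambda(\tau^j(t^*))$ landing at $f^j_\lambda(\beta_c)\in\partial B_\lambda\cap f^j_\lambda(K_c)$. The crux is then to construct a forward-invariant curve in $\mathbb{C}\setminus(K_c\cup f^j_\lambda(K_c))$ converging to $z$, built from the shared cut-ray arcs of $\partial P_d(c)\cap\partial P_d(f^j_\lambda(c))$; applying Lemma~\ref{5c} in both straightened pictures simultaneously forces $z\in\partial B_\lambda$, and the uniqueness of Part 2 on both sides then identifies $z$ with $\beta_c$ and with $f^j_\lambda(\beta_c)$. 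The resulting coincidence $\beta_c=f^j_\lambda(\beta_c)$ yields two distinct $f^k_\lambda$-invariant rays $R_\lambda(t^*)$ and $R_\lambda(\tau^j(t^*))$ landing at the same point, whose straightenings give two distinct $p_\mu$-invariant curves in the exterior of $K(p_\mu)$ accumulating at $\beta_{p_\mu}$; since in the quadratic model only one ray accesses $\beta_{p_\mu}$, this produces the desired contradiction.

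The hardest step will be the accessibility claim in Part 1: proving that a shared point $z\in K_c\cap f^j_\lambda(K_c)$ must lie on $\partial B_\lambda$, so that the uniqueness from Part 2 gets traction on both sides. Without the hypothesis $K_c\cap\partial B_\lambda\neq\emptyset$, the two iterates could touch at a satellite-type $\alpha$-fixed point of the renormalization that is invisible from $B_\lambda$, and the entire argument collapses. Constructing the required invariant access curve from the shared cut-ray arcs in $\partial P_d(c)\cap\partial P_d(f^j_\lambda(c))$, and threading it through both straightenings to apply Lemma~\ref{5c} on both sides, is where the full interplay between the puzzle structure and the hypothesis is needed.
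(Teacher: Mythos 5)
Your Part~2 is essentially the paper's argument: a nested-angle shrinking via the B\"ottcher parametrisation, fixed-point of $\tau^k$, rational-ray landing, and identification of the landing point with $\beta_c$ via Lemma~\ref{5c}. (Two small remarks: you assert that $t^*$ has \emph{exact} $\tau$-period $k$, which at that stage you have not shown --- the paper derives exactness only after Part~1, from the pairwise disjointness of $K_c,\dots,f^{k-1}_\lambda(K_c)$; and in the $*$-renormalizable case what one actually needs is $-f^{k/2}_\lambda(R_\lambda(t))=R_\lambda(t)$, which follows from uniqueness of the accessing ray, not merely from $f^k_\lambda$ preserving $K_c$.)

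Part~1, however, contains a genuine gap arising from a misconception. You allow a shared point $z\in K_c\cap f^j_\lambda(K_c)$ to lie on $\partial K_c\cap\partial f^j_\lambda(K_c)$ ``on the common cut-ray arcs bounding both pieces at every depth,'' and then set up an elaborate (and admittedly unfinished) construction of forward-invariant access curves to force $z$ onto $\partial B_\lambda$. None of this is necessary, and the scenario you are guarding against cannot occur. By admissibility the filled Julia set $K_c$ is disjoint from the graph and its preimages, so $K_c=\bigcap_d P_d(c)=\bigcap_d\overline{P_d(c)}$ is contained in the \emph{open} puzzle piece $P_d(c)$ at every depth, and likewise $f^j_\lambda(K_c)\subset P_{d,j}(c)=P_d(f^j_\lambda(c))$. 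Thus $z\in K_c\cap f^j_\lambda(K_c)$ immediately puts $z$ in the intersection of two open puzzle pieces of the same depth; since puzzle pieces of equal depth are either identical or disjoint, this forces $P_d(c)=P_{d,j}(c)$ for all $d$, i.e.\ $T(c)$ is $j$-periodic with $0<j<k$, contradicting minimality. That is the entire proof, and it does \emph{not} use $K_c\cap\partial B_\lambda\neq\emptyset$; your concern that without this hypothesis the small Julia sets could touch at a satellite-type $\alpha$-point is misplaced in this puzzle framework, precisely because $K_c$ never reaches $\partial P_d(c)$. Your proposed construction of an invariant curve in $\mathbb{C}\setminus(K_c\cup f^j_\lambda(K_c))$ built from shared cut-ray arcs rests on a premise that is vacuous, and is in any case not carried out, so Part~1 as written does not constitute a proof.
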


\begin{proof}
1. \  If $f_\lambda^i(K_c)\cap f_\lambda^j(K_c)\neq \emptyset$ for
some $0\leq i<j<k$, then $K_c\cap f_\lambda^{k+i-j}(K_c)\neq
\emptyset$. Thus, $P_{d,
k+i-j}(c)=f_\lambda^{k+i-j}(P_{d+k+i-j}(c))=P_d(c)$ for all
$d\geq0$. This implies that the tableau $T(c)$ is
$(k+i-j)$-periodic, which is a contradiction.

 2. \ First, note that $f_\lambda^k(P_{d+k}(c))=P_d(c)$ for
$d\geq0$. Because $K_c\cap \partial B_\lambda\neq\emptyset$,
$P_{mk}(c)\cap B_\lambda$ is nonempty and bounded by two external
rays, say $R_\lambda(\theta_m^-)$ and $R_\lambda(\theta_m^+)$ with
$\theta_m^-<\theta_m^+$.  Let
$Q(\theta_m^-,\theta_m^+)=\overline{P_{mk}(c)\cap B_\lambda},\
m\geq1$. Because
$f_\lambda^k(Q(\theta_{m+1}^-,\theta_{m+1}^+))=Q(\theta_m^-,\theta_m^+)$,
we have
$$\theta_m^-\leq\theta_{m+1}^-\leq\cdots\leq\theta_{m+1}^+\leq\theta_{m}^+,\ \
\theta_m^+-\theta_m^-=n^k(\theta_{m+1}^+-\theta_{m+1}^-).$$ Thus,
there is a common limit $t=\lim \theta_m^+=\lim \theta_m^-$. Because
$\theta_m^-\leq t\leq\theta_m^+$ for any $m$, we have $n^kt\equiv t$
(${\rm mod }\  \mathbb{Z}$). Thus, $t$ is a periodic angle and the
external ray $R_\lambda(t)$ lands at a point $z\in K_c\cap
\partial B_\lambda$ (because rational external rays always land). Because $R_\lambda(n^jt)$ lands at $f_\lambda^j(z)\in
f_\lambda^j(K_c)\cap\partial B_\lambda$ for $0\leq j<k$ and the
small filled Julia sets $K_c, f_\lambda(K_c), \cdots,
f^{k-1}_\lambda(K_c)$ are pairwise disjoint, we conclude that the
angles $t, nt,\cdots, n^{k-1}t$ are distinct. Thus, $t$ is
$k$-periodic.

Suppose $\theta$ is another angle such that the external ray
$R_\lambda(\theta)$ accumulates on $K_c$. Then,
$\theta_m^-\leq\theta\leq\theta_m^+$ for any $m$. Thus, $\theta=\lim
\theta_m^+=\lim \theta_m^-=t$.

To finish, we show $z=\beta_c$. Because $T(c)$ is $k$-periodic,
$f_\lambda$ is either $k$-renormalizable or
$k/2$-$*$-renormalizable. In the former case,
$f_\lambda^k(R_\lambda(t))=R_\lambda(t)$. Thus, based on Lemma
\ref{5c}, $z=\beta_c$. In the latter case, because $R_\lambda(t)$ is
the unique external ray accumulating on $K_c$, we conclude that
$R_\lambda(t+1/2)=-R_\lambda(t)$ is the unique external ray
accumulating on $-K_c$. On the other hand,
$f_\lambda^{k/2}(R_\lambda(t))$ is also an external ray accumulating
on $-K_c$, and we have
$f_\lambda^{k/2}(R_\lambda(t))=R_\lambda(t+1/2)=-R_\lambda(t)$. In
this case, $-f_\lambda^{k/2}(R_\lambda(t))=R_\lambda(t)$. Again,
based on Lemma \ref{5c}, $z=\beta_c$.
\end{proof}

\section{ A Criterion of Local Connectivity}

In this section, we present a criterion for the characterization of
the local connectivity of the immediate basin of attraction. This
criterion can be applied together with Yoccoz puzzle techniques to
study the local connectivity and higher regularity of the boundary
$\partial B_\lambda$.

In the following discussion, let $f$ be a rational map of degree at
least two, $C(f)$ be the critical set of $f$ and
$P(f)=\overline{\bigcup_{k\geq1}f^k(C(f))}$ be the postcritical set.
Suppose that $f$ has an attracting periodic point $z_0$ and the
immediate basin $B$ of $z_0$ is simply connected. Let
$B(z,\delta)=\{x\in \mathbb{C}; |x-z|<\delta\}$.
\begin{defi}
We say $f$ satisfies the {\bf{BD}}(bounded degree) condition on
$\partial B$ if for any $u\in \partial B$ there is a number
$\varepsilon_u>0$ such that for any integer $m\geq0$ and any
component $U_m(u)$ of $f^{-m}(B(u,\varepsilon_u))$ intersecting with
$\partial B$, $U_m(u)$ is simply connected and the degree ${\rm
deg}(f^m: U_m(u)\rightarrow B(u,\varepsilon_u))$ is bounded by some
constant $D$ that is independent of $u, m$ and $U_m(u)$.
\end{defi}

The following is a remark on the definition: because $f^m:
U_m(u)\rightarrow B(u,\varepsilon_u)$ is a proper map between two
disks, we conclude by the Maximum Principle that for any disk
$W\subset B(u,\varepsilon_u)$ and any component $V$ of $f^{-m}(W)$
that lies inside $U_m(u)$, $V$ is also a disk.

The aim of this section is to prove the following:

\begin{pro}\label{6a}  If $f$ satisfies the {\bf{BD}} condition on
$\partial B$, then

1. $\partial B$ is locally connected.

2. If, furthermore, $\partial B$ is a Jordan curve, then $\partial
B$ is a quasi-circle.
\end{pro}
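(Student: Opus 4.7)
The plan is to first establish a uniform \emph{shrinking lemma} for pullback disks meeting $\partial B$, and then to leverage it for local connectivity (Part 1) and the quasi-circle property (Part 2). The shrinking lemma asserts: for every $u\in\partial B$ and every $\eta>0$, there exists $N$ such that every component $U_m$ of $f^{-m}(B(u,\varepsilon_u))$ meeting $\partial B$, with $m\geq N$, satisfies $\mathrm{diam}(U_m)<\eta$. I would prove it by contradiction: if a subsequence $U_{m_k}$ has $\mathrm{diam}(U_{m_k})\geq\delta$, pick $x_k\in U_{m_k}\cap\partial B$ with $x_k\to x^*\in\partial B$; the uniform degree bound $D$ together with a Gr\"otzsch modulus estimate comparing $U_{m_k}$ with its subpiece covering $B(u,\varepsilon_u/2)$, and Koebe's distortion theorem applied to the Riemann uniformization $\mathbb{D}\to U_{m_k}$, yield round disks $B(x_k,\rho)\subset U_{m_k}$ with a uniform $\rho>0$. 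For $k$ large, $B(x^*,\rho/2)\subset U_{m_k}$, so $\{f^{m_k}\}$ restricted to $B(x^*,\rho/2)$ takes values in the bounded disk $B(u,\varepsilon_u)$, hence forms a normal family at $x^*$ by Montel. Extracting a locally uniformly convergent subsequence of $\{f^n\}$ at $x^*$ then forces $x^*\in F(f)$, contradicting $x^*\in\partial B\subset J(f)$.

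For Part (1), given $u\in\partial B$, compactness yields a subsequence $n_k\to\infty$ with $f^{n_k}(u)\to u^*\in\partial B$ and eventually $f^{n_k}(u)\in B(u^*,\varepsilon_{u^*}/2)$. Let $V_k$ be the component of $f^{-n_k}(B(u^*,\varepsilon_{u^*}))$ containing $u$. By the BD condition at $u^*$, $V_k$ is a simply connected disk with $\deg(f^{n_k}\colon V_k\to B(u^*,\varepsilon_{u^*}))\leq D$, and by the shrinking lemma applied at $u^*$, $\mathrm{diam}(V_k)\to 0$. This produces at $u$ a neighborhood base of simply connected open disks $V_k$ in $\mathbb{C}$. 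Since $f^{n_k}\colon V_k\cap B\to B(u^*,\varepsilon_{u^*})\cap B$ is proper of bounded degree and $V_k$ is simply connected, a topological analysis shows that the component of $V_k\cap\partial B$ containing $u$ is a connected neighborhood of $u$ in $\partial B$, yielding local connectivity.

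For Part (2), once $\partial B$ is a locally connected Jordan curve, I would establish that it is a quasi-circle via Ahlfors's three-point characterization: there exists $C>0$ such that for any $z_1,z_3\in\partial B$ and any $z_2$ on the shorter sub-arc $\gamma$ of $\partial B$ between them, $|z_1-z_2|\leq C|z_1-z_3|$. Given such a configuration, let $n$ be the smallest integer with $\mathrm{diam}(f^n(\gamma))\geq\eta_0$ for a fixed threshold $\eta_0$ small enough relative to $\inf_u\varepsilon_u$; the shrinking lemma guarantees $n<\infty$. The BD condition along the corresponding pullback disk, combined with Koebe distortion, shows that the bounded-degree inverse branch of $f^n$ sending the disk around $f^n(\gamma)$ back to the disk around $\gamma$ is $K$-quasisymmetric with $K=K(D)$ universal. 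At scale $\eta_0$ the three-point inequality holds by a geometric compactness argument, and quasi-symmetry transfers it back to the original scale.

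The most delicate step is the topological accounting in Part (1): a priori, $V_k\cap\partial B$ could shatter into many small pieces, and one must show that the component containing $u$ really is a neighborhood of $u$ in $\partial B$. This requires combining the simple connectivity of $V_k$ (guaranteed by BD) with a careful analysis of accesses from $B$ to $\partial B$ under the proper map $f^{n_k}\colon V_k\cap B\to B(u^*,\varepsilon_{u^*})\cap B$, using the comparatively coarse topology of $B(u^*,\varepsilon_{u^*})\cap B$ near $u^*$ at the target scale. For Part (2), the uniformity of $K$ is the crucial point, and it rests essentially on the fact that the constant $D$ in the BD condition is independent of the point $u$.
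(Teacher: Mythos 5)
Your shrinking lemma (Step 1 of the paper) is in the right spirit, though you prove only the pointwise version at a fixed $u$ and conclude via Montel normality, whereas the paper proves a uniform version $\lim_m\sup_{z\in\partial B}\mathrm{diam}(V_m(z))=0$ directly, using the Shape--distortion Lemma \ref{6b} to extract a definite round disk inside $V_m$ and then contradicting $f^m(B(y_\infty,r_0))\supset J(f)$. Either route establishes shrinking, so the differences there are cosmetic.

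The genuine gap is exactly where you flag it: Part (1). Shrinking simply connected disks $V_k\ni u$ do not by themselves give local connectivity of $\partial B$ at $u$, because $V_k\cap\partial B$ may well be disconnected, and the component through $u$ need not be a $\partial B$-neighborhood of $u$. This is not a technicality one can wave at---before local connectivity is proved, the boundary may look like a comb, and then small disks around the spine intersect $\partial B$ in infinitely many pieces. Your sketch of a ``careful analysis of accesses from $B$'' is not an argument, and I do not see how to complete it in the BD framework alone. The paper sidesteps the issue entirely by never trying to produce connected neighborhoods: it linearizes (surgery to make the fixed point superattracting), takes the B\"ottcher coordinate $\phi:B\to\overline{\mathbb C}\setminus\overline{\mathbb D}$, defines the equipotential curves $\gamma_k(t)=\phi^{-1}(R^{1/d^k}e^{2\pi i t})$, and shows that $\{\gamma_k\}$ converges \emph{uniformly} to a continuous surjection $\gamma_\infty:\mathbb S\to\partial B$. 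Local connectivity is then automatic, since $\partial B$ is a continuous image of the circle. To run that argument, the paper needs its Step 2: an \emph{exponential} decay $\mathrm{diam}(V_k(z))\le L\nu^k$, obtained by chaining Gr\"otzsch modulus estimates through the tower $U_k\subset\cdots\subset U_0$ and using the degree bound $D$ to control loss of modulus at each stage. Your pointwise shrinking is not summable, so even if you wanted to switch to the paper's route you would still be missing this quantitative step: uniform $\mathrm{diam}(V_k)\to 0$ gives $|\gamma_{k+1}-\gamma_k|\to 0$ uniformly, but that alone does not make the sequence Cauchy.

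For Part (2) your plan is morally the same as the paper's Step 4: pull an arc $L(z_1,z_2)$ forward until it first reaches a definite scale $\delta/2$, and transfer a three-point bound back down by a bounded-degree inverse branch. The paper packages the transfer as the Turning distortion in Lemma \ref{6b}(2), together with an elementary upper bound on the expansion (to bound $\mathrm{diam}(f^\ell(L))$ from below as well as above) and the uniform continuity of the B\"ottcher boundary extension (to convert $\mathrm{diam}(f^\ell(L))\ge\delta/2M$ into $|f^\ell(z_1)-f^\ell(z_2)|\ge c$). Your appeal to a universal quasisymmetry constant $K(D)$ is plausible but would need essentially the same modulus/Koebe machinery to be made precise; as written it is a sketch rather than a proof.
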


 Before presenting the proof, we introduce a distortion lemma. Let $U$ be a hyperbolic disk in $\mathbb{C}$ and $z\in U$. The shape of $U$ about $z$ is defined by:
$${\rm Shape}(U,z)=\sup_{x\in \partial U}|x-z|/\inf_{x\in \partial U}|x-z|.$$

It is obvious that ${\rm Shape} (U,z)=\infty$ if and only if $U$ is
unbounded and ${\rm Shape} (U,z)=1$ if and only if $U$ is a round
disk centered at $z$. In all other cases, $1<{\rm Shape}
(U,z)<\infty.$

Let $K$ be a connected and compact subset of $U$ containing at least
two points. For any $z_1,z_2\in K$, define the turning of $K$ about
$z_1$ and $z_2$ by:
$$\Delta(K;z_1,z_2)={\rm {diam}}(K)/|z_1-z_2|,$$
where ${\rm {diam}}(\cdot)$ is the Euclidean diameter. It is obvious
that $1\leq\Delta(K;z_1,z_2)\leq\infty$ and
$\Delta(K;z_1,z_2)=\infty$ if and only if $z_1=z_2$.

\begin{lem}\label{6b} For $i\in \{1,2\}$, let $(V_i, U_i)$ be a pair of hyperbolic disks in $\mathbb{C}$ with $\overline{U_i}\subset V_i$.  $g:V_1\rightarrow V_2$ is a proper holomorphic map of degree $d$, and $U_1$ is a component of $g^{-1}(U_2)$. Suppose ${\rm mod}(V_2\setminus
\overline{U_2})\geq m>0$. Then,

1. (Shape distortion) There is a constant $C(d,m)>0$ such that for
all $z\in U_1$,
$${\rm Shape} (U_1,z)\leq C(d,m){\rm Shape} (U_2,g(z)).$$

2. (Turning distortion) There is a constant $D(d,m)>0$ such that for
any connected and compact subset $K$ of $U_1$ with $\#K\geq2$ and
any $z_1,z_2\in K$,
$$\Delta(K;z_1,z_2)\leq D(d,m)\Delta(g(K);g(z_1),g(z_2)).$$
\end{lem}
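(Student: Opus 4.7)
The strategy is to obtain a Koebe space around $U_1$ via modulus pullback, uniformize both Koebe spaces to the unit disk, and deduce the two distortion estimates via Koebe's theorem together with a local analysis of the induced map at critical points.

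\emph{Extracting a Koebe space.} Since $g:V_1\to V_2$ has degree $d$ and $\mathrm{mod}(V_2\setminus\overline{U_2})\geq m$, the extremal length inequality for holomorphic maps (pulling back the curve family separating $\overline{U_2}$ from $\partial V_2$) yields an embedded annulus $A_1\subset V_1\setminus\overline{U_1}$ separating $\overline{U_1}$ from $\partial V_1$ with $\mathrm{mod}(A_1)\geq m/d$. Let $W_1$ be the topological disk bounded by the outer boundary of $A_1$; then $\overline{U_1}\subset W_1\subset V_1$, $\mathrm{mod}(W_1\setminus\overline{U_1})\geq m/d$, and $g(W_1)\subset V_2=:W_2$.

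\emph{Uniformization and Koebe control.} Fix $z\in U_1$. Let $\phi_i:\mathbb{D}\to W_i$ be Riemann maps normalized by $\phi_1(0)=z$ and $\phi_2(0)=g(z)$, and set $\tilde g:=\phi_2^{-1}\circ g\circ\phi_1:\mathbb{D}\to\mathbb{D}$, a holomorphic self-map fixing $0$. Writing $U_i':=\phi_i^{-1}(U_i)$, conformal invariance of modulus gives $\mathrm{mod}(\mathbb{D}\setminus\overline{U_i'})\geq m/d$; combined with $0\in U_i'$, the Gr\"otzsch extremal inequality forces $\overline{U_i'}\subset\overline{B(0,r)}$ for some $r=r(d,m)<1$. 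On $B(0,r)$, Koebe's distortion theorem applied to $\phi_i$ yields two-sided bounds on $|\phi_i'|$ in terms of $r$ alone, so $\phi_i$ and $\phi_i^{-1}$ distort Euclidean distances, diameters, and shapes on the relevant sets by factors depending only on $(d,m)$. Moreover, the restriction $\tilde g|_{U_1'}:U_1'\to U_2'$ is proper holomorphic of degree $d_1\leq d$, with local expansion $\tilde g(w)=\alpha w^k+O(w^{k+1})$ near $0$ for some $1\leq k\leq d_1$.

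\emph{Shape distortion.} By the bounded distortion of $\phi_1,\phi_2$, part~(1) reduces to showing $\mathrm{Shape}(U_1',0)\leq C_1(d)\,\mathrm{Shape}(U_2',0)$. Since $\tilde g|_{U_1'}:U_1'\to U_2'$ is a proper degree-$k$ map onto $U_2'$ ramified only at $0$, an explicit computation with the model $w\mapsto w^k$ gives that the preimage under this monomial of a set of shape $s$ about $0$ has shape $s^{1/k}\leq s$; the univalent factor of the factorization of $\tilde g|_{U_1'}$ is controlled by Koebe on $B(0,r)$, and absorbing all constants yields the required bound.

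\emph{Turning distortion and the main obstacle.} The same reduction reduces part~(2) to establishing $\Delta(K';w_1,w_2)\leq D'(d,m)\,\Delta(\tilde g(K');\tilde g(w_1),\tilde g(w_2))$ for connected compact $K'\subset U_1'$ and $w_1,w_2\in K'$. The principal difficulty is that $\tilde g$ may have critical points inside $K'$, so the ratio $|w_1-w_2|/|\tilde g(w_1)-\tilde g(w_2)|$ cannot be controlled by a naive derivative estimate at $w_1$ or $w_2$. The proposed resolution is to exploit that $\tilde g|_{U_1'}$ has at most $d-1$ critical points, and that near each such critical point it conjugates (up to a univalent Koebe-controlled change of coordinates) to a monomial $w\mapsto w^k$ with $k\leq d$, for which an elementary calculation shows that both $\mathrm{diam}(K')/\mathrm{diam}(\tilde g(K'))$ and $|w_1-w_2|/|\tilde g(w_1)-\tilde g(w_2)|$ are comparable (up to a factor of $k$) to the same power of a common quantity, so their ratio is bounded by $k\leq d$. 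Combining the local estimates at each critical point with Koebe-bounded distortion of $\tilde g$ on the (finitely many) components of the complement of the critical set in $\overline{U_1'}$ produces the uniform constant $D(d,m)$.
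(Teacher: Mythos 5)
The paper offers no proof of part~1 (shape distortion), simply citing \cite{Wa}, so there is nothing to compare there in detail; I only note that your claim that ``$\tilde g|_{U_1'}:U_1'\to U_2'$ is a proper degree-$k$ map onto $U_2'$ ramified only at $0$'' is not generally correct --- the degree of $\tilde g|_{U_1'}$ can exceed the local degree $k$ at $0$, and other ramification points of $\tilde g$ may lie in $U_1'$ --- so that sentence would at least need repair.

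The genuine gap is in part~2 (turning distortion). You correctly identify the obstacle --- critical points of $\tilde g$ inside or near $K'$ --- but the proposed resolution does not work. You invoke ``Koebe-bounded distortion of $\tilde g$ on the (finitely many) components of the complement of the critical set in $\overline{U_1'}$''; however, $\tilde g$ is not univalent on those components. It is only \emph{locally} univalent there and can still have arbitrary degree (for instance $w\mapsto w^2$ identifies $w$ and $-w$ at every point away from the origin), so the Koebe distortion theorem simply does not apply. In addition, the monomial normal form at a critical point only controls $\tilde g$ on a neighborhood whose size is not uniformly bounded below in terms of $(d,m)$, and no mechanism is given for patching the local estimates at critical points together into a single estimate along a connected compact set $K'$ that may thread close to several critical points; without such a gluing the constant degrades as $K'$ approaches the critical set. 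The paper's proof of part~2 sidesteps critical points entirely. It works in the target $V_2$: it takes two hyperbolic disks $B_1\supset B_2$ centered at $g(z_1)$, with $B_1$ the smallest hyperbolic disk containing $g(K)$ and $\partial B_2$ passing through $g(z_2)$; after uniformizing $V_2$ these become round disks $D_R\supset D_r$, and Koebe applied to the Riemann map (which \emph{is} univalent) bounds their shapes and yields $R/r\lesssim\Delta(g(K);g(z_1),g(z_2))$. Pulling $B_1,B_2$ back to the components $W_1\supset W_2$ of their preimages containing $z_1$, one has $\mathrm{diam}(K)\lesssim\sup_{\zeta\in\partial W_1}|\zeta-z_1|$ and $|z_1-z_2|\gtrsim\inf_{\zeta\in\partial W_2}|\zeta-z_1|$, so $\Delta(K;z_1,z_2)$ is controlled by the shapes of $W_1,W_2$ (bounded by part~1) together with $Q(W_1,W_2,z_1)=\inf_{\partial W_1}|\cdot-z_1|/\sup_{\partial W_2}|\cdot-z_1|$, and finally $\tfrac{1}{2\pi}\log Q\le\mathrm{mod}(W_1\setminus\overline{W}_2)\le\mathrm{mod}(B_1\setminus\overline{B}_2)=\tfrac{1}{2\pi}\log(R/r)$. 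It is this modulus comparison, applied to a pullback of \emph{round} disks, that replaces all the local analysis near critical points; that is the ingredient your argument is missing, and I do not see how to make the ``local monomial plus Koebe on the complement'' combination rigorous.
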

\begin{proof}
A complete proof of 1 can be found in \cite{Wa}, Theorem 2.3.2. In
the following, we prove 2. We assume that $g(z_1)\neq g(z_2)$.
Otherwise, $\Delta(g(K);g(z_1),g(z_2))=\infty$, and the conclusion
follows. Let $\rho (x,y)$ be the hyperbolic distance in $V_2$, and
let $B_1,B_2$ be two hyperbolic disks both centered at $g(z_1)$,
with radii $\max_{\zeta\in g(K)}\rho(g(z_1),\zeta)$ and
$\rho(g(z_1),g(z_2))$, respectively. Let $\varphi: V_2\rightarrow D$
be the Riemann mapping with $\varphi(g(z_1))=0$, and let
$W=\varphi(U_2)$. Because ${\rm mod}(\mathbb{D}\setminus
\overline{W})={\rm mod}(V_2\setminus \overline{U}_2)\geq m$, we
conclude by the Gr\"otzsch Theorem that there is a constant $r(m)\in
(0,1)$ such that $W\subset D_{r(m)}$; here, we use $D_r$ to denote
the disk $\{z; |z|<r\}$.

Note that $\varphi(B_1),\varphi(B_2)$ are two round disks, say $D_R$
and $D_r$, centered at $0$. Based on Koebe distortion, there exist
three constants $C_1(m),C_2(m),C_3(m)>0$ such that
$${\rm Shape}(B_1,g(z_1))\leq C_1(m), \ {\rm Shape}(B_2,g(z_1))\leq C_2(m),$$
$$R/r\leq C_3(m)\max_{\zeta\in g(K)\cap\partial B_1}|g(z_1)-\zeta|/|g(z_1)-g(z_2)|\leq C_3(m)\Delta(g(K);g(z_1),g(z_2)).$$

For $i\in \{1,2\}$, let $W_i$ be the component of $g^{-1}(B_i)$ that
contains $z_1$. Based on the Maximum Principle, $W_1$ and $W_2$ are
simply connected. We may assume that $K\subset \overline{W}_1$
(otherwise, we can replace $B_1$ by $\widehat{B}_1$, a hyperbolic
disk centered at $g(z_1)$ with radius $\epsilon+\max_{\zeta\in
g(K)}\rho(g(z_1),\zeta)$, where $\epsilon$ is a small positive
constant and then let $\epsilon\rightarrow 0^+$). Thus, ${\rm
{diam}}(K)\leq {\rm {diam}}(W_1)\leq 2\sup_{\zeta\in
\partial W_1}|\zeta-z_1|$. Consider the location of $z_2$, which by the Maximum Principle is either $z_2\in \partial W_2$ or $z_2\in U_1\setminus
\overline{W}_2$. In either case, $|z_1-z_2|\geq \inf_{\zeta\in
\partial W_2}|\zeta-z_1|$. Thus, by Shape distortion,
\bess \Delta(K;z_1,z_2)&\leq& 2\sup_{\zeta\in
\partial W_1}|\zeta-z_1|/\inf_{\zeta\in
\partial W_2}|\zeta-z_1|\\
&=& 2{\rm Shape}(W_1,z_1) {\rm Shape}(W_2,z_1) Q(W_1,W_2,z_1)\\
&\leq& C_1(d,m){\rm Shape}(B_1,g(z_1)) {\rm Shape}(B_2,g(z_1))
Q(W_1,W_2,z_1)\\
&\leq& C_2(d,m)Q(W_1,W_2,z_1)\eess where
$Q(W_1,W_2,z_1)=\inf_{\zeta\in
\partial W_1}|\zeta-z_1|/\sup_{\zeta\in \partial W_2}|\zeta-z_1|$. To finish, in the following we show that there is a constant $c(m)>0$
such that $$Q(W_1,W_2,z_1)\leq c(m)\Delta(g(K);g(z_1),g(z_2)).$$

In fact, we only need to consider the case $Q(W_1,W_2,z_1)>1$. In
this case, the annulus $W_1\setminus \overline{W}_2$ contains the
round annulus $\{w\in \mathbb{C}; \sup_{\zeta\in
\partial W_2}|\zeta-z_1|<|w-z_1|<\inf_{\zeta\in \partial W_1}|\zeta-z_1|\}$. It turns out that
\bess\frac{1}{2\pi}\log Q(W_1,W_2,z_1)&\leq& {\rm mod}(W_1\setminus
\overline{W}_2) \leq {\rm mod}(B_1\setminus
\overline{B}_2)=\frac{1}{2\pi}\log\frac{R}{r}\\
&\leq&\frac{1}{2\pi}\log\big(C_3(m)\Delta(g(K);g(z_1),g(z_2))\big)\eess
The conclusion follows.
\end{proof}

\noindent\textit{Proof of Proposition \ref{6a}.}
By replacing $f$ with $f^k$, we assume $z_0$ is a fixed point of
$f$. Based on quasi-conformal surgery, we assume $z_0$ is a
superattracting fixed point with local degree $d={\rm
deg}(f:B\rightarrow B)\geq2$. Thus, $B$ contains no critical points
other than $z_0$. By M\"obius conjugation, we assume $z_0=\infty$.

  Because $f$ satisfies the {\bf{BD}} condition on $\partial B$, there exists a constant $\delta>0$ such that for any $u\in\partial B$, any integer $m\geq0$ and any component $U_m(u)$ of $f^{-m}(B(u,\delta))$ that intersects with $\partial B$, $U_m(u)$ is simply connected and ${\rm deg}(f^m: U_m(u)\rightarrow B(u,\delta))\leq D$. In fact, we can choose $\delta$ as the Lebesgue number of the family $\mathcal{F}=\{B(u,\varepsilon_u); u\in
\partial B\}$, which is an open covering of the boundary $\partial B$.

  The proof consists of four steps, as follows:

\textbf{Step 1.} {\it Let $V_m(z)$ be the component of
$f^{-m}(B(z,\delta/2))$ contained in $U_m(z)$ and intersecting with
$\partial B$, then
$$\lim _{m\rightarrow\infty}\sup_{z\in\partial B}{\rm
{diam}}(V_m(z))=0.$$ }

Otherwise, there is a constant $d_0\geq0$ and two sequences
$\{z_k\}\subset \partial B$ and $\{\ell_k\}$ such that ${\rm
{diam}}(V_{\ell_k}(z_k))\geq d_0$. For every $k\geq1$, choose a
point $y_k\in f^{-\ell_k}(z_k)\cap V_{\ell_k}(z_k)$. By passing to a
subsequence, we assume $y_k\rightarrow y_\infty\in \partial B$ and
$z_k\rightarrow z_\infty\in \partial B$. Based on Lemma \ref{6b},
there is a constant $C(D)$ such that
$${\rm Shape} (V_{\ell_k}(z_k),y_k)\leq C(D){\rm Shape} (B(z_k,\delta/2),z_k)=C(D).$$

Because ${\rm {diam}}(V_{\ell_k}(z_k))\geq d_0$, $V_{\ell_k}(z_k)$
contains a round disk of definite size centered at $y_k$. Therefore,
there is a constant $r_0=r_0(d_0,D)$ such that
$V_{\ell_k}(z_k)\supset B(y_\infty, r_0)$ for large $k$. Therefore,
$f^{\ell_k}(B(y_\infty,r_0))\subset B(z_k,\delta/2)\subset
B(z_\infty,\delta)$. But, this contradicts the fact that
$f^{\ell_k}(B(y_\infty,r_0))\supset J(f)$ when $k$ is large.

\textbf{Step 2.} {\it There are two constants $L>0$ and $\nu\in
(0,1)$ such that for any $z\in \partial B$ and any $k\geq 1$, ${\rm
{diam}}(V_k(z))\leq L\nu^k $.}

By step 1, there is an integer $s>0$ such that ${\rm
{diam}}(V_s(z))<\delta/4$ for all $z\in \partial B$. For each $x\in
\partial B$, we take a point $x_{kx}\in V_{ks}(x)\cap f^{-ks}(x)$.
(Notice that, in general, $V_{ks}(x)\cap f^{-ks}(x)$ consists of
finitely many points, $x_{ks}$ can be either of them). For $0\leq
j\leq k$, let $x_{js}=f^{(k-j)s}(x_{ks})$ and $U_j$ be the component
of $f^{-js}(B(x_{(k-j)s},\delta/2))$ containing $x_{ks}$. Then,
$$x_{ks}\in  V_{ks}(x)=U_k\subset \cdots\subset U_0=B(x_{ks},\delta/2).$$

For every $1\leq j<k$, $f^{js}:U_j\rightarrow
B(x_{(k-j)s},\delta/2)$ is a proper map of degree $\leq D$. Because
$f^{js}(U_{j+1})$ is contained in $B(x_{(k-j)s},\delta/4)$,
$${\rm mod}(U_j\setminus \overline{U_{j+1}})\geq \frac{1}{D}
{\rm mod}(B(x_{(k-j)s},\delta/2)\setminus
\overline{f^{js}(U_{j+1})})\geq \frac{\log 2}{2\pi D},$$
$${\rm mod}(B(x_{ks},\delta/2)\setminus
\overline{V_{ks}(x)})\geq \sum_{0\leq j<k}{\rm mod}(U_j\setminus
\overline{U_{j+1}})\geq \frac{k\log 2}{2\pi D}.$$

We know from the proof of Step 1 that ${\rm Shape}
(V_{ks}(x),x_{ks})\leq C(D)$. Therefore, there is a constant
$K(D)>0$ such that $\min_{y\in \partial V_{ks}(x)}|x_{ks}-y|\geq
K(D){\rm diam}(V_{ks}(x))$. We have

$${\rm mod}(B(x_{ks},\delta/2)\setminus
\overline{V_{ks}(x)})
\leq \frac{1}{2\pi}\log \Big(\frac{\delta}{2K(D){\rm
diam}(V_{ks}(x))}\Big).$$

It turns out that ${\rm {diam}}(V_{ks}(x))\leq
\frac{\delta}{2K(D)}2^{-k/D}$, which implies that there are two
constants $L>0$ and $\nu\in (0,1)$ such that ${\rm
{diam}}(V_{k}(x))\leq L\nu^k$ for all $k\geq1$.

\textbf{Step 3.} {\it There exists a sequence of Jordan curves
$\{\gamma_k: \mathbb{S}\rightarrow B\}$ such that $\gamma_k$
converges uniformly to a continuous and surjective map
$\gamma_\infty: \mathbb{S}\rightarrow \partial B$, where
$\mathbb{S}=\mathbb{R}/\mathbb{Z}$ is the unit circle. Hence,
$\partial B$ is locally connected.}

Recall that the B\"ottcher map $\phi: B\rightarrow
\mathbb{\overline{C}}\setminus\mathbb{\bar{D}}$ defined by
$\phi(z)=\displaystyle\lim_{k\rightarrow\infty}(f^k_\lambda(z))^{d^{-k}}$
is a conformal isomorphism, which satisfies $\phi^{-1}(r^de^{2\pi
idt})=f(\phi^{-1}(r e^{2\pi it}))$ for $(r,t)\in (1,+\infty)\times
\mathbb{S}$. Let $\ell (R,t)=\phi^{-1}([\sqrt[d]{R},R]e^{2\pi it})$
for $(R,t)\in (1,2)\times \mathbb{S}$. By the boundary behavior of
hyperbolic metric, there is a constant $C>0$ such that for
any$(R,t)\in (1,2)\times \mathbb{S}$,
 \bess {\rm Eucl. length}(\ell
(R,t))&\leq&C {\rm Hyper.length}(\ell (R,t))
 \cdot{\rm H.dist}(\phi^{-1}(R\mathbb{S}),\partial B)\\
&\leq&C (\log d)
 {\rm H.dist}(\phi^{-1}(R\mathbb{S}),\partial B)\ \ (\rightarrow0 {\text\  as \ } R\rightarrow 1), \eess
where ${\rm Hyper.length}$ is the hyperbolic length in $B$ and ${\rm
H.dist}$ is the Hausdorff distance in the sphere $\mathbb{\bar{C}}$.
Thus, we can choose $R$ sufficiently close to $1$ such that for any
$t\in \mathbb{S}$, $\ell (R,t)\subset B(z,\delta/2)$ for some $z\in
\partial B$. For $k\geq0$, define a curve $\gamma_k: \mathbb{S}\rightarrow B$ by
$\gamma_k(t)=\phi^{-1}(R^{1/d^k}e^{2\pi it})$. Because
$f^k(\gamma_{k+q}(t))=\gamma_q(d^kt)$ for $q\geq0$ and
$\gamma_0(d^kt), \gamma_1(d^kt)\in \ell (R,d^kt)\subset
B(z,\delta/2)$ for some $z\in
\partial B$, we conclude that $\gamma_{k}(t)$ and $\gamma_{k+1}(t)$ lie in the same component of $f^{-k}(B(z,\delta/2))$ that intersect with
$\partial B$. Based on Step 2,
$$\max_{t\in \mathbb{S}}|\gamma_{k+1}(t)-\gamma_{k}(t)|=\mathcal{O}(\nu^k).$$
So $\{\gamma_k: \mathbb{S}\rightarrow B\}$ is a Cauchy sequence and
hence converges to a continuous map $\gamma_\infty:
\mathbb{S}\rightarrow \partial B$.

To finish, we show $\gamma_\infty$ is surjective. Let $B_k\subset B$
be the disk bounded by $\gamma_k(\mathbb{S})$; then $B_k\Subset
B_{k+1}$ and $\cup_k B_k=B$. Each point $z\in \partial B$ can
therefore be approximated by a sequence of points
$\{z_k=\gamma_k(t_k)\}_{k\geq1}$ with $z_k\in \partial B_k$. There
is a subsequence $k_j$ such that $t_{k_j}\rightarrow t_\infty \in
\mathbb{S}$ as $j\rightarrow\infty$. We then have
$\gamma_\infty(t_\infty)=\lim_j \gamma_{k_j}(t_\infty)=\lim_j
\gamma_{k_j}(t_{k_j})=z$.  It follows that $\gamma_\infty$ is
surjective.

\textbf{Step 4.} {\it If, furthermore, $\partial B$ is a Jordan
curve, then $\partial B$ is a quasi-circle.}

Because
 $\partial B$ is a Jordan curve, the B\"otcher map
 $\phi: B\rightarrow \mathbb{\overline{C}}\setminus
 \mathbb{\overline{D}}$ can be extended to a homeomorphism
 $\phi: \overline{B}\rightarrow \mathbb{\overline{C}}\setminus
 \mathbb{{D}}$. Define a map $\psi: \mathbb{S}\rightarrow\partial B$
 by $\psi(\zeta)=\phi^{-1}(\zeta)$ for $\zeta\in \mathbb{S}$. Then
 $f(\psi(\zeta))=\psi(\zeta^d)$. Let $\varphi=\phi|_{\partial B}$ be
 the inverse of $\psi$. Both $\psi$ and $\varphi$ are uniformly  continuous; thus, for any sufficiently small positive number
 $\varepsilon$, there are two small constants $a(\varepsilon),
 b(\varepsilon)$ such that
 \bess
\forall\  (\zeta_1,\zeta_2)\in \mathbb{S}\times \mathbb{S},
|\zeta_1-\zeta_2|< a(\varepsilon)&\Rightarrow&|\psi(\zeta_1)-\psi(\zeta_2)|< \varepsilon;\\
\forall\  (z_1,z_2)\in \partial B\times \partial B, |z_1-z_2|<
b(\varepsilon)&\Rightarrow&|\varphi(z_1)-\varphi(z_2)|<
a(\varepsilon). \eess

Given two points $z_1, z_2\in \partial B$, $\partial B\setminus
\{z_1,z_2\}$ consists of two components, say $E_1$ and $E_2$. Let
$L(z_1,z_2)\in \{\overline{E}_1, \overline{E}_2\}$ be a section of
$\partial B$ such that ${\rm diam}(L(z_1,z_2))=\min\{{\rm
diam}(E_1), {\rm diam}(E_2)\}$. Thus, for any positive number
$\varepsilon\ll {\rm diam}(\partial B)$, by uniform continuity we
have
 \bess |z_1-z_2|<
b(\varepsilon)\Rightarrow {\rm diam}(L(z_1,z_2))< \varepsilon.
\label{7.a}\eess

Based on Ahlfors' characterization of quasi-circles \cite{A}, to
prove that $\partial B$ is a quasi-circle, it suffices to show that
there is a constant $C>0$ such that for any $z_1,z_2\in
\partial B$ with $z_1\neq z_2$,   $ \Delta(L(z_1,z_2);z_1,z_2)\leq
C$. In fact, if $|z_1-z_2|\geq\epsilon$ for some positive constant
$\epsilon$, then $ \Delta(L(z_1,z_2);z_1,z_2)\leq {\rm
diam}(\partial B)/\epsilon$. Therefore, we only need to consider the
case when $|z_1-z_2|$ is small. In the following, we assume
$\delta\ll {\rm diam}(\partial B)$ and $|z_1-z_2|\leq b(\delta/2)$;
it turns out that $ {\rm diam}(L(z_1,z_2))< \delta/2$.

Because $f$ is expanding on $\partial B$, there is an integer $N>0$
such that $f^k(L(z_1,z_2))=\partial B$ for all $k\geq N$. We can
therefore find a smallest integer $\ell\geq 0$ such that

$$ {\rm
diam}(f^\ell(L(z_1,z_2)))<\delta/2,\ {\rm
diam}(f^{\ell+1}(L(z_1,z_2)))\geq \delta/2.$$

On the other hand, there exist two points $w_1,w_2\in
f^\ell(L(z_1,z_2))$ such that \bess {\rm
diam}(f^{\ell+1}(L(z_1,z_2)))&=&|f(w_1)-f(w_2)|\leq\int_{[w_1,w_2]}|f'(z)||dz|\\
&\leq&M|w_1-w_2|\leq M{\rm diam}(f^\ell(L(z_1,z_2))), \eess where
$[w_1,w_2]$ is the straight segment connecting $w_1$ with $w_2$ and
$$M=\max\{|f'(z)|; {\rm Eucl.dist}(z, \partial B)\leq\delta/2\}.$$
Thus, we have
$$\frac{\delta}{2M}\leq {\rm
diam}(f^\ell(L(z_1,z_2)))= {\rm
diam}(L(f^\ell(z_1),f^\ell(z_2)))<\frac{\delta}{2}.$$ By (\ref
{7.a}), there is a constant $c(\delta,M)>0$ such that
$|f^\ell(z_1)-f^\ell(z_2)|\geq c(\delta,M)$.

Applying Lemma \ref{6b} to the situation
$(V_1,U_1)=(U_\ell(f^\ell(z_1)),V_\ell(f^\ell(z_1))),
(V_2,U_2)=(B(f^\ell(z_1),\delta),B(f^\ell(z_1),\delta/2))$ and
$g=f^\ell$, we conclude that there is a constant $C(D)>0$ such that
$$\Delta(L(z_1,z_2);z_1,z_2)\leq C(D)\Delta(f^\ell(L(z_1,z_2));f^\ell(z_1),f^\ell(z_2))\leq\frac{C(D)\delta}{2c(\delta,M)}.$$

Thus, for any $x,y\in
\partial B$ with $x\neq y$, the turning $\Delta(L(x,y);x,y)$ is
bounded by
$$\max\Bigg\{ \frac{{\rm
diam}(\partial B)}{b(\delta/2)}, \frac{C(D)\delta}{2c(\delta,M)}
\Bigg\}.$$ \hfill $\Box$

\begin{rem}
Using the same argument as \cite{CJY}, one can show further that if
$f$ satisfies {\bf{BD}} condition on $\partial B$, then $\partial B$
is a John domain.
\end{rem}

The following describes an important case in which $f$ satisfies the
{\bf{BD}} condition on $\partial B$.

\begin{pro}\label{6c}  If $\#(P(f)\cap\partial B)<\infty$ and all
periodic points in $P(f)\cap\partial B$ are repelling, then $f$
satisfies {\bf{BD}} condition on $\partial B$.
\end{pro}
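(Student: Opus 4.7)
The plan is to split into two cases according to whether $u\in P_0:=P(f)\cap\partial B$. First, $P_0$ is finite by hypothesis and forward invariant (since $\partial B$ is forward invariant and $f(P(f))\subseteq P(f)$), so every $u\in P_0$ is eventually periodic under $f$; the periodic ones are repelling by hypothesis. Because $P(f)$ is closed, $P(f)\setminus\partial B$ can accumulate on $\partial B$ only at points of $P_0$. If $u\in\partial B\setminus P_0$ then $u\notin P(f)$, so I would choose $\varepsilon_u>0$ with $\overline{B(u,\varepsilon_u)}\cap P(f)=\emptyset$. For any component $V$ of $f^{-m}(B(u,\varepsilon_u))$ meeting $\partial B$, the critical values of the proper map $f^m|_V\colon V\to B(u,\varepsilon_u)$ lie in $\bigcup_{k=1}^{m}f^k(C(f))\subseteq P(f)$ and hence miss $B(u,\varepsilon_u)$. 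Thus $f^m|_V$ is an unbranched covering of a disk, forcing $V$ simply connected and the degree equal to $1$.

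When $u\in P_0$ — the essential case — finiteness of $P_0$ reduces the problem to producing an individual bound $D_u$ and taking $D:=\max_u D_u$ (together with $1$ from the previous case). Let $q\geq 0$ be minimal with $a:=f^q(u)$ on a repelling periodic cycle of period $p$ and multiplier $\lambda$, and install a Koenigs coordinate $\phi$ on a neighborhood $N$ of $a$ conjugating $f^p$ to $z\mapsto\lambda z$. I would choose $\varepsilon_u$ so that $f^q(B(u,\varepsilon_u))$ lies deep inside $N$, that $B(u,\varepsilon_u)\cap P(f)$ contains at most $u$ itself and the (finitely many) critical values of $f$ possibly clustering at $u$ from $\mathbb{C}\setminus\partial B$, and that all remaining critical values of $f$ are at positive distance from the relevant portion of $\partial B$. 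For a pullback chain
\[
V_m\xrightarrow{\,f\,}V_{m-1}\xrightarrow{\,f\,}\cdots\xrightarrow{\,f\,}V_0=B(u,\varepsilon_u),\qquad V_j\cap\partial B\neq\emptyset,
\]
ramification of $f\colon V_k\to V_{k-1}$ forces $V_{k-1}$ to contain a critical value of $f$. Since $f$ has only finitely many critical values, and the Koenigs coordinate contracts the inverse $f^p$-branch fixing $a$ by $|\lambda|^{-1}$, only a bounded number of ramification events can occur in the chain; each contributes a factor of at most $M:=\max_{c\in C(f)}\deg_c f$, yielding $D_u\leq M^{K}$ with $K$ independent of $m$. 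Simple connectedness of each $V_j$ follows by induction using Riemann--Hurwitz, since pullbacks of disks under proper holomorphic maps remain disks.

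The principal obstacle is bounding the number of ramification steps in Case $u\in P_0$: along a chain that may dwell arbitrarily long near the repelling cycle, one must prevent an unbounded cascade of ramifications. The Koenigs contraction resolves this: once a pullback disk sits deep in the linearizing coordinate, subsequent inverse $f^p$-iterates shrink it geometrically and, by the finiteness of $C(f)$ together with the chosen separation of $\varepsilon_u$, the shrunken disk cannot meet a critical value of $f$ more than finitely often. The bookkeeping — combining the pre-periodic tail of length $q$, the periodic loop, and the separation of branching near $P_0$ from branching at critical values off $\partial B$ — is the delicate part of the argument.
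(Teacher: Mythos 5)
Your plan mirrors the paper's: separate $u\in\partial B\setminus P(f)$ from $u\in P_0=P(f)\cap\partial B$, treat the former with balls disjoint from $P(f)$ (giving conformal, hence degree-one, pullbacks), and feed the latter's orbit into a repelling cycle where a linearizing coordinate is available. The first case is handled correctly. But the decisive assertion in the second case --- that \emph{only a bounded number of ramification events can occur in the chain} because of Koenigs contraction --- is not justified as written. Koenigs contraction controls only the inverse branch of $f^p$ that fixes the cycle point $a$; a pullback chain $V_m\to\cdots\to V_0$ that at some level follows a \emph{different} branch leaves the linearizing neighbourhood, and nothing in your argument prevents such an excursion from re-entering regions containing critical values and accumulating ramification. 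What is missing is the observation that an off-cycle preimage of a cycle point which meets $\partial B$ cannot itself lie on a cycle (the cycle map is a bijection), so it must sit in $\partial B\setminus Z$, i.e., at a strictly preperiodic postcritical point or a non-postcritical point.

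The paper closes this gap with a three-stage bootstrap that your outline does not make explicit: fix the $\varepsilon$'s in the order non-postcritical $x\in X=\partial B\setminus P(f)$ (all pullbacks conformal), then strictly preperiodic $y\in Y$ (choosing $\varepsilon_y$ so the $q$-step pullback of $B(y,\varepsilon_y)$ along $\partial B$ nests into a previously-chosen $B(x,\varepsilon_x)$ with at most one critical point of $f^q$ in between), and finally periodic $z\in Z$ (choosing $\varepsilon_z$ so that the component of $f^{-p}(B(z,\varepsilon_z))$ at $z$ nests back into $B(z,\varepsilon_z)$ with no ramification by linearization, while every other component meeting $\partial B$ sits at some $u\in X\cup Y$ and is contained in $B(u,\varepsilon_u)$). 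It is precisely this containment of off-cycle pullback components inside earlier-stage balls that caps the total ramification: once the chain exits the cycle, it lands in a ball whose further pullbacks are already under control. Your ``bookkeeping is delicate'' remark gestures at this, but without stating the containment the bound $D_u\leq M^K$ is unsupported, since $K$ could grow with $m$ as the chain repeatedly leaves and re-approaches the cycle.
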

\begin{proof} The proof is based on the following claim.

 {\bf Claim:} {\it For any $u\in\partial B$, there is a constant
$\varepsilon_u>0$ such that for any $m\geq0$ and any component
$U_m(u)$ of $f^{-m}(B(u,\varepsilon_u))$ that intersects with
$\partial B$, $U_m(u)$ contains at most one critical point of
$f^m$.}

The claim implies that $U_m(u)$ is simply connected by the
Riemann-Hurwitz Formula. Because the sequence $U_m(u)\rightarrow
f(U_m(u))\rightarrow\cdots\rightarrow f^{m-1}(U_m(u))\rightarrow
B(u,\varepsilon_u)$ meets every critical point of $f$ at most once,
we conclude that ${\rm deg}(f^m: U_m(u)\rightarrow
B(u,\varepsilon_u))$ is bounded by $D=\Pi_{c\in C(f)}{\rm
deg}(f,c)$.

In the following, we prove the claim.

 First, note that every point in $P(f)\cap \partial
B$ is pre-periodic; we can deconstruct $\partial B$ into three
disjoint sets $X, Y$ and $Z$, where $X=\partial B\setminus P(f)$,
$Z$ is the union of all repelling cycles in $P(f)\cap \partial B$
and $Y=(P(f)\cap \partial B)\setminus Z$.

 For any $x\in X$, choose a small number $\varepsilon_x>0$ such that $B(x,\varepsilon_x)\cap P(f)=\emptyset$. Then, for any component $W_m(x)$ of $f^{-m}(B(x,\varepsilon_x))$ intersecting with $\partial B$,
$f^m:W_m(x)\rightarrow B(x,\varepsilon_x)$ is a conformal map.

The set $Y$ consists of all strictly pre-periodic points. Thus,
there is an integer $q\geq1$ such that for any $y\in Y$,
$f^{-q}(y)\cap P(f)\cap \partial B=\emptyset$. For an open set $U$
in $\mathbb{\bar{C}}$ and a point $u\in U$, we use ${\rm Comp}_u(U)$
to denote the component of $U$ that contains $u$. For every $y\in
Y$, choose $\varepsilon_y>0$ small enough such that for any $x\in
f^{-q}(y)\cap
\partial B\subset X$, ${\rm Comp}_x(f^{-q}(B(y,\varepsilon_y)))\subset
B(x,\varepsilon_x)$ and ${\rm Comp}_x(f^{-q}(B(y,\varepsilon_y)))$
contain at most one critical point of $f^q$.

Finally, we deal with $Z$. For $z\in Z$, suppose $z$ lies in a
repelling cycle of period $p$. Choose $\varepsilon_z>0$ such that

(1) $B(z,\varepsilon_z)$ is contained in the linearizable
neighborhood of $z$ and ${\rm Comp}_z(f^{-p}(B(z,\varepsilon_z)))$
is a subset of $B(z,\varepsilon_z)$

(2) For every $u\in (f^{-p}(z)\cap
\partial B)\setminus \{z\}\subset X\cup Y$, ${\rm Comp}_u(f^{-p}(B(z,\varepsilon_z)))$
contains at most one critical point of $f^p$ and ${\rm
Comp}_u(f^{-p}(B(z,\varepsilon_z)))\subset B(u,\varepsilon_u)$.

One can easily verify that the collection of neighborhoods
$\{B(u,\varepsilon_u) , u\in \partial B\}$ are just as required.
\end{proof}

\begin{cor}\label{6d} If $f$ is critically finite, then $f$ satisfies the
{\bf{BD}} condition on $\partial B$.
\end{cor}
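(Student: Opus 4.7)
The plan is to deduce this corollary directly from Proposition \ref{6c}. By hypothesis $f$ is critically finite, meaning every critical point of $f$ is pre-periodic, so $P(f) = \overline{\bigcup_{k\geq 1} f^k(C(f))}$ is a finite set. In particular, $\#(P(f)\cap\partial B) \leq \#P(f) < \infty$, which verifies the first hypothesis of Proposition \ref{6c}.

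Next, I would verify that every periodic point in $P(f)\cap\partial B$ is repelling. Since $B$ is a Fatou component we have $\partial B \subset J(f)$, so any periodic point lying on $\partial B$ is a periodic point in the Julia set. It is a classical fact from Fatou--Julia theory that periodic points in $J(f)$ are either repelling or parabolic. The main substantive step is therefore to rule out parabolic cycles on $\partial B$ for a critically finite map. This is standard: a parabolic periodic point must attract the forward orbit of at least one critical point (the basin of a parabolic cycle always contains a critical point of $f$), and this forward critical orbit accumulates on the parabolic cycle without being eventually periodic, contradicting critical finiteness. Thus no periodic point on $\partial B$ can be parabolic, and all periodic points in $P(f)\cap\partial B$ are repelling.

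With both hypotheses of Proposition \ref{6c} verified, the conclusion follows immediately: $f$ satisfies the \textbf{BD} condition on $\partial B$. The only step requiring any real argument is the exclusion of parabolic cycles, and I expect this to be the main (though very mild) obstacle, which is handled by the classical observation linking parabolic basins to critical orbits.
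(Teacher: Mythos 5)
Your overall plan---reduce to Proposition~\ref{6c} by showing $P(f)\cap\partial B$ is finite and contains only repelling periodic points---matches the paper's. But there is a genuine flaw in the middle step. You assert ``it is a classical fact from Fatou--Julia theory that periodic points in $J(f)$ are either repelling or parabolic,'' and this is false: Cremer periodic points (irrationally indifferent, non-linearizable) lie in $J(f)$ and are neither repelling nor parabolic. Your argument then rules out parabolic cycles but never addresses Cremer cycles, so the proof as written does not establish that every periodic point of $P(f)\cap\partial B$ is repelling.

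The paper avoids this by invoking a cleaner and correct fact: for a critically finite rational map, \emph{every} periodic cycle is either repelling or superattracting (this is the standard consequence of critical finiteness, e.g.\ via the expanding orbifold metric). Superattracting cycles cannot lie on $\partial B\subset J(f)$, so all periodic points of $P(f)\cap\partial B$ are repelling, and Proposition~\ref{6c} applies. To repair your version along the lines you chose, you would need to rule out Cremer cycles separately; this can be done (by Fatou's and Ma\~n\'e's theorems, a Cremer point is an accumulation point of a recurrent critical orbit, which is impossible when $P(f)$ is finite), but it is an additional argument you did not supply. Your parabolic-exclusion reasoning via the critical point in the parabolic basin is correct as far as it goes.
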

\begin{proof} Because $f$ is critically finite, every periodic point
of $f$ is either repelling or superattracting, which implies that
$\#(P(f)\cap\partial B)<\infty$ and all periodic points in
$P(f)\cap\partial B$ are repelling. Thus, by Proposition \ref{6c},
$f$ satisfies the {\bf{BD}} condition on $\partial B$.
\end{proof}

\section{The boundary $\partial B_\lambda$ is a Jordan curve}

In this section, we will prove Theorem \ref{11a} and Theorem
\ref{11b}. The strategy of the proof is as follows.

First, consider the McMullen maps $f_\lambda$ with parameter
$\lambda\in\mathcal{H}$. If $f_\lambda$ is critically finite, then
the Julia set is locally connected. Otherwise, by Proposition
\ref{4ab}, we can find an admissible graph
$\mathbf{G}_\lambda(\theta_1,\cdots,\theta_N)$. With respect to the
Yoccoz puzzle induced by this graph, there are two possibilities:

\text{\bf Case 1: None of $T(c)$ with $c\in C_\lambda$ is periodic.}
This case is discussed in section 7.1, and the local connectivity of
$J(f_\lambda)$ follows from Proposition \ref{7b}. The idea of the
proof is based on the combinatorial analysis for tableaux introduced
by Branner and Hubbard (see \cite {BH}, \cite {M2}) and on `modified
puzzle piece' techniques.

\text{\bf Case 2: Some $T(c)$ with $c\in C_\lambda$ is periodic.} In
this case, the map $f_\lambda$ is either renormalizable or
$*$-renormalizable. This case is discussed in section 7.2. The local
connectivity of $\partial B_\lambda$ follows from Proposition
\ref{7c}. The goal of the proof of Proposition \ref{7c} is to
construct a closed curve separating $\partial B_\lambda$ from the
small filled Julia set $K_c$.

In section 7.3, we deal with the real parameters $\lambda\in
\mathbb{R}^+$.

In section 7.4, we improve the regularity of the boundary $\partial
B_\lambda$. We first include a proof of Devaney that claims that the
local connectivity of $\partial B_\lambda$ implies that $\partial
B_\lambda$ is a Jordan curve. We then show that $\partial B_\lambda$
is a quasi-circle except in two specific cases.

In section 7.5, we present some corollaries.

\subsection{None of $T(c)$ with $c\in C_\lambda$ is periodic}

Recall that $J_0$ is the set of all points on the Julia set
$J(f_\lambda)$ whose orbits eventually meet the graph
$\mathbf{G}_\lambda(\theta_1,\cdots,\theta_N)$.

\begin{lem}\label{7a} Let $z\in J(f_\lambda)\setminus J_0$. If $T(z)$ is
non-critical, then ${\rm
End}(z):=\bigcap_{d\geq0}\overline{P_d(z)}=\{z\}$.
\end{lem}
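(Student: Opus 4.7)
The plan is to apply the Branner--Hubbard--Yoccoz modulus argument to the sequence of modified puzzle pieces $\widehat{P}_d(z)$ from Section 4.3, which provides the genuine non-degeneracy of the nested annuli $\widehat{A}_d(z):=\widehat{P}_d(z)\setminus\overline{\widehat{P}_{d+1}(z)}$ together with the proper covering property $f_\lambda\colon\widehat{P}_{d+1}(w)\to\widehat{P}_d(f_\lambda(w))$. Using the non-criticality of $T(z)$ and tableau rule (T1), I first fix $d_0\geq 1$ so that every position $(d,l)$ with $d\geq d_0$ and $l\geq 1$ is non-critical.

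The key step is to bound the degree of the proper map
$$f_\lambda^l\colon\widehat{P}_{d_0+l}(z)\longrightarrow\widehat{P}_{d_0}(f_\lambda^l(z))\qquad(l\geq 0).$$
Decomposing it into $l$ single-step factors, the $k$-th factor ($1\leq k\leq l$) sends $\widehat{P}_{d_0+l-k+1}(f_\lambda^{k-1}(z))$ to $\widehat{P}_{d_0+l-k}(f_\lambda^k(z))$ and can be branched only when its domain meets $C_\lambda$, i.e. when position $(d_0+l-k+1,k-1)$ is critical. For $k\geq 2$ the column index $k-1\geq 1$ and the depth $\geq d_0$ force non-criticality; only the step $k=1$ can contribute branching, and by the size restriction on modified pieces this accounts for at most one critical point of $C_\lambda$ of local degree two. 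Hence $\deg f_\lambda^l\leq 2$. When the degree equals two, Riemann--Hurwitz applied to the surjection $f_\lambda^l\colon\widehat{A}_{d_0+l}(z)\to\widehat{A}'_l:=\widehat{P}_{d_0}(f_\lambda^l(z))\setminus\overline{\widehat{P}_{d_0+1}(f_\lambda^l(z))}$ rules out a branch point inside the annulus (one would otherwise need $0=\chi(\mathrm{annulus})=2\cdot 0-1$), so the critical point of $f_\lambda^l$ must lie in $\widehat{P}_{d_0+l+1}(z)$ and $f_\lambda^l$ restricts on $\widehat{A}_{d_0+l}(z)$ to an unbranched covering of $\widehat{A}'_l$. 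In every case therefore
$$\mathrm{mod}\bigl(\widehat{A}_{d_0+l}(z)\bigr)\geq \tfrac{1}{2}\,\mathrm{mod}(\widehat{A}'_l).$$

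The family of modified puzzle pieces of depth $d_0$ is finite and each annulus $\widehat{P}_{d_0}(w)\setminus\overline{\widehat{P}_{d_0+1}(w)}$ is non-degenerate, so their minimum modulus $m>0$ gives $\mathrm{mod}(\widehat{A}_{d_0+l}(z))\geq m/2$ uniformly in $l$. Superadditivity of nested annular moduli then forces $\mathrm{mod}\bigl(\widehat{P}_{d_0}(z)\setminus\bigcap_{d\geq d_0}\overline{\widehat{P}_d(z)}\bigr)=+\infty$, and $\bigcap_{d\geq d_0}\overline{\widehat{P}_d(z)}$ collapses to a single point. From $\overline{P_d(z)}\subset\widehat{P}_d(z)\cup A_\lambda$ and $z\in J(f_\lambda)\setminus A_\lambda$ we get $z\in\widehat{P}_d(z)$ for every $d$, pinning that point down to $z$ itself. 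The inclusion $\bigcap_d\overline{P_d(z)}\subset\bigcap_d\widehat{P}_d(z)$ from Section 4.3 now yields $\mathrm{End}(z)=\{z\}$. The main subtlety throughout is the Riemann--Hurwitz observation locating the possible branch point of $f_\lambda^l$ inside the innermost piece rather than in the annulus; this is what converts the degree-two proper map into an honest unbranched cover and hence gives the uniform modulus lower bound driving the argument.
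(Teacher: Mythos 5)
Your overall strategy is genuinely different from the paper's. The paper's proof of Lemma \ref{7a} is a hyperbolic-contraction argument: it fixes the finitely many modified puzzle pieces of depths $d_0-1,\ d_0$, shows each inverse branch $g_k^i$ on $\widehat{P}_{d_0-1}^{(i)}$ is a uniform Schwarz contraction, and then chains these contractions to obtain $\mathrm{Hyper.\ diam}\,P_{d_0+h}(f_\lambda(z))=\mathcal{O}(\nu^h)$, giving $\mathrm{End}(f_\lambda(z))=\{f_\lambda(z)\}$. You instead run a Branner--Hubbard modulus-summation argument on the modified annuli $\widehat{A}_{d_0+l}(z)$. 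Both are legitimate routes, and your reduction of the degree of $f_\lambda^l\colon\widehat{P}_{d_0+l}(z)\to\widehat{P}_{d_0}(f_\lambda^l(z))$ to at most two is essentially right (though it silently uses the nesting $\widehat{P}_{d_0+l-k+1}(f_\lambda^{k-1}(z))\subset\widehat{P}_{d_0+1}(f_\lambda^{k-1}(z))$ to reduce the ``modified pieces track regular pieces'' restriction from the unbounded depths $d_0+l-k+1$ down to the bounded depth $d_0+1$, which is where the paper's ``$m$ large enough'' hypothesis actually applies).

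The step you flag as the ``main subtlety'' is, however, where there is a real gap. You apply Riemann--Hurwitz to $f_\lambda^l\colon\widehat{A}_{d_0+l}(z)\to\widehat{A}'_l$, but Riemann--Hurwitz requires a \emph{proper} map, and this restriction is proper onto $\widehat{A}'_l$ only if $(f_\lambda^l)^{-1}\bigl(\widehat{P}_{d_0+1}(f_\lambda^l(z))\bigr)\cap\widehat{P}_{d_0+l}(z)$ is the single component $\widehat{P}_{d_0+l+1}(z)$. That in turn holds exactly when the critical value $f_\lambda^l(c)$ lies in $\widehat{P}_{d_0+1}(f_\lambda^l(z))$, i.e.\ exactly when $c\in\widehat{P}_{d_0+l+1}(z)$ --- which is the very conclusion you are trying to derive. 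In the semi-critical case, where $c\in\widehat{A}_{d_0+l}(z)$, the critical value lies in $\widehat{A}'_l$, the preimage of $\widehat{P}_{d_0+1}$ has two components, one of which sits inside $\widehat{A}_{d_0+l}(z)$, and the restriction of $f_\lambda^l$ to the annulus is neither proper nor surjective onto $\widehat{A}'_l$; the Euler-characteristic count never gets off the ground. So the statement ``the critical point must lie in $\widehat{P}_{d_0+l+1}(z)$'' is not forced, and indeed it \emph{fails} for one depth: since $z\notin J_0$, either $z\in C_\lambda$ (no issue) or the zeroth column of $T(z)$ passes from critical to non-critical at a unique depth $d_1$, and at $l=d_1-d_0-1$ the critical point sits precisely in the annulus $\widehat{A}_{d_0+l}(z)$.

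The good news is that this gap does not sink the argument. The offending depth $l=d_1-d_0-1$ occurs at most once: for $l<d_1-d_0-1$ the covering is genuinely unbranched of degree two, and for $l>d_1-d_0-1$ the degree drops to one, so $\mathrm{mod}\bigl(\widehat{A}_{d_0+l}(z)\bigr)\geq m$ outright. Thus $\sum_l\mathrm{mod}\bigl(\widehat{A}_{d_0+l}(z)\bigr)$ diverges regardless of what happens at the single bad level, and $\bigcap_d\overline{\widehat{P}_d(z)}$ still collapses. You should replace the Riemann--Hurwitz claim by this case analysis (at most one semi-critical level, whose modulus you simply discard) rather than asserting a fact that is false at that level.
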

\begin{proof}
It suffices to prove ${\rm End}(f_\lambda(z))=\{f_\lambda(z)\}$.
Because $T(z)$ is non-critical, there is an integer $d_0\geq1$ such
that for any $j>0$, the position $(d_0,j)$ is not critical.
Equivalently, for any $d\geq d_0$ and any $j\geq1$, the puzzle piece
$P_d(f_\lambda^j(z))$ contains no critical point. Let
$\{\widehat{P}_{d_0-1}^{(i)} and 1\leq i\leq M\}$ be the collection
of all modified puzzle pieces of depth $d_0-1$, numbered so that
$\widehat{P}_{d_0-1}^{(1)}=\widehat{P}_{d_0-1}(v_\lambda^+),\
\widehat{P}_{d_0-1}^{(2)}=\widehat{P}_{d_0-1}(v_\lambda^-)$, and
recall that we use $\widehat{P}_{d}(w)$ to denote the modified
puzzle piece of ${P}_{d}(w)$. Every modified puzzle piece of
depth$\geq d_0$ is contained in a unique modified  puzzle piece
$\widehat{P}_{d_0-1}^{(i)}$ of depth $d_0-1$. Let ${\rm
dist}_i(x,y)$ be the Poincar\'e metric of
$\widehat{P}_{d_0-1}^{(i)}$. For $2< i \leq M$, there are exactly
$2n$ branches of $f_\lambda^{-1}$ on $\widehat{P}_{d_0-1}^{(i)}$,
say $g_1^i,g_2^i,\cdots,g_{2n}^i$, and each $g_k^i$ on
$\widehat{P}_{d_0-1}^{(i)}$ is univalent and carries
$\widehat{P}_{d_0}^{(\alpha)}\subset\subset
\widehat{P}_{d_0-1}^{(i)}$ onto a proper subset of some
$\widehat{P}_{d_0-1}^{(j)}$. It follows that there is a uniform
constant $0<\nu<1$ such that
$${\rm dist}_j(g_k^i(x),g_k^i(y))\leq \nu {\rm dist}_i(x,y)$$
for any $x,y\in\widehat{P}_{d_0}^{(\alpha)}\subset\subset
\widehat{P}_{d_0-1}^{(i)}$ and any $2< i\leq M, 1\leq k \leq 2n$.

Let $D$ be the maximum Poincar\'e diameters of the modified puzzle
pieces of depth $d_0$. For any integer $h>0$, because the sequence
$${P}_{d_0+h}(f_\lambda(z))\rightarrow {P}_{d_0+h-1}(f_\lambda^2(z))\rightarrow
\cdots {P}_{d_0+1}(f_\lambda^h(z))\rightarrow
{P}_{d_0}(f_\lambda^{h+1}(z))$$ contains no critical point (this
follows from the assumption that $T(z)$ is non-critical), it follows
that
$$ \mathrm{Hyper. diam}\big(
{P}_{d_0+h}(f_\lambda(z))\big)\leq D\nu^h$$ with respect to the
Poincar\'e metric of $\widehat{P}_{d_0-1}(f_\lambda(z))$. Thus, we
have ${\rm End}(f_\lambda(z))=\{f_\lambda(z)\}$.
\end{proof}

\begin{pro}\label{7b} If $T(c)$ is not periodic for any $c\in
C_\lambda$, then the Julia set $J(f_\lambda)$ is locally connected.
\end{pro}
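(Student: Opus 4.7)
The plan is to reduce the local connectivity of $J(f_\lambda)$ to the statement that, for every $z \in J(f_\lambda)$, the nested puzzle pieces (or their modified counterparts from Section 4.3) shrink to $\{z\}$. Under our hypothesis that no $T(c)$ with $c \in C_\lambda$ is periodic, Lemma \ref{4d} rules out any critical tableau from being critical and pre-periodic, while Lemma \ref{5a}(1) says any non-critical pre-periodic critical tableau forces $f_\lambda$ to be critically finite; in the critically finite case I would invoke the classical local connectivity theorem \cite{TY}. Thus it suffices to treat the situation in which no critical tableau is pre-periodic at all, which is precisely the hypothesis of Lemma \ref{4c}.

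The core step is to prove $\mathrm{End}(z) := \bigcap_{d \geq 0} \overline{P_d(z)} = \{z\}$ for every $z \in J(f_\lambda) \setminus J_0$. If $T(z)$ is non-critical this is Lemma \ref{7a}. If $T(z)$ is critical I would run the Branner-Hubbard-Yoccoz tableau argument: Lemma \ref{4c}(1) produces at least one child of every row $\mathrm{Row}_c(d)$, while Lemma \ref{4c}(2)--(4) promote excellent rows to two children and propagate excellence along children. A combinatorial child-count combined with the Gr\"otzsch inequality applied to the degree-two covering maps $f_\lambda^{\ell}: A_{d+\ell}(c) \to A_d(c')$ from the definition of a child yields the divergence $\sum_{d \geq 1} \mathrm{mod}(A_d(c)) = \infty$ for every $c \in C_\lambda$; the symmetry Lemma \ref{4a} means one does not need to argue separately at different critical points. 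Tableau rule (T2) then transports this modulus growth from the critical columns to the column of $z$, so the annuli $A_d(z)$ also have divergent modulus sum, which forces $\mathrm{End}(z) = \{z\}$.

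It remains to handle $z \in J_0$. Such a $z$ has some $f_\lambda^k(z)$ lying on a cut ray $\Omega_\lambda^{\theta}$, which is a Jordan curve by Proposition \ref{3e1} meeting $J(f_\lambda)$ in a Cantor set by Proposition \ref{3e}. Arbitrarily small connected neighborhoods of $f_\lambda^k(z)$ in $J(f_\lambda)$ can be built by pairing the two puzzle pieces abutting the cut ray at sufficient depth and invoking the previous step to see that both pieces shrink down along the cut ray; pulling these neighborhoods back by $f_\lambda^k$ yields local connectivity at $z$.

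The main obstacle I expect is the child-counting and modulus-telescoping step for critical tableaux: making the Branner-Hubbard-Yoccoz combinatorial argument run in the presence of the $2n$-fold symmetry, where several critical points can share a critical value, and confirming that the admissibility of the graph together with Lemma \ref{4c} really does furnish enough non-degenerate critical annuli to drive the modulus sum to infinity. A secondary difficulty, to be resolved by the modified puzzle pieces of Section 4.3, is that standard puzzle pieces can degenerate as they approach $\partial B_\lambda$, so passing from $\mathrm{End}(z) = \{z\}$ to a local basis of connected neighborhoods of $z$ in $J(f_\lambda)$ requires working with $\widehat{P}_d(z)$ rather than $P_d(z)$ whenever $z$ lies on $\partial B_\lambda$.
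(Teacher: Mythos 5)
Your proposal follows essentially the same route as the paper's proof: establish $\mathrm{End}(z)=\{z\}$ for $z\in J(f_\lambda)\setminus J_0$ by tableau analysis --- using Lemma \ref{4c} to get the Branner--Hubbard divergence $\sum_d\mathrm{mod}\,A_d(c)=\infty$ when some $T(c)$ is critical, Lemma \ref{7a} when no tableau is critical, and tableau rule (T2) to transfer modulus growth to an arbitrary $z$ --- then handle $z\in J_0$ via the pair of puzzle pieces abutting the cut ray, and conclude local connectivity from connectedness of $\overline{P_d}\cap J(f_\lambda)$ (Lemma \ref{4aa}). Your preliminary reduction via Lemma \ref{4d} and Lemma \ref{5a}(1) to the hypotheses of Lemma \ref{4c} is a slight detour the paper skips (it handles non-critical tableaux directly through Lemma \ref{7a} regardless of pre-periodicity), but it works.

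Two corrections. First, for $z\in J_0$, ``invoking the previous step to see that both pieces shrink'' is not quite the right mechanism: $\mathrm{End}(w)=\{w\}$ for off-graph points $w$ controls the pieces \emph{containing $w$}, not the half-pieces $P_d'(z)$, $P_d''(z)$ whose common boundary contains $z$. What actually makes $P_d^*(z)=\overline{P_d'(z)\cup P_d''(z)}$ shrink is that the forward orbit of $z$ eventually lies on the graph and is therefore bounded away from $C_\lambda$ by some $\epsilon(z)>0$; since $\mathrm{End}(c)=\{c\}$ forces $\mathrm{Eucl.\,diam}\,P_{d_0}(c)<\epsilon(z)$ for $d_0$ large, the orbit of $z$ avoids every critical puzzle piece of depth $d_0$, and the contracting-inverse-branch argument of Lemma \ref{7a} then applies verbatim to $P_d^*(z)$. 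Second, the modified puzzle pieces of Section 4.3 are not what furnishes the connected local basis near $\partial B_\lambda$ --- that is supplied by Lemma \ref{4aa}, which holds for ordinary puzzle pieces --- rather, they are what makes the hyperbolic contraction inside the proof of Lemma \ref{7a} run despite degenerate annuli between consecutive ordinary pieces.
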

\begin{proof} Note that $T(c)$ is either critical or non-critical. First, we prove ${\rm End}(c)=\{c\}$ and ${\rm End}(z)=\{z\}$ for any $z\in J(f_\lambda)\setminus J_0$. We then deal with the points that lie in $J_0$.

\text{\bf Case 1: $T(c)$ is critical.} Because the graph is
admissible, we can find a non-degenerate annulus $A_{d_0}(c)$.
Consider the descendents of ${\rm Row}_c(d_0)$. It is obvious that
if ${\rm Row}_c(t)$ is a descendent in the $k$-th generation of
${\rm Row}_c(d_0)$, the annulus $A_{t}(c)$ is non-degenerate with
modulus ${\rm mod}(A_{d_0}(c))/2^{k}$. If ${\rm Row}_c(d_0)$ has at
least $2^k$ descendents in the $k$-th generation for each $k\geq1$,
then each of these contributes exactly ${\rm mod}(A_{d_0}(c))/2^{k}$
to the sum $\sum_d {\rm mod}(A_{d}(c))$. Hence, $\sum_d {\rm
mod}(A_{d}(c))=\infty$, as required. On the other hand, if there are
fewer descendents in some generation, then one of them, say ${\rm
Row}_c(m)$, must be an only child, hence excellent by Lemma
\ref{4c}. Again by Lemma \ref{4c}, we see that $\sum_d {\rm
mod}(A_{d}(c))=\infty$. Therefore, in either case, ${\rm
End}(c)=\{c\}$.

Now consider a point $z\in J(f_\lambda)\setminus (J_0\cup
C_\lambda)$. If $T(z)$ is non-critical, then by Lemma \ref{7a},
${\rm End}(z)=\{z\}$. If $T(z)$ is critical, then for each $d\geq1$,
there is a smallest integer $l_d\geq0$ such that both $(d,l_d)$ and
$(d,l_d+1)$ are critical positions. It follows that
$f_\lambda^{l_d}:A_{d+l_d}(z)\rightarrow A_{d}(c')$ is a conformal
map for some $c'\in C_\lambda$. In this case, $\sum_d {\rm
mod}(A_{d}(z))\geq\sum_d {\rm mod}(A_{d+l_d}(z))=\sum_d {\rm
mod}(A_{d}(c))=\infty$, hence ${\rm End}(z)=\{z\}$.

\text{\bf Case 2: $T(c)$ is non-critical.} It follows from Lemma
\ref{7a} that ${\rm End}(c)=\{c\}$. For $z\in J(f_\lambda)\setminus
(J_0\cup C_\lambda)$, we assume $T(z)$ is critical; otherwise, ${\rm
End}(z)=\{z\}$ based on Lemma \ref{7a}. Suppose $A_{d_0}(c)$ is a
non-degenerate annulus and $(d_0+1,l_1),(d_0+1,l_2),\cdots$ are all
critical positions in the $(d_0+1)$-th row of the tableau $T(z)$.
Because all tableaux $T(c)$ with $c\in C_\lambda$ are non-critical,
there is a constant $D$ such that ${\rm
deg}(f_\lambda^{l_k}:P_{d_0+l_k}(z)\rightarrow P_{d_0,l_k}(z))\leq
D$ for all $k\geq1$. Thus,
$${\rm
mod}(A_{d_0+l_k}(z))\geq D^{-1}{\rm mod}(A_{d_0}(c))$$ for all
$k\geq1$. Hence, $\sum_d {\rm mod}(A_{d}(z))\geq\sum_k {\rm
mod}(A_{d_0+l_k}(z))=\infty$ and ${\rm End}(z)=\{z\}$.

\textbf{Points that lie in $J_0$.}  For any $z\in J_0$, the orbit
$z\mapsto f_\lambda(z)\mapsto f^2_\lambda(z)\mapsto\cdots$
eventually meets the graph
$\mathbf{G}_\lambda(\theta_1,\cdots,\theta_N)$. Therefore, the
Euclidean distance between the critical set $C_\lambda$ and the
orbit $\{f_\lambda^k(z)\}_{k\geq0}$ is bounded below by some
positive number $\epsilon(z)$.
  In addition, for every $d$ large enough, $z$ lies in the common boundary of exactly two puzzle pieces of depth $d$. We denote these two puzzle pieces by $P_d'(z)$ and $P_d''(z)$. In the previous argument, we have already proved that ${\rm End}(c)=\{c\}$; this implies $\mathrm{Eucl.
diam}(P_d(c))\rightarrow0$ as $d\rightarrow\infty$. Choose a $d_0$
large enough such that
$$\mathrm{Eucl. diam}(P_{d_0}(c))<\epsilon(z)
 \leq \mathrm{Eucl. dist}(C_\lambda, \{f_\lambda^k(z)\}_{k\geq0}).$$
Then, the orbit $z\mapsto f_\lambda(z)\mapsto
f^2_\lambda(z)\mapsto\cdots$ avoids all the critical puzzle pieces
of depth $d_0$. Let $P^*_{d}(z)=\overline{P_{d}'(z)\cup P_{d}''(z)}$
for $d$ large enough. Then, the proof of Lemma \ref{7a} applies
equally well to this situation, and $\bigcap _d P^*_{d}(z)=\{z\}$
immediately follows.

\textbf{Connectivity of neighborhoods.} Let
 \begin{equation*}
P^*_{d}(z)=\begin{cases}
\overline{P_d(z)},\ \  &\text{ if  $z\in J(f_\lambda)\setminus J_0$},\\
\overline{P_{d}'(z)\cup P_{d}''(z)},\ \ &\text{ if  $z\in J_0$ and
$d$ is large.}
 \end{cases}
\end{equation*}
Based on Lemma \ref {4aa}, for every $z\in J(f_\lambda)$ and every
large integer $d$, the intersection $P^*_{d}(z)\cap J(f_\lambda)$ is
a connected and compact subset of $J(f_\lambda)$. Thus,
$\{P^*_{d}(z)\cap J(f_\lambda)\}$ forms a basis of connected
neighborhoods of $z$. Because $\bigcap (P^*_{d}(z)\cap
J(f_\lambda))=\{z\}$, the Julia set is locally connected at $z$.
Note that $z$ is arbitrarily chosen, we conclude that $J(f_\lambda)$
is locally connected.
\end{proof}

\subsection{Some $T(c)$ with $c\in C_\lambda$ is periodic}

Suppose some tableau $T(c)$ with $c\in C_\lambda$ is $k$-periodic
for some $k>0$. Based on the proof of Lemma \ref{5a}, $f_\lambda$ is
either $k$-renormalizable at $c$ or $k/2$-$*$-renormalizable at $c$.
Let $(\epsilon f^p_\lambda, P_{d_0+p}(c), P_{d_0}(c))$, where $d_0$
is a large integer, be the renormalization and
 \begin{equation*}
(\epsilon,p)=\begin{cases}
(1,k),\ \  &\text{ if $f_\lambda$ is $k$-renormalizable at $c$},\\
(-1,k/2),\ \ &\text{ if $f_\lambda$ is  $k/2$-$*$-renormalizable at
$c$}.
 \end{cases}
\end{equation*}
The small filled Julia set of the renormalization $(\epsilon
f^p_\lambda, P_{d_0+p}(c), P_{d_0}(c))$ is denoted by $K_c$. Recall
that $\beta_c$ is the $\beta$-fixed point of the renormalization and
$\beta'_c$ is the other preimage of $\beta_c$ under the map
$\epsilon f^p_\lambda|_{P_{d_0+p}(c)}$.

Assume now that $K_c\cap \partial B_\lambda\neq\emptyset$; then,
based on Lemma \ref{5d}, $\beta_c\in K_c\cap \partial B_\lambda$ and
there is a unique external ray, say $R_\lambda(\theta)$, landing at
$\beta_c$. The angle $\theta$ is of the form $\frac{m}{2^k-1}$. It
follows that $\beta'_c\in K_c\cap \partial T_\lambda$ and there is a
unique radial ray $R_{T_\lambda}(\alpha_\theta)$ in $T_\lambda$
landing at $\beta'_c$. The radial ray $R_{T_\lambda}(\alpha_\theta)$
satisfies $\epsilon
f_\lambda^p(R_{T_\lambda}(\alpha_\theta))=R_\lambda(\theta)$. Let
$$K=K_c\cup \overline{R_\lambda(\theta)}\cup
\overline{R_{T_\lambda}(\alpha_\theta)}\cup(-K_c)\cup(-\overline{R_\lambda(\theta)})
\cup(-\overline{R_{T_\lambda}(\alpha_\theta)}).$$

The set $K$ is a connected and compact subset of $\mathbb{\bar{C}}$.
Note that
$-R_{T_\lambda}(\alpha_\theta)=R_{T_\lambda}(\alpha_\theta+1/2)$.
Let $\Delta_1$ be the component of $\mathbb{\bar{C}}\setminus (K\cup
\overline{B_\lambda})$ that intersects with
$Q_{T_\lambda}(\alpha_\theta,\alpha_\theta+1/2)$ and $\Delta_2$ be
the component of $\mathbb{\bar{C}}\setminus (K\cup
\overline{B_\lambda})$ that intersects with
$Q_{T_\lambda}(\alpha_\theta+1/2,\alpha_\theta)$, where we use
$Q_{T_\lambda}(\theta_1,\theta_2)$ to denote the set
$\overline{\{\phi_{T_\lambda}(re^{2\pi it}); 0<r< 1, \theta_1\leq
t\leq \theta_2\}}$. Because $K\cup \overline{B_\lambda}$ is
connected and compact, both $\Delta_1$ and $\Delta_2$ are disks. Let
$Z_i$ be the component of $\mathbb{\bar{C}}\setminus K$ that
contains $\Delta_i$.

The aim of this section is to prove:
\begin{pro}\label{7c} Assume that $K_c\cap \partial B_\lambda\neq\emptyset$,
 then for $i\in \{1,2\}$, there is a curve $\mathcal{L}_i\subset \Delta_i\cup\{0\}$
 stemming from $T_\lambda$ and converging to $\beta_c$. More
 precisely, $\mathcal{L}_i$ can be parameterized as
  $\mathcal{L}_i: [0,+\infty)\rightarrow \Delta_i\cup \{0\}$ such
  that $\mathcal{L}_i(0)=0,
  \mathcal{L}_i((0,+\infty))\subset\Delta_i$ and
  $\lim_{t\rightarrow+\infty}\mathcal{L}_i(t)=\beta_c$.
\end{pro}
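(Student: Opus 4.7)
The plan is to construct $\mathcal L_i$ as the concatenation of a short initial segment of a radial ray of $T_\lambda$ joining $0$ to the boundary of the puzzle piece $P_{d_0}(c)$, followed by an infinite chain of arcs built by iterated pullback under the inverse branch of the renormalization $g:=\epsilon f_\lambda^p$ that fixes $\beta_c$. The crucial local ingredient is that $\beta_c$ is the $\beta$-fixed point of the quadratic-like map $g$, so via the Douady--Hubbard straightening it corresponds to the $\beta$-fixed point of $p_\mu(z)=z^2+\mu$, whose multiplier is real and strictly greater than $1$. Consequently $g'(\beta_c)$ is a positive real larger than $1$, and there is an inverse branch $\psi$ of $g$ defined on a one-sided neighborhood of $\beta_c$ with $\psi(\beta_c)=\beta_c$ and $\psi'(\beta_c)\in(0,1)$. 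Because this derivative is a positive real, $\psi$ asymptotically preserves every angular sector at $\beta_c$; in particular it carries the local sector of $\Delta_i$ at $\beta_c$ into itself.

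Next I pick a seed arc $\gamma_0\subset \overline{\Delta_i}\cap \big(\overline{P_{d_0}(c)}\setminus P_{d_0+p}(c)\big)$ with endpoints $x_0\in \partial P_{d_0}(c)\cap \overline{T_\lambda}$ and $x_1:=\psi(x_0)\in\partial P_{d_0+p}(c)$; the existence of such an arc follows from the fact that $\Delta_i$ meets the fundamental annulus $P_{d_0}(c)\setminus\overline{P_{d_0+p}(c)}$ in a connected region with both boundary points accessible. Since $\Delta_i$ is simply connected (its complement $K\cup\overline{B_\lambda}$ is a connected compact subset of the sphere) and avoids the critical point $c\in K_c$, the branch $\psi$ extends by monodromy-free analytic continuation along $\gamma_0$, and I set $\gamma_k:=\psi^k(\gamma_0)$ for $k\ge 1$. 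Each $\gamma_k$ lies in $\overline{\Delta_i}\cap\big(\overline{P_{d_0+kp}(c)}\setminus P_{d_0+(k+1)p}(c)\big)$ by sector preservation, the endpoint of $\gamma_k$ matches the starting point of $\gamma_{k+1}$ via the identity $\psi^k(x_1)=\psi^{k+1}(x_0)$, and Koebe distortion applied to the iterates $\psi^k$ on a fixed disc centred at $\beta_c$ yields $\mathrm{diam}(\gamma_k)=\mathcal O(\nu^k)$ for some $\nu\in(0,1)$. The concatenation $\bigcup_{k\ge 0}\gamma_k$ is therefore a continuous simple arc in $\overline{\Delta_i}$ converging to $\beta_c$.

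To finish I prepend to $\gamma_0$ a radial segment of $T_\lambda$ lying in the sector $Q_{T_\lambda}$ corresponding to $\Delta_i$, joining $0$ to $x_0$; by the choice of sector this segment lies in $\overline{\Delta_i}\cup\{0\}$ with its interior in $\Delta_i$. After reparametrising by $[0,+\infty)$ the resulting total curve yields $\mathcal L_i$. The main obstacle is the sectorial analysis underlying the preceding step: one must verify that the set $\mathbb{\bar C}\setminus K$ decomposes near $\beta_c$ into sectors on each of which $\psi$ acts as a contraction fixing that sector, and identify which of these sectors corresponds to $\Delta_i$ as opposed to the $B_\lambda$-side or to the companion component $\Delta_{3-i}$. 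The sector-preservation property ultimately rests on the identification of $g'(\beta_c)$ as a positive real exceeding $1$ supplied by the straightening theorem, together with the fact that $R_\lambda(\theta)$ and $R_{T_\lambda}(\alpha_\theta)$ land at $\beta_c$ and $\beta_c'$ respectively, constraining how $K$ meets a small neighbourhood of $\beta_c$.
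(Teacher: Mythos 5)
Your proposal hinges on the claim that $g'(\beta_c)$ is a positive real number greater than $1$, obtained by transporting the multiplier of the $\beta$-fixed point of $p_\mu(z)=z^2+\mu$ through the straightening map. This claim is false for two independent reasons. First, the straightening conjugacy supplied by Douady--Hubbard is only quasiconformal, and quasiconformal conjugacies do not preserve multipliers of repelling periodic points; there is no reason for $g'(\beta_c)$ to agree with $p_\mu'(\beta)$. Second, and more decisively, even the polynomial multiplier $p_\mu'(\beta)=1+\sqrt{1-4\mu}$ fails to be a positive real when $\mu$ is a genuinely complex parameter in the Mandelbrot set (e.g.\ $\mu=i/4$), which is exactly the situation of interest here. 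A single invariant external ray can perfectly well land at a repelling fixed point whose multiplier has nonzero argument; in that case the ray spirals into the point in linearizing coordinates.

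Once $g'(\beta_c)$ is allowed to be non-real, the central mechanism of your argument collapses: the local inverse $\psi$ at $\beta_c$ rotates angular sectors rather than preserving them, so the assertion that ``$\psi$ asymptotically preserves every angular sector at $\beta_c$, in particular carries the local sector of $\Delta_i$ into itself'' has no justification, and the inductive claim $\gamma_k\subset\overline{\Delta_i}\cap\bigl(\overline{P_{d_0+kp}(c)}\setminus P_{d_0+(k+1)p}(c)\bigr)$ is not established. (It is plausible that the arcs do remain in $\Delta_i$, but that would have to be argued by showing that the appropriate inverse branch maps the boundary pieces of $\Delta_i$ near $\beta_c$ --- namely portions of $K_c$ and of $\overline{R_\lambda(\theta)}$ --- back into themselves; this is a topological/combinatorial argument and has nothing to do with a real positive multiplier.) The paper avoids this difficulty entirely by working globally: it introduces the disk $U=\overline{\mathbb C}\setminus\Gamma$ where $\Gamma=\bigcup_{j\geq 0}\bigl(\pm f_\lambda^j(K_c\cup\overline{R_\lambda(\theta)})\bigr)$, exhibits a single-valued inverse branch $G_i:U\to U\cap Z_i$ of $\epsilon f_\lambda^p$ built from the branches $g_j$, and invokes the Denjoy--Wolff theorem to obtain that $G_i^l\to z_i$ locally uniformly in $U$, identifying $z_i=\beta_c$ afterwards via Lemma \ref{5c}. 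No local linearization at $\beta_c$, and no control of $g'(\beta_c)$, is needed. You would need to replace your sector-preservation step by such a global argument (or a careful topological one) for the construction to go through.
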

\begin{proof}
Let $\Gamma=\bigcup_{j\geq0}(\pm
f_\lambda^j(K_c\cup\overline{R_\lambda(\theta)} ))$. By Lemma
\ref{5d}, any two distinct elements in the set $\{\pm
f_\lambda^j(K_c\cup\overline{R_\lambda(\theta)}); j\geq0\}$
intersect only at the point $\infty$, which implies that
$U=\mathbb{\bar{C}}\setminus\Gamma$ is a disk.

\textbf{Step 1.} {\it There exists $G_i:U\rightarrow U\cap Z_i$, an
inverse branch of $\epsilon f_\lambda^p$ such that the sequence
$\{G_i^l; l\geq0\}$ converges locally and uniformly in $U$ to a
constant $z_i\in K_c$.}

Because $U$ has no intersection with the post-critical set of
$f_\lambda$, its preimage $f_\lambda^{-1}(U)$ has exactly $2n$
components, say $V_1,\cdots, V_{2n}$. These components are arranged
symmetrically about the origin under the rotation $z\mapsto e^{\pi
i/n}z$. For every $1\leq j\leq 2n$, $f_\lambda: V_j\rightarrow U$ is
a conformal map. Moreover, $f_\lambda^{-1}(U)\subset
\mathbb{\bar{C}}\setminus K$.

For $1\leq j\leq p-1$, let $\Omega_j\in \{V_1,\cdots, V_{2n}\}$ be
the component of $f_\lambda^{-1}(U)$ such that
$\overline{\Omega}_j\cap f_\lambda^j(K_c)\neq\emptyset$ and the
inverse of $f_\lambda:\Omega_j\rightarrow U$ is denoted by $g_j$.
For $j=0$, let $\Omega_0^i$ be the component of $f_\lambda^{-1}(U)$
such that $\overline{\Omega_0^i}\cap K_c\neq\emptyset$ and
$\Omega_0^i\subset Z_i$. The inverse of
$f_\lambda:\Omega_0^i\rightarrow U$ is denoted by $g_0^i$ for $i\in
\{1,2\}$.

Now, we define
\begin{equation*}
G_i(z)=\begin{cases}
g_0^i\circ g_1 \circ\cdots\circ g_{p-1}(\epsilon z) ,\ \ z\in U  &\text{ if $p\geq 2$},\\
g_0^i(\epsilon z) ,\ \ z\in U  &\text{ if $p=1$}.
 \end{cases}
\end{equation*}

Because $(\epsilon f^p_\lambda, P_{d_0+p}(c), P_{d_0}(c))$ is a
$p$-($*$-)renormalization of $f_\lambda$ at $c$, we have
$G_i(P_{d_0}(c)\cap U)\subset P_{d_0+p}(c)\cap Z_i$. The map
$G_i:U\rightarrow U$ is not surjective; thus, by the Denjoy-Wolff
Theorem (see \cite{M1}), the sequence $\{G_i^l; l\geq0\}$ converges
locally and uniformly in $U$ to a constant $z_i$. It follows from
$G_i(P_{d_0}(c)\cap U)\subset P_{d_0+p}(c)\cap Z_i$ that $z_i\in
K_c$.

\begin{figure}
\begin{center}
\includegraphics[height=7.5cm]{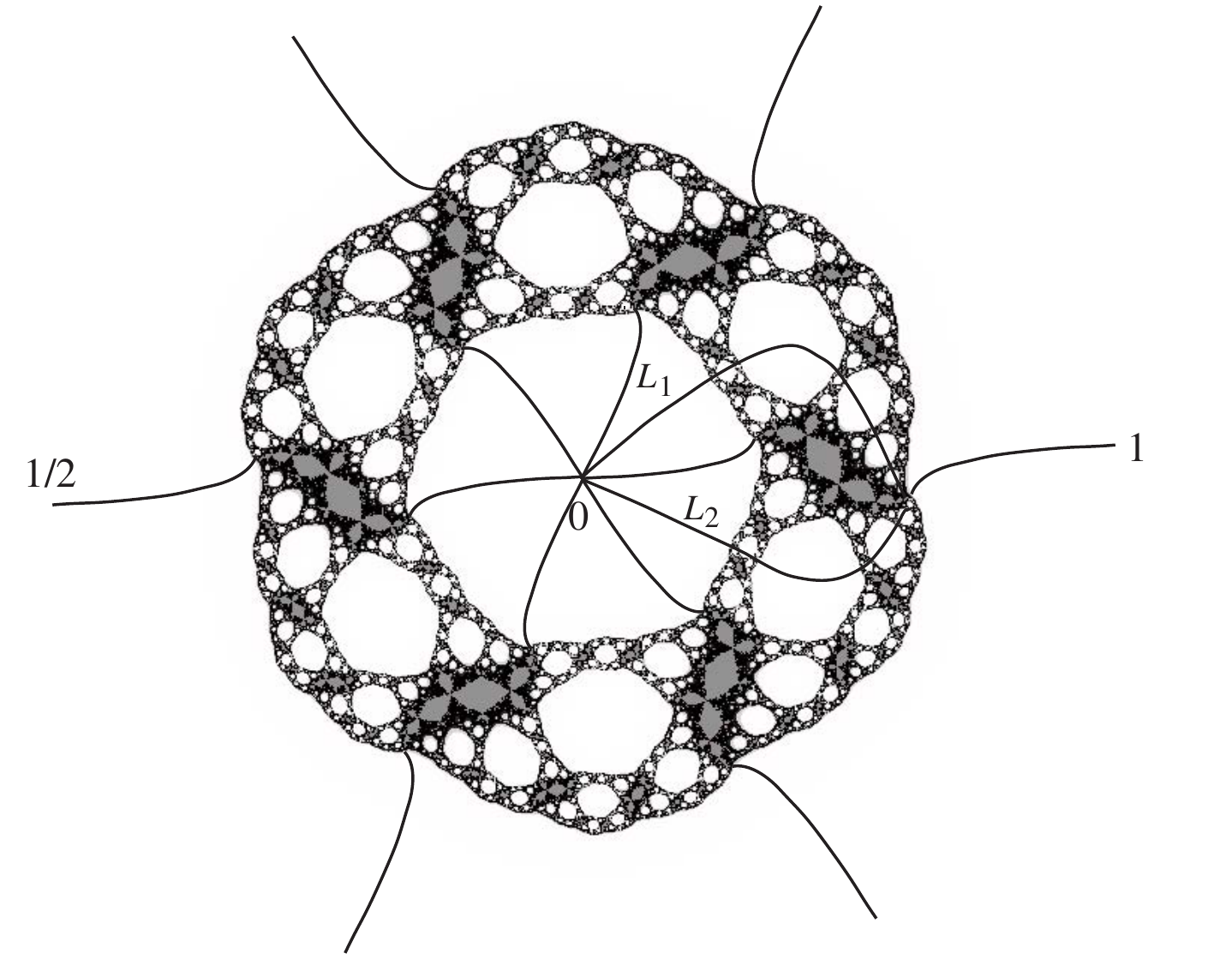}
\caption{Constructing two curves $L_1$ and $L_2$ that converge to
$\beta_c$, here $n=3$ and $f_\lambda$ is $1-$renormalizable at
$c=c_0$.}
\end{center}
\end{figure}

\textbf{Step 2.} {\it There exists a curve $C_i\subset
U\cap(\Delta_i\cup\{0\})$ connecting $0$ with $G_i(0)$ for
$i\in\{1,2\}$.}

Because the graph $\mathbf{G}_\lambda(\theta_1,\cdots,\theta_N)$ is
admissible, the filled Julia set $K_c$ is disjointed from the
boundary of any puzzle piece. Thus, for any $\alpha\in
\{\tau^s(\theta_j); 1\leq j\leq N, s\geq0\}$, $\Gamma$ is disjoint
from the cut ray $\Omega_\lambda^\alpha$ outside $\infty$. (This is
because the external ray $R_\lambda(\theta)$ has no intersection
with $g_\lambda(\theta_1,\cdots,\theta_N)$ outside $\infty$; compare
Lemma \ref{5d}). By Proposition \ref{4b3}, for any angle $\alpha\in
\{\tau^s(\theta_j); 1\leq j\leq N, s\geq0\}$ and any map $g\in
\{g_0^1,g_0^2,g_1,\cdots,g_{p-1}\}$, only one curve of
$g(\omega_\lambda^\alpha\setminus \{\infty\}),
g(\omega_\lambda^{\alpha+1/2}\setminus \{\infty\})$ intersects with
$\partial B_\lambda$, and the other curve connects $0$ with a
preimage of $0$.

Fix an angle $\alpha\in \{\tau^s(\theta_j); 1\leq j\leq N,
s\geq0\}$; we define a curve family $\mathcal{F}$ by
$$\mathcal{F}=\{\epsilon \omega_\lambda^\alpha\setminus \{\infty\} ;
 \ \epsilon^{2n}=1 \text{\ and \ } \epsilon \omega_\lambda^\alpha
  \subset \cup_{j\in\mathbb{I}\setminus\{0,n\}}S_j\}.$$

We construct the curve $C_i$ by an inductive procedure, as follows:

First, choose a curve $\zeta_{p-1}\in \mathcal{F}$ such that
$g_{p-1}(\zeta_{p-1})\cap \partial B_\lambda=\emptyset$ and let
$\gamma_{p-1}=g_{p-1}(\zeta_{p-1})$. Suppose that for some $2\leq
j\leq p-1$ we have already constructed the curves
$\gamma_{p-1},\cdots,\gamma_j$. We then choose $\zeta_{j-1}\in
\mathcal{F}$ such that $g_{j-1}(\zeta_{j-1})\cap
\partial B_\lambda=\emptyset$ and $\zeta_{j-1}\cap
\gamma_j=\emptyset$ and let $\gamma_{j-1}=g_{j-1}(\zeta_{j-1}\cup
\gamma_j)$. In this way, we can construct a sequence of curves
$\gamma_{p-1},\gamma_{p-2},\cdots,\gamma_{2},\gamma_{1}$ step by
step, and each curve has no intersection with $\partial B_\lambda$.
These curves connect $0$ with some iterated preimage of $0$. By
construction,
$$\gamma_1=\bigcup_{1\leq j\leq p-1}g_1\circ\cdots \circ g_j( \zeta_{j}).$$

We now choose $\zeta_{0}^i\in \mathcal{F}$ such that
$g_{0}^i(\zeta_{0}^i)\cap \partial B_\lambda=\emptyset$ and
$\zeta_{0}^i\cap \gamma_1=\emptyset$, and let
\begin{equation*}
C_i=\begin{cases}
g_0^i(\zeta_{0}^i\cup \gamma_1)\cup\{0\} ,\ \ &\text{ if $p\geq 2$},\\
g_0^i(\zeta_{0}^i)\cup\{0\} ,\ \  &\text{ if $p=1$}.
 \end{cases}
\end{equation*}

 The curve $C_i$ connects $0$ to $G_i(0)$ and $C_i\subset
U\cap(\Delta_i\cup\{0\})$, as required.

\textbf{Step 3.} {\it The union
$\mathcal{L}_i=\bigcup_{j\geq0}G_i^j(C_i)$ is the curve contained in
$\Delta_i\cup\{0\}$ and converging to $\beta_c$.}

By construction, $G_i(\mathcal{L}_i)\subset G_i(\mathcal{L}_i)\cup
C_i=\mathcal{L}_i$ and $\mathcal{L}_i\setminus
\{0\}\subset\Delta_i$.

To finish, we show $\mathcal{L}_i$ converges to $\beta_c$. By step
1, the sequence $\{G_i^k; k\geq0\}$ converges uniformly on any
compact subset of $U$ to a constant $z_i\in K_c$. Because $C_i$ is a
compact subset of $U$, the curve $\mathcal{L}_i$ converges to
$z_i\in K_c$ and $G_i(z_i)=z_i$. Because $\epsilon
f_\lambda^p(\mathcal{L}_i)\supset \mathcal{L}_i$, we conclude
$z_i=\beta_c$ by Lemma \ref{5c}.
\end{proof}

\begin{cor}\label{7d} If $T(c)$ is periodic for some $c\in C_\lambda$, then $\partial B_\lambda$ is locally connected.
\end{cor}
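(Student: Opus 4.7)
The plan is to verify the \textbf{BD} condition on $\partial B_\lambda$ and then invoke Proposition \ref{6a}. By Proposition \ref{6c}, it suffices to prove that $P(f_\lambda)\cap\partial B_\lambda$ is finite and that all its periodic points are repelling.

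Since $T(c)$ is $k$-periodic, the proof of Lemma \ref{5a} shows that $f_\lambda$ is $k$-renormalizable or $k/2$-$*$-renormalizable at $c$ with small filled Julia set $K_c$, and that the critical orbit is eventually captured by the cycle $\bigcup_{0\le j<k}(\pm f_\lambda^j(K_c))$. Hence
$$
P(f_\lambda)\ \subset\ \bigcup_{0\le j<k}\bigl(f_\lambda^j(K_c)\cup(-f_\lambda^j(K_c))\bigr)\cup\{\infty\}\cup(\text{finite pre-periodic tail}),
$$
so the intersection with $\partial B_\lambda$ is controlled by how the cycle of small filled Julia sets meets $\partial B_\lambda$.

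I will then split into two cases. If $K_c\cap\partial B_\lambda=\emptyset$, an iteration argument using the symmetry in Lemma \ref{1a} and the forward invariance of $\partial B_\lambda$ shows that each iterate $\pm f_\lambda^j(K_c)$ either misses $\partial B_\lambda$ or meets it in at most one point, and any such intersection point is the landing point of a pre-periodic external ray, hence a repelling pre-periodic point. If instead $K_c\cap\partial B_\lambda\ne\emptyset$, Lemma \ref{5d} directly gives $K_c\cap\partial B_\lambda=\{\beta_c\}$ where $\beta_c$ is the $\beta$-fixed point of the renormalization and the landing point of the periodic ray $R_\lambda(\theta)$; iterating gives a finite cycle of repelling points on $\partial B_\lambda$. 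In this case Proposition \ref{7c} plays its complementary role: the curves $\mathcal{L}_1,\mathcal{L}_2$, together with $R_\lambda(\theta)$, exhibit $\beta_c$ as a bi-laterally accessible boundary point and certify that no further components of $\partial B_\lambda$ accumulate on $K_c$ beyond $\{\beta_c\}$.

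In either case $P(f_\lambda)\cap\partial B_\lambda$ is finite and its periodic points are all repelling, so by Proposition \ref{6c} the map $f_\lambda$ satisfies the \textbf{BD} condition on $\partial B_\lambda$, and Proposition \ref{6a} yields the local connectivity of $\partial B_\lambda$. The main obstacle I expect is the first case: one must argue carefully that, even when $K_c$ itself is disjoint from $\partial B_\lambda$, no iterate $f_\lambda^{j_0}(K_c)$ can meet $\partial B_\lambda$ in more than a single repelling point. This is handled by recentering the renormalization at the appropriate critical point in the orbit and re-applying the uniqueness argument of Lemma \ref{5d} there, possibly exploiting the $\pm$-symmetry to transfer information between $K_c$ and $-K_c$.
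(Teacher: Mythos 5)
Your overall strategy -- verify the \textbf{BD} condition via Propositions \ref{6a} and \ref{6c} by controlling $P(f_\lambda)\cap\partial B_\lambda$ -- is the same as the paper's, but there are three concrete gaps in how you carry it out.

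First, you are missing the paper's preliminary reduction: one must first dispose of the geometrically finite case by \cite{TY}, and then observe that in the remaining case $f_\lambda$ has no parabolic cycle. Without that step you have no justification for the claim that the periodic point $\beta_c$ (which, by the Snail Lemma, could a priori be parabolic) is repelling, so the hypothesis of Proposition \ref{6c} is not verified.

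Second, you have Case $1$ ($K_c\cap\partial B_\lambda=\emptyset$) exactly backwards as to its difficulty. You flag it as ``the main obstacle'' and worry that some iterate $f_\lambda^{j_0}(K_c)$ might meet $\partial B_\lambda$ ``in more than a single repelling point,'' proposing an elaborate recentering argument. But the situation is trivial: since $T(c)$ is $k$-periodic, $f_\lambda^k(K_c)=K_c$, and since $\partial B_\lambda$ is forward invariant, any point $x\in f_\lambda^j(K_c)\cap\partial B_\lambda$ for $0<j<k$ would give $f_\lambda^{k-j}(x)\in K_c\cap\partial B_\lambda$, contradicting the hypothesis. Combining this with the $\pm$-symmetry of $\partial B_\lambda$ yields $P(f_\lambda)\cap\partial B_\lambda=\emptyset$ directly -- no iterate of $\pm K_c$ meets $\partial B_\lambda$ at all.

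Third, and more seriously, in Case $2$ you misattribute the key step. You write that ``Lemma \ref{5d} directly gives $K_c\cap\partial B_\lambda=\{\beta_c\}$.'' It does not. Lemma \ref{5d} only produces a \emph{unique external ray} accumulating on $K_c$, landing at $\beta_c$; at this stage $\partial B_\lambda$ is not yet known to be locally connected, so there could be points of $K_c\cap\partial B_\lambda$ not accessible from $B_\lambda$ by any external ray. The single-point intersection $K_c\cap\partial B_\lambda=\{\beta_c\}$ is precisely what Proposition \ref{7c} is for: the curves $\mathcal{L}_1,\mathcal{L}_2$ glue to a Jordan arc $\mathcal{L}=\mathcal{L}_1\cup\mathcal{L}_2\cup\{\beta_c\}$ that \emph{separates} $K_c\setminus\{\beta_c\}$ from $\partial B_\lambda\setminus\{\beta_c\}$, and that separation is what forces the intersection to be the singleton. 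By assigning Proposition \ref{7c} a vague ``complementary'' role about bi-lateral accessibility rather than recognizing it as the load-bearing step, your argument as written is circular at exactly the point where the real content lies.
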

\begin{proof} We can assume that $f_\lambda$ is not geometrically finite;
 otherwise, the Julia set is locally connected (see \cite{TY}).
 Thus, $f_\lambda$ has no parabolic point.

 If $K_c\cap \partial B_\lambda=\emptyset$, then for all $j\geq0$, $f_\lambda^j(K_c)\cap \partial
B_\lambda=\emptyset$. Because $P(f_\lambda)$ is a subset of
$(\bigcup_{j\geq0}f_\lambda^j(\pm
f_\lambda(K_c)))\bigcup\{\infty\}$, we conclude $P(f_\lambda)\cap
\partial B_\lambda=\emptyset$. Based on Proposition \ref{6a} and Proposition \ref{6c}, $\partial B_\lambda$ is locally connected.

If $K_c\cap \partial B_\lambda\neq\emptyset$, then by Proposition
\ref{7c}, the closed curve
$\mathcal{L}=\mathcal{L}_1\cup\mathcal{L}_2\cup \{\beta_c\}$
separates $K_c\setminus\{\beta_c\}$ from $\partial
B_\lambda\setminus\{\beta_c\}$. In this case, for all $j\geq0$,
$f_\lambda^j(K_c)\cap
\partial B_\lambda=\{f_\lambda^j(\beta_c)\}$. Thus, $\#(P(f_\lambda)\cap
\partial B_\lambda)<\infty$, and all periodic points in $P(f_\lambda)\cap
\partial B_\lambda$ are repelling.
Again by Proposition \ref{6a} and Proposition \ref{6c}, $\partial
B_\lambda$ is locally connected.
\end{proof}

\subsection{Real case}

In this section, we will deal with real parameters. Due to the
symmetry of the parameter plane, we only need to consider
$\lambda\in\mathbb{R}^+=(0,+\infty)$. In this case, the Julia set
$J(f_\lambda)$ is symmetric about the real axis. If
$C_\lambda\subset A_\lambda$, by `The Escape Trichotomy' (Theorem
\ref{1c}), the Julia set $J(f_\lambda)$ is either a Cantor set, a
Cantor set of circles or a Sierpinski curve. In the latter two
cases, the local connectivity of $\partial B_\lambda$ is already
known. In the following discussion, we assume $C_\lambda\cap
A_\lambda=\emptyset$.

\begin{lem}\label{7e} Suppose $\lambda\in\mathbb{R}^+$ and $C_\lambda\cap
A_\lambda=\emptyset$; then, $f_\lambda$ is $1$-renormalizable at
$c_0=\sqrt[2n]{\lambda}$.
\end{lem}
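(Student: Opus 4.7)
The plan is to construct a quadratic-like restriction of $f_\lambda$ at $c_0$ directly, bypassing Yoccoz puzzles (which the paper explicitly notes fail for $\lambda\in\mathbb{R}^+$), by exploiting the dynamics of $f_\lambda$ on $\mathbb{R}^+$ together with the functional identity $f_\lambda(c_0^2/z)=f_\lambda(z)$ which is immediate from $c_0^{2n}=\lambda$.

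First, since angle $0$ is $\tau$-fixed, the rational external ray $R_\lambda(0)$ lands, by Fatou's theorem, at a fixed point $\alpha\in\partial B_\lambda\cap\mathbb{R}^+$ of $f_\lambda$. Using the fixed-point equation $\lambda=\alpha^{n+1}-\alpha^{2n}$, one checks $c_0<\alpha<1$ and $f'_\lambda(\alpha)=n(2\alpha^{n-1}-1)\ge 1$; the equality case corresponds to the degenerate parabolic parameter $\lambda_{\max}:=\max_{\alpha\in(0,1)}(\alpha^{n+1}-\alpha^{2n})$, which is handled as a limit. Setting $\alpha^*:=c_0^2/\alpha$, the identity gives $f_\lambda(\alpha^*)=\alpha$ and $0<\alpha^*<c_0<\alpha$; differentiating yields $f'_\lambda(\alpha^*)=-(\alpha/c_0)^2 f'_\lambda(\alpha)$, so $|f'_\lambda(\alpha^*)|>|f'_\lambda(\alpha)|\ge 1$. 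A parallel landing argument for the radial ray $R_{T_\lambda}(0)$, whose $f_\lambda$-image is $R_\lambda(0)$, identifies $\alpha^*$ as its landing point on $\partial T_\lambda$.

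Next I verify that the interval $I:=[\alpha^*,\alpha]\subset\mathbb{R}^+$ is forward invariant: since $f_\lambda|_{\mathbb{R}^+}$ decreases on $(0,c_0]$ from $+\infty$ to $v_\lambda^+=2\sqrt{\lambda}$ and increases on $[c_0,\infty)$ to $+\infty$, one obtains $f_\lambda(I)=[v_\lambda^+,\alpha]$ and $f_\lambda^{-1}(I)\cap\mathbb{R}^+=I$. The hypothesis $C_\lambda\cap A_\lambda=\emptyset$ forces $v_\lambda^+\notin T_\lambda$, hence $v_\lambda^+\ge\alpha^*$ and $f_\lambda(I)\subset I$. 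In particular the whole critical orbit $\{c_0,v_\lambda^+,f_\lambda(v_\lambda^+),\dots\}$ lies in $[v_\lambda^+,\alpha]\subset I$. Now I take $V$ to be a sufficiently thin real-symmetric topological disk neighborhood of $I$, disjoint from $v_\lambda^-$ and from every non-real critical point $c_k$ ($k\ne 0,n$), with the shape of $V$ near the two endpoints $\alpha,\alpha^*$ adapted to the Koenigs linearizing coordinates there (e.g., lemon-shaped with cusps at $\alpha,\alpha^*$, rather than a uniform $\varepsilon$-tube). Letting $U$ be the component of $f_\lambda^{-1}(V)$ containing $c_0$, Riemann--Hurwitz applied to $U\ni c_0$ as the unique ramification point (of local degree $2$) shows that $U$ is a topological disk and $f_\lambda\colon U\to V$ is a proper holomorphic map of degree $2$. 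Since the critical orbit stays inside $I\subset U$, the filled Julia set $\bigcap_{k\ge 0}(f_\lambda|_U)^{-k}(V)$ is connected, so $(f_\lambda,U,V)$ is the sought $1$-renormalization at $c_0$.

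The main obstacle is arranging the strict inclusion $\overline{U}\subset V$: a naive $\varepsilon$-neighborhood of $I$ can fail this because $f_\lambda$ expands transversely to $\partial V$ near $\alpha$, producing boundary contact. Matching $\partial V$ to the local repelling dynamics at $\alpha$ and $\alpha^*$ via Koenigs linearization makes $V$ an attracting neighborhood for the relevant inverse branch of $f_\lambda$, so $f_\lambda^{-1}(\partial V)\cap(\text{neighborhood of the endpoint})$ lies strictly inside $V$; away from the endpoints, $\partial V\cap\mathbb{R}=\emptyset$ and transversality for $V$ thin enough forces the preimage curve inside $V$, giving $\overline U\subset V$.
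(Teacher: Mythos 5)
Your approach is genuinely different from the paper's and contains several correct observations, but the key step — arranging $\overline{U}\subset V$ — has a real gap.

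The paper's proof is remarkably short and explicit: it takes $U$ to be the interior of the sector $S_0\cup S_{-(n-1)}$ (the angular region $|\arg z|\le\pi/n$) with two equipotential caps removed (one in $B_\lambda$ near $\infty$, one in $T_\lambda$ near $0$), and takes $V=\overline{\mathbb{C}}\setminus(\{G_\lambda\ge n\}\cup[-\infty,v_\lambda^-])$. Writing $f_\lambda=\sigma\circ\pi$ with $\pi(z)=z^n$ and $\sigma(w)=w+\lambda/w$, one sees immediately that $f_\lambda$ carries the open sector two-to-one onto $\overline{\mathbb{C}}\setminus[-\infty,v_\lambda^-]$, so $f_\lambda\colon U\to V$ is a quadratic-like map. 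Crucially, $V$ is enormous (nearly the whole sphere), so $\overline U\subset V$ is automatic; one never has to track the size of $U$. Your construction replaces this by a thin tubular neighborhood $V$ of the forward-invariant interval $I=[\alpha^*,\alpha]$, which forces you to control $U$ against $V$ pointwise.

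This is where the proposal breaks. Near $c_0$ the map is quadratic: $f_\lambda(z)=v_\lambda^+ + C(z-c_0)^2+O((z-c_0)^3)$. If $V$ has transverse width $\varepsilon$ at $v_\lambda^+$, then the component $U$ of $f_\lambda^{-1}(V)$ through $c_0$ has width $\asymp\sqrt{\varepsilon/C}$ at $c_0$, which for small $\varepsilon$ is much larger than the width $\varepsilon$ of $V$ at $c_0$. So $\partial U$ exits $V$ in a neighborhood of $c_0$, and the claim that ``transversality for $V$ thin enough forces the preimage curve inside $V$'' is false exactly where it matters: transversality is available away from the critical point, but $c_0\in I$ is a critical point of $f_\lambda$. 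Widening $V$ near $c_0$ to absorb $U$ only pushes the problem to the two preimages of $c_0$ in $I$, and so on; a uniform ``sufficiently thin'' $V$ cannot satisfy $\overline U\subset V$. One genuinely needs either the paper's large-$V$ device (or a similar Markov-partition/puzzle-piece construction) to get the compact containment.

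Two further, smaller gaps: (i) when $f'_\lambda(\alpha)=1$ the point $\alpha$ is parabolic, there is no Koenigs coordinate, and the whole ``repelling neighborhood'' mechanism at the endpoints fails; saying this case is ``handled as a limit'' is not an argument, and this parameter genuinely occurs (it is the $\lambda=1/4$-type locus for the renormalization, cf.\ Proposition~\ref{7i}). (ii) As written, $V$ is declared to be both a neighborhood of $I$ and a lemon with cusps at $\alpha,\alpha^*$, which is inconsistent: if $\alpha\in\partial V$ then the critical orbit, which is known only to lie in $[v_\lambda^+,\alpha]\subset\overline V$, could leave $U$ (e.g.\ in the critically pre-fixed case $f_\lambda^k(v_\lambda^+)=\alpha$), and the filled Julia set of the renormalization would fail to contain its $\beta$-fixed point.

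The material you develop — identification of the fixed point $\alpha$ as the landing point of $R_\lambda(0)$, the conjugate point $\alpha^*=c_0^2/\alpha$ on $\partial T_\lambda$, and the forward invariance of $I=[\alpha^*,\alpha]$ — is correct and useful (it is essentially what the paper proves in Proposition~\ref{7f} and Remark~\ref{7f1}), but it belongs downstream of the lemma, not as a substitute for the sector construction that actually produces the quadratic-like restriction.
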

\begin{proof} Let $U$ be the interior of $(S_0\cup S_{-(n-1)})
\setminus\{z\in B_\lambda\cup T_\lambda; G_\lambda(z)\geq 1\}$ and
$V=\mathbb{\bar{C}}\setminus (\{z\in B_\lambda; G_\lambda(z)\geq
n\}\cup [-\infty,v_\lambda^-])$. One can easily verify that
$f_\lambda: U\rightarrow V$ is a quadratic-like map. Because
$C_\lambda\cap A_\lambda=\emptyset$, the critical orbit
$\{f_\lambda^k(c_0); k\geq0\}$ is contained in $U\cap \mathbb{R}^+$.
This implies that $(f_\lambda,U,V)$ is a $1$-renormalization of
$f_\lambda$ at $c_0$.
\end{proof}

Let $K_{c_0}=\bigcap_{k\geq0}f_\lambda^{-k}(U)$ be the small filled
Julia set of the renormalization $(f_\lambda,U,V)$, $\beta_{c_0}$ be
the $\beta-$fixed point and $\beta'_{c_0}$ be the preimage of
$\beta_{c_0}$. It is easy to check that $K_{c_0}$ is symmetric about
the real axis and $K_{c_0}\cap \mathbb{R}^+$ is a connected and
closed interval.

\begin{pro}\label{7f}  $K_{c_0}\cap \partial
B_\lambda=\{\beta_{c_0}\}$.
\end{pro}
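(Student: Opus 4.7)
The proof naturally splits into two inclusions. I would first establish $\beta_{c_0}\in K_{c_0}\cap\partial B_\lambda$ by a direct real-axis analysis. On $\mathbb R^+$ the map $f_\lambda(x)=x^n+\lambda/x^n$ attains its unique minimum $2\sqrt\lambda$ at $c_0=\lambda^{1/(2n)}$ and tends to $+\infty$ at $0^+$ and $+\infty$; since $C_\lambda\cap A_\lambda=\emptyset$, the critical orbit is bounded, so $f_\lambda|_{\mathbb R^+}$ has a largest real fixed point. By $\mathbb R$-symmetry of the renormalization $(f_\lambda,U,V)$, its straightening is conjugate to a real quadratic polynomial $p_\mu$ whose $\beta$-fixed point is the rightmost real point of $K(p_\mu)$, and pulling back via the straightening identifies $\beta_{c_0}$ with this largest real fixed point of $f_\lambda$. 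For $x>\beta_{c_0}$ one has $f_\lambda(x)>x$, so the orbit of $x$ increases monotonically to $+\infty$ while remaining in the Fatou set; by connectedness $(\beta_{c_0},+\infty)\subset B_\lambda$, giving $\beta_{c_0}\in\partial B_\lambda$, while $\beta_{c_0}\in K_{c_0}$ holds by definition.

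For the harder inclusion $K_{c_0}\cap\partial B_\lambda\subset\{\beta_{c_0}\}$, I would adapt the scheme of Proposition \ref{7c}. By real symmetry, $R_\lambda(0)=(\beta_{c_0},+\infty)$, $R_\lambda(1/2)=(-\infty,-\beta_{c_0})$, $R_{T_\lambda}(\alpha_0)=(0,\beta'_{c_0})$ and $R_{T_\lambda}(\alpha_0+1/2)=(-\beta'_{c_0},0)$ are all real segments. The goal is to build two arcs $\mathcal L^+\subset H^+\cup\{0,\beta_{c_0}\}$ and $\mathcal L^-\subset H^-\cup\{0,\beta_{c_0}\}$, each running from $0$ to $\beta_{c_0}$, so that the resulting Jordan curve $\mathcal L=\mathcal L^+\cup\mathcal L^-$ separates $\overline{B_\lambda}$ from $K_{c_0}\setminus\{\beta_{c_0}\}$ in $\hat{\mathbb C}$. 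Existence of such an $\mathcal L$ immediately forces the intersection to reduce to $\{\beta_{c_0}\}$.

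The construction of $\mathcal L^+$ would proceed along the lines of Step 1 of Proposition \ref{7c}: the forward-invariant set $\Gamma=\pm(K_{c_0}\cup\overline{R_\lambda(0)})$ has simply connected complement $\Xi\subset\hat{\mathbb C}$, on which the univalent inverse branch $G^+$ of $f_\lambda$ that fixes $\beta_{c_0}$ and sends a chosen upper-half-plane component of $\Xi$ to itself is well-defined and non-surjective; by Denjoy--Wolff the iterates $(G^+)^k$ converge locally uniformly in $\Xi$ to the constant $\beta_{c_0}$. The main obstacle is producing the starter arc $C^+\subset(\Xi\cap\overline{H^+})\cup\{0\}$ from $0$ to $G^+(0)=c_0\,e^{i\pi/(2n)}$, since the cut-ray machinery underpinning Step 2 of Proposition \ref{7c} requires an admissible graph, which does not exist for real $\lambda>0$. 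I would resolve this directly via the $2n$-fold rotational and $z\mapsto\bar z$ symmetries of $f_\lambda$: a short arc inside the sector $S_0$ from $0$ to $c_0\,e^{i\pi/(2n)}$ --- for instance the radial segment, perturbed slightly to skirt $K_{c_0}$ on the upper side --- lies in $(\Xi\cap\overline{H^+})\cup\{0\}$ and connects $0$ to $G^+(0)$. Setting $\mathcal L^+=\{\beta_{c_0}\}\cup\bigcup_{k\ge 0}(G^+)^k(C^+)$ and letting $\mathcal L^-$ be its complex conjugate then yields the required separating curve.
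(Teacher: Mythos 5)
Your proposal follows essentially the same strategy as the paper's: build a Jordan curve $\mathcal{L}\cup\overline{\mathcal{L}}\cup\{\beta_{c_0}\}$ from the iterates of an inverse branch on a slit sphere, use Denjoy--Wolff for convergence, and use Lemma~\ref{5c} (or the equivalent) to identify the limit with $\beta_{c_0}$. Your $\Gamma=\pm(K_{c_0}\cup\overline{R_\lambda(0)})$ is exactly the paper's $K$, your $\Xi$ is its $Y$, and your $G^+$ is its $g$. So the architecture is right. However, there are two concrete gaps in Step 2 that matter.

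\textbf{First gap: the starter arc must avoid $\overline{B_\lambda}$, not just $\Gamma$.} You only require $C^+\subset(\Xi\cap\overline{H^+})\cup\{0\}$, which guarantees disjointness from $K_{c_0}$, $-K_{c_0}$, and the real rays, but says nothing about $\overline{B_\lambda}$. For the final curve to separate $\overline{B_\lambda}$ from $K_{c_0}\setminus\{\beta_{c_0}\}$ it is indispensable that $\mathcal{L}^\pm$ stay off $\partial B_\lambda$, which reduces (by invariance of $\partial B_\lambda$ under pullback by $g$) to showing $[0,p_0]\cap\overline{B_\lambda}=\emptyset$ for the starter segment. The paper does precisely this with a separate argument: it proves $B_\lambda\cap\{\,|z|<\sqrt[2n]{\lambda}\,\}=\emptyset$ via the anti-holomorphic symmetry $z\mapsto\sqrt[n]{\lambda}/\bar z$ which swaps $B_\lambda$ and $T_\lambda$ and fixes the circle $|z|=\sqrt[2n]{\lambda}$ pointwise. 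Without this or an equivalent step your curve is not known to separate.

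\textbf{Second gap: the proposed perturbation is both unnecessary and risky.} You suggest perturbing the radial segment $[0,p_0]$ to ``skirt $K_{c_0}$,'' but the paper shows no perturbation is needed: since $p_0=c_0 e^{i\pi/(2n)}$ is the bisector of $S_0$, the image $f_\lambda([0,p_0])$ lies in $i\mathbb{R}$, and $K_{c_0}\subset\mathrm{int}(S_0\cup S_{-(n-1)})$ is disjoint from $i\mathbb{R}$, hence $[0,p_0]\cap K_{c_0}=\emptyset$ already. Worse, an uncontrolled perturbation could push the arc into $B_\lambda$ (the very set you must avoid), and you give no mechanism to rule this out. Once both of these points are supplied --- the $B_\lambda\cap D=\emptyset$ lemma and the $i\mathbb{R}$ observation --- your construction coincides with the paper's and the proof closes.
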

\begin{proof}
As with the proof of Proposition \ref{7c}, the idea of the proof is
to construct a Jordan curve $\mathcal{C}$ that separates
$K_{c_0}\setminus \{\beta_{c_0}\}$ from $\partial
B_\lambda\setminus\{\beta_{c_0}\}$,.

We first show that $\beta_{c_0}$ is the landing point of the zero
external ray $R_\lambda(0)$. Note that rational external rays (i.e.,
external rays with a rational angle) always land. Let $z_0$ be the
landing point of $R_\lambda(0)$. Obviously, $R_\lambda(0)\subset
\mathbb{R}^+$ and $z_0$ is a fixed point of $f_\lambda$, which
implies that $z_0\in U\cap \mathbb{R}^+$, and the orbit of $z_0$
does not escape from $U$. Therefore, $z_0\in K_{c_0}$. Because
$R_\lambda(0)$ is an $f_\lambda$-invariant ray that lands at $z_0$,
we conclude $z_0=\beta_{c_0}$ based on Lemma \ref{5c}.

Let $K=K_{c_0}\cup [\beta_{c_0},+\infty]\cup (-K_{c_0})\cup
[-\infty,-\beta_{c_0}]$. One can easily verify
$f^{-1}_\lambda(K)=\bigcup_{\omega^{2n}=1}\omega(K_{c_0}\cup
[0,+\infty])$. The set $Y=\mathbb{\bar{C}}\setminus K$ is a disk,
and its preimage $f^{-1}_\lambda(Y)$ consists of $2n$ components
that are symmetric about the origin under the rotation $z\mapsto
e^{i\pi/n}z$. For each component $X$ of $f^{-1}_\lambda(Y)$,
$f_\lambda:X\rightarrow Y$ is a conformal map. Let $X_0$ be the
component of $f^{-1}_\lambda(Y)$ that is contained in $S_0$ and $g$
be the inverse map of $f_\lambda:X_0\rightarrow Y$. Based on the
Denjoy-Wolff theorem, the sequence of maps $\{g^k;k\geq0\}$
converges locally and uniformly in $Y$ to a constant, say $x$.
Because $g(Y\cap V)\subset X_0\cap U$, we conclude $x\in K_{c_0}$.

Let $\Delta$ be the component of $\mathbb{\bar{C}}\setminus
(\overline{B}_\lambda\cup K_{c_0}\cup(-K_{c_0}) \cup\mathbb{R})$
that intersects with $T_\lambda$ and lies in the upper half plane.

\textbf{Claim:\ }{\it  There is a path $\mathcal{L}\subset
\Delta\cup\{0\}$ stemming from $T_\lambda$ and converging to
$\beta_{c_0}$. More precisely, $\mathcal{L}$ can be parameterized as
$\mathcal{L}: [0,+\infty)\rightarrow \Delta\cup \{0\}$ such that
$\mathcal{L}(0)=0,
 \mathcal{L}((0,+\infty))\subset\Delta$ and
  $\lim_{t\rightarrow +\infty}\mathcal{L}(t)=\beta_{c_0}$}

  Let $p_0=\sqrt[2n]{-\lambda}$ be the preimage of $0$ that lies in
  $S_0$ and $\gamma_0=[0, p_0]$ be the segment connecting $0$ with
  $p_0$. Then, $\gamma_0\cap (K_{c_0}\cup \partial B_\lambda)=\emptyset$.
  Indeed, $\gamma_0 \cap K_{c_0}=\emptyset$ follows from the fact that
  $f_\lambda(\gamma_0)\cap K_{c_0}\subset i\mathbb{R}\cap K_{c_0}=\emptyset$. In the following, we show that $\gamma_0 \cap \partial B_\lambda=\emptyset$. It suffices to show that $B_\lambda\cap D=\emptyset$, where $D=\{z\in \mathbb{\mathbb{C}};
  |z|<\sqrt[2n]{\lambda}\}$. Otherwise, $B_\lambda\cap
  D\neq\emptyset$ implies $B_\lambda\cap
  \partial D\neq\emptyset$. Because $\varphi:
  z\mapsto\sqrt[n]{\lambda}/\bar{z}$ maps $B_\lambda$ onto
  $T_\lambda$ and the restriction  $\varphi |_{\partial D}$ is the
  identity map, we have $B_\lambda\cap
  \partial D=\varphi(B_\lambda\cap
  \partial D)=T_\lambda \cap
  \partial D$. But this implies $B_\lambda\cap T_\lambda\neq\emptyset$, contradiction.

  Note that $g$ maps $\gamma_0$ outside $D$ and $g(\gamma_0)$
  connects $p_0$ with a preimage of $p_0$ that lies inside $S_0$.
  Let $\mathcal{L}=\bigcup_{k\geq0}g^k(\gamma_0)$. By construction, $\mathcal{L}\cap (K_{c_0}\cup
  \partial B_\lambda)=\emptyset$, and $\mathcal{L}$ converges to $x\in
   K_{c_0}$. Because
   $f_\lambda(\mathcal{L})=\mathcal{L}\cup f_\lambda(\gamma_0)\supset\mathcal{L}$, we conclude
   $x=\beta_{c_0}$ based on Lemma \ref{5c}.

   Let
   $\mathcal{C}=\mathcal{L}\cup\mathcal{L}^*\cup \{\beta_{c_0}\}$,
   where $\mathcal{L}^*=\{\bar{z}; z\in \mathcal{L}\}$.
   $\mathcal{C}$ is a Jordan curve separating $K_{c_0}\setminus \{\beta_{c_0}\}$ from $\partial B_\lambda\setminus\{\beta_{c_0}\}$. The conclusion follows.
\end{proof}

\begin{rem}\label{7f1}
Based on the proof of Proposition \ref{7f}, we conclude $$
\partial B_\lambda\cap \mathbb{R}=\{\pm\beta_{c_0}\},\
K_{c_0}\cap \mathbb{R}=[\beta'_{c_0},\beta_{c_0}],\  \partial
T_\lambda\cap \mathbb{R}=\{\pm\beta'_{c_0}\}.$$
\end{rem}

\begin{cor}\label{7g}  Suppose $\lambda\in\mathbb{R}^+$ and $C_\lambda\cap
A_\lambda=\emptyset$; then, $\partial B_\lambda$ is locally
connected.
\end{cor}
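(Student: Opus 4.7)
The plan is to adapt the proof of Corollary \ref{7d} to the real setting, replacing the admissible-graph machinery (which requires $\lambda\in\mathcal{H}$) by the $1$-renormalization at $c_0$ provided by Lemma \ref{7e}, together with the separation result of Proposition \ref{7f} and the BD criterion of Propositions \ref{6a} and \ref{6c}.

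First I would combine Proposition \ref{7f} with the $z\mapsto -z$ symmetry of Lemma \ref{1a} (applied with $\omega=-1$, using $(-1)^{2n}=1$): since $-K_{c_0}$ is the small filled Julia set of a symmetric $1$-renormalization, one obtains $(-K_{c_0})\cap \partial B_\lambda=\{-\beta_{c_0}\}$. The main step is then to establish the inclusion
$$P(f_\lambda)\ \subset\ K_{c_0}\cup(-K_{c_0})\cup\{\infty\}.$$
For this, note that each critical point $c_k\in C_\lambda$ satisfies $f_\lambda(c_k)=(-1)^k v_\lambda^+$. Since $v_\lambda^+\in\mathbb{R}^+$ and the critical orbit does not escape, the forward orbit of $v_\lambda^+$ is confined to the real interval $K_{c_0}\cap\mathbb{R}^+\subset[\beta'_{c_0},\beta_{c_0}]$ identified in Remark \ref{7f1}. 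Using the identity $f_\lambda(-z)=(-1)^n f_\lambda(z)$, the forward orbit of $v_\lambda^-=-v_\lambda^+$ lies in $-K_{c_0}$ when $n$ is odd, while when $n$ is even it agrees from the second iterate onward with the orbit of $v_\lambda^+$ (and $v_\lambda^-$ itself lies in $-K_{c_0}$). Taking closures yields the desired containment, whence $P(f_\lambda)\cap\partial B_\lambda\subset\{\pm\beta_{c_0}\}$ is finite.

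To conclude, if $f_\lambda$ is geometrically finite, then $J(f_\lambda)$ is locally connected by \cite{TY}. Otherwise $f_\lambda$ has no parabolic cycles, so both points in $\{\pm\beta_{c_0}\}$ are repelling periodic points (the former fixed under $f_\lambda$; the latter fixed when $n$ is odd, strictly pre-fixed when $n$ is even). Proposition \ref{6c} then furnishes the BD condition on $\partial B_\lambda$, and Proposition \ref{6a} delivers local connectivity. The only delicate point is the parity bookkeeping used in establishing the inclusion above; apart from that minor technicality, the argument runs exactly parallel to the complex-parameter case treated in Corollary \ref{7d}.
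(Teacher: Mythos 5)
Your proposal is correct and follows essentially the same route as the paper: use Proposition \ref{7f} to pin down $P(f_\lambda)\cap\partial B_\lambda\subset\{\pm\beta_{c_0}\}$, then split into the geometrically finite case (handled by \cite{TY}) and the case where $\beta_{c_0}$ is repelling (handled by Propositions \ref{6a} and \ref{6c}). You fill in the inclusion $P(f_\lambda)\subset K_{c_0}\cup(-K_{c_0})\cup\{\infty\}$ via the parity identity $f_\lambda(-z)=(-1)^n f_\lambda(z)$, which the paper states implicitly; the only slip is the phrase ``both points in $\{\pm\beta_{c_0}\}$ are repelling periodic points,'' since for $n$ even $-\beta_{c_0}$ is strictly pre-periodic (as you yourself note), but this does not affect the application of Proposition \ref{6c}, which only requires the \emph{periodic} points of $P(f_\lambda)\cap\partial B_\lambda$ to be repelling.
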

\begin{proof}
By Proposition \ref{7f}, if $n$ is odd, then $P(f_\lambda)\cap
\partial B_\lambda\subset (-K_{c_0}\cup K_{c_0})\cap\partial B_\lambda\subset\{\pm
\beta_{c_0}\}$; if $n$ is even, then $P(f_\lambda)\cap
\partial B_\lambda\subset K_{c_0}\cap\partial B_\lambda\subset\{
\beta_{c_0}\}$. If $\beta_{c_0}$ is a parabolic point, then
$f_\lambda$ is geometrically finite, and the local connectivity of
$\partial B_\lambda$ follows from \cite{TY}. Otherwise, based on
Propositions \ref{6a} and \ref{6c}, $\partial B_\lambda$ is also
locally connected.
\end{proof}

\subsection{Local connectivity implies higher regularity}

At this point, we have already proven that $\partial B_\lambda$ is
locally connected if the Julia set is not a Cantor set.  Based on
the arguments of Devaney \cite{D0}, we prove the following
proposition, which will lead to Theorem \ref{11a}.

\begin{pro}\label{7h} If $\partial B_\lambda$ is locally connected, then $\partial B_\lambda$ is a Jordan curve.
\end{pro}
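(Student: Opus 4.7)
The plan is to use local connectivity to extend the B\"ottcher coordinate continuously to the unit circle, and then verify the extension is injective by a symmetry-and-iteration argument on $\mathbb{S} = \mathbb{R}/\mathbb{Z}$.

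First, since $J(f_\lambda)$ is not a Cantor set, Theorem \ref{1c} forces $v_\lambda^+ \notin B_\lambda$, so $B_\lambda$ contains no critical point other than $\infty$ and $\phi_\lambda \colon B_\lambda \to \overline{\mathbb{C}} \setminus \overline{\mathbb{D}}$ is a conformal isomorphism. Local connectivity of $\partial B_\lambda$ combined with Carath\'eodory's theorem yields a continuous extension of $\phi_\lambda^{-1}$ to the closure, whose restriction is a continuous surjection $\gamma \colon \mathbb{S} \to \partial B_\lambda$. By continuity $\gamma$ inherits the semi-conjugacy $\gamma(nt) = f_\lambda(\gamma(t))$ and, from Lemma \ref{1a}, the rotational equivariance $\gamma(t + k/(2n)) = e^{\pi i k/n}\gamma(t)$ for every $k \in \mathbb{Z}$. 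It then suffices to show $\gamma$ is injective.

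Suppose for contradiction that $\gamma(\theta_1) = \gamma(\theta_2) = p$ with $\theta_1 \neq \theta_2$, and set $\alpha = \theta_2 - \theta_1 \pmod 1$. If $n^k\alpha \equiv 1/2 \pmod 1$ for some $k \geq 0$, then the antipodal symmetry $\gamma(t + 1/2) = -\gamma(t)$ forces $f_\lambda^k(p) = -f_\lambda^k(p)$, hence $f_\lambda^k(p) = 0$, which contradicts $f_\lambda^k(p) \in \partial B_\lambda$. If $n^k\alpha \equiv 0 \pmod 1$ for some minimal $k \geq 1$, then the two distinct angles $n^{k-1}\theta_1$ and $n^{k-1}\theta_2$ differ by $j/n$ for some $1 \leq j \leq n-1$, and rotational equivariance by $\omega = e^{2\pi i j/n}$ forces the common landing point $q = f_\lambda^{k-1}(p)$ to satisfy $\omega q = q$; since $\omega$ is a nontrivial $n$-th root of unity this gives $q = 0$, again a contradiction.

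In the remaining case the forward orbit of $\alpha$ under $t \mapsto nt$ avoids both $0$ and $1/2$, so the cyclic length $\ell_k := \min(n^k\alpha \bmod 1,\, 1 - n^k\alpha \bmod 1)$ lies in $(0, 1/2)$ for every $k$. The equivalence relation $s \sim t \iff \gamma(s) = \gamma(t)$ on $\mathbb{S}$ is closed and invariant under both $t \mapsto nt$ and the $2n$-fold rotations $t \mapsto t + k/(2n)$, and its nontrivial classes must give non-crossing chords on the circle: two crossing classes would yield two distinct Jordan curves $\Gamma, \Gamma'$ (each the union of a pair of external rays meeting at a landing point) with interleaved arms at $\infty$, forcing the connected set $\Gamma' \setminus \{\infty\}$ to meet both components of the complement of $\Gamma$ in $\mathbb{C}$, a planar impossibility. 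A direct computation shows that among the $2n$ rotated pairs $\{n^k\theta_1 + j/(2n),\, n^k\theta_2 + j/(2n)\}_{0 \leq j < 2n}$, some two cross as soon as $\ell_k > 1/(2n)$; meanwhile, as long as $\ell_k \leq 1/(2n)$ the recursion $\ell_{k+1} = n\ell_k$ holds (since $n\ell_k \leq 1/2$ prevents wrapping), so $\ell_k$ grows geometrically and some iterate $k$ satisfies $\ell_k > 1/(2n)$, producing the desired crossing and contradiction.

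The main technical point is the verification that $\sim$ is non-crossing, which is where planarity of $\partial B_\lambda$ really enters, together with the delicate boundary case $\ell_k = 1/(2n)$, where $\ell_{k+1} = 1/2$ returns us to the previously dispatched sub-case; once these are in hand, the geometric growth of $\ell_k$ closes the proof.
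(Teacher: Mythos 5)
Your proof is correct but takes a genuinely different route from the paper's. The paper argues via the component $W_0$ of $\overline{\mathbb{C}}\setminus\overline{B}_\lambda$ containing the origin and the involution $H_\lambda(z)=\sqrt[n]{\lambda}/z$: using the $e^{i\pi/n}$-symmetry of $W_0$ and the inclusion $H_\lambda(\overline{\mathbb{C}}-\overline{W}_0)\subset W_0$, the paper deduces $f_\lambda^{-1}(0)\subset W_0$, hence $f_\lambda^{-1}(\overline{W}_0)\subset\overline{W}_0$, hence $\partial B_\lambda\subset\overline{W}_0$, so $\partial W_0=\partial B_\lambda$; a pair of co-landing external rays would then disconnect $\partial B_\lambda$ incompatibly with $\partial W_0=\partial B_\lambda$. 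You instead work directly on the circle parametrization $\gamma$ supplied by Carath\'eodory, exploiting the semi-conjugacy $\gamma(nt)=f_\lambda(\gamma(t))$, the $2n$-fold rotational equivariance from Lemma~\ref{1a}, and the non-crossing property of the landing relation $\sim$: any nontrivial identification either eventually hits $0$ or $1/2$ under angle doubling (your Cases~1 and~2), or else the arc-length $\ell_k$ grows geometrically until the chord collides with one of its $2n$ rotated copies, and each possibility pins a boundary point at $0$ or $\infty$. Both arguments lean on local connectivity plus symmetry, but yours is more elementary and self-contained---you avoid the topological subtleties about $W_0$ and the connectedness of $\overline{\mathbb{C}}-\overline{W}_0$ at the cost of more explicit circle combinatorics.

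One small point to tighten: in the crossing step you should rule out explicitly that two crossing rotated pairs lie in the \emph{same} $\sim$-class, since non-crossing only constrains distinct classes. This follows immediately from the rotation identity already used in Cases~1 and~2: if $\gamma(s)=\gamma(s+j/(2n))$ with $0<j<2n$, then $\gamma(s)=e^{\pi ij/n}\gamma(s)$ forces $\gamma(s)\in\{0,\infty\}$, which is impossible on $\partial B_\lambda$. With that sentence added the proof is complete.
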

\begin{proof} Let
$W_0$ be the component of
$\mathbb{\overline{C}}-\overline{B}_\lambda$ containing $0$. It is
obvious that $\partial W_0\subset \partial B_\lambda,\
T_\lambda\subset W_0,\ \partial T_\lambda\subset \overline{W}_0$.
Based on Lemma \ref{1a}, $e^{i\pi/n} W_0=W_0$.

Recall that $H_\lambda(z)=\sqrt[n]{\lambda}/z$, so
$H_\lambda(\partial W_0)\subset H_\lambda(\partial
B_\lambda)=\partial T_\lambda \subset \overline{W}_0$. Because
$\partial B_\lambda$ is locally connected, $\partial W_0$ is locally
connected. It follows that $\mathbb{\overline{C}}-\overline W_0$ is
connected and
$H_\lambda(\mathbb{\overline{C}}-\overline{W}_0)\subset {W}_0$.

Now, we show that $f_\lambda^{-1}(0)\subset W_0$. If not,
$f_\lambda^{-1}(0)\cap
(\mathbb{\overline{C}}-\overline{W}_0)\neq\emptyset$. Based on the
symmetry of $f_\lambda^{-1}(0)$ and
$\mathbb{\overline{C}}-\overline{W}_0$, we have
$f_\lambda^{-1}(0)\subset \mathbb{\overline{C}}-\overline{W}_0$.
This will contradict the fact that
$f_\lambda^{-1}(0)=H_\lambda(f_\lambda^{-1}(0))\subset
H_\lambda(\mathbb{\overline{C}}-\overline{W}_0)\subset W_0$.

Because no point on $\partial W_0$ can be mapped into $W_0$, we have
$f_\lambda^{-1}(W_0)\subset W_0$ and $f_\lambda^{-1}
(\overline{W}_0)\subset \overline{W}_0$. Take a point $z\in \partial
W_0$; we have $\partial B_\lambda\subset
J(f_\lambda)=\overline{\bigcup_{k\geq0}f_\lambda^{-k}(z)}\subset
\overline{W}_0$ and $\partial B_\lambda\subset \partial W_0$.
Therefore, $\partial W_0=\partial B_\lambda$.

Now, we show that $\partial B_\lambda$ is a Jordan curve. If two
different external rays, say $R_\lambda(t_1)$ and $R_\lambda(t_2)$,
land at the same point $p\in \partial B_\lambda$, then
$\overline{R_\lambda(t_1)\cup R_\lambda(t_2)}$ decomposes $\partial
B_\lambda$ into two parts. It turns out that $\partial
W_0\neq\partial B_\lambda$, which is a contradiction.
\end{proof}

The aim of this section is to prove Theorem \ref{11c}, as follows:

\noindent\textit{Proof of Theorem \ref{11b}.} By Theorem \ref{11a}
and Proposition \ref{6a}, it suffices to show that $f_\lambda$
satisfies the {\bf BD} condition on $\partial B_\lambda$. First, we
deal with three special cases:

{\bf Case 1.} The critical orbit escapes to infinity.

{\bf Case 2.} The parameter $\lambda\in \mathbb{R}^+$ and $\partial
B_\lambda$ contains no parabolic point.

{\bf Case 3.} The map $f_\lambda$ is critically finite.

In case 1, $P(f_\lambda)\cap \partial B_\lambda=\emptyset$. Based on
Proposition \ref{6c}, $f_\lambda$ satisfies the {\bf BD} condition
on $\partial B_\lambda$. For case 2, by Proposition \ref{7f}, either
$P(f_\lambda)\cap
\partial B_\lambda=\emptyset$ or $P(f_\lambda)\cap \partial
B_\lambda=\{\beta_c\}$ or $P(f_\lambda)\cap \partial
B_\lambda=\{\pm\beta_c\}$. In either case, $\beta_c$ is a repelling
fixed point of $f_\lambda$. By Proposition \ref{6a}, $f_\lambda$
satisfies the {\bf BD} condition on $\partial B_\lambda$. For case
3, $f_\lambda$ satisfies the {\bf BD} condition on $\partial
B_\lambda$ by Corollary \ref{6c}.

In the remaining cases, we can use the Yoccoz puzzle to study the
higher regularity of $\partial B_\lambda$. There are two remaining
cases:

{\bf Case 4.} $\partial B_\lambda$ contains no critical point.

{\bf Case 5.} $C_\lambda\subset\partial B_\lambda$ and  $C_\lambda$ is not recurrent.

In either case, by Proposition \ref{4ab}, we can find an admissible
graph $\mathbf{G}_\lambda(\theta_1,\cdots,\theta_N)$. With respect
to the Yoccoz puzzle induced by this graph, we consider the critical
tableaux. For case 4, there are two possibilities:

{\bf Case 4.1.} Some $T(c)$ with $c\in C_\lambda$ is periodic.

{\bf Case 4.2.} No $T(c)$ with $c\in C_\lambda$ is periodic.

For case 4.1, we conclude from Proposition \ref {7c} that $\#
(P(f_\lambda)\cap \partial B_\lambda)<\infty$. Because $\partial
B_\lambda$ contains no parabolic point, all periodic points in
$P(f_\lambda)\cap\partial B_\lambda$ are repelling. Thus, based on
Proposition \ref{6c}, $f_\lambda$ satisfies the {\bf BD} condition
on $\partial B_\lambda$.

For case 4.2, we have already shown that ${\rm
End}(c)=\bigcap_{d\geq0}\overline{P_d(c)}=\{c\}$ for $c\in
C_\lambda$ in the proof of Proposition \ref {7b}. Thus, we can
choose a $d_0$ large enough such that
$${\rm Eucl.diam}(P_{d_0}(c))<{\rm Eucl.dist}(c, \partial B_\lambda).$$
For $d\geq d_0$, let $U_d$ be the union of all puzzle pieces of
depth $d$ that intersect with $\partial B_\lambda$ and $V_d$ be the
interior of $\overline{U_d}$. For every $u\in \partial B_\lambda$,
there is a number $\varepsilon_u>0$ such that
$B(u,\varepsilon_u)\subset V_{d_0}$. For any $m\geq0$ and any
component $U_m(u)$ of $f_\lambda^{-m}(B(u,\varepsilon_u))$
intersecting with $\partial B_\lambda$, $U_m(u)\subset
V_{d_0+m}\subset V_{d_0}$. By the choice of $d_0$, the sequence
$U_m(u)\rightarrow\cdots\rightarrow
f_\lambda^{m-1}(U_m(u))\rightarrow B(u,\varepsilon_u)$ meets no
critical point of $f_\lambda$; thus, $f_\lambda^m: U_m(u)\rightarrow
B(u,\varepsilon_u)$ is a conformal map. Therefore, in this case,
$f_\lambda$ satisfies the {\bf BD} condition on $\partial
B_\lambda$.

In the following, we deal with case 5. Again,based on Proposition
\ref {7b}, ${\rm End}(c)=\{c\}$ for $c\in C_\lambda$. Thus, in this
case one can verify that  $C_\lambda$ is not recurrent if and only if all tableaux $T(c)$ with $c\in
C_\lambda$ are non-critical. Based on Lemma \ref{5a}, $f_\lambda$ is
critically finite. It follows from Corollary \ref{6d} that
$f_\lambda$ satisfies the {\bf BD} condition on $\partial
B_\lambda$. \hfill $\Box$

\subsection{Corollaries}

In this section, we present some corollaries of Theorem \ref{11a}.

\begin{pro}\label{7i}  If $\partial B_\lambda$ contains a parabolic cycle, then the multiplier of the cycle is $1$ and the Julia set $J(f_\lambda)$
contains a quasi-conformal copy of the quadratic Julia set of
$z\mapsto z^2+1/4.$
\end{pro}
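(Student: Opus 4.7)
The plan is to combine the renormalization theory from Section~5 with the Douady--Hubbard straightening theorem. Since a parabolic cycle is non-repelling in $\mathbb{C}$, Proposition~\ref{5b} gives that $f_\lambda$ admits a $p$-renormalization or $(p/2)$-$*$-renormalization $(\epsilon f_\lambda^p, P_{d_0+p}(c), P_{d_0}(c))$ at some critical point $c\in C_\lambda$, with $\epsilon\in\{\pm1\}$; after replacing the cycle by its image under $z\mapsto -z$ if needed, the parabolic cycle meets the small filled Julia set $K_c$. By Theorem~\ref{11a} (already established at this point in the paper), the boundary $\partial B_\lambda$ is a Jordan curve, so every boundary point is the landing point of a unique external ray. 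Combined with Lemma~\ref{5d}, which asserts that only one external ray accumulates on $K_c$ and that this ray lands at $\beta_c$, this forces $K_c\cap \partial B_\lambda=\{\beta_c\}$; hence the parabolic cycle is precisely the $f_\lambda$-orbit of $\beta_c$.

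Next I will apply the straightening theorem to obtain a quasiconformal homeomorphism $\sigma$ conjugating $\epsilon f_\lambda^p$ near $K_c$ to a quadratic polynomial $p_\mu(z)=z^2+\mu$, sending $\beta_c$ to the $\beta$-fixed point $\tilde\beta=(1+\sqrt{1-4\mu})/2$ of $p_\mu$. Since the Beltrami coefficient of $\sigma$ can be chosen supported off $K_c$, the map $\sigma$ is holomorphic at $\beta_c$ and therefore preserves the multiplier; consequently $\tilde\beta$ is a parabolic fixed point of $p_\mu$ with multiplier equal to $(\epsilon f_\lambda^p)'(\beta_c)$. The key observation is that the $\beta$-fixed point of any $p_\mu$ with connected Julia set is the landing of a single external ray, $R_\mu(0)$, which is fixed by $p_\mu$; hence the combinatorial rotation number at $\tilde\beta$ is zero, so if $\tilde\beta$ is parabolic its multiplier must equal $1$. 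This yields $\mu=1/4$ and $(\epsilon f_\lambda^p)'(\beta_c)=1$.

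It remains to transfer this to the cycle multiplier of $\beta_c$ as a periodic point of $f_\lambda$, and to identify the quasiconformal copy of $J(z^2+1/4)$. In the renormalization case $\epsilon=1$, Lemma~\ref{5d} says that $K_c,f_\lambda(K_c),\ldots,f_\lambda^{p-1}(K_c)$ are pairwise disjoint, so $\beta_c$ has exact period $p$ and cycle multiplier $(f_\lambda^p)'(\beta_c)=1$. In the $*$-renormalization case ($n$ odd, $\epsilon=-1$), the pairwise disjointness of the $2p$ sets $\pm f_\lambda^j(K_c)$, $0\leq j<p$, shows the cycle has period exactly $2p$; using that $f_\lambda$ is odd so that $(f_\lambda^p)'$ is an even function, one computes
\[
(f_\lambda^{2p})'(\beta_c)=(f_\lambda^p)'(-\beta_c)\cdot(f_\lambda^p)'(\beta_c)=\bigl((f_\lambda^p)'(\beta_c)\bigr)^2=(-1)^2=1.
\]
Either way the cycle multiplier is $1$. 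Since $\mu=1/4$, the map $\sigma$ restricts to a quasiconformal homeomorphism from $\partial K_c\subset J(f_\lambda)$ onto $J(p_{1/4})$, embedding a quasiconformal copy of $J(z^2+1/4)$ into $J(f_\lambda)$.

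The main technical obstacle will be justifying that the straightening conjugacy preserves the multiplier at the parabolic fixed point $\beta_c$; this relies on the standard freedom in the Douady--Hubbard construction to choose the dilatation supported in the fundamental annulus $V\setminus\overline{U}$ and pulled back, so that $\sigma$ is holomorphic in a neighborhood of $K_c$ and in particular at $\beta_c$. The combinatorial step of locating the parabolic point precisely at $\beta_c$ (and nowhere else in $K_c$) is a second subtlety: it depends essentially on Theorem~\ref{11a}, which guarantees that the Jordan curve $\partial B_\lambda$ meets $K_c$ in exactly one point.
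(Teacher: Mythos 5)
Your strategy for $\lambda\in\mathcal{H}$ essentially reproduces the paper's argument: straighten the ($*$-)renormalization from Section~5, use Lemma~\ref{5d} together with the already-established Jordan curve property of $\partial B_\lambda$ to pin the parabolic point at $\beta_c$, and conclude $\mu=1/4$ since the $\beta$-fixed point of a quadratic with connected Julia set has combinatorial rotation number zero. However, the justification you give for multiplier preservation is not correct. In the Douady--Hubbard straightening, the dilatation supported in the fundamental annulus $V\setminus\overline{U}$ is pulled back along the dynamics, and the preimages of $V\setminus\overline{U}$ accumulate on all of $\partial K_c$, in particular on $\beta_c$; so $\sigma$ cannot be arranged to be holomorphic in any neighborhood of $\beta_c$ (being $1$-qc \emph{on} $K_c$ does not imply conformality at a boundary point). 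The correct reason the multiplier passes through is the standard fact that a quasiconformal (indeed topological) conjugacy preserves the Leau--Fatou flower and hence the rotation number $p/q$ of a parabolic point, so the multiplier $e^{2\pi i p/q}$ is a conjugacy invariant; alternatively, one can argue directly on $f_\lambda$ by observing that the $k$-periodic ray $R_\lambda(t)$ landing at $\beta_c$ is fixed by $f_\lambda^k$, forcing rotation number $0$ and multiplier $1$ without reference to $\sigma$.

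Separately, your proof omits the real parameters. Proposition~\ref{5b} and Lemma~\ref{5d} are proved under the standing Hypothesis $\lambda\in\mathcal{H}$, which is what guarantees admissible graphs and the Yoccoz puzzle; for $\lambda\in\mathbb{R}^+$ there is no admissible graph (no critical puzzle piece), and the paper handles this case separately via the explicit $1$-renormalization of Lemma~\ref{7e} and the identity $K_{c_0}\cap\partial B_\lambda=\{\beta_{c_0}\}$ from Proposition~\ref{7f}. Since a parabolic cycle on $\partial B_\lambda$ is perfectly possible for real $\lambda$, this case must be covered for the statement as written.
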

\begin{proof}
Suppose $\mathcal{C}=\{z_0, f_\lambda(z_0), \cdots,
f^{q}_\lambda(z_0)=z_0\}$ is a parabolic cycle on $\partial
B_\lambda$. We will first consider the case $\lambda\in
\mathbb{R}^+$, then deal with the case $\lambda\in \mathcal{H}$.

First, suppose $\lambda\in \mathbb{R}^+$. By Lemma \ref {7e} and
Proposition \ref {7f}, $f_\lambda$ is $1-$renormalizable at $c_0$
and $P(f_\lambda)\cap
\partial B_\lambda\subset (-K_{c_0}\cup K_{c_0})\cap
\partial B_\lambda=\{\pm\beta_{c_0}\}$. Because a parabolic point must
attract a critical point, we conclude that $\beta_{c_0}$ is a
parabolic fixed point of $f_\lambda$. Therefore, $(f_\lambda,U,V)$
is quasi-conformally conjugate to a quadratic polynomial $z\mapsto
z^2+\mu$ with a $\beta-$fixed point that is also a parabolic point,
thus $\mu=1/4$. The conclusion follows in this case.

In the following, we deal with the case $\lambda\in \mathcal{H}$.
Based on Proposition \ref{4ab}, we can find an admissible graph
$\mathbf{G}_\lambda({\theta}_1,\cdots,{\theta}_N)$. Based on
Proposition \ref{3f}, the parabolic cycle $\mathcal{C}$ avoids the
graph $\mathbf{G}_\lambda({\theta}_1,\cdots,{\theta}_N)$. With
respect to the Yoccoz puzzle induced by this graph and with an
argument similar to that used to prove Corollary \ref{5b}, we
conclude that there is a critical point $c\in C_\lambda$ and a point
$z\in \mathcal{C}$ such that $P_d(z)=P_d(c)$ for all $d\geq0$. Thus,
the tableau $T(c)$ is periodic. Suppose the period of $T(c)$ is $k$.
It is obvious that $k$ is a divisor of $q$. By Lemma \ref{5a}, when
$d_0$ is large enough, the triple $(\epsilon f_\lambda^p,
{P}_{d_0+p}(c), {P}_{d_0}(c))$ is either a $k$-renormalization of
$f_\lambda$ at $c$ (in this case, $(\epsilon,p)=(1,k)$) or a
$k/2$-$*$-renormalization of $f_\lambda$ at $c$ (in this case,
$(\epsilon,p)=(-1,k/2)$). Moreover, the small filled Julia set
$K_c={\rm End}(c)= \bigcap_{d\geq 0}\overline{{P}_d (c)}$ and $z\in
K_c\cap\partial B_\lambda$.

On the other hand, based on  Lemma \ref{5d}, there is a unique
external ray $R_\lambda(t)$ landing at $\beta_c$, which is the
$\beta$-fixed point of the renormalization $(\epsilon f_\lambda^p,
{P}_{d_0+p}(c), {P}_{d_0}(c))$. Note that we have already proved
that $\partial B_\lambda$ is a Jordan curve; the intersection
$\partial B_\lambda \cap \overline{{P}_d (c)}$ shrinks to a single
point as $d\rightarrow\infty$. Thus, we have $K_c\cap \partial
B_\lambda=\{\beta_c\}$. By the previous argument, $\beta_c=z$.

Based on the straightening theorem of Douady and Hubbard, $(\epsilon
f_\lambda^p,  {P}_{d_0+p}(c),  {P}_{d_0}(c))$ is quasi-conformally
conjugate to a quadratic polynomial $p_\mu(z)=z^2+\mu$ in a
neighborhood of the small filled Julia set $K_c$. For this quadratic
polynomial, the $\beta$-fixed point is also a parabolic point, thus
$\mu=1/4$. Therefore, the Julia set $J(f_\lambda)$ contains a
quasi-conformal copy of the quadratic Julia set of $z\mapsto
z^2+1/4$. Because the multiplier of the parabolic point of $z\mapsto
z^2+1/4$ is $1$, it turns out that $(\epsilon f_\lambda^p)'(z)=1$,
$(f_\lambda^k)'(z)=1$ and $(f_\lambda^q)'(z)=1$.
\end{proof}

\begin{pro}\label{7j}  Suppose $f_\lambda$ has no Siegal disk and
the Julia set $J(f_\lambda)$ is connected, then every Fatou
component is a Jordan domain.
\end{pro}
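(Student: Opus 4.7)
The plan is to classify every Fatou component of $f_\lambda$ according to its long-term dynamics and verify the Jordan domain property in each case. By Sullivan's no-wandering-domains theorem every Fatou component is eventually periodic. Since $f_\lambda$ has no Siegel disks by hypothesis, each periodic cycle of Fatou components is either the superattracting fixed component $B_\lambda$ associated to $\infty$, or a bounded cycle of attracting, superattracting, or parabolic type. I would split the argument into (i) the components in the grand orbit $\mathcal{P}$ of $B_\lambda$, and (ii) the bounded periodic cycles coming from renormalization, followed by an inductive pullback to treat strictly preperiodic components.

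For (i), $B_\lambda$ is already a Jordan domain by Theorem \ref{11a}. For $T_\lambda$, the map $f_\lambda\colon T_\lambda\to B_\lambda$ is a proper holomorphic map of degree $n$, and its unique critical point $0\in T_\lambda$ has local degree $n$. Since the total ramification $n-1$ equals $\deg-1$, the Riemann--Hurwitz formula forces $T_\lambda$ to be simply connected and $f_\lambda$ to have no critical points on $\partial T_\lambda$. Hence $f_\lambda\colon\partial T_\lambda\to\partial B_\lambda$ is an unramified $n$-fold covering of the Jordan curve $\partial B_\lambda$. Concretely, using the relation $\phi_\lambda\circ f_\lambda=\phi_{T_\lambda}^{-n}$ together with the homeomorphic extension $\phi_\lambda\colon\overline{B_\lambda}\to\overline{\mathbb{C}\setminus\mathbb{D}}$, one extracts an $n$-th root (the winding number on $\partial T_\lambda$ equals $n$) and extends $\phi_{T_\lambda}$ to a homeomorphism $\overline{T_\lambda}\to\overline{\mathbb{D}}$, so $\partial T_\lambda$ is a Jordan curve. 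For any other $U\in\mathcal{P}\setminus\{B_\lambda,T_\lambda\}$, some iterate $f_\lambda^k\colon U\to T_\lambda$ is conformal; the same covering argument, now with trivial ramification, gives that $\partial U$ is a Jordan curve.

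For (ii), suppose $U$ is a component of a bounded periodic Fatou cycle. By Proposition \ref{5b}, $f_\lambda$ is then $p$-renormalizable or $p$-$\ast$-renormalizable at some $c\in C_\lambda$. The straightening theorem of Douady--Hubbard supplies a quasi-conformal conjugacy $\sigma$ between the renormalization $(\epsilon f_\lambda^p,P_{d_0+p}(c),P_{d_0}(c))$ and a quadratic polynomial $p_\mu(z)=z^2+\mu$ in a neighborhood of the small filled Julia set $K_c$. The absence of Siegel disks in $f_\lambda$ transfers to $p_\mu$, so $p_\mu$ is either hyperbolic (attracting case) or geometrically finite parabolic; in either case every Fatou component of $p_\mu$ lying in the basin of the non-repelling cycle is a Jordan domain, a classical consequence of local connectivity of $J(p_\mu)$ together with B\"ottcher (respectively Fatou) linearization. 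Since $\sigma$ is a homeomorphism of closures, this Jordan domain property transfers to the corresponding Fatou component of $f_\lambda$. Any strictly preperiodic Fatou component is then handled by the proper-covering argument of (i) applied to a suitable iterate landing in an already-treated periodic component.

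The main obstacle will be step (i)'s verification that no critical point of $f_\lambda$ lies on $\partial T_\lambda$ or on $\partial U$ for other components $U\in\mathcal{P}$: this is where the Riemann--Hurwitz count combined with the simple connectivity of $U$ (a consequence of $J(f_\lambda)$ being connected) is essential, and where the explicit conjugacy to $z\mapsto z^{-n}$ supplied by $\phi_{T_\lambda}$ allows one to upgrade local connectivity to injectivity of the boundary extension. A secondary delicate point arises in step (ii), namely confirming the classical Jordan domain statement for the basins of the straightened quadratic polynomial; in the parabolic case this requires local connectivity of $J(p_\mu)$ along the boundary of the cycle of basins, which is standard but must be invoked carefully.
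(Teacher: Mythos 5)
Your decomposition is close to the paper's, but part (i) has two issues, one of them a genuine gap. First, Riemann--Hurwitz constrains ramification \emph{inside} $T_\lambda$ only; it says nothing about $\partial T_\lambda\subset J(f_\lambda)$, and in fact $C_\lambda$ can lie on $\partial T_\lambda$: the M\"obius involution $H_\lambda(z)=\sqrt[n]{\lambda}/z$ satisfies $H_\lambda(B_\lambda)=T_\lambda$ and $H_\lambda(C_\lambda)=C_\lambda$, so $C_\lambda\subset\partial B_\lambda$ (which does occur, cf.\ Theorem~\ref{11b}) forces $C_\lambda\subset\partial T_\lambda$. Thus $f_\lambda\colon\partial T_\lambda\to\partial B_\lambda$ is \emph{not} in general unramified, and your root-extraction step is built on a false premise. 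It is also unnecessary: the same symmetry $H_\lambda$ shows directly that $\partial T_\lambda=H_\lambda(\partial B_\lambda)$ is a Jordan curve once $\partial B_\lambda$ is, which is exactly the paper's one-line argument via Proposition~\ref{7h}.

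Second, and this is the real gap, your claim that $f_\lambda^k\colon U\to T_\lambda$ is conformal for every $U\in\mathcal{P}\setminus\{B_\lambda,T_\lambda\}$ holds only under the standing Hypothesis $C_\lambda\cap A_\lambda=\emptyset$. The hypotheses of the proposition also admit the Sierpi\'nski-curve case of the Escape Trichotomy, where $C_\lambda$ lies in iterated preimages of $T_\lambda$; then some $U\in\mathcal{P}$ contain critical points, $f_\lambda^k\colon U\to T_\lambda$ is ramified, and your covering argument is silent on such $U$. The paper disposes of this case separately: $J(f_\lambda)$ is then a locally connected Sierpi\'nski curve and the \textbf{BD} criterion of Proposition~\ref{6a} gives that every Fatou component is a quasi-disk, hence a Jordan domain. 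Your part (ii) agrees in substance with the paper, which cites Lemma~\ref{5a} together with the classical Jordan-domain property for bounded Fatou components of a Siegel-free quadratic; your explicit passage through the straightening map is correct.
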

\begin{proof} By Proposition  \ref{7h} and the fact that $H_\lambda(B_\lambda)=T_\lambda$,
we conclude that both $T_\lambda$ and $B_\lambda$ are Jordan
domains.

If the critical orbit tends to $\infty$, then the Julia set is a
Sierpinski curve that is locally connected, and all Fatou components
are quasi-disks (by Proposition \ref{6a}).

If the critical orbit remains bounded, then for any $U\in
\mathcal{P}\setminus \{T_\lambda, B_\lambda\}$, there is a smallest
integer $k\geq1$ such that $f_\lambda^k: U\rightarrow T_\lambda$ is
a conformal map. Thus, if two radial rays $R_U(\theta_1)$ and
$R_U(\theta_2)$ land at the same point, then
$R_{T_\lambda}(\theta_1)=f_\lambda^k(R_U(\theta_1))$ and
$R_{T_\lambda}(\theta_2)=f_\lambda^k(R_U(\theta_2))$ also land at
the same point. This implies that $U$ is also a Jordan domain. If
there are other Fatou components, then they are eventually mapped to
a parabolic basin or an attracting basin. By Proposition \ref{5a},
the map is either renormalizable or $*-$renormalizable. It is known
that every bounded Fatou component of a quadratic polynomialwithout
a Siegal disk is a Jordan disk; it turns out that all Fatou
components of $f_\lambda$ are Jordan disks in this case.
\end{proof}

\begin{pro}\label{7k}  If $f_\lambda$ has a Cremer point, then
the Cremer point cannot lie on the boundary of any Fatou component.
In other words, all Cremer points are buried on the Julia set.
\end{pro}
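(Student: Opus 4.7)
The plan is to argue by contradiction: suppose a Cremer periodic point $z_0$ of period $q$ lies on $\partial U$ for some Fatou component $U$. The starting observation is that since $f_\lambda$ has a non-repelling cycle in $\mathbb{C}$, Proposition \ref{5b} applies, so $f_\lambda$ is renormalizable or $*$-renormalizable at some critical point $c \in C_\lambda$; inspecting the three cases of Proposition \ref{5b}, one checks that all non-repelling cycles of $f_\lambda$ in $\mathbb{C}$ share the same dynamical character (in the only case with two cycles, $n$ odd and renormalizable, the cycles $\mathcal{C}$ and $-\mathcal{C}$ have the same multiplier since $f_\lambda^q(-z)=-f_\lambda^q(z)$ for $n$ odd, and a direct computation gives $(f_\lambda^q)'(-z_0)=(f_\lambda^q)'(z_0)$). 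Consequently $f_\lambda$ admits no attracting, parabolic, or Siegel cycle in $\mathbb{C}$, so by Sullivan's no-wandering-domain theorem the only periodic Fatou component is $B_\lambda$, and every Fatou component is pre-periodic to $B_\lambda$.

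Choosing $k\geq 0$ with $f_\lambda^k(U)=B_\lambda$, I obtain $f_\lambda^k(z_0)\in \partial B_\lambda$; since $\partial B_\lambda$ is forward invariant and $z_0$ is periodic, the entire Cremer cycle $\mathcal{C}$ lies on $\partial B_\lambda$. On the other hand, the proof of Proposition \ref{5b} shows that $\mathcal{C}\subset \bigcup_{j\geq 0}(\pm f_\lambda^j(K_c))$, where $K_c$ is the small filled Julia set of the renormalization. Replacing $z_0$ by a suitable iterate in the cycle, I may therefore assume $z_0\in K_c\cap \partial B_\lambda$, so in particular $K_c\cap \partial B_\lambda\neq\emptyset$.

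Now Proposition \ref{7c} provides two curves $\mathcal{L}_1\subset \Delta_1\cup\{0\}$ and $\mathcal{L}_2\subset \Delta_2\cup\{0\}$ that join $0$ to $\beta_c$; as observed in the proof of Corollary \ref{7d}, their union $\mathcal{L}_1\cup\mathcal{L}_2\cup\{\beta_c\}$ is a Jordan curve separating $K_c\setminus\{\beta_c\}$ from $\partial B_\lambda\setminus\{\beta_c\}$. This forces $K_c\cap \partial B_\lambda=\{\beta_c\}$, whence $z_0=\beta_c$. But by Lemma \ref{5d}, $\beta_c$ is the landing point of a periodic external ray $R_\lambda(t)$ with $t=m/(2^k-1)$ rational, so $\beta_c$ is a repelling or parabolic periodic point of $f_\lambda$; having already excluded parabolic cycles, $\beta_c$ is repelling, which contradicts $z_0$ being a Cremer point. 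The case $\lambda\in\mathbb{R}^+$ is handled identically, substituting Lemma \ref{7e} for Proposition \ref{5b} and invoking Proposition \ref{7f} (which gives $K_{c_0}\cap \partial B_\lambda=\{\beta_{c_0}\}$ directly) in place of Proposition \ref{7c}.

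The principal technical point is the second step, verifying that the full Cremer cycle is absorbed into $\bigcup_{j\geq 0}(\pm f_\lambda^j(K_c))$. I would confirm this by revisiting the Denjoy--Wolff style contraction argument in the proof of Proposition \ref{5b}: any non-repelling cycle must meet a puzzle piece $P_d(c')$ for every $d$ and some $c'\in C_\lambda$ (else the first-return map would contract by the Schwarz lemma), forcing the cycle to be trapped in the tableau-periodic nest whose intersection is $K_c$ (up to the $\pm$ symmetry). Everything else reduces to assembling earlier results.
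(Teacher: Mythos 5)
Your proof is correct, but it takes a noticeably different route than the paper after the opening step. The paper, having already established that $\partial B_\lambda$ is a Jordan curve (Theorem \ref{11a}), simply extends the B\"ottcher map to a homeomorphism $\overline{B}_\lambda\to\overline{\mathbb{C}}\setminus\mathbb{D}$; a periodic point $z\in\partial B_\lambda$ then corresponds to a $\tau$-periodic (hence rational) angle, so some periodic external ray $R_\lambda(t)$ lands at $z$, and the Snail Lemma forces $z$ to be repelling or parabolic — one line and done. You instead bypass the Jordan-curve fact and run through the renormalization machinery (Proposition \ref{5b}, Proposition \ref{7c}, Corollary \ref{7d}, Lemma \ref{5d}) to identify $z_0$ with the $\beta$-fixed point $\beta_c$ and only then invoke the landing-ray argument. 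That is valid but considerably heavier than necessary once Theorem \ref{11a} is on the table. On the other hand, your first paragraph supplies something the paper's proof passes over silently: the assertion that $F(f_\lambda)=\bigcup_{k\geq0}f_\lambda^{-k}(B_\lambda)$ whenever a Cremer point is present genuinely needs the observation that, by Proposition \ref{5b} and the odd-symmetry $(f_\lambda^q)'(-z_0)=(f_\lambda^q)'(z_0)$, all non-repelling cycles of $f_\lambda$ in $\mathbb{C}$ share the same multiplier and hence the same type, so no attracting, parabolic, or Siegel cycle can coexist with the Cremer one. Your closing paragraph leaves the containment $\mathcal{C}\subset\bigcup_{j\geq0}(\pm f_\lambda^j(K_c))$ somewhat in sketch form, but it is indeed a direct byproduct of the argument already carried out in the proof of Proposition \ref{5b}, so nothing essential is missing. (Minor typo, inherited from the paper's own text: the periodic angle is of the form $m/(n^k-1)$, not $m/(2^k-1)$.)
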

\begin{proof} Suppose $f_\lambda$ has a Cremer point $z$, then
the Fatou set
$F(f_\lambda)=\bigcup_{k\geq0}f_\lambda^{-k}(B_\lambda)$. If $z$
lies on the boundary of some Fatou component, then after iterations,
one sees that $z\in\partial B_\lambda$. By Theorem 1, there is a
periodic external ray $R_\lambda(t)$ landing at $z$. But this is a
contradiction because, by the Snail Lemma, every periodic external
ray can only land at a parabolic point or a repelling point (see
\cite{M1}).
\end{proof}

\section{Local connectivity of the Julia set $J(f_\lambda)$}

In this section, we study the local connectivity of the Julia set
$J(f_\lambda)$. We will prove Theorem \ref{11c}.

 The proof is based on the `Characterization of Local Connectivity'
(Proposition \ref{8a}(see \cite{W})and the `Shrinking Lemma'
(Proposition \ref{8b} (see \cite{TY} or \cite{LM}), as follows.

\begin{pro}\label{8a}  A connected and compact set $X\subset
\mathbb{\overline{C}}$ is locally connected if and only if it
satisfies the following conditions:

1. Every component of $\mathbb{\overline{C}}\setminus X$ is locally
connected.

2. For any $\epsilon>0$, there are only a finite number of
components of $\mathbb{\overline{C}}\setminus X$ with spherical
diameter greater than $\epsilon$.
\end{pro}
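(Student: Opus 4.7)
\medskip

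The proposition is a classical criterion due to Whyburn, so the plan is to give the standard proof, adapted to the spherical setting, in two directions.

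For the necessity direction (local connectivity of $X$ implies conditions~1 and~2), the plan is as follows. Given a complementary component $U$ of $X$, its boundary $\partial U$ is a closed subset of $X$, and a standard argument shows that if $X$ is locally connected then so is $\partial U$: any small connected neighborhood of $p\in\partial U$ in $X$ intersected with $\overline{U}$ gives a connected neighborhood of $p$ in $\overline{U}$, from which local connectivity of $U$ itself follows because $U$ is open. For condition~2, I would argue by contradiction: if infinitely many components $\{U_k\}_{k\geq 1}$ of $\overline{\mathbb{C}}\setminus X$ had spherical diameter exceeding some fixed $\epsilon>0$, one could pick points $a_k,b_k\in\partial U_k$ with $\mathrm{dist}_{\overline{\mathbb{C}}}(a_k,b_k)\geq\epsilon/2$, pass to subsequences with $a_k\to a_\infty$ and $b_k\to b_\infty$ in $X$, and contradict local connectivity of $X$ at $a_\infty$: any sufficiently small connected neighborhood $V$ of $a_\infty$ in $X$ would meet $\partial U_k$ for infinitely many $k$ but could not contain the corresponding $b_k$, while $V$ must avoid $U_k$, forcing a separation of $V$ by $\partial U_k$ which is impossible.

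For the sufficiency direction (conditions~1 and~2 imply local connectivity), fix $x\in X$ and $\epsilon>0$, and aim to construct a connected neighborhood of $x$ in $X$ of spherical diameter less than $\epsilon$. The first step is to apply condition~2: only finitely many complementary components $U_1,\ldots,U_N$ have spherical diameter exceeding $\epsilon/4$. Let $B=B(x,\delta)$ be a small spherical ball with $\delta\ll\epsilon$ chosen so that $B$ is disjoint from all $\overline{U_i}$ that do not contain $x$ on their boundary, keeping only those finitely many indices $i\in I$ with $x\in\partial U_i$. The second step is, for each $i\in I$, to use condition~1 (local connectivity of $U_i$, equivalently of $\overline{U_i}$) to find a cross-cut $\gamma_i\subset \overline{U_i}$ close to $x$ that separates a small subregion of $\overline{U_i}$ containing $x$ from the rest of $\overline{U_i}$, with $\mathrm{diam}(\gamma_i)<\epsilon/(2N)$.

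The final step is to combine these finitely many cross-cuts $\gamma_i$ with a portion of the boundary of a small ball around $x$ to produce a Jordan curve $\Gamma$ enclosing a region $\Omega$ with $x\in\Omega$, $\mathrm{diam}(\Omega)<\epsilon$, such that $\Gamma\cap X$ is contained in a small neighborhood of $x$; the intersection $\overline{\Omega}\cap X$ is then the desired connected neighborhood of $x$ in $X$. Connectedness follows because any two points in $\overline{\Omega}\cap X$ can be joined inside $\overline{\Omega}$ by a path that may cross finitely many $U_j$ with $j\notin I$ of small diameter (since $B$ meets only such components now), and each crossing can be rerouted along $\partial U_j\subset X$ by condition~1. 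The main obstacle will be the third step: carefully organizing the finitely many cross-cuts into a single Jordan curve and verifying that the complementary components of $\overline{\Omega}\cap X$ inside $\Omega$ do not disconnect it, which requires the combination of the finiteness from condition~2 (to control the small components met by the path) with the local connectivity supplied by condition~1 (to perform the rerouting).
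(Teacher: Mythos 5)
The paper does not prove Proposition~\ref{8a}; it cites Whyburn's \emph{Analytic Topology}~\cite{W} and invokes the result as a known characterization of local connectivity, so there is no in-paper argument to compare against. The comparison is therefore between your sketch and the theorem itself, and here there is a genuine gap that already shows up in how you read condition~1. As stated, ``every component of $\overline{\mathbb{C}}\setminus X$ is locally connected'' is vacuous: each component is an open subset of the sphere, hence automatically locally connected. What Whyburn's theorem needs---and what the paper supplies when it applies this proposition, via Proposition~\ref{7j} that every Fatou component is a Jordan domain---is that the \emph{closure} $\overline{U}$ of each complementary component (equivalently, its boundary $\partial U$) be locally connected. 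You slide between the two by asserting ``local connectivity of $U_i$, equivalently of $\overline{U_i}$,'' but these are not equivalent: the second is strictly stronger and is exactly what your sufficiency argument requires in order to draw cross-cuts $\gamma_i\subset\overline{U_i}$ landing on $\partial U_i$. With condition~1 taken literally, the proposition is false---take $X$ to be the closed topologist's sine curve in $S^2$: its single complementary domain is open hence locally connected, condition~2 holds trivially, yet $X$ is not locally connected.

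The necessity direction for condition~1 is also not established by the argument you give. For a connected neighborhood $N$ of $p\in\partial U$ in $X$, the set $N\cap\overline U$ equals $N\cap\partial U$, which is a subset of $\partial U$, \emph{not} a neighborhood of $p$ in $\overline U$ (it omits every point of the open set $U$ near $p$); and its connectedness is not automatic---it is precisely the content of the Torhorst theorem, whose standard proofs use uniform local connectedness / property~S of $X$ or a prime-end argument rather than a one-line intersection. Your sufficiency outline is in the right circle of ideas (finiteness from condition~2 to isolate the large domains, local connectivity of the closures to produce small cross-cuts), but, as you note yourself, the assembly of the cross-cuts into a Jordan curve and the verification that $\overline\Omega\cap X$ is connected is the substantive step; it is not a side obstacle but the core of the argument, and it remains unresolved in the sketch.
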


\begin{pro}\label{8b} Let $f:\mathbb{\overline{C}}\rightarrow
\mathbb{\overline{C}}$ be a rational map and $D$ be a topological
disk whose closure $\overline{D}$ has no intersection with the
post-critical set $P(f)$. Then, either $\overline{D}$ is contained
in a Siegel disk or a Herman ring or for any $\epsilon>0$ there are
at most finitely many iterated preimages of $D$ with spherical
diameter greater than $\epsilon$.
\end{pro}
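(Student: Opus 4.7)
The plan is to establish Proposition 8.2 as the classical Shrinking Lemma, via inverse branches, Montel's theorem, and the Fatou--Sullivan classification of periodic Fatou components. First, the degenerate cases $\#P(f)\leq 2$ (power maps and Chebyshev maps) can be handled by inspection, so assume $\Omega:=\widehat{\mathbb{C}}\setminus P(f)$ is hyperbolic. Since $P(f)$ is forward invariant and contains $f^k(C(f))$ for all $k\geq 1$, the critical values of every $f^n$ lie in $P(f)$, and $\overline{D}\cap P(f)=\emptyset$ guarantees $D$ contains no critical value of $f^n$; together with the simple-connectedness of $D$, each component $D_n$ of $f^{-n}(D)$ is then a topological disk and $f^n\colon D_n\to D$ is a conformal isomorphism. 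Let $g_n\colon D\to D_n\subset\Omega$ denote its inverse branch. Because $\Omega$ omits at least three points of $\widehat{\mathbb{C}}$, the family $\{g_n\}_{n\geq 0}$ is normal by Montel's theorem.

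Arguing by contrapositive, suppose there exist $\epsilon>0$ and $n_k\to\infty$ with the spherical diameter of $D_{n_k}$ at least $\epsilon$. Extract a locally uniform limit $g_{n_k}\to g$; on a compact subdisk $D'\Subset D$ chosen via a routine conformal distortion estimate, the diameters of $g_{n_k}(D')$ remain bounded below, so $g$ is non-constant. Pick $w_0\in D$ with $g'(w_0)\neq 0$ and set $z_0=g(w_0)$ and $z_{n_k}=g_{n_k}(w_0)\to z_0$. Then $|(f^{n_k})'(z_{n_k})|=1/|g_{n_k}'(w_0)|\to 1/|g'(w_0)|$, a finite non-zero limit, which translates to a uniform bound on the spherical derivatives of $f^{n_k}$ near $z_0$. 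Equicontinuity of $\{f^{n_k}\}$ at $z_0$ places $z_0$ in the Fatou set $F(f)$.

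The main step, which I expect to be the principal obstacle, is to show that the Fatou component $\mathcal{F}$ containing $z_0$ is a Siegel disk or a Herman ring. Choose a compact neighborhood $K\Subset g(D)$ of $z_0$; by Hausdorff convergence $D_{n_k}\to g(D)$, we have $K\subset D_{n_k}$ for large $k$ and hence $f^{n_k}(K)\subset D$. If $\mathcal{F}$ were contained in the basin of an attracting, superattracting, or parabolic cycle, then $f^{n_k}|_K$ would converge uniformly to a constant equal to a point on that cycle; since $f^{n_k}(K)\subset D$, the limit point would lie in $\overline{D}$, contradicting $\overline{D}\cap P(f)=\emptyset$ because such cycles always lie inside $P(f)$. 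Sullivan's no-wandering-domains theorem excludes wandering $\mathcal{F}$, and Baker domains do not arise for rational maps. So $\mathcal{F}$ is pre-periodic to a Siegel disk or Herman ring $R$; analytic continuation forces $g(D)\subset\mathcal{F}$, and for large $k$ the nearby $D_{n_k}$ also lies in $\mathcal{F}$, whence $\overline{D}=f^{n_k}(\overline{D_{n_k}})$ sits inside $f^{n_k}(\overline{\mathcal{F}})$, a closed rotation domain in the cycle of $R$. Since $\partial R\subset P(f)$ (a standard property of rotation domains) and $\overline{D}\cap P(f)=\emptyset$, $\overline{D}$ lies strictly inside the open rotation domain, completing the proof.
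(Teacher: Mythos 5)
The paper does not contain its own proof of this proposition; it cites it directly as the ``Shrinking Lemma'' from \cite{TY} and \cite{LM}, so there is no internal argument to compare against. Your proposal is the standard normal-family proof and is essentially sound: the reduction that every component $D_n$ of $f^{-n}(D)$ is a disk on which $f^n$ is univalent (because critical values of $f^n$ lie in $P(f)$, which is also forward invariant so $D_n\subset\Omega$), Montel on $\{g_n\}$, extraction of a non-constant limit $g$, and elimination of attracting/superattracting/parabolic basins, wandering domains, and Baker domains via the Fatou--Sullivan classification together with the fact that such cycles lie in $P(f)$, are all correct. The use of the standard fact $\partial R\subset P(f)$ for rotation domains $R$ to finish is also appropriate.

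One step in the final paragraph deserves more care. You assert ``for large $k$ the nearby $D_{n_k}$ also lies in $\mathcal{F}$,'' citing Hausdorff convergence, and then conclude $\overline{D}\subset f^{n_k}(\overline{\mathcal{F}})$. What you actually have is Carath\'eodory kernel convergence: every compact $K\Subset g(D)$ is eventually contained in $D_{n_k}$. This does not by itself give $D_{n_k}\subset\mathcal{F}$, since a priori the ``outer'' part of $D_{n_k}$ could cross $\partial\mathcal{F}$, and the claim as written has a circular flavor (the most natural route to it, namely that $D\subset F(f)$ forces each $D_{n_k}$ into a single Fatou component, presupposes the conclusion). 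The gap is fixable. Either argue directly that for $n_k$ large, $f^{n_k}(\partial\mathcal{F})\subset\partial R\subset P(f)$ is disjoint from $D$, so a connected component of $f^{-n_k}(D)$ meeting $\mathcal{F}$ cannot cross $\partial\mathcal{F}$; or, cleaner, bypass $D_{n_k}$ entirely: since $z_{n_k}=g_{n_k}(w_0)\to z_0\in\mathcal{F}$, we have $z_{n_k}\in\mathcal{F}$ for large $k$, hence $w_0=f^{n_k}(z_{n_k})\in f^{n_k}(\mathcal{F})$; choosing a subsequence with $n_k$ constant modulo the period, $w_0$ lies in a single fixed rotation domain $R_i$ of the cycle, and then connectedness of $D$, $\partial R_i\subset P(f)$, and $\overline{D}\cap P(f)=\varnothing$ immediately force $\overline{D}\subset R_i$.
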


\noindent\textit{Proof of Theorem \ref{11c}.}

 1. If $f_\lambda$ is geometrically finite, then $J(f_\lambda)$ is locally connected (See \cite{TY}). Otherwise, the Fatou set $F(f_\lambda)=\bigcup_{k\geq0}f_\lambda^{-k}(B_\lambda)$. Because $\overline{B}_\lambda\cap P(f_\lambda)=\emptyset$, we conclude base on Shrinking Lemma that for any $\epsilon>0$, there are at most finitely many iterated preimages of $B_\lambda$ with spherical diameter greater than $\epsilon$. Based on Proposition \ref {8a}, $J(f_\lambda)$ is locally connected.

2. If $f_\lambda$ is neither renormalizable nor $*-$renormalizable,
then the parameter $\lambda\in\mathcal{H}$ by Lemma \ref{7e}. We can
assume that $f_\lambda$ is not critically finite; otherwise, the
Julia set is locally connected. Thus, based on Proposition
\ref{4ab}, we can find an admissible graph. By Lemma \ref{5a}, none
of the tableaux $T(c)$ with $c\in C_\lambda$ are periodic. The local
connectivity of $J(f_\lambda)$ follows from Proposition \ref {7b}.

3. (The notations here are the same as in Section 7.3) We need only
consider the case when $f_\lambda$ is not geometrically finite. In
this case, the Fatou set
$F(f_\lambda)=\bigcup_{k\geq0}f_\lambda^{-k}(B_\lambda)$. Note that
for any $z>0$, $f_\lambda(z)\geq 2 \sqrt{z^n\cdot
\frac{\lambda}{z^n}}=2\sqrt{\lambda}=v^+_\lambda$. Thus,
$\{f_\lambda^k(v^+_\lambda); k\geq0\}\subset [v^+_\lambda,
\beta_{c_0}]$.

If $v^+_\lambda=\beta'_{c_0}$, one can easily verify that the triple
$(f_\lambda, U,V)$ is quasi-conformally conjugate to the quadratic
polynomial $z\mapsto z^2-2$, which is critically finite. Therefore,
$f_\lambda$ is also critically finite, and the Julia set is locally
connected.

If $v^+_\lambda>\beta'_{c_0}$, then $\overline{T}_\lambda\cap
[v_\lambda^+, \beta_{c_0}]=\emptyset$ by Remark \ref{7f1}. Because
$P(f_\lambda)\subset [-\beta_{c_0},v^-_\lambda]\cup
[v^+_\lambda,\beta_{c_0}]\cup \{\infty\}$, we have
$\overline{T}_\lambda\cap P(f_\lambda)=\emptyset$. Based on
Proposition \ref{8b}, for any $\epsilon>0$ there are at most
finitely many iterated preimages of $T_\lambda$ with spherical
diameter greater than $\epsilon$. Based on Proposition \ref{8a}, the
Julia set is locally connected. \hfill $\Box$











\end{document}